\documentclass[12pt]{amsart}
\usepackage[bb=libus]{mathalpha}

\usepackage[a-1b]{pdfx}   
\makeatletter \AtBeginDocument{\let\mathaccentV\AMS@mathaccentV} \makeatother

\usepackage{graphicx}

\usepackage{amsmath, amsthm, amssymb}
\usepackage{fullpage}
\usepackage{color}
\usepackage{soul}
\usepackage{enumitem}

\newtheorem{theorem}{Theorem}
\newtheorem{lemma}{Lemma}
\newtheorem{proposition}[lemma]{Proposition}
\newtheorem{corollary}[lemma]{Corollary}
\newtheorem{definition}[lemma]{Definition}
\newtheorem{remark}[lemma]{Remark}
\newtheorem{conjecture}{Conjecture}
\numberwithin{lemma}{section}

\newcommand{\R}{{\mathbb R}}

\newcommand{\Z}{{\mathbb Z}}

\renewcommand{\R}{\mathbb R}
\newcommand{\bL}{\mathbf L}
\newcommand{\bM}{\mathbf M}
\newcommand{\bbM}{\mathbb M}
\newcommand{\bbm}{\mathbb m}
\newcommand{\bP}{\mathbf P}
\newcommand{\bbP}{\mathbb P}
\newcommand{\bbp}{\mathbb p}
\newcommand{\bE}{\mathbf E}
\newcommand{\bbE}{\mathbb E}
\newcommand{\bbe}{\mathbb e}
\newcommand{\bN}{\mathbf N}
\newcommand{\bB}{\mathbf B}

\newcommand{\bR}{\mathbf R}
\newcommand{\bQ}{\mathbf Q}

\newcommand{\du}{\mathfrak{u}}
\newcommand{\dv}{\mathfrak{v}}

\newcommand{\bI}{\mathbf I}
\newcommand{\bJ}{\mathbf J}
\newcommand{\bK}{\mathbf K}

\newcommand{\bu}{{\bar u}}

\newcommand{\bw}{{\bar w}}

\newcommand{\la}{\langle}
\newcommand{\ra}{\rangle}
\newcommand{\abs}[1]{\lvert #1 \rvert}

\newcommand{\ol}{\overline}
\newcommand{\ms}{\mathbb M^\sharp}
\newcommand{\ps}{\mathbb P^\sharp}
\newcommand{\calR}{\mathcal{R}}

\def\bal{{bal}}
\def\res{res} 
\def\nonres{nres}
\def\low{pert} 
              
\def\unbal{unbal}
\def\rem{{rem}}
\def\himed{{hm}} 
\def\hihi{{hh}} 

\def\dyad{{\mathfrak c}}

\begin{document}

\title{Global solutions for 1D cubic defocusing dispersive equations, Part V: low regularity NLS}

\author{Mihaela Ifrim}
\address{Department of Mathematics, University of Wisconsin, Madison}
\email{ifrim@wisc.edu}

\author{ Ryan Martinez}
\address{Department of Mathematics, University of California at Berkeley}
\email{ryan\_martinez@berkeley.edu}

\author{ Daniel Tataru}
\address{Department of Mathematics, University of California at Berkeley}
\email{tataru@math.berkeley.edu}

\begin{abstract}
This article is motivated by a broad conjecture, formulated by the first and
last authors in earlier work, asserting that \emph{one-dimensional cubic
defocusing dispersive flows with small initial data have global, dispersive
solutions}. The conjecture was first established for a class of semilinear
Schr\"odinger-type models at $L^2$ regularity, the classical cubic NLS among
them. In a complementary direction, Harrop-Griffiths, Killip and
Vi\c{s}an~\cite{HGKV} have recently shown, using the completely integrable
structure, that the cubic NLS is globally well-posed in $H^s$ for every
$-\tfrac12 < s < 0$.

Our aim here is to extend the reach of the global well-posedness conjecture for one dimensional cubic NLS problems to data which is small in negative Sobolev
spaces, and to show that global \emph{dispersive} bounds persist there. We do
so for a broad class of nonlinearities which includes the cubic NLS but which
in general generates flows that are not completely integrable. Our method is
correspondingly robust, resting on density-flux identities, interaction
Morawetz estimates and an implicit normal form transformation rather than on
integrability, and it reaches all the way to the scaling-critical threshold, namely $s > -\tfrac12$.
As in the earlier work, the global bounds we obtain include both $L^6_{t,x}$
Strichartz estimates and bilinear $L^2_{t,x}$ estimates; these are new even for the classical defocusing cubic NLS
at negative Sobolev regularity. There, by scaling, our dispersive bounds also extend to the large data case. 

\end{abstract}

\subjclass[2020]{35Q55;   
35B40   
}
\keywords{NLS problems, defocusing, scattering, interaction Morawetz, global well-posedness}

\maketitle

\setcounter{tocdepth}{1}
\tableofcontents


\section{Introduction}

Our starting point is the following broad conjecture, proposed in earlier work
of the first and last authors.

\begin{conjecture}[\cite{IT-global,IT-conjecture}]
One-dimensional dispersive flows with cubic defocusing nonlinearities and small
initial data have global in time, dispersive solutions.
\end{conjecture}

Here the word dispersive is used to indicate that the solutions satisfy a range of global dispersive bounds; this is 
as opposed to scattering, which would also require the existence of an asymptotic state.

This conjecture was first proved in~\cite{IT-global} for 1D semilinear cubic NLS problems with a bounded nonlinearity and $L^2$ initial data. The solutions
constructed there were moreover shown to disperse at infinity in a sharp,
quantitative sense: they obey both $L^6_{t,x}$ Strichartz estimates and bilinear
$L^2_{t,x}$ estimates, despite the fact that the nonlinearity is non-perturbative
on large time scales. This was already new for the classical, integrable cubic defocusing 
NLS.

In the present paper we extend these results to initial data which is small in negative Sobolev spaces
$H^s$ with $-\tfrac12 < s < 0$, still for bounded nonlinearities. Under a mild additional restriction on the class of allowed symbols for the cubic nonlinearity, we prove that $H^s$ solutions obey
global, uniform bounds and scatter at infinity in the same quantitative sense as above, again through $L^6_{t,x}$ Strichartz and bilinear $L^2_{t,x}$ estimates.
It is important to note here that bounded cubic nonlinearities correspond to an $H^{-\frac12}$ critical Sobolev space, so the threshold $s = -\frac12$ is 
the sharp threshold for our result.

These bounds apply in particular to the cubic NLS at negative regularity, where by scaling one can also consider large data. For
that completely integrable model, global well-posedness in $H^s$ for
$-\tfrac12 < s < 0$ --- existence, uniqueness and continuous dependence --- was
established by Harrop-Griffiths, Killip and Vi\c{s}an~\cite{HGKV} through the
method of commuting flows and a family of microscopic conservation laws tied to
the integrable structure. Their method does not, however, yield dispersive information, so our bounds are new even in this case.
By comparison, the model we treat here is in general not integrable and admits no
exact conservation law.

\subsection{Cubic NLS problems in one space dimension}
One of the most fundamental one-dimensional dispersive
flows is the cubic NLS equation
\begin{equation}\label{nls3}
i u_t + u_{xx} = \pm u |u|^2,  \qquad u(0) = \du_0,
\end{equation}
which comes, according to the choice of sign, in a defocusing $(+)$
and a focusing $(-)$ flavor. Both are of interest in their own right, and as
model problems for more complex one-dimensional dispersive flows, semilinear and
quasilinear alike.

These are integrable problems, so for each 
nonnegative integer $k$ they admit a conserved energy $E^k$ at the $H^k$ level. It is far more difficult to show that conservation laws can be obtained at any $H^s$ level above scaling $s > -\frac12$, see \cite{KT-full} and \cite{KVZ}, as well as preceding partial results in \cite{KT,CCT,KT0}.

Local and then global well-posedness in $L^2$ and then $H^s$ for all $s \geq 0$ for both of these problems has been known for a long time, and can be proved using standard dispersive tools. The much more recent work in \cite{HGKV} uses integrable tools to expand this well-posedness result to its natural scaling limit, i.e. to all $H^s$ spaces with $s > -\frac12$.

However, the above global well-posedness results say little about the long time behavior of solutions, which also  differs sharply between the focusing and the defocusing models. In the focusing case the
equation admits small solitons, so its solutions cannot in general scatter. If
the initial data is in addition localized, one expects the solution to resolve
into a superposition of finitely many solitons together with a dispersive part.
This is the \emph{soliton resolution conjecture}, known in restricted settings
through the method of inverse scattering, see e.g.~\cite{IST-focusing}.

In the defocusing case, inverse scattering again handles localized data and
yields scattering of the global solution, see for instance~\cite{IST}. The same
conclusion can be reached more robustly, without inverse scattering, for small
localized data, see~\cite{IT-NLS} and the references therein. Much less is known
about scattering for nonlocalized $L^2$ data. If additional regularity is
assumed, however, one has the Planchon--Vega estimate~\cite{PV} (see also
Colliander--Grillakis--Tzirakis~\cite{MR2527809})
\begin{equation}\label{PV-est}
\|u\|_{L^6_{t,x}}^6 + \| \partial_x |u|^2\|_{L^2_{t,x}}^2
\lesssim \| \du_0\|_{L^2}^3 \|\du_0\|_{H^1},
\end{equation}
which in particular controls the $L^6_{t,x}$ Strichartz norm of the solution, and so yields a form of dispersive decay.

For problems with just $L^2$ data, the result in \cite{IT-global} shows that the solutions are still dispersive, and in effect satisfy a global $L^6$ bound,
\begin{equation}\label{PV-est-re}
\|u\|_{L^6_{t,x}} 
\lesssim \| \du_0\|_{L^2}.
\end{equation}
But until now, no such bounds are known below the $L^2$ regularity, i.e.  in $H^s$ with $-\frac12 < s < 0$.

\bigskip

Motivated by the above considerations, in this article we focus on defocusing cubic problems at low regularity. Precisely, we consider a cubic nonlinear Schr\"odinger (NLS) type model in one space dimension,
\begin{equation}\label{nls}
i u_t + u_{xx} = C(u,\bar u, u),  \qquad u(0) = \du_0,
\end{equation}
where $u : \mathbb{R}\times\mathbb{R}\rightarrow \mathbb{C}$ and $C$
is a translation invariant trilinear form whose symbol $c(\xi_1,\xi_2,\xi_3)$
may always be taken symmetric in $\xi_1,\xi_3$; see Section~\ref{s:multi} for an
expanded discussion of multilinear forms. The arrangement of the arguments
$u,\bar u, u$ of $C$ ensures that \eqref{nls} has the phase rotation symmetry
$u \mapsto u e^{i\theta}$, as is the case in most examples of interest. 

For the above problem, in the spirit of the defocusing global well-posedness conjecture, we ask whether it is globally well-posed, with dispersive solutions, for initial data which is small in well chosen negative Sobolev spaces $H^s$. 
We begin by reviewing the earlier results in \cite{IT-global} and \cite{IT-general}, which frame the question in the present article.

\subsection{$L^2$ data: the results of~\cite{IT-global}}
This article considers symbols $c(\xi_1,\xi_2,\xi_3)$ satisfying a set of assumptions as follows:

\begin{enumerate}[label=(H\arabic*)]

\item \label{h:c_smooth} Bounded and regular:
\begin{equation*}
|\partial_\xi^\alpha c(\xi_1,\xi_2,\xi_3)| \leq c_\alpha,
\qquad \xi_1,\xi_2,\xi_3 \in \R,\,   \mbox{  for every  multi-index $\alpha$}.
\end{equation*}

\item\label{h:c_conserv} Conservative:
\begin{equation*}
\Im c(\xi,\xi,\xi) = 0, \qquad \xi\in \R,
\quad \Im \nabla c(\xi,\xi,\xi) = 0 \qquad \xi\in \R.
\end{equation*}

\item\label{h:c_defocus} Defocusing:
\begin{equation*}
c(\xi,\xi,\xi) \geq c_0 > 0, \qquad \xi \in \R,  \mbox{ for some constant } c_0 \in \mathbb{R^+}.
\end{equation*}

\end{enumerate}

The simplest such trilinear form is $C = 1$, corresponding to the classical
cubic NLS. This problem is completely integrable, and so possesses infinitely
many conservation laws. By
contrast, the assumptions above guarantee no exact conservation law, at the $L^2$ level or at any other level.

The main $L^2$ result of~\cite{IT-global} asserts global well-posedness for
\eqref{nls} with small $L^2$ data, together with global $L^6_{t,x}$ and bilinear
$L^2_{t,x}$ control, as follows.

\begin{theorem}[\cite{IT-global}]\label{t:main-L2}
Under the above assumptions (H1), (H2) and (H3) on the symbol of the cubic form
$C$, any small initial data $\du_0$ with
\[
\|\du_0\|_{L^2} \leq \epsilon \ll 1,
\]
yields a unique global solution $u$ of \eqref{nls}, which satisfies the
following bounds:
\begin{enumerate}[label=(\roman*)]
\item Uniform $L^2$ bound:
\begin{equation}\label{main-L2}
\| u \|_{L^\infty_t L^2_x} \lesssim \epsilon.
\end{equation}

\item Strichartz bound:
\begin{equation}\label{main-Str}
\| u \|_{L^6_{t,x}} \lesssim \epsilon^\frac23.
\end{equation}

\item Bilinear Strichartz bound:
\begin{equation}\label{main-bi}
\| \partial_x (u \bar u(\cdot+x_0))\|_{L^2_t H_x^{-\frac12}} \lesssim \epsilon^2,
\qquad x_0 \in \R.
\end{equation}
\end{enumerate}
\end{theorem}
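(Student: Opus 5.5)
The plan is a bootstrap (continuity) argument on a time interval $[0,T]$, with all constants independent of $T$. Local well-posedness in $L^2$ follows from standard Strichartz theory, since $C$ is bounded and translation invariant. The global result then reduces to a priori bounds. We assume, on $[0,T]$, the bootstrap hypotheses
\[
\|u\|_{L^\infty_t L^2_x}\le C\epsilon,\qquad \|u\|_{L^6_{t,x}}\le C\epsilon^{\frac23},\qquad \|\partial_x(u\bar u(\cdot+x_0))\|_{L^2_tH^{-\frac12}_x}\le C\epsilon^2 .
\]
These should be strengthened to frequency-envelope versions, so that each dyadic piece $u_\lambda$ obeys analogous bounds with $\epsilon$ replaced by $\epsilon c_\lambda$, for a slowly varying envelope $c_\lambda$ with $\|c\|_{\ell^2}\lesssim 1$. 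The goal is to recover each bound with $C$ replaced by $C/2$.

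The first step handles the mass. There is no exact conservation law, so for each dyadic frequency $\lambda$ we build a quasi-conserved mass density $M_\lambda=|u_\lambda|^2+B^4_\lambda(u,\bar u,u,\bar u)$. Here $B^4_\lambda$ is a quartic normal-form correction chosen so that the density-flux relation $\partial_t M_\lambda=\partial_x P_\lambda+R_\lambda$ has a remainder $R_\lambda$ that is sextic, or else quartic with a favorable structure. The conservative assumption (H2) is exactly what is needed here. The quartic resonant terms have phase $\Delta\xi^4$ vanishing on the diagonal $\{\xi_1,\xi_3\}=\{\xi_2,\xi_4\}$. Near that diagonal the symbol $\Im c$ vanishes to second order, so it can be divided by the phase, giving a smooth correction plus a flux term. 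Integrating in $x$ and $t$, and bounding $R_\lambda$ through the $L^6$ and bilinear bootstrap bounds, gives $\|u_\lambda\|_{L^\infty L^2}^2\lesssim \epsilon^2c_\lambda^2+C^{6}\epsilon^4c_\lambda^2$. This closes the first bound for small $\epsilon$.

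The second step is the core: interaction Morawetz estimates for the frequency-localized densities. Consider
\[
I(t)=\iint a(x-y)\,M_\lambda(x)\,P_\mu(y)\,dx\,dy ,
\]
with $a(x)=\operatorname{sgn}(x)$ or a smoothed version, and also the diagonal case $\lambda=\mu$. Differentiating in time produces positive terms. The defocusing condition (H3) yields $c(\xi,\xi,\xi)\int |u_\lambda|^4|u_\mu|^2$-type quantities. The momentum-flux pairing yields $\|\partial_x|u_\lambda|^2\|_{\dot H^{-1/2}}^2$, or, for separated frequencies, the bilinear $L^2$ bound $\|u_\lambda\bar u_\mu(\cdot+x_0)\|_{L^2}^2$ weighted by $|\lambda-\mu|$. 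The boundary terms are bounded by $\sup_t\|u_\lambda\|_{L^2}^2\|u_\mu\|_{L^2}^2$, and the remaining error terms again use the bootstrap. Summing over frequencies with the envelope gives the bilinear bound, which in turn yields $L^6$ through a standard bilinear-to-$L^6$ argument. In that argument one writes $\|u\|_{L^6}^6\le \| |u|^2\|_{L^3}^3$ and controls $|u|^2$ in $\dot H^{1/2}$-type norms via the bilinear estimates, with a Littlewood–Paley decomposition.

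The main obstacle will be the error analysis in the Morawetz identity. The cubic nonlinearity contributes quartic and sextic terms that are not perturbative on long time scales. Their quartic parts must be shown either to be nonresonant, and so removable by the normal-form correction $B^4$, or to carry a null structure coming from (H2) that converts them into terms bounded by $\epsilon^2$ times the bilinear norm. The transversal, high–low interactions require the most care, because the gain $|\lambda-\mu|^{-1/2}$ must offset the lack of summability. I would first try to bound each error term by multilinear estimates in $L^2$, using the bilinear bootstrap on the pair of factors with largest frequency separation. Uniqueness and continuous dependence then follow from the same estimates applied to differences.
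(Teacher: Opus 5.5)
There is a genuine gap, and it is the scale of your frequency localization. You work with dyadic blocks $u_\lambda$ and bound the boundary terms of the interaction functional by $\sup_t\|u_\lambda\|_{L^2}^2\|u_\mu\|_{L^2}^2$. But the functional pairs a mass density with a momentum density. On a dyadic block the momentum density $\bbP_\lambda$ has symbol $-(\xi+\eta)$ of size $\lambda$, and a Galilean boost can reduce it only to the block width, which is still $\approx\lambda$.

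\begin{itemize}
\item The correct boundary bound is therefore $\lambda\|u_\lambda\|_{L^2}^4$. The positive sextic term then gives only $\|u_\lambda\|_{L^6_{t,x}}^6\lesssim\lambda\,\epsilon^4c_\lambda^4$, i.e. $\|\la D\ra^{-1/6}u\|_{L^6_{t,x}}\lesssim\epsilon^{2/3}$. This is the $s=0$ endpoint of \eqref{main-Str-Hs}, a $1/6$ derivative short of \eqref{main-Str}.
\item Likewise, the diagonal dyadic bound $\|\partial_x|u_\lambda|^2\|_{L^2_{t,x}}\lesssim\lambda^{1/2}\epsilon^2c_\lambda^2$ gives the $H^{-1/2}$ weight of \eqref{main-bi} only at output frequencies $|\eta|\approx\lambda$. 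For the output frequencies $|\eta|\ll\lambda$ generated inside a block it is off by a factor $(\lambda/|\eta|)^{1/2}$.
\item Your bilinear-to-$L^6$ step cannot repair this, because (i) and (iii) do not imply (ii). Take $u=\epsilon R^{-1/2}\phi(x/R)$ with $R=(1+t)^{2/5}$. It has constant mass, and the square of the left side of \eqref{main-bi} is $\lesssim\epsilon^4\int R^{-3}\,dt<\infty$ uniformly in $x_0$. Yet $\|u\|_{L^6_{t,x}}^6\approx\epsilon^6\int R^{-2}\,dt=\infty$. So the $L^6$ bound must come from the sextic defocusing term itself.
\item (H1) gives symbol regularity only on the unit scale. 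Dyadically localized symbols, such as the corrections $B^4_\lambda$ obtained by division or multipliers like $(p_\lambda^4c(\xi,\xi,\xi))^{1/6}$, therefore need not have uniformly integrable kernels. This is why the present paper strengthens (H1) to the dyadic condition (H1s) and accepts the weaker Strichartz exponent.
\end{itemize}

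What is missing is the unit-scale (lattice) decomposition that the paper attributes to \cite{IT-global}. One localizes to frequency intervals of unit length, and for the transversal bounds to intervals of arbitrary length, with frequency envelopes on the unit lattice. Each interaction functional uses the momentum measured relative to the center of the interval (equivalently, after a Galilean boost), so it is bounded by the interval length times the mass.

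\begin{itemize}
\item The diagonal functional on a unit interval $I$ then gives $\|u_I\|_{L^6_{t,x}}^6\lesssim\epsilon^4c_I^4$ with no loss.
\item For intervals $I,J$ at distance $d$, the transversal functional gives $\|\partial_x(u_I\bar u_J)\|_{L^2_{t,x}}\lesssim d^{1/2}\|u_I\|_{L^2}\|u_J\|_{L^2}$ at output frequency $\approx d$. This is exactly the $H^{-1/2}$ weight in \eqref{main-bi}.
\item The global $L^6$ bound then follows by summing the unit-scale pieces using these bilinear bounds.
\end{itemize}

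The rest of your plan is consistent with that proof: the bootstrap, the quartic corrections from the split into nonresonant, flux and null-form resonant parts, and the use of the defocusing sign.
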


Taking $x_0 = 0$ recovers the more classical formulation of the
bilinear bound,
\begin{equation}\label{main-bi-diag}
\| \partial_x |u|^2\|_{L^2_tH^{-\frac12}_x} \lesssim \epsilon^2;
\end{equation}
keeping \eqref{main-bi} uniform in the translation $x_0$ records its
separate translation invariance, and is convenient in the proofs.

All of these bounds are Galilean invariant: while \eqref{nls} itself is not, the
class to which it belongs is. The estimates of Theorem~\ref{t:main-L2} are only
a representative sample of what is actually proved; in full strength this is a
frequency-envelope bound attached to a decomposition of $u$ on the unit
frequency scale, rather than the more traditional dyadic one. 

Specializing to the cubic NLS \eqref{nls3} and using scaling yields the
following large-data counterpart.

\begin{theorem}[\cite{IT-global}]\label{t:NLS-L2}
Consider the defocusing 1-d cubic NLS problem \eqref{nls3}$(+)$ with $L^2$
initial data $\du_0$. Then the global solution $u$ satisfies the following
bounds:
\begin{enumerate}[label=(\roman*)]
\item Uniform $L^2$ bound:
\begin{equation}\label{main-L2-model}
\| u \|_{L^\infty_t L^2_x} \lesssim \|\du_0\|_{L^2_x}.
\end{equation}

\item Strichartz bound:
\begin{equation}\label{main-Str-model}
\| u \|_{L^6_{t,x}} \lesssim  \|\du_0\|_{L^2_x}. 
\end{equation}

\item Bilinear Strichartz bound:
\begin{equation}\label{main-bi-model}
\| \partial_x |u|^2\|_{L^2_t (\dot H_x ^{-\frac12} + c L^2_x) } \lesssim
\|\du_0\|_{L^2}^2, \qquad c = \| \du_0\|_{L^2}.
\end{equation}
\end{enumerate}
\end{theorem}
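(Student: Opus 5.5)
Theorem~\ref{t:NLS-L2} follows from Theorem~\ref{t:main-L2} specialized to $C=1$, using the scaling symmetry of \eqref{nls3}. The symbol $c\equiv 1$ trivially satisfies (H1)--(H3), with $c_0=1$. If $u$ solves \eqref{nls3}$(+)$, then so does
\[
u_\lambda(t,x) = \lambda u(\lambda^2 t, \lambda x), \qquad \lambda>0,
\]
and $\|u_\lambda(0)\|_{L^2} = \lambda^{\frac12}\|\du_0\|_{L^2}$. Given arbitrary $\du_0\in L^2$, we choose $\lambda$ so that $\lambda^{\frac12}\|\du_0\|_{L^2} = \epsilon$, with $\epsilon$ the fixed small threshold of Theorem~\ref{t:main-L2}. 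Then Theorem~\ref{t:main-L2} gives a global solution $u_\lambda$ obeying \eqref{main-L2}, \eqref{main-Str} and \eqref{main-bi} with this $\epsilon$. Global existence for the original $u$ follows by undoing the rescaling, and by uniqueness this $u$ coincides with the standard global $L^2$ solution.

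Next we transfer the bounds back by checking how each norm scales. The $L^\infty_tL^2_x$ norm scales like $\lambda^{\frac12}$, so \eqref{main-L2-model} is immediate. For the Strichartz norm,
\[
\|u_\lambda\|_{L^6_{t,x}} = \lambda\cdot\lambda^{-\frac12}\|u\|_{L^6_{t,x}} = \lambda^{\frac12}\|u\|_{L^6_{t,x}}.
\]
Hence $\|u\|_{L^6}\lesssim \lambda^{-\frac12}\epsilon^{\frac23}$. Since $\epsilon$ is a fixed constant, this reads $\lesssim \epsilon^{-\frac13}\|\du_0\|_{L^2}$, which is \eqref{main-Str-model} with an absolute implicit constant.

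The only point requiring care is the bilinear bound, because \eqref{main-bi} uses the inhomogeneous space $H^{-\frac12}$, which is not scale invariant. This is the source of the mixed norm $\dot H^{-\frac12}+cL^2$ in \eqref{main-bi-model}. We use the elementary equivalence
\[
\|f\|_{H^{-\frac12}} \approx \|f\|_{\dot H^{-\frac12}+L^2},
\]
obtained by splitting $f$ into frequencies $\lesssim 1$ and $\gtrsim 1$. Apply \eqref{main-bi} with $x_0=0$ to $w=\partial_x|u_\lambda|^2(t,x) = \lambda^3 g(\lambda^2t,\lambda x)$, where $g=\partial_x|u|^2$. Computing the scaling of each piece gives
\[
\|w\|_{L^2_t\dot H^{-\frac12}_x} = \lambda\,\|g\|_{L^2_t\dot H^{-\frac12}_x},
\qquad
\|w\|_{L^2_tL^2_x} = \lambda^{\frac32}\|g\|_{L^2_tL^2_x},
\]
and the same factors apply to each summand in any decomposition. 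Hence
\[
\|g\|_{L^2_t(\dot H^{-\frac12}+\lambda^{\frac12}L^2)}\lesssim \lambda^{-1}\epsilon^2 = \|\du_0\|_{L^2}^2.
\]
Here the sum norm is taken with weight $\lambda^{\frac12}$ on the $L^2$ part, and $\lambda^{\frac12}=\epsilon/\|\du_0\|_{L^2}$. So the constant in front of $L^2$ is $c'\approx \|\du_0\|_{L^2}^{-1}$ up to the fixed factor $\epsilon$.

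I expect this bookkeeping of the $L^2$ weight to be the main obstacle, since it must be reconciled with the stated $c=\|\du_0\|_{L^2}$. The convention has to be checked, namely whether the norm on $X+cL^2$ means $\inf(\|f_1\|_X + c^{-1}\|f_2\|_{L^2})$ or uses the multiplicative weight $c$. With the convention $\|f\|_{\dot H^{-\frac12}+cL^2}=\inf\{\|f_1\|_{\dot H^{-\frac12}}+c^{-1}\|f_2\|_{L^2}\}$, the computation above matches \eqref{main-bi-model} exactly, up to absolute constants depending on $\epsilon$. The computation itself is routine; the only content is the inhomogeneous-to-homogeneous splitting.
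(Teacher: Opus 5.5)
Your proposal is correct and follows the route the paper indicates. The paper gives no separate proof; it derives the result by specializing Theorem~\ref{t:main-L2} to $C=1$ and using scaling. Your scaling exponents check out, and the splitting $H^{-\frac12}\approx \dot H^{-\frac12}+L^2$ with the convention $\|f\|_{X+cY}=\inf(\|f_1\|_X+c^{-1}\|f_2\|_Y)$ correctly produces $c\approx\|\du_0\|_{L^2}$. Rescaling every datum, including small ones, to $L^2$ size exactly $\epsilon$ is what yields the linear bound $\|u\|_{L^6_{t,x}}\lesssim\|\du_0\|_{L^2}$ instead of the weaker $\|\du_0\|_{L^2}^{2/3}$.
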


The $L^6_{t,x}$ bound here may be compared with the Planchon--Vega
estimate~\eqref{PV-est}, which by contrast requires $H^1$ data\footnote{Or a uniform $H^{\frac12}$ bound for $u$, which can in turn be replaced by 
an $H^{\frac12}$
data bound using the subsequent $H^\frac12$ level conservation law of \cite{KT-full}.}.

We conclude our discussion of this result with two remarks
that will play a role in framing the question in the present work.

\begin{remark}\label{r:L2}
 Examining the scaling properties of the 
simplest model in the above results, i.e. the cubic NLS, one easily sees that the scale invariant Sobolev space corresponding to this set-up is $H^{-\frac12}$. This leads to the interesting question of whether the above result extends to all $H^s$ spaces with $-\frac12 < s < 0$; in that case, a simple scaling argument shows that the natural substitute for the condition (H1) should be
\begin{equation}\label{H1s}
|\partial_\xi^\alpha c(\xi_1,\xi_2,\xi_3)| \leq c_\alpha
\la\xi_1\ra^{2s\alpha_1}\la\xi_2\ra^{2s\alpha_2}\la\xi_3\ra^{2s\alpha_3},
\qquad \xi_1,\xi_2,\xi_3 \in \R,\,   \mbox{  for every  multi-index $\alpha$}.
\end{equation}
\end{remark}

\begin{remark}\label{r:Hs}
    The symbol smoothness on the unit scale in Theorem~\ref{t:NLS-L2} is directly connected to the 
    unit scale frequency decomposition which is used in the proof of the result. In this fashion, the condition \eqref{H1s} corresponding to $H^s$ solutions would 
    be naturally associated to a frequency decomposition on the scale $\delta \xi \approx \la \xi \ra^{-2s}$.
        In the limit at the scaling regularity $s \to -\frac12$,
    the frequency localization becomes dyadic. This is important because a lower frequency decomposition scale leads to a better $L^6_{t,x}$ bound for the solutions.
    \end{remark}

\subsection{Scale invariant data: The results of \cite{IT-general}}

The aim of the article \cite{IT-general} 
was  to provide the first proofs of the global well-posedness conjecture 
for general dispersion relations, i.e. not necessarily of Schr\"odinger type.
However, it also applies to the Schr\"odinger dispersion relation, 
though with a different symbol class:

\begin{enumerate}[label=(H\arabic*)']

\item \label{h:c_smooth-crit} Bounded and regular:
\begin{equation*}
|\partial_\xi^\alpha c(\xi_1,\xi_2,\xi_3)| \leq c_\alpha
\la\xi_1\ra^{\delta - \alpha_1}\la \xi_2\ra^{\delta -\alpha_2}\la\xi_3\ra^{\delta-\alpha_3},
\qquad \xi_1,\xi_2,\xi_3 \in \R,\,   \mbox{  for every  multi-index $\alpha$}.
\end{equation*}
\item\label{h:c_conserv-crit} Conservative:
\begin{equation*}
\Im c(\xi,\xi,\xi) = 0, \quad \Im \nabla c(\xi,\xi,\xi) = 0 \qquad \xi\in \R.
\end{equation*}

\item\label{h:c_defocus-crit} Defocusing:
\begin{equation*}
c(\xi,\xi,\xi) \geq c_0 \la \xi\ra^{3\delta} > 0, \qquad \xi \in \R  \mbox{ for some constant } c_0 \in \mathbb{R^+}.
\end{equation*}
\end{enumerate}
The critical Sobolev exponent associated 
to this problem is 
\[
s_c = \frac{3\delta-1}2,
\]
and the result of \cite{IT-general} specialized to this case gives:

\begin{theorem}[\cite{IT-general}]\label{t:main-crit}
Let $\dfrac13 < \delta < 1$. Under the above assumptions (H1)', (H2)' and (H3)' on the symbol of the cubic form
$C$, any small initial data $\du_0$ with
\[
\|\du_0\|_{H^{s_c}} \leq \epsilon \ll 1,
\]
yields a unique global solution $u$ of \eqref{nls}, which satisfies 
(i) uniform $H^{s_c}$ bounds, (ii) 
$L^6_{t,x}$ Strichartz bounds, and 
(iii) bilinear $L^2_{t,x}$ bounds.
\end{theorem}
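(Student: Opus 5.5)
The strategy is a bootstrap argument in a scale-adapted frequency-envelope setting, modeled on the $L^2$ argument of \cite{IT-global}, with the unit-scale frequency decomposition replaced by a dyadic one. This is the limit of Remark~\ref{r:Hs}, since at scale-critical regularity the symbol bounds (H1)' are of classical (dyadic) symbol type. Concretely, decompose $u = \sum_k P_k u$ in dyadic Littlewood--Paley pieces. Fix an admissible frequency envelope $\{c_k\}$ for the data in $H^{s_c}$, slowly varying and with $\|c\|_{\ell^2}\lesssim\epsilon$. Then make the bootstrap assumptions
\[
\|P_k u\|_{L^\infty_t L^2_x} \lesssim C\epsilon\, c_k 2^{-s_c k}, \qquad \|P_k u\|_{L^6_{t,x}} \lesssim C\epsilon^{\frac23} c_k 2^{-(s_c - \frac13\cdot 0)k}\text{-type bounds},
\]
together with bilinear $L^2_{t,x}$ bounds for $\partial_x(P_j u\,\overline{P_k u}(\cdot+x_0))$, weighted by the scaling of the critical space. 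The goal is to improve these constants from $C$ to $C/2$ on a finite time interval, and then conclude by continuity and local well-posedness for regular data plus approximation. Uniqueness and stability come from the same estimates applied to differences, in a weaker norm.

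The core steps are three. First, for each dyadic frequency $k$, construct a \emph{modified mass density} $M_k = |P_k u|^2 + B_k^4(u,\bar u,u,\bar u)$. It satisfies a density-flux identity $\partial_t M_k + \partial_x P_k^{flux} = R_k^6$, where the quartic correction $B_k^4$ is a normal-form term removing the nonresonant part of the cubic source. The conservative assumption (H2)' guarantees that the resonant (diagonal) part of the source has vanishing imaginary part to first order, so it can be absorbed into the flux. The symbol bounds (H1)' make $B_k^4$ a bounded multilinear form at the right scaling. Second, the energy estimate: integrating the identity gives the $L^\infty_t L^2_x$ envelope bound, provided the sextic remainder $R_k^6$ is controlled in $L^1_{t,x}$ by the bootstrap $L^6$ and bilinear bounds. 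Third, the interaction Morawetz estimate: pair $M_j(x)$ and $M_k(y)$ with the weight $\operatorname{sgn}(x-y)$ and differentiate in time. This yields a positive-definite bilinear term, giving the bilinear $L^2$ bound for $\partial_x(|P_j u|^2\,\ldots)$. The $j=k$ diagonal uses the defocusing sign (H3)', with $c(\xi,\xi,\xi)\gtrsim\langle\xi\rangle^{3\delta}$ giving exactly the weight matching the critical scaling. That diagonal estimate yields control of $\|P_k u\|_{L^6}^6$ up to error terms. Finally, sum the frequency-localized $L^6$ and bilinear bounds using the envelope structure.

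The main obstacle will be the sextic remainders and the quartic flux corrections in the interaction Morawetz identity, in the \emph{high-high-to-low} and unbalanced frequency interactions. There the dyadic decomposition gives less room than the unit-scale one in \cite{IT-global}. For these I will first try to estimate each remainder through one bilinear $L^2$ factor at transversal frequencies plus $L^6$ or $L^\infty L^2$ factors. The needed off-diagonal decay comes from the derivative gain $\langle\xi\rangle^{\delta-\alpha}$ in (H1)' combined with the condition $\delta>\frac13$, which ensures $s_c>0$ and hence summability. If near-resonant interactions resist this, the fallback is the implicit normal form in \cite{IT-global}: the correction is chosen to solve a transport-type equation rather than by explicit division by the phase. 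The resonant remainder is then estimated only along the diagonal, where the conservative condition gives an extra vanishing factor.
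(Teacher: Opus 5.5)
The paper does not reprove this theorem; it is quoted from \cite{IT-general}. The natural comparison is therefore with the dyadic-scale density--flux and interaction Morawetz machinery of Sections~\ref{s:energy}, \ref{s:Morawetz} and~\ref{s:fe-bounds}, which is the same method.

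\textbf{The Morawetz step fails as you set it up.} Your overall architecture matches the method, but the central step does not work. The functional $\iint\operatorname{sgn}(x-y)\,M_j(x)M_k(y)\,dx\,dy$ is antisymmetric under $x\leftrightarrow y$. So in the diagonal case $j=k$, $w=u$ it vanishes identically and cannot yield any $L^6_{t,x}$ bound. For $j\neq k$ its time derivative is $2\int(M_jF_k-F_jM_k)\,dx$, with $F$ the mass flux. This is a first-order, momentum-type expression with no sign.

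Coercivity comes only from pairing mass against momentum:
$\bI_{jk}=\iint_{x>y}\ms_j(v)(x)\ps_k(w)(y)-\ps_j(v)(x)\ms_k(w)(y)\,dx\,dy$.
Its quartic part is $\bJ^4_{jk}=4\int|\partial_x(P_jv\,P_k\bar w)|^2\,dx$. This requires a second conservation law that your proposal never constructs: a corrected momentum density $\ps_k=\bbP_k+B^4_{p,k}$ with $\partial_t\ps_k=\partial_x(\bbE_k+R^4_{p,k})+\dots$, as in \eqref{eq:dens_flux_pj}.

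This is also where the defocusing hypothesis enters. The sign sits in the sextic term $\bJ^6_k$, built from the density and flux corrections of both identities, whose diagonal trace is $p_k^4(\xi)\,c(\xi,\xi,\xi)$, cf.~\eqref{good-J6}. Under (H3)' this is $\gtrsim 2^{3\delta k}$. Balancing it against $|\bI_k|\lesssim 2^k\|P_ku\|_{L^\infty_tL^2_x}^4$ gives $\|P_ku\|_{L^6_{t,x}}\lesssim(\epsilon c_k)^{2/3}2^{-s_ck}$. That is the $L^6$ bootstrap norm your proposal leaves unspecified.

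\textbf{The mass identity is misstated.} The identity $\partial_tM_k+\partial_x(\cdot)=R^6_k$ is not attainable, and (H2)' does not let you absorb the resonant part into the flux. A flux term removes only quartic symbols divisible by $\Delta^4\xi$, and $B^4_k$ only those divisible by $\Delta^4\xi^2$. The restriction of the source to the resonant set $\calR$ is, for the full mass, $\Im\big(c(\xi_1,\xi_1,\xi_3)+c(\xi_1,\xi_3,\xi_3)\big)$, which is generally nonzero. What (H2)' gives, together with the $\xi_1\leftrightarrow\xi_3$ symmetry, is vanishing at $\xi_1=\xi_3$, hence the null factor $(\xi_o-\xi_e)^2$ in \eqref{choose-R4m}. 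So a quartic source $Q^4_k((\partial|u|^2)^2)$ survives, and it must be bounded in $L^1_{t,x}$ by two bilinear $L^2_{t,x}$ bounds, as in Lemma~\ref{l:Q4_space_time}.

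The same holds for the unbalanced cubic interactions. A quartic $L^1_{t,x}$ bound cannot be closed with one bilinear factor plus $L^6_{t,x}$ or $L^\infty_tL^2_x$ factors. With two bilinear bounds, the worst case $hhm\to l$ has size $\epsilon^4\,2^{-\delta h}\,2^{\frac{1-\delta}{2}m}\,2^{-s_cl}$ (envelopes omitted). Summing over $m\ll h$ leaves $\sum_h 2^{-s_ch}$, which converges precisely because $s_c>0$, i.e.\ $\delta>\frac13$. This is the analogue of the $s>-\frac13$ threshold in Section~\ref{ss:cubic_corrections}.

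\textbf{The fallback is unnecessary and misdescribed.} In this range no fallback is needed. As stated it is also misattributed: \cite{IT-global} uses no implicit normal form; that is the new device of Section~\ref{sec:unbalanced_corrections}. Moreover, that device is built by explicit division by the phase, not from a transport equation. Finally, the derivative bounds in (H1)' serve to give the dyadic pieces of the symbols integrable kernels (Lemma~\ref{lem:sep}) and to run the division lemma. The off-diagonal decay itself comes from the bilinear gain and from $s_c>0$.
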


We conclude our discussion of this result with two remarks
which broadly parallel Remarks~\ref{r:L2}, \ref{r:Hs} associated 
to Theorem~\ref{t:NLS-L2}.

\begin{remark} \label{r:crit}
Compared to \cite{IT-global}, here we are able to work in the critical Sobolev spaces, though for a different class of symbols, which are now assumed to be smooth on the dyadic scale. The upper constraint on $\delta$ is simply in order to ensure that the problem is semilinear. The lower constraint $\dfrac13$, on the other hand, is more of a technical nature. The interesting open question, in this case, would be to extend the range of $\delta$ to $(0,1)$.
This is not the same as the open question in Remark~\ref{r:L2}, though we note that the (possibly forbidden) endpoints of the two open problems are identical, corresponding to $\delta = 0$
and $s = s_c = -\frac12$.
\end{remark}

\begin{remark}
 The symbol $c$ in the above theorem is assumed to be smooth on the dyadic scale. This is natural since the 
 result applies to initial data in critical Sobolev spaces,
 see also Remark~\ref{r:crit}. If instead one were to consider solutions in higher regularity spaces, then it 
 should be possible to work with narrower frequency localizations.
\end{remark}

\subsection{$H^s$ data, $-\tfrac12 < s < 0$: the new results}

We come now to the main contribution of the paper, the counterpart of
Theorem~\ref{t:main-L2} in negative Sobolev spaces. Throughout we fix
$-\tfrac12 < s < 0$; the endpoint $s = -\tfrac12$ is the scaling-critical
exponent for the cubic NLS, and marks the natural limit of our method: it is
approached but never attained.

In terms of the assumptions on the symbol, we retain the condition (H3), but we strengthen (H1) in two ways, (i) by requiring symbol regularity on the dyadic scale, and (ii) by asking 
for some\footnote{Examining the proofs, one 
can see that the extra regularity is only needed in the most interesting range
$-\frac12 < s < -\frac13$.} additional regularity in the case of low frequency outputs. By contrast, we are able to weaken the condition (H2), by eliminating any requirement on $\nabla_\xi c$.
Precisely, we will work with 

\begin{enumerate}[label=(H\arabic*s)]

\item \label{h:c_smooth-s} Enhanced symbol regularity: 
for all frequencies $\xi_1,\xi_2,\xi_3$ and all multiindices $\alpha$,
\begin{equation}\label{normal-reg}
|\partial_\xi^\alpha c(\xi_1,\xi_2,\xi_3)| \leq c_\alpha
\la\xi_1\ra^{-\alpha_1}\la\xi_2\ra^{-\alpha_2}\la\xi_3\ra^{-\alpha_3}
\end{equation}
and
\begin{equation}\label{extra-reg}
|\partial_\xi^\alpha c(\xi_1,\xi_2,\xi_3)| \leq c_\alpha
\la \xi_1 \ra^{-\alpha_1-\frac{\alpha_3}2}\la\xi_2\ra^{-\alpha_2}
 \la\xi_3\ra^{-\frac{\alpha_3}2} \mbox{  when }
 |\xi_1 - \xi_2 + \xi_3| \ll |\xi_3| \ll |\xi_1| \sim |\xi_2| 
\end{equation}
\item\label{h:c_conserv+} Conservative:
\begin{equation*}
\Im c(\xi,\xi,\xi) = 0, \qquad \xi\in \R.
\end{equation*}
\end{enumerate}
By symmetry the bound \eqref{extra-reg} holds with $\xi_1$ and $\xi_3$ interchanged.
We remark that the bound \eqref{extra-reg} strengthens
the symbol regularity (but not size) in \eqref{normal-reg} exactly in the nonperturbative regime $high \times high \times medium \to low$.  

Concerning \ref{h:c_conserv+} we note that it 
relaxes and simplifies the original condition \eqref{h:c_conserv}.
\medskip

To state our main result we need to introduce some notations. At $L^2$ regularity the bilinear content was recorded in a single estimate on
the product itself: by~\eqref{main-bi-diag}, $\partial_x|u|^2$ lies in
$L^2_t H^{-1/2}_x$, uniformly across frequencies. This is no longer possible once
$s<0$. The scale at which the bilinear $L^2_{t,x}$ bound holds then depends on the
frequencies involved 
(see Section~\ref{s:global}), and the two basic
interactions are governed by different weights: the diagonal high--high
interactions and the off-diagonal low--high interactions can no longer be
measured in a common norm of $u\bar u$, and the low--high contribution in fact
diverges logarithmically as the low frequency ranges below the high one. One
must therefore separate these interactions before measuring them, through the
paraproduct decomposition
\[
u\,\bar u \;=\; T_{\bar u} u + T_u \bar u + \Pi(u,\bar u),
\qquad
T_f g = \sum_{j \ll k} P_j f\, P_k g,
\qquad
\Pi(f, g) = \sum_{j \sim k} P_j f\, P_k g,
\]
where $P_j$ denotes the Littlewood--Paley projections of
Section~\ref{ss:lattice_decomp}. The high--high part $\Pi(u,\bar u)$ is then
controlled symmetrically, with a common weight $\la D\ra^{s-\frac14}$ on each
factor; the low--high part $T_{\bar u}u$ instead calls for unequal weights on its
two factors, together with an arbitrarily small loss $\delta_0>0$ on the low one
to absorb the logarithm. The bilinear bounds of Theorem~\ref{t:main} are stated
in exactly these terms.

We are now ready to state the main result of the paper, the analogue of
Theorem~\ref{t:main-L2} at negative regularity.
\begin{theorem}\label{t:main}
Under the assumptions \ref{h:c_smooth-s}, \ref{h:c_conserv+} and \ref{h:c_defocus}
on the symbol of the cubic form $C$, and for $-\tfrac12 < s < 0$, any $L^2$ initial
data $\du_0$ satisfying the smallness condition
\[
\|\du_0\|_{H^s} \leq \epsilon \ll 1,
\]
yields a unique global solution $u$ of \eqref{nls}, which satisfies the
following bounds:
\begin{enumerate}[label=(\roman*)]
\item Uniform $H^s$ bound:
\begin{equation}\label{main-Hs}
\| u \|_{L^\infty_t H^s_x} \lesssim \epsilon.
\end{equation}
\item Strichartz bound:
\begin{equation}\label{main-Str-Hs}
\| \la D\ra^{-\frac{1-4s}{6}} u \|_{L^6_{t,x}} \lesssim \epsilon^{\frac23}.
\end{equation}
\item Bilinear $L^2_{t,x}$ bounds: for every $\delta_0 > 0$,
\begin{equation}\label{main-bi-Hs}
\begin{aligned}
\big\| \partial_x\, T_{\la D\ra^{s-\delta_0}\bar u}\, 
    \la D\ra^{s-\frac12} u
\big\|_{L^2_{t,x}} &\lesssim \epsilon^2,\\[2pt]
\big\| \partial_x\, \Pi\big(\la D\ra^{s-\frac14} u,\, 
    \la D\ra^{s-\frac14}\bar u\big)
\big\|_{L^2_{t,x}} &\lesssim \epsilon^2,
\end{aligned}
\end{equation}
with implicit constants depending on $\delta_0$.
\end{enumerate}
\end{theorem}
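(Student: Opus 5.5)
We follow the bootstrap strategy of \cite{IT-global}, adapted to the variable frequency scale $\delta\xi \approx \la\xi\ra^{-2s}$ suggested in Remark~\ref{r:Hs}. Since $\du_0 \in L^2$, Theorem~\ref{t:main-L2}-type local theory gives a local $L^2$ solution, so the issue is to propagate a priori bounds in $H^s$ uniformly in time; uniqueness and continuation then follow in the $L^2$ class, and density gives the $H^s$ statement.

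\emph{Frequency envelopes and the bootstrap.} We decompose $u$ in frequency using a partition on intervals of length $\la\xi\ra^{-2s}$ around frequency $\xi$. On each such interval the symbol $c$ is essentially constant, by \eqref{normal-reg} and since $-2s<1$. We attach an $H^s$ frequency envelope $c_\lambda$ to $\du_0$ at the dyadic scale $\lambda$, with $\|c\|_{\ell^2}\lesssim\epsilon$. Our bootstrap hypotheses, on $[0,T]$ and with constant $C\epsilon$, are:
\begin{enumerate}[label=(\alph*)]
\item $\|P_\lambda u\|_{L^\infty_t H^s}\lesssim c_\lambda$;
\item the $L^6$ bound for the $\la\xi\ra^{-2s}$-localized pieces;
\item the bilinear bounds \eqref{main-bi-Hs}, localized to each frequency pair.
\end{enumerate}
Summing the unit-interval $L^6$ bounds with Bernstein and square-function arguments gives \eqref{main-Str-Hs}. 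The weight $\la D\ra^{-(1-4s)/6}$ arises from counting $\lambda^{1+2s}$ intervals in a dyadic block, each of which carries a portion of $\lambda^{-s}c_\lambda$.

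\emph{Energy and Morawetz.} For each localized piece we build a mass density $M(u_\lambda)$ corrected by a quartic term, i.e.\ an implicit normal form, chosen to cancel the nonresonant cubic contributions to its flux. The resulting density--flux identity
\[
\partial_t (M + M^4) = \partial_x(P + P^4) + R^6
\]
then leaves only resonant quartic terms. By \ref{h:c_conserv+}, these vanish on the diagonal $\xi_1=\xi_2=\xi_3$. We no longer assume $\Im\nabla c=0$. Instead, the off-diagonal imaginary part is of size $|\xi_i-\xi|\la\xi\ra^{-1}$, and the localization scale $\la\xi\ra^{-2s}\ll\la\xi\ra$ makes it summable; this is exactly where dyadic symbol regularity replaces (H2). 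Integrating in time gives (a), using (c) to bound the sextic and resonant quartic errors.

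We then apply the interaction Morawetz functional $\iint a(x-y)M_\lambda(x)M_\mu(y)\,dxdy$ with $a = \mathrm{sgn}$ to pairs of localized pieces, again with quartic corrections. The positive term yields $\|\partial_x(u_\lambda\bar u_\mu(\cdot+x_0))\|^2_{L^2}$ for separated frequencies, and, by \ref{h:c_defocus}, an $L^4$-type defocusing term controlling the diagonal $L^6$ norm. Summing over pairs with the weights of \eqref{main-bi-Hs} gives (b) and (c). The $\delta_0$ loss in the low--high part absorbs the logarithmic sum over low dyadic scales.

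\emph{Main obstacle.} The hardest part is the error analysis in the regime high$\times$high$\times$medium$\to$low ($|\xi_1|\sim|\xi_2|\gg|\xi_3|$, low output) for $s<-\frac13$. There the normal form denominators $(\xi_1-\xi_2)(\xi_3-\xi_2)$ are small, the quartic corrections are not bounded by the $H^s$ norm alone, and the natural frequency scales of the inputs do not match the output scale. Our first attempt will be to use the extra regularity \eqref{extra-reg} to localize $c$ on the finer $|\xi_3|^{1/2}$-scale relative to $\xi_1$, so that only a genuinely resonant piece remains. We would handle that piece with the bilinear bound (c) applied to the two high frequency factors together with the Strichartz bound for the medium one, checking that the exponents close precisely when $s>-\frac12$.
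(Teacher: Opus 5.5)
\textbf{The gap: the high$\times$high$\times$medium$\to$low interactions.} Your treatment of this regime does not work, and it is exactly where the difficulty for $s\le-\frac13$ lies.

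Take inputs at dyadic frequencies $\dyad^h,\dyad^h,\dyad^m$ and output at $\dyad^j$ with $h\gg m\gg j$. Then $\Delta^4\xi^2=2(\xi_1-\xi_2)(\xi_2-\xi_3)\sim\dyad^{h}\dyad^{m}$ on the whole support. The interaction is uniformly nonresonant, so there is no ``genuinely resonant piece'' for a finer $|\xi_3|^{1/2}$-scale localization to isolate. The obstruction is lack of high-frequency decay, not resonance.

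A direct bound in the bootstrap norms cannot close. For a quartic $L^1_{t,x}$ bound only two transversal bilinear $L^2_{t,x}$ factors are usable; a bilinear bound on the two unseparated high factors plus an $L^6_{t,x}$ bound on the medium one does not H\"older into $L^1_{t,x}$. The best outcome is
\[
\sum_{h\gg m\gg j}\dyad^{(-1-2s)h}\dyad^{-sm}\dyad^{-sj}c_h^2c_mc_j ,
\]
which diverges for $s<-\frac13$. So a normal form is forced.

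Your quartic correction of the mass density amounts to a single cubic normal form step $v=u+B^3(u,\bar u,u)$. That only moves the divergence to quintic order:
\begin{itemize}
\item $B^3(C^{\mathrm{nres}}(u,\bar u,u),\bar u,u)$ has identical numerology, since its $\Delta^6\xi^2$ is dominated by the inner factor.
\item $B^3(C^{\mathrm{bal}},\bar u,u)-B^3(u,\overline{C^{\mathrm{bal}}},u)$ has four nearly equal high frequencies with only one separation.
\item $B^3(u,\bar u,C^{\mathrm{res}}(u,\bar u,u))$ with matched high frequencies is nonresonant as a quintic interaction, even though its inner cubic factor is near-resonant.
\end{itemize}

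\textbf{The missing idea.} The paper uses an implicit normal form
\[
v=u+\sum_{n=1}^N B^{2n+1}(v,\bar v,\dots,v),
\]
which resums the whole cascade $B^3(B^3(\cdots))$ into one change of variables. It is supplemented by:
\begin{itemize}
\item a balanced quintic correction $B^{5,\mathrm{bal}}$, which exploits the sign cancellation between the two $C^{\mathrm{bal}}$ insertions through Lemma~\ref{l:division};
\item the hierarchy $B^{2n+1}_h$, which corrects compositions with matched high frequencies;
\item a class of good remainders that is stable under these corrections.
\end{itemize}
The order $N$ is chosen so that $s>-\frac{N}{2N+1}$; the leftover bad terms of order above $2N+1$ then decay like $\dyad^{(-n-(2n+1)s)h}$. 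This is also where \eqref{extra-reg} actually enters. It justifies replacing $\xi_3$ by $\xi_2-\xi_1$ in $c^{\himed}$, so that the composed symbols symmetrize and $\alpha_1\delta_1+\dots+\alpha_n\delta_n$ can be factored out. It is not used to isolate a resonant piece.

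\textbf{Two smaller points need repair.}
\begin{itemize}
\item The interaction functional must pair mass with momentum, $\iint_{x>y}\ms_j(v)(x)\ps_k(w)(y)-\ps_j(v)(x)\ms_k(w)(y)\,dx\,dy$. A mass--mass functional $\iint\mathrm{sgn}(x-y)M_\lambda(x)M_\mu(y)$ has derivative $2\int(M_\lambda P_\mu-P_\lambda M_\mu)\,dx$, which is not coercive. Also, the defocusing term is the sextic $\bJ^6_j$ with diagonal trace $p_j^4c(\xi,\xi,\xi)$, not an $L^4$-type term.
\item First-order vanishing of the resonant quartic symbol is not enough, since neither $L^4_{t,x}$ nor an unseparated $\|u\bar u\|_{L^2_{t,x}}$ is controlled globally. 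What makes these terms harmless is that the resonant trace is symmetric in $\xi_1,\xi_3$. It therefore vanishes to second order and becomes a $(\partial|v|^2)^2$ null form, controlled by two bilinear $L^2_{t,x}$ bounds.
\end{itemize}
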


As before, the estimates above are only a representative sample. The sharper
frequency-envelope statement, including a frequency-localized,
translation-uniform transversal bilinear estimate, is Theorem~\ref{t:boot} in
Section~\ref{s:boot}, from which \eqref{main-Hs}--\eqref{main-bi-Hs} are
recovered in Section~\ref{s:global}. We continue with several remarks:

\begin{remark}
In the above theorem we require the initial data to be in $L^2$ (though possibly with a large $L^2$ norm) due to the fact that this is where we have a local well-posedness result. However, should an $H^s$ local well-posedness result become available in the future, our bounds would 
directly apply to the $H^s$ solutions.  This is the case 
for instance in the completely integrable case $c=1$, where such a result is already available in \cite{HGKV}.
\end{remark}

\begin{remark}
Our condition (H1s) requires symbol regularity on the dyadic scale, rather than on the narrower scale discussed in 
Remark~\ref{r:Hs}. The reader should regard this as a 
minor compromise for expository reasons, rather than as 
something fundamental. In any case, the scales in Remark~\ref{r:Hs} converge to the dyadic scales in the limit $s \to -\frac12$. The price we pay is a slightly weaker $L^6_{t,x}$ bound  in \eqref{main-Str-Hs}, with the exponent $\frac{1-4s}6$ rather than the expected exponent $-s$. Again, the difference between the two decays to zero as $s \to -\frac12$.
\end{remark}

Finally, we note that comparing with the two earlier results in Theorems~\ref{t:main-L2},\ref{t:main-crit}, our new result can be 
best interpreted as the natural low regularity expansion of  Theorem~\ref{t:main-L2}. However,
one may also adopt the opposite viewpoint, 
which leads us to

\begin{conjecture}
With a minor enhancement of the symbol class for $C$, the result in Theorem~\ref{t:main-crit} extends to the maximal range $0< \delta <1$.    
\end{conjecture}

Specialized to the integrable defocusing cubic NLS case,
Theorem~\ref{t:main} proves new global $L^6_{t,x}$ Strichartz and bilinear 
$L^2_{t,x}$ bounds for the negative-regularity solutions of~\cite{HGKV}. By scaling, 
these even extend to the large data case, with implicit constants depending 
polynomially on the data size $\|\du_0\|_{H^s}$:

\begin{theorem}\label{t:NLS-Hs}
Consider the defocusing 1-d cubic NLS problem \eqref{nls3}$(+)$ with 
    $H^s$ initial data $\du_0$ for $-1/2 < s < 0$. 
    Then the global solution $u$
    satisfies the following bounds:
\begin{enumerate}[label=(\roman*)]
\item Uniform $H^s$ bound:
\begin{equation}\label{main-Hs-model}
    \| u \|_{L^\infty_t H^s_x} 
    \lesssim \|\du_0\|_{H^s_x}(1 + \|\du_0\|_{H^s_x}^{\frac{-2s}{1+2s}}).
\end{equation}

\item Strichartz bound:
\begin{equation}\label{main-s-Str-model}
    \|\la D\ra^{-\frac{1-4s}{6}} u \|_{L^6_{t,x}} 
    \lesssim \|\du_0\|_{H^s_x}^{2/3}(1 + \|\du_0\|_{H^s_x}^{\frac{1-4s}{3+6s}}). 
\end{equation}

\item Bilinear Strichartz bound: for every $\delta_0 > 0$ 
\begin{equation}\label{main-bi-Hs-model}
\begin{aligned}
    \big\| \partial_x\, T_{\la D\ra^{s-\delta_0}\bar u}\, 
    \la D\ra^{s-\frac12} u
    \big\|_{L^2_{t,x}} &\lesssim \|\du_0\|_{H^s}^{2}
    (1 + \|\du_0\|_{H^s_x}^{\frac{1 - 4s}{1+2s}}),\\[2pt]
    \big\| \partial_x\, \Pi\big(\la D\ra^{s-\frac14} u,\, 
    \la D\ra^{s-\frac14}\bar u\big)
\big\|_{L^2_{t,x}} &\lesssim \|\du_0\|_{H^s}^2
    (1 + \|\du_0\|_{H^s_x}^{\frac{1 - 4s}{1+2s}}).
\end{aligned}
\end{equation}

\end{enumerate}
\end{theorem}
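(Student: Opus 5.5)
Theorem~\ref{t:NLS-Hs} follows from Theorem~\ref{t:main} applied with $C=1$, combined with the scaling symmetry of \eqref{nls3}. We also use the $H^s$ well-posedness of \cite{HGKV} to pass from $L^2$ data to general $H^s$ data.

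\emph{Small data.} If $\|\du_0\|_{H^s}\le\epsilon$, Theorem~\ref{t:main} applies directly with $\epsilon:=\|\du_0\|_{H^s}$. The symbol $c\equiv1$ satisfies \ref{h:c_smooth-s}, \ref{h:c_conserv+} and \ref{h:c_defocus} trivially. This gives the stated bounds with the factor $(1+\cdots)\approx 1$.

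\emph{Large data: rescaling.} If $A:=\|\du_0\|_{H^s}\ge\epsilon$, set $u_\lambda(t,x)=\lambda u(\lambda^2t,\lambda x)$. This is again a solution of \eqref{nls3}$(+)$. Writing $\hat u_\lambda(\xi)=\hat u(\xi/\lambda)$ and substituting $\eta=\xi/\lambda$ in $\int\la\xi\ra^{2s}|\hat u(\xi/\lambda)|^2d\xi$, we get
\[
\|u_\lambda(0)\|_{H^s}^2=\lambda\int\la\lambda\eta\ra^{2s}|\hat\du_0(\eta)|^2d\eta\le \lambda^{1+2s}\|\du_0\|_{H^s}^2 \qquad(\lambda\le1),
\]
using $\la\lambda\eta\ra\ge\lambda\la\eta\ra$ and $s<0$. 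Choosing $\lambda=(\epsilon/A)^{\frac{2}{1+2s}}\le1$ makes $u_\lambda(0)$ $\epsilon$-small in $H^s$. Here $s>-\frac12$ is exactly what makes this possible.

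For $L^2$ data, Theorem~\ref{t:main} then gives the bounds for $u_\lambda$. For general $H^s$ data, we approximate by $L^2$ data and pass to the limit using the continuous dependence from \cite{HGKV}; all the bounds are stable under this limit by lower semicontinuity.

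\emph{Undoing the scaling.} Each bound for $u_\lambda$ is converted back to $u$ using inhomogeneous weight comparisons, with $\lambda\le1$:
\begin{itemize}
\item[(i)] The same substitution gives $\|u(t)\|_{H^s}^2=\lambda^{-1}\int\la\eta/\lambda\ra^{2s}|\hat u_\lambda|^2d\eta$. Since $\la\eta/\lambda\ra^{2s}\le\lambda^{-2s}\la\eta\ra^{2s}$, this yields $\|u\|_{H^s}\lesssim\lambda^{-\frac12-s}\epsilon$. With our choice of $\lambda$, $\lambda^{-\frac12-s}\epsilon=A$, so this step loses nothing; the loss must come from elsewhere.
\item[(ii)] The $L^6$ norm scales as $\|u_\lambda\|_{L^6_{t,x}}=\lambda^{\frac12}\|u\|_{L^6_{t,x}}$. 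The weight $\la D\ra^{-\frac{1-4s}{6}}$ introduces a factor $\lambda^{-\frac{1-4s}6}$ when comparing $\la\xi\ra$ with $\la\xi/\lambda\ra$.
\item[(iii)] For the bilinear bounds, the scalings of $\partial_x$, of $L^2_{t,x}$, and of the two weights $\la D\ra^{s-\delta_0}$, $\la D\ra^{s-\frac12}$ (respectively $\la D\ra^{s-\frac14}$ twice) are collected in the same way. The paraproduct cutoffs are handled by noting that the rescaled Littlewood--Paley pieces are finite combinations of unit-scale ones.
\end{itemize}

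\emph{Main obstacle.} The delicate step is the bookkeeping of weights. Below frequency $\lambda^{-1}$ the inhomogeneous weights do not scale homogeneously, and one must check that the worst factor is exactly $\lambda^{-\gamma}$ with $\gamma$ matching the stated exponents after substituting $\lambda=(\epsilon/A)^{2/(1+2s)}$. For example, in the Strichartz bound the exponent $\frac{1-4s}{3+6s}$ arises as $\frac{1-4s}{6}\cdot\frac{2}{1+2s}$. The $\delta_0$-loss in the low-frequency factor must not spoil the $\delta_0$-independence of the exponent; this is handled by applying Theorem~\ref{t:main} with a smaller $\delta_0$ if needed. The expected $(1+A^{\gamma'})$ form of each bound then follows by combining the small- and large-data cases.
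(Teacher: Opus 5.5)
Your route is the paper's: small data come directly from Theorem~\ref{t:main}. Otherwise you rescale so the data is $\epsilon$-small in $H^s$ (your $\lambda=(\epsilon/A)^{2/(1+2s)}\le 1$ is the reciprocal of the paper's), and pass from $L^2$ to $H^s$ data via \cite{HGKV}. Your rescaling of the data is correct, but the inverse step (i) is wrong.

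Since $2s<0$, the inequality $\la\eta/\lambda\ra^{2s}\le\lambda^{-2s}\la\eta\ra^{2s}$ is equivalent to $\lambda\la\eta/\lambda\ra\ge\la\eta\ra$, i.e.\ to $\lambda^2+\eta^2\ge 1+\eta^2$. So for $\lambda<1$ it fails for every $\eta$, and by a factor up to $\lambda^{2s}\gg1$ on $|\eta|\ll 1$, i.e.\ on the $u$-frequencies $|\xi|\ll\lambda^{-1}$; at $\eta=0$ it reads $1\le\lambda^{-2s}$. What is true for $\lambda\le 1$ is $\la\eta\ra\le\la\eta/\lambda\ra\le\lambda^{-1}\la\eta\ra$, hence $\la\eta/\lambda\ra^{2s}\le\la\eta\ra^{2s}$. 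This gives $\|u\|_{L^\infty_tH^s}\lesssim\lambda^{-1/2}\epsilon=\lambda^{s}\cdot\lambda^{-\frac12-s}\epsilon\approx A\cdot A^{\frac{-2s}{1+2s}}$. So the loss in \eqref{main-Hs-model} comes precisely from this step, not ``from elsewhere''. It is the low-frequency non-homogeneity you yourself flag at the end. The lossless bound $\|u\|_{L^\infty_tH^s}\lesssim\|\du_0\|_{H^s}$ you arrive at does not follow from this argument.

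Once this is fixed, the bookkeeping you leave open in (ii) and (iii) becomes immediate. All the weights are negative powers. After rescaling, $\la D\ra^{\alpha}u$ corresponds to $\la D/\lambda\ra^{\alpha}u_\lambda$, and $\la\eta/\lambda\ra^{\alpha}\le\la\eta\ra^{\alpha}$ for $\alpha<0$, with a ratio symbol smooth on dyadic scales uniformly in $\lambda$. So the entire loss is the scaling of the norms: $\lambda^{-1/2}$ for $L^\infty_tL^2_x$ and $L^6_{t,x}$, and $\lambda^{-3/2}$ for $\partial_x(\cdot\,\cdot)$ in $L^2_{t,x}$. With $\lambda^{-1/2}=(A/\epsilon)^{1/(1+2s)}$ this gives $A^{\frac{1}{1+2s}}=A^{\frac23}A^{\frac{1-4s}{3+6s}}$ and $A^{\frac{3}{1+2s}}=A^{2}A^{\frac{1-4s}{1+2s}}$, exactly the stated exponents. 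Moreover $\delta_0$ never enters, so there is no need to rerun Theorem~\ref{t:main} with a smaller $\delta_0$.

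The paper does the same computation in two stages. First it records exactly scale-invariant bounds in $\la D\ra_\lambda$-weighted norms (Proposition~\ref{prop:large}, lossless in (i) and with a $\delta_0$-gain in the $T$ bound). Then, with its $\lambda=(A/\epsilon)^{2/(1+2s)}\ge 1$, it applies $\la\xi\ra^\alpha\le(1+\lambda^{-\alpha})\la\xi\ra_\lambda^\alpha$, at which point the powers of $\delta_0$ cancel.
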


We note here that the large powers of 
$\|\du_0\|_{H^s}$ go to infinity as $s$ approaches
$-1/2$. In fact, these large powers come from low frequencies rather than high 
frequencies and so may be removed in norms adapted to the scale on which 
the transition from low frequencies to high frequencies takes place. This is discussed in more detail in Proposition 
\ref{prop:large} in Section \ref{s:global}.

\subsection{Further discussion}
The fundamental difficulty in working below $L^2$ is that the conserved mass no
longer controls the solution at the level of the $H^s$ norm. Relative to that
norm a dyadic frequency $N$ component of $u$ carries the amplified mass
$\|P_N u\|_{L^2}\sim N^{-s}\|P_N u\|_{H^s}$, so for $s<0$ it is the high
frequencies --- and above all the transfer of energy from high to low
frequencies --- that must be controlled, a transfer that grows  as
$s\to-\tfrac12$. We organize the analysis around a splitting of the nonlinearity
into a \emph{balanced} part, in which all frequencies are comparable, and an
\emph{unbalanced} part, in which the output frequency is separated from the
inputs. The balanced part is treated through a density-flux and interaction
Morawetz analysis, carried out in the nonlocal, frequency localized setting
forced on us by a general nonlinearity. Peeling off perturbative components of the unbalanced component, we are left with its $hhm \to l$ part, which at negative regularity is genuinely non-perturbative, and is removed by an implicit normal form transformation. The whole is then closed by a bootstrap, run on a dyadic
frequency decomposition, that propagates energy, Strichartz and bilinear $L^2_{t,x}$
bounds at once. Five ideas, each used in a somewhat nonstandard way, drive this
scheme. The last of them, the implicit normal form transformation, is the
principal novelty of this article.

\medskip

\emph{1. Density-flux identities for nonlocal multilinear forms.} For the
balanced nonlinearity we write the mass and momentum balances not as energy
identities but in density-flux form, the densities and fluxes now being
translation invariant multilinear forms. The conservative hypothesis (H2) forces
the symbol of the leading quartic source term to vanish on the resonant set, and a division lemma
then separates it into three structurally distinct pieces: a genuine flux, an
energy correction (the part divisible by $\Delta^4\xi^2$), and a resonant
remainder with a null condition, i.e. a multilinear expression of the type $(\partial|u|^2)^2$, which is
therefore amenable to bilinear $L^2_{t,x}$ control. The division is carried out in
linear coordinates adapted to the resonant set, and is localized to adjacent
dyadic frequency intervals.

\medskip

\emph{2. Energy corrections.} To make these density-flux identities usable we
correct the frequency localized mass and momentum densities and their associated fluxes by carefully chosen
quartic terms $B^4$ respectively $R^4$, in the spirit of the $I$-method
\cite{I-method,I-method2}, but implemented at the level of the density-flux
identities themselves, in a manner closer to \cite{KT}. The corrections are
chosen so that the residual source is controlled by the very norms we propagate:
the $L^6_{t,x}$ Strichartz norm and the bilinear $L^2_{t,x}$ norm.

\medskip

\emph{3. Interaction Morawetz estimates.} Built from the corrected, frequency
localized mass and momentum density-flux pairs, our interaction Morawetz
functionals come in two forms. In the diagonal case the sixth order term has
diagonal symbol $p_j^4(\xi)\,c^{\bal}(\xi,\xi,\xi)$, which is \emph{positive}
exactly by the defocusing condition (H3); this positivity is the source of the
$L^6_{t,x}$ Strichartz bound. In the transversal case, with two separated
frequencies and uniformly in a relative translation, the same functionals yield
the bilinear $L^2_{t,x}$ bounds. The method originates with the three-dimensional
NLS analysis of \cite{MR2053757}, see also \cite{MR2415387,MR2288737}. Our
one-dimensional version is closer to \cite{PV}, recast in the language of
nonlocal multilinear forms.

\medskip

\emph{4. Frequency envelopes on the dyadic scale.} 
We track the distribution of
energy across frequencies by means of frequency envelopes, in the spirit of Tao
\cite{Tao-WM,Tao-BO}, but asking for the usual ``slowly varying'' condition 
to act more strongly from high frequencies to low frequencies than from low 
to high. 
This allows us to apply an $L^2$ based local theory which preserves $\dot H^s$. 
Unlike in the work \cite{IT-global}, we do not use maximal envelopes. This 
corresponds to our use of dyadic frequency intervals as opposed to unit 
frequency intervals. The
bootstrap propagates the energy, Strichartz and bilinear estimates
simultaneously, as in the authors' \cite{IT-BO}; the bilinear bounds are kept
uniform in a relative translation, which is what permits the multilinear symbols
to be estimated through their integrable kernels.

\medskip

\emph{5. An implicit normal form transformation.} The unbalanced nonresonant
interactions are the genuine obstruction at negative regularity: already with a
single cubic correction the high-frequency sum diverges for $s<-\tfrac13$, and
canceling the cubic term by an ordinary normal form merely shifts the problem to
quintic and higher order. We therefore pass to an \emph{implicit} normal form
\[
v = u + \sum_{n=1}^N B^{2n+1}(v, \bar v, \ldots, v),
\]
of finite order $2N+1$, which resumes into a single change of variables the entire
infinite cascade of corrections an explicit transformation would generate. The
construction is organized around a hierarchy of corrections $B^{n}_h$ and a
notion of \emph{good} remainder, and the order is governed by the regularity
through the relation $s > -\tfrac{N}{2N+1}$, so that every $s>-\tfrac12$ is
reached by choosing $N$ large enough. In the new variable the equation reduces to
a balanced cubic Schr\"odinger equation with a source term $N^{\unbal}(v)$ satisfying\footnote{ Here $\dyad > 1$ replaces $2$ as the dyadic step parameter, so that $P_j$ projects to frequency $\dyad^j$, see the discussion in the next section.} 
\[
    \sup_{x_0}\|P_j N^{\unbal}(v)\,P_j\bar v(\cdot+x_0)\|_{L^1_{t,x}} \lesssim \epsilon^4\, c_j^2\dyad^{-2sj},
\]
precisely the bound the energy estimate can absorb. This is the analytic heart of
the paper, and is carried out in Section~\ref{sec:unbalanced_corrections}.

\subsection{An outline of the paper}

Section~\ref{s:not} fixes notation for the function spaces and multilinear
forms, and---more importantly---introduces the class of admissible frequency
envelopes which will preserve both $L^2_x$ and $\dot H^s_x$ norms. 

In Section~\ref{s:local} we carry out a preliminary step in the proof of the
main result, namely the large $L^2_x$ data local well-posedness theorem. It is
independent of the global theory, and rests on a standard contraction argument, 
where we emphasize that the local solution remains controlled by the frequency 
envelope of the initial data, assuming smallness in $H^s$.

Section~\ref{s:energy} recasts the mass and momentum identities in density-flux
form. We refine this in two further steps: first by passing to frequency
localized mass and momentum densities, and then by improving their accuracy
through a carefully chosen quartic correction.

In Section~\ref{s:Morawetz} we begin from the classical interaction Morawetz
identities for the linear Schr\"odinger flow, and then use the density-flux
identities for the sharp frequency localized mass and momentum to derive refined
interaction Morawetz identities for our problem. For clarity we treat separately
the diagonal case, where equal frequency components interact, and the transversal
case, where the frequencies are separated.

Section~\ref{sec:unbalanced_corrections} is devoted to the unbalanced part of
the nonlinearity, whose nonperturbative part we remove by the implicit normal form transformation
described above, constructed to all the relevant orders. This is the longest and
most technical part of the paper.

The global result is then obtained through an intricate bootstrap argument, in
which energy, Strichartz and bilinear $L^2_{t,x}$ bounds are propagated together
in a frequency localized setting governed by frequency envelopes. The bootstrap
is set up in Section~\ref{s:boot}, which also contains the sharper
frequency-envelope form of our result, Theorem~\ref{t:boot}; the estimates that
close it are carried out in Section~\ref{s:fe-bounds}, on the basis of the
density-flux and interaction Morawetz identities obtained earlier.

Finally, in Section~\ref{s:global} we pass from the frequency localized bounds to
their global counterparts, completing the proof of the main result.%

\subsection*{Acknowledgements}
\leavevmode{
M.I. gratefully acknowledges support from the National Science Foundation
through grant DMS-2348908, from a Miller Visiting Professorship at UC Berkeley
during the Fall semester of 2023, from the Simons Foundation through a Simons
Fellowship in the Spring semester of 2024, and from a Vilas Associate
Fellowship.
R.M. gratefully acknowledges support from The Professor B.C. Wong Endowment in Mathematics as well as from the NSF grant DMS-2054975.
D.T. was supported by the NSF grants DMS-2054975 and DMS-2554866 and by a Simons Fellowship from the Simons Foundation.%


\section{Notations and preliminaries} \label{s:not}

\subsection{Littlewood-Paley decomposition and frequency envelopes}
\label{ss:lattice_decomp}

Throughout our analysis, we would like to localize our 
functions to inhomogeneous dyadic intervals $I_j$. 
However, at certain points, we would 
like for the sum of two points in a dyadic interval to be in a 
different dyadic interval. Thus we choose a constant
\[
    1 < \dyad
\]
such that 
\[
    \dyad - 1 \ll 1.
\]
It will not be important exactly what this number is, so it will suffice 
to choose $\dyad = 1+ 1/10$. However, to ease the complexity 
of formulas we will always 
prefer to leave $\dyad$ as a variable.

We let $p_0$ be a smooth function which is $1$ on $[-1,1]$
and is supported on $[-\dyad, \dyad]$.

Then we construct the usual Littlewood-Paley symbols 
\[
    p_{j+1}(\xi) = p_0(\xi/\dyad^{j+1}) - p_0(\xi/\dyad^{j})
\]
and we will denote by 
$P_j$ the projector whose symbol is $p_j$.

\bigskip

Corresponding to these dyadic intervals we will use the following notation 
to indicate how different dyadic regions compare. 

\begin{definition}
    We will say 
    \begin{itemize}
        \item $j \ll k$ (or $\dyad^j \ll \dyad^k$) if $k - j \geq 4$.
        \item $j \gg k$ if $k \ll j$ 
        \item $j \sim k$ if $j \not \ll k$ and $j \not \gg k$
        \item $j \cong k$ if $\abs{j - k} \leq 1$
    \end{itemize}
\end{definition}
\begin{remark}
    We make these definitions so that if $j \cong k \gg \ell$ and 
    $\xi$ is in the support of $p_j$ and $\eta$ is in the support of 
    $p_\ell$, then $\abs{\xi - \eta} \geq_{\dyad} \dyad^{k-2}$. In other words
    there is a buffer between adjacent intervals and separated intervals.
\end{remark}

Corresponding to these definitions we define
\[
    P_{\cong j} := \sum_{k \cong j} P_k 
\]
and so on.

\bigskip

Next we construct appropriate frequency envelopes for our problem
subordinate to our dyadic decomposition. We 
will want our frequency envelopes to be slowly varying as in \cite{Tao-WM,Tao-BO}, 
which is a natural condition in these problems 
which will help us control the flow of ``energy'' between frequencies. 

In our problem, what will be most important is controlling 
how much energy at high frequency flows to lower frequencies, and 
dual to this story, we will want to keep track of how fast the 
frequency envelopes decay at high frequency. This motivates 
the following definition:

\begin{definition}\label{def:admissible_envelope}
    Let $\delta > 0$ be small.
    We say a frequency envelope $\{c_j\}$ 
    is $\delta$-\emph{one-sided slowly varying} 
    or simply \emph{admissible}
    if for all $j_1 \geq j_2$
    \[
    c_{j_1} \lesssim c_{j_2}\dyad^{\delta(j_1-j_2)}.
    \]
    and if $j_2 \geq j_1$
    \[
        c_{j_1} \lesssim c_{j_2}\dyad^{(\delta -s)(j_2-j_1)}.
    \]
\end{definition}

We will consider initial data which is small in $H^s$, but potentially
large in $L^2$. Every such function $f \in H^s \cap L^2$ has an associated 
frequency envelope:
\[
    c_j = \|P_j f\|_{H^s}
\]
with the property that 
\[
    \|c_j\|_{\ell^2_j} = \|f\|_{H^s}
\]
and 
\[
    \|\dyad^{-sj}c_j\|_{\ell^2_j} \sim \|f\|_{L^2}
\]

An important observation is that the frequency envelopes with
these properties can always be placed under a $\delta$-one-sided slowly
varying frequency envelope with comparable properties.

\begin{proposition}\label{prop:freq_envelopes}
    Any $\ell^2$ frequency envelope $c$ can be 
    placed under a comparable $\delta$-one-sided slowly
varying envelope $\tilde c$. In other words,
    \[
        c \leq \tilde c, \qquad \|\tilde c\|_{\ell^2} \sim_\delta \|c\|_{\ell^2}
    \]
    Further, if $-1/2 < s < 0$ and  
    \[
        \|\dyad^{-sj}c_j\|_{\ell^2} < \infty, 
    \]
    then $\tilde c$ can be chosen such that
    \[
        \|\dyad^{-sj}\tilde c_j\|_{\ell^2} \sim_\delta
        \|\dyad^{-sj}c_j\|_{\ell^2} 
    \]
\end{proposition}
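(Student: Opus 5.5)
The envelope $\tilde c$ will be built by the standard maximal-function construction, adapted to the two one-sided rates in Definition~\ref{def:admissible_envelope}. Define
\[
\tilde c_j := \sup_{k} \, c_k \, w(j,k), \qquad
w(j,k) = \begin{cases} \dyad^{-\delta(j-k)}, & j \geq k,\\[2pt] \dyad^{-(\delta - s)(k-j)}, & k > j. \end{cases}
\]
Taking $k=j$ gives $c \le \tilde c$ immediately. The first task is to check admissibility. The weight $\log_\dyad w(j,k)$ is a Lipschitz-type function of $j$ with slope $-\delta$ when moving $j$ upward and slope $-(\delta-s)$ when moving $j$ downward. In each regime this follows from the triangle inequality on the exponents, for instance $w(j_1,k) \ge w(j_2,k)\dyad^{-\delta(j_1-j_2)}$ for $j_1 \ge j_2$. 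Taking the supremum over $k$ then yields the two inequalities of Definition~\ref{def:admissible_envelope}. One has to be careful in the case where $k$ lies between $j_1$ and $j_2$, but there both exponents are at least $\delta>0$ and $\delta-s$, so the inequality survives with constant $1$.

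For the $\ell^2$ bounds, I will replace the supremum by a sum:
\[
\tilde c_j^2 \le \sum_k c_k^2 w(j,k)^2,
\qquad\text{so}\qquad
\|\tilde c\|_{\ell^2}^2 \le \sup_k \sum_j w(j,k)^2 \, \|c\|_{\ell^2}^2 .
\]
The bracketed sum is a two-sided geometric series, bounded by roughly $(1-\dyad^{-2\delta})^{-1} + (1-\dyad^{-2(\delta-s)})^{-1} \lesssim_\delta 1$. This gives $\|\tilde c\|_{\ell^2}\sim_\delta \|c\|_{\ell^2}$, with the lower bound coming from $c\le\tilde c$.

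For the weighted bound, the same computation is applied to $\dyad^{-sj}\tilde c_j$, which leads to the kernel $\dyad^{-s(j-k)}w(j,k)$:
\begin{itemize}
\item for $j\ge k$ it equals $\dyad^{-(\delta+s)(j-k)}$, which decays provided $\delta+s>0$, i.e.\ $\delta<-s$...
\end{itemize}
This is the expected obstacle, because as written the kernel grows when $\delta < -s$.

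So I must recheck the roles of the rates. With the weight $\dyad^{-sj}$ and $s<0$, high frequencies are amplified. The upward-in-$j$ propagation therefore needs a decay rate exceeding $-s$, while the downward direction only needs rate $\delta$ after the weight. The fix is to allow the construction itself to be the supremum of the two envelopes adapted to each norm. The plain $\ell^2$ construction uses rates $(\delta,\delta-s)$. A second construction, performed on $d_k=\dyad^{-sk}c_k$ with rates $(\delta+ s',\dots)$, is then transported back. Alternatively, I will verify directly that for $j\ge k$ the factor is $\dyad^{-s(j-k)}\dyad^{-\delta(j-k)}$, and use the envelope's actual upward rate: the definition only bounds $c_{j_1}$ above by $c_{j_2}\dyad^{\delta(j_1-j_2)}$, so the construction should propagate upward with growth and not decay. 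That is, I reinterpret the construction as $\tilde c_j=\sup_k c_k \dyad^{-\delta|j-k|}$ for $k>j$ (downward influence limited) and $\dyad^{-(\delta-s)(j-k)}$ for $k<j$. Admissibility then follows as before, with the inequality directions swapped appropriately.

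In the weighted norm the kernels become $\dyad^{-(\delta-s)(j-k)}\dyad^{-s(j-k)}=\dyad^{-\delta(j-k)}$ for $j>k$, and $\dyad^{-\delta(k-j)}\dyad^{s(k-j)}$ for $k>j$. Both are summable geometric series, uniformly in $\delta$ fixed and $s\in(-\frac12,0)$. Schur's test then gives $\|\dyad^{-sj}\tilde c_j\|_{\ell^2}\lesssim_\delta \|\dyad^{-sj}c_j\|_{\ell^2}$, and the unweighted bound is handled the same way. Sorting out the correct orientation of the two one-sided rates, so that admissibility and both Schur bounds hold simultaneously, is the only genuine issue; everything else is geometric-series bookkeeping.
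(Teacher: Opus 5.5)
Your final construction, with weight $\dyad^{-(\delta-s)(j-k)}$ for $k\le j$ and $\dyad^{-\delta(k-j)}$ for $k>j$, is exactly the paper's kernel $K(j-k)$. The paper differs only in using the convolution $\tilde c_j=\sum_k K(j-k)c_k$, together with $K(a)K(b)\le K(a+b)$ and Young's inequality, where you take a supremum; so the argument is correct and essentially the paper's. One correction to the write-up: your first orientation also fails admissibility, not just the weighted bound. For $c$ supported at a single large index $k_0$ it gives $\tilde c_{k_0}/\tilde c_0=\dyad^{(\delta-s)k_0}$, which violates the upward condition $\tilde c_{j_1}\lesssim \tilde c_{j_2}\dyad^{\delta(j_1-j_2)}$ of Definition~\ref{def:admissible_envelope}, so your claim that the supremum there yields both inequalities is false and that paragraph should be discarded.
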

\begin{proof}
    We simply convolve $c$ with an appropriate kernel.

    Let 
    \[
        K(k) = \begin{cases}
            \dyad^{-(\delta-s) k} & \text{ if } k \geq 0\\
            \dyad^{\delta k} & \text{ if } k < 0\\
        \end{cases}.
    \]
    The important properties of $K$ are that, for all $k$ and $\ell$, 
    we have 
    \[
        K(k)K(\ell) \leq K(k + \ell),
    \]
   \[
        \sum_{k \in \Z} K(k) \lesssim 1,
    \] 
    and that 
    \[
        \sum_{k \in \Z} \dyad^{-sk} K(k) \lesssim 1.
    \]
    
    The first fact is obvious when $k$ and $\ell$ have the same sign
    and also when either is 0. Otherwise, 
    without loss of generality assume $k > 0 > \ell$. If $k + \ell > 0$, then 
    \[
        K(k)K(\ell) = \dyad^{-(\delta-s) k}\dyad^{\delta \ell}  
        = \dyad^{-(\delta-s)(k + \ell)}\dyad^{(2\delta-s) \ell}
        \leq K(k + \ell)
    \]
    and if $k + \ell < 0$, then 
    \[
        K(k)K(\ell) = \dyad^{-(\delta-s) k}\dyad^{\delta \ell}  
        = \dyad^{\delta(k + \ell)}\dyad^{-(2\delta-s) k}
        \leq K(k + \ell).
    \]
    The second fact follows from $\delta > 0$ and $-(\delta - s) < 0$.
    The third fact follows from $\delta - s > 0$ and $-(\delta - s) - s < 0$.

    Then we set 
    \[
        \tilde c_j = \sum_{k \geq 0} K(j-k)c_k.
    \]
    First, we see that $\tilde c_j$ is admissible since
    \[
        K(j_2 - j_1)\tilde c_{j_1} = \sum_{k\geq 0} K(j_2 - j_1)K(j_1 - k) c_{k}  
        \leq \sum_{k\geq 0} K(j_2 - k ) c_{k} = \tilde c_{j_2}
    \]

    Since $K(0) = 1$ we have 
    \[
        c_j \leq \sum_{k} K(j-k)c_k = \tilde c_j
    \]
    which also shows 
    $\|c_j\|_{\ell^2} \leq \|\tilde c_j\|_{\ell^2}$
    and 
    $\|\dyad^{-sj}c_j\|_{\ell^2} \leq \|\dyad^{-sj}\tilde c_j\|_{\ell^2}$.
    By Young's inequality and the second and third properties of $K$, we also 
    have 
    $\|\tilde c_j\|_{\ell^2} \lesssim_\delta \| c_j\|_{\ell^2}$
    and 
    $\|\dyad^{-sj}\tilde c_j\|_{\ell^2} \lesssim_\delta 
    \|\dyad^{-sj} c_j\|_{\ell^2}$
    which completes the proof.

\end{proof}

\subsection{Multilinear forms and symbols}\label{s:multi}

A key notion which is used throughout the paper is that 
of multilinear form. 
All our multilinear forms are invariant with respect to 
translations, and have as arguments either complex-valued 
functions or their complex conjugates. 

For an integer $k \geq 2$, we will use 
translation invariant $k$-linear  forms 
\[
(\mathcal D(\R))^{k} \ni (u_1, \cdots, u_{k}) \to     L(u_1,\bu_2,\cdots) \in \mathcal D'(\R),
\]
where the nonconjugated and conjugated entries are alternating.

Such a form is uniquely described by its symbol $\ell(\xi_1,\xi_2, \cdots,\xi_{k})$
via
\[
\begin{aligned}
L(u_1,\bu_2,\cdots)(x) = (2\pi)^{-k} & 
\int e^{i(x-x_1)\xi_1} e^{-i(x-x_2)\xi_2}
\cdots 
\ell(\xi_1,\cdots,\xi_{k})
\\ & \qquad 
u_1(x_1) \bu_2(x_2) \cdots  
dx_1 \cdots dx_{k} d\xi_1\cdots d\xi_k,
\end{aligned}
\]
or equivalently on the Fourier side
\[
\mathcal F L(u_1,\bu_2,\cdots)(\xi)
= (2\pi)^{-\frac{k-1}2} \int_{D}
\ell(\xi_1,\cdots,\xi_{k})
\hat u_1(\xi_1) \bar{\hat u}_2(\xi_2) \cdots  
d\xi_1 \cdots d\xi_{k-1},
\]
where, with alternating signs, 
\[
D = \{ \xi = \xi_1-\xi_2 + \cdots \}.
\]

They can also be described via their kernel
\[
L(u_1,\bu_2,\cdots)(x) =  
\int K(x-x_1,\cdots,x-x_{k})
u_1(x_1) \bu_2(x_2) \cdots  
dx_1 \cdots dx_{k},
\]
where $K$ is defined in terms of the  
Fourier transform  of $\ell$
\[
K(x_1,x_2,\cdots,x_{k}) = 
(2\pi)^{-\frac{k}2} \hat \ell(-x_1,x_2,\cdots,(-1)^k x_{k}).
\]

All the symbols in this article will be 
assumed to be smooth, bounded and with bounded derivatives.

We remark that our notation is slightly nonstandard because of the alternation of complex conjugates, which is consistent with the set-up of this paper. Another important remark is that, for $k$-linear forms, the cases of odd $k$, respectively even $k$ play different roles here, as follows:

\medskip

i) The $2k+1$ multilinear forms will be thought of as functions, e.g. those which appear 
in some of our evolution equations.

\medskip

ii) The $2k$ multilinear forms will be thought of as densities, e.g. which appear 
in some of our density-flux pairs.

\medskip
Correspondingly,  
to each $2k$-linear form $L$ we will associate
a $2k$-linear functional $\bL$ defined by 
\[
\bL(u_1,\cdots,u_{2k}) = \int_\R L(u_1,\cdots,\bu_{2k})(x)\, dx,
\]
which takes real or complex values.
This may be alternatively expressed 
on the Fourier side as 
\[
\bL(u_1,\cdots,u_{2k}) = (2\pi)^{1-k} \int_{D}
\ell(\xi_1,\cdots,\xi_{2k})
\hat u_1(\xi_1) \bar{\hat u}_2(\xi_2) \cdots  
\bar{\hat u}_{2k}(\xi_{2k})d\xi_1 \cdots d\xi_{2k-1},
\]
where, with alternating signs, the diagonal $D_0$ is given by
\[
D_0 = \{ 0 = \xi_1-\xi_2 + \cdots \}.
\]
Note that in order to define the multilinear functional $\bL$ we only need to know the symbol $\ell$ on $D_0$. There will be however 
more than one possible smooth extension of 
$\ell$ outside $D_0$. This will play a role in our story later on.

\subsection{Separation of variables}

In this section we will explain a tool we will rely on heavily in our 
estimates. In particular, we will often want to convert multilinear 
estimates into product estimates for some multilinear form $L$ of 
either even or odd multiplicity:
\[\|L(u_1, \ldots, u_k)\|_{L^1_x} 
\lesssim 
\sup_{x_1,\ldots,x_k}\|u_1(x - x_1)
    \cdots  u_k(x - x_k) \|_{L^1_x}
\]

\begin{lemma}\label{lem:sep}
    Let $\ell$ be the symbol associated to a $n$ linear form $L$ and suppose that 
    $\ell$ is smooth on the dyadic scale with some size $M$
    near $\xi_1 \sim \dyad^{j_1}, \ldots \xi_n \sim \dyad^{j_n}$. That is 
    \begin{equation}\label{eq:smooth_on_scales}
        |p_{j_1}(\xi_1) \cdots p_{j_n}(\xi_n)
        \partial^\alpha \ell(\xi_1, \ldots, \xi_n)| 
        \leq M\dyad^{-\alpha_1 j_1 - \cdots - \alpha_nj_n}
    \end{equation}
for all $\abs{\alpha} \leq n + 2.$
Then the kernel of the multilinear form $L$ restricted to 
    $\xi_1 \sim \dyad^{j_1}, \ldots \xi_n \sim \dyad^{j_n}$
    given by
$$\check \ell(x_1,\ldots, x_n) = \int e^{ix_1\xi_1 + \cdots + ix_n\xi_n} 
    \ell(\xi_1,\ldots, \xi_n)p_{j_1}(\xi_1) \cdots p_{j_n}(\xi_n) 
    d\xi_1\cdots d\xi_n$$
has 
$$\int \abs{\check \ell(x_1, \ldots, x_n)} dx_1\cdots dx_n \lesssim M$$
where the constant depends only on $n$ and not on $M$ or the $j_i$'s.

Further, any multilinear form whose kernel is integrable, such as the ones 
    satisfying \eqref{eq:smooth_on_scales} after localization, has the estimate
\begin{align*}
    \|L(u_1,\bar u_2, u_3, \ldots )\|_{L^q_x}
    \lesssim M \sup_{x_1,\ldots,x_n}\|u_1(x - x_1)\bar u_2 (x+x_2) 
    u_3(x - x_3)\cdots \|_{L^q_x}.
\end{align*}
    In particular, multilinear forms which satisfy \eqref{eq:smooth_on_scales} 
    satisfy
\begin{align*}
    \|L(P_{j_1}u_1,P_{j_2}\bar u_2, P_{j_3}u_3, \ldots )\|_{L^q_x}
    \lesssim M \sup_{x_1,\ldots,x_n}\|P_{j_1}u_1(x - x_1)
    P_{j_2}\bar u_2 (x+x_2) P_{j_3} u_3(x - x_3)\cdots \|_{L^q_x}.
\end{align*}
\end{lemma}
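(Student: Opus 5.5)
The plan is to prove the kernel bound by rescaling followed by integration by parts, and then to deduce the product estimate from Minkowski's inequality.

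\emph{Reduction to unit scale.} Set $\lambda_i = \dyad^{j_i}$ (or $\lambda_i=1$ for $j_i=0$) and change variables $\xi_i = \lambda_i \eta_i$. Define the rescaled symbol
\[
m(\eta_1,\ldots,\eta_n) = \ell(\lambda_1\eta_1,\ldots,\lambda_n\eta_n)\, p_{j_1}(\lambda_1\eta_1)\cdots p_{j_n}(\lambda_n\eta_n).
\]
Then
\[
\check \ell(x_1,\ldots,x_n) = \lambda_1\cdots\lambda_n\, \check m(\lambda_1 x_1,\ldots,\lambda_n x_n),
\]
so $\|\check\ell\|_{L^1} = \|\check m\|_{L^1}$, and the $L^1$ norm of the kernel is invariant under this rescaling. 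The function $m$ is supported in a fixed compact set, namely $|\eta_i| \leq \dyad$, or an annulus $\dyad^{-1}\le|\eta_i|\le \dyad$ when $j_i>0$. By \eqref{eq:smooth_on_scales} together with the Leibniz rule applied to the cutoffs $p_{j_i}(\lambda_i\cdot)$, whose rescaled derivatives are bounded uniformly in $j_i$, we get $|\partial^\alpha m| \lesssim M$ for all $|\alpha|\le n+2$. There is one small point to check here: \eqref{eq:smooth_on_scales} is stated with the cutoffs multiplied in front of $\partial^\alpha\ell$. Since derivatives may also fall on the cutoffs, I will apply the hypothesis on the slightly enlarged support, using neighbouring $p_{j_i\pm1}$ (this is how ``near'' should be read).

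\emph{Decay of $\check m$.} For each multi-index $\beta$ with $|\beta|\le n+1$, integration by parts gives
\[
|x^\beta \check m(x)| \le \|\partial^\beta m\|_{L^1} \lesssim M,
\]
since the support has bounded volume. Hence
\[
|\check m(x)| \lesssim M (1+|x|)^{-(n+1)},
\]
which is integrable on $\R^n$. This gives $\|\check \ell\|_{L^1}\lesssim M$ with constants depending only on $n$ (and on $\dyad$ and $p_0$). This is the main technical step. The only care needed is to count the derivatives: $n+1$ derivatives are enough for integrable decay, and the allowance of $n+2$ leaves room.

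\emph{Product estimate.} Writing the form through its kernel, we have
\[
L(u_1,\bar u_2,\ldots)(x) = \int K(y_1,\ldots,y_n)\, u_1(x-y_1)\bar u_2(x-y_2)\cdots\, dy.
\]
Up to the sign conventions of Section~\ref{s:multi}, which only flip some $y_i \to -y_i$ and account for the $+x_2$ in the statement, Minkowski's integral inequality in $L^q_x$ gives
\[
\|L(\ldots)\|_{L^q_x} \le \int |K(y)|\, \|u_1(x-y_1)\bar u_2(x-y_2)\cdots\|_{L^q_x}\, dy \le \|K\|_{L^1} \sup_{y}\|\cdots\|_{L^q_x}.
\]
Finally, for the localized statement, the frequency-restricted form $L(P_{j_1}u_1,\ldots)$ coincides with the form whose symbol is $\ell\,\tilde p_{j_1}\cdots\tilde p_{j_n}$ applied to $P_{j_i}u_i$, where $\tilde p_j$ is a fattened cutoff equal to $1$ on the support of $p_j$. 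The first part bounds its kernel in $L^1$ by $M$, and the Minkowski step then gives the stated estimate.
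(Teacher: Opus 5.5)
Your proof is correct and is essentially the paper's argument: the paper also uses non-stationary phase at the dyadic scales, integrating by parts with the weighted operator $(\dyad^{2j_1}\partial_1^2+\cdots+\dyad^{2j_n}\partial_n^2)^\alpha$ and then substituting $z_k=\dyad^{j_k}x_k$, which is your rescaling performed at the end rather than at the start. Your reading of the hypothesis on a slightly enlarged region is what is needed both to control $\partial^\alpha\ell$ where derivatives of the cutoffs live and to cover the fattened cutoffs $\tilde p_{j_i}$ in the localized estimate; the paper uses this implicitly, and your Minkowski step makes explicit the product estimate that the paper's proof leaves unstated.
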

\begin{remark}
    In Section \ref{sec:unbalanced_corrections} we will consider 
    arbitrarily high order multilinear forms of the form 
    \[
        L_1(L_2(\cdots(L_m(u_1, \cdots, u_{k_m}), u_{k_m+1}, 
        \cdots u_{k_m + k_{m-1}}) \cdots u_{k_m + \cdots k_2})
        \cdots u_{k_m + \cdots + k_1})
    \]
    where each $L_k$ is at most of fixed finite order $N$
    satisfying \eqref{eq:smooth_on_scales}, and $m$ is potentially 
    large. Thus, we must analyze the dependence of the constant on $m$. 

    Fortunately, the symbol corresponding to a composition of this form 
    corresponds to a product of the symbols:
    \[
        \ell_m(\xi_1, \cdots, \xi_{k_m})
        \ell_{m-1}(\xi_1 - \xi_2 + \cdots \pm \xi_{k_m}, 
        \xi_{k_m+1}, \cdots, \xi_{k_m + k_{m-1}}) \cdots.
    \]
    Now, the kernel corresponding to this product is the convolution 
    of the individual symbols, each of which satisfies \eqref{eq:smooth_on_scales}
    after a linear change of variables. Further the convolution of $L_1$ 
    kernels is bounded, and so the growth rate of the constant will be at most
    $C(N)^m$.

    This will be sufficient in Section \ref{sec:unbalanced_corrections}.
\end{remark}
\begin{proof}
The proof follows from non-stationary phase. We expect $\check \ell$ 
    to be localized in a $\dyad^{-j_1} \times \cdots \times \dyad^{-j_n}$ 
    sized box near the origin because of \eqref{eq:smooth_on_scales}. 
    Thus, we estimate $\check \ell$ inside the box by 
the $L^\infty$ norm of $\ell$ and by using polynomial decay outside of the box.
We see
\[
\abs{\check \ell} = 
    \abs{\int e^{ix_1\xi_1 + \cdots + ix_n\xi_n} \ell(\xi_1,\ldots, \xi_n)
    p_{j_1}(\xi_1) \cdots p_{j_n}(\xi_n) d\xi_1\cdots d\xi_n} \lesssim 
    M\dyad^{j_1 + \cdots + j_n},
\]
and 
\begin{align*}
    \abs{(\dyad^{2j_1}x_1^2 + \cdots + \dyad^{2j_n}x_n^2)^\alpha \check \ell} 
    &= \abs{\int e^{ix\cdot \xi} (\dyad^{2j_1}\partial_1^2 + \cdots + 
    \dyad^{2j_n}\partial_n^2)^\alpha 
    \Big(\ell(\xi_1,\ldots, \xi_n)p_{j_1}(\xi_1) \cdots p_{j_n}(\xi_n) \Big)d\xi}\\ 
    &\lesssim M\dyad^{j_1 + \cdots + j_n}.
\end{align*}
    Then, letting $z_k= \dyad^{j_k}x_k$
\begin{align*}
\int\abs{\check \ell(x)}dx &\leq \dyad^{-j_1} 
    \cdots \dyad^{-j_n} 
    \left(\int_{\abs{z} \leq 1} \abs{\check \ell}dz + 
    \int_{\abs{z} \geq 1} \frac{\abs{(\dyad^{2j_1}x_1^2 + \cdots + \dyad^{2j_n}x_n^2)^
    \alpha \check \ell}}{\abs{z}^{2\alpha}}dz\right)\\
\lesssim M
\end{align*}
as long as $2\alpha > n.$
\end{proof}

In our estimates we will commonly use Lemma \ref{lem:sep} by restricting 
a multilinear form to dyadic intervals to replace multilinear estimates 
with dyadic sums over localized product estimates. For example 
\begin{align*}
    \|C(u, \bar u, u)\|_{L^q}
    &\leq \sum_{j_1, j_2, j_3} \|C(P_{j_1} u, P_{j_2} \bar u, P_{j_3} u)\|_{L^q}\\
    &\lesssim \sum_{j_1, j_2, j_3} \sup_{x_1,x_2, x_3}
    \|P_{j_1} u(\cdot - x_1) P_{j_2} \bar u(\cdot + x_2) P_{j_3} u(\cdot - x_3)\|_{L^q}
\end{align*}

The more general idea of integrable kernels 
will be invaluable when we study the unbalanced part of 
the nonlinearity in Section~\ref{sec:unbalanced_corrections} since 
symbols there will live on several scales where it will not always be 
straightforward to apply Lemma~\ref{lem:sep}.


\subsection{Cubic interactions in Schr\"odinger flows}
Given three input frequencies $\xi_1, \xi_2,\xi_3$ for 
our cubic nonlinearity, the output will be at frequency 
\[
\xi_4 = \xi_1-\xi_2+\xi_3.
\]
This relation can be described in a more symmetric fashion as 
\[
\Delta^4 \xi = 0, \qquad \Delta^4 \xi := \xi_1-\xi_2+\xi_3-\xi_4 .
\]
This is a resonant interaction if and only if we have a similar relation for the associated time frequencies, namely 
\[
\Delta^4 \xi^2 = 0, \qquad \Delta^4 \xi^2 := \xi_1^2-\xi_2^2+\xi_3^2-\xi_4^2 .
\]
Hence, we define the resonant set in a symmetric fashion as 
\[
\calR := \{ \Delta^4 \xi = 0, \ \Delta^4 \xi^2 = 0\}.
\]
It is easily seen that this set may be characterized as
\[
\calR = \{ \{\xi_1,\xi_3\} = \{\xi_2,\xi_4\}\}.
\]

Further, we will want to characterize how transverse the interaction is. That is 
the largest separation between the frequencies. We will use the notation 
\[
    (\xi_o - \xi_e)^2 := (\xi_1 - \xi_2)(\xi_4 - \xi_3) + 
    (\xi_1 - \xi_4)(\xi_2 - \xi_3).
\]
Note that $(\xi_o - \xi_e)^2$ measures the square distance 
of a point in $\{\Delta^4 \xi = 0\}$ to the ``diagonal'', where all four frequencies 
are exactly equal.


\section{Local well-posedness }
\label{s:local}
In this section we will prove a large data $L^2$ result which 
will be the starting point of our bootstrap argument in Section
\ref{s:boot}. The main reason we reprove this result is to establish 
suitable frequency envelope 
controlled $L^\infty_tL^2_x$, $L^6_{t,x}$, and bilinear $L^2_{t,x}$ 
bounds which we will need for the global result. We have the following theorem

\begin{theorem}\label{thm:lwp}
    Let $\du_0 \in L^2_x$ with $\|\du_0\|_{L^2_x} = M$. Then there exists a time $T(M)$ and 
    a unique solution $u \in C^0([0,T], L^2) \cap L^6_{t,x}$ to \eqref{nls}.
    In addition, suppose 
    \[
        \|\du_0\|_{H^s} = \epsilon
    \]
    and let
    $c_j$ be a normalized one sided slowly varying 
    frequency envelope such that 
    \[
        \|P_j \du_0\|_{L^2} \lesssim \epsilon\dyad^{-sj} c_j
    \]
    and 
    \[
        \|\epsilon\dyad^{-sj}c_j\|_{\ell^2_j} \sim M.
    \]
    (Note such an envelope always exists by Proposition \ref{prop:freq_envelopes}.)
    Then we also have the following frequency localized 
    bounds on $[0,T]$:
    \[
        \|P_j u\|_{L^\infty_t L^2_x} \lesssim \epsilon \dyad^{-sj} c_j
    \]
    \[
        \|P_j u\|_{L^6_{t,x}} \lesssim \epsilon \dyad^{-sj} c_j
    \]
    \[
        \|\partial(P_j u P_k \bar u(\cdot + x_0))\|_{L^2_{t,x}} 
        \lesssim (\dyad^{j/2} + \dyad^{k/2})\epsilon^2 
        \dyad^{-sj}\dyad^{-sk}c_j c_k,
    \]
uniformly in $x_0 \in \R$.
\end{theorem}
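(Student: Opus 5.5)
We will prove Theorem~\ref{thm:lwp} by a standard contraction argument in a Strichartz-type space on a short interval $[0,T]$. We then upgrade the resulting solution to the frequency envelope bounds by a second, frequency-localized bootstrap on the same interval.

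\textbf{Step 1: existence and uniqueness.} We work in $X_T = C([0,T];L^2)\cap L^6_{t,x}([0,T]\times\R)$. The symbol $c$ is bounded and smooth on the dyadic scale by \eqref{normal-reg}, so Lemma~\ref{lem:sep} (after a dyadic decomposition of all three inputs, summed using the dyadic regularity) gives the H\"older-type bound
\[
\|C(u,\bar v,w)\|_{L^{6/5}_{t,x}} \lesssim \|u\|_{L^{18/5}}\|v\|_{L^{18/5}}\|w\|_{L^{18/5}}.
\]
More conveniently, we place the nonlinearity in $L^1_tL^2_x$, using $T^{1/2}$ and $L^6_{t,x}$ norms. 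Smallness cannot come from $T$ alone at the $L^2$ critical level, since the problem is $L^2$ subcritical but only mildly so. We therefore follow the usual large-data route: choose $T$ so that $\|e^{it\partial_x^2}\du_0\|_{L^6([0,T]\times\R)}$ is small. For rough data this is achieved by splitting $\du_0$ into a low-frequency part, which is bounded in $H^1$ and handled by $T^{1/6}$ via Sobolev embedding, and a high-frequency part with small $L^2$ norm. This gives $T=T(M)$, but only if the frequency cutoff depends on $M$ alone. Because we also have the $H^s$ envelope, we can choose the cutoff frequency $\dyad^{j_0}$ so that $\sum_{j>j_0}\epsilon^2\dyad^{-2sj}c_j^2$ is small. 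The contraction in $X_T$ then proceeds from the inhomogeneous Strichartz estimate together with the trilinear estimate, and it yields uniqueness and continuous dependence.

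\textbf{Step 2: frequency envelope bounds.} For each $j$ we write the Duhamel formula for $P_ju$ and decompose $P_jC(u,\bar u,u)=\sum P_jC(P_{j_1}u,P_{j_2}\bar u,P_{j_3}u)$. We bootstrap the three claimed bounds with a large constant $A$. For the energy and $L^6$ bounds, Strichartz reduces matters to estimating $\|P_jC(\cdots)\|_{L^{6/5}_{t,x}}$, or $\|\cdot\|_{L^1L^2}$, by $T^{\theta}$ times products of the localized $L^6$ or $L^\infty L^2$ norms. The sum over $(j_1,j_2,j_3)$ is controlled by the admissibility of $c_j$. When the output is low and the inputs are high, we need $\sum_{j_1\sim j_2\gtrsim j}\dyad^{-s(j_1+j_2)}c_{j_1}c_{j_2}$ together with the $\dyad^{(\delta-s)}$ one-sided slow variation to return $\dyad^{-sj}c_j$. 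When the output is high, we use $c_{j_{\max}}\lesssim \dyad^{\delta(\cdot)}c_j$. Some high-high-to-low sums diverge without gain. There we use the short time, taking $T$ depending on $M$, and the $L^2$ control $\|\epsilon\dyad^{-sj}c_j\|_{\ell^2}\sim M$, so that Cauchy--Schwarz in $j_1$ closes.

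\textbf{Step 3: bilinear bounds.} We derive these from the standard bilinear Strichartz estimate for transversal frequencies, $\|\partial(e^{it\partial^2}f\,\overline{e^{it\partial^2}g}(\cdot+x_0))\|_{L^2}\lesssim |\xi-\eta|^{1/2}\|f\|_{L^2}\|g\|_{L^2}$, which is uniform in translations. We transfer it to Duhamel solutions via the $U^2$ or Christ--Kiselev type argument. For $j\cong k$ we instead bound the derivative by $\dyad^{j}$ times $L^2_{t,x}$ of a product, and estimate that by H\"older, $T^{1/6}$ and the $L^6$ bounds, absorbing the factor using $\dyad^{j/2}$ and the short time.

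We expect the main obstacle to be the summation in Step 2 of the high$\times$high$\to$low interactions with only $L^2$ and envelope information. This is exactly where the $(\delta-s)$ one-sided condition in Definition~\ref{def:admissible_envelope} and the dependence of $T$ on $M$ have to be balanced.
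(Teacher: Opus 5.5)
Your Step~2 has a genuine gap, in exactly the high$\times$high$\to$low interactions you single out.

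Take inputs at $\dyad^{h}$ (the third possibly at some $\dyad^{m}$, $j\ll m\le h$) and output at $\dyad^{j}$, $j\ll h$. The tools you allow are $L^\infty_tL^2_x$ and $L^6_{t,x}$ norms, H\"older in time and Bernstein. These give bounds with no decay in $h-j$: all these norms are $L^2$-scale invariant, so the factor $T^\theta$ from the short time does not depend on $h$. You can absorb two factors into $M$, via $\epsilon\dyad^{-sh}c_h\lesssim M$ or Cauchy--Schwarz in $h$. The third still contributes $\epsilon\dyad^{-sh}c_h$, and admissibility only gives $\dyad^{-sh}c_h\lesssim \dyad^{-sj}c_j\,\dyad^{(\delta-s)(h-j)}$. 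The one-sided condition works against you here, and the sum over $h$ diverges.

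Choosing $T$ small in terms of $M$ does not help. The target $\epsilon\dyad^{-sj}c_j$ can be far smaller than $M$, for instance when most of the mass of $\du_0$ sits far above $\dyad^{j}$. For fixed $M,\epsilon$ the ratio between your bound and the target is unbounded over admissible envelopes.

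What is missing is a frequency gain, and it comes from using the bilinear $L^2_{t,x}$ bound \emph{inside} the envelope estimate. Since the output is low, the highest input $P_{j_3}u$ is separated by $\gtrsim\dyad^{j_3}$ from another input, say $P_{j_1}u$. Pair these two bilinearly, put the remaining one in $L^\infty_tL^2_x$, and apply Bernstein at the output:
\[
\|P_j(P_{j_1}u\,P_{j_2}\bar u\,P_{j_3}u)\|_{L^2_{t,x}}\lesssim \dyad^{j/2}\,\|P_{j_1}u\,P_{j_3}u\|_{L^2_{t,x}}\,\|P_{j_2}u\|_{L^\infty_tL^2_x}.
\]
This yields $\dyad^{\frac12(j-j_3)}$, which beats $\dyad^{(\delta-s)(j_3-j)}$ exactly because $s>-\frac12+\delta$. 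That is \eqref{tri-L2} with $\gamma=\frac12+s-\delta$, interpolated with a Bernstein/$L^6$ bound when the lowest input lies below $j$. Consequently the bilinear bounds cannot be derived afterwards as in your Step~3. They must be propagated together with the energy and $L^6$ bounds, which is why the paper runs an induction over the Picard iterates carrying all three bounds at once.

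There are two further problems.

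First, Step~1 rests on a false premise. The problem is $L^2$-subcritical, and
\[
\|C(u,\bar u,u)\|_{L^1_tL^2_x}\le T^{1/2}\|C(u,\bar u,u)\|_{L^2_{t,x}}\lesssim T^{1/2}\|u\|_{L^6_{t,x}}^3
\]
closes the contraction in $C_tL^2_x\cap L^6_{t,x}$ as soon as $T^{1/2}M^2\ll 1$. Your route through smallness of the free evolution and an envelope-dependent cutoff gives a time depending on the profile of $\du_0$, not on $M$ alone. So it does not prove the stated $T(M)$.

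Second, in Step~3 the case $j\cong k$ fails as proposed. H\"older, $T^{1/6}$ and the $L^6$ bounds give a factor $\dyad^{j}T^{1/6}$, and the excess $\dyad^{j/2}T^{1/6}$ cannot be absorbed uniformly in $j$. Use instead that for free solutions $\|\partial_x(e^{it\partial_x^2}f\,\overline{e^{it\partial_x^2}g}(\cdot+x_0))\|_{L^2_{t,x}}^2$ is a constant multiple of $\iint|\xi-\eta|\,|\hat f(\xi)|^2|\hat g(\eta)|^2\,d\xi\,d\eta$. This requires no transversality and is uniform in $x_0$. Transfer it to the Duhamel terms with $L^1_tL^2_x$ sources by Minkowski's inequality, as the paper does; no $U^2$ or Christ--Kiselev argument is needed.
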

\begin{proof}
    The existence follows from a standard contraction using Strichartz
    estimates and a Coifman-Meyer type estimate for the trilinear 
    form $C$. We write the Duhamel form of \eqref{nls}:
    \[ 
        \Psi(u) = e^{it\Delta}\du_0 -i \int_0^t e^{i(t-s)\Delta} C(u,\bar u , u)ds
    \]
    Then 
    \[
        \|\Psi(u)\|_{L^\infty_t L^2_x \cap L^6_{t,x}}
        \leq \|\du_0\|_{L^2} + \|C(u,\bar u, u)\|_{L^1_t L^2_x}
        \leq M + T^{1/2}\|u\|_{L^\infty_t L^2_x \cap L^6_{t,x}}^3.
    \]
    And the contraction follows from the trilinearity of $C$ and 
    by choosing $T$ small depending on $M$.

    \bigskip

    For the frequency localized estimates we follow the Picard iterates.
    Let $u^0 = e^{it\Delta}\du_0$ and 
    $u^n = \Psi^n(u_0)$.
Then the standard energy estimate for the linear Schr\"odinger 
    equation gives
    \[\|P_j u^0\|_{L^2_x} = \|P_j \du_0\|_{L^2} \lesssim \epsilon \dyad^{-sj}c_j; \]
    the Strichartz estimate for the linear Schr\"odinger equation gives
    \[
    \|P_j u^0\|_{L^6_{t,x}} \lesssim\|P_j \du_0\|_{L^2} \lesssim \epsilon \dyad^{-sj}c_j; 
    \]
    and bilinear estimates for the linear Schr\"odinger equation 
    give 
    \[
    \|\partial_x \left(P_j u^0P_k \ol{u^0(\cdot +x_0)}\right)\|_{L^2_{t,x}} 
    \lesssim   (\dyad^{j/2} + \dyad^{k/2})\|P_j \du_0\|_{L^2}\|P_k \du_0\|_{L^2} \lesssim 
       \epsilon^2 (\dyad^{j/2} + \dyad^{k/2})\dyad^{-sj}\dyad^{-sk}c_jc_k. 
    \]
    For higher iterates, we simply use induction. Assume we have 
    our estimates for $u$. We have by
    Strichartz estimates that 
    \begin{align*}
        \|P_j \Psi(u)\|_{L^\infty_t L^2_x \cap L^6_{t,x}}
        &\lesssim \|P_j \du_0\|_{L^2_x} + \|P_j C(u,\bar u, u)\|_{L^1_tL^2_x}\\
        &\lesssim \|P_j \du_0\|_{L^2_x} + T^{1/2}\|P_j C(u,\bar u, u)\|_{L^2_{t,x}}
    \end{align*}
    For the bilinear estimate we have, setting $v = u(\cdot + x_0)$
    \begin{align*}
        \partial_x(P_j \Psi(u)P_k \ol{\Psi(v)})
        =&  \ \partial_x(e^{it\Delta}P_j \du_0 e^{-it\Delta} P_k\overline{\dv_0})
        + i \partial_x(e^{it\Delta}P_j \du_0 \cdot \int_0^t e^{-i(t-s)\Delta} 
            P_k\ol{C(v,\bar v, v)} ds)\\
        & - i \partial_x(e^{-it\Delta}P_k \ol{\dv_0} \cdot \int_0^t e^{i(t-s)\Delta} 
            P_jC(u,\bar u, u) ds)\\
        &+ \partial_x(\int_0^t e^{i(t-s)\Delta} 
            P_jC(u,\bar u, u) ds \cdot \int_0^t e^{-i(t-s)\Delta} 
            P_k\ol{C(v,\bar v, v)} ds)
    \end{align*}
    so that by the standard bilinear estimates for the linear Schr\"odinger 
    operator
\begin{align*}
        \|\partial_x(P_j \Psi(u)P_k \ol{\Psi(v)})\|_{L^2_{t,x}}
        \lesssim & \ (\dyad^{j/2} + \dyad^{k/2})( \|P_j \du_0\|_{L^2}
        + \|P_j C(u,\bar u, u)\|_{L^1_tL^2_x})
        \\ & \ ( \|P_k \du_0\|_{L^2}
        +\|P_k C(u,\bar u, u)\|_{L^1_tL^2_x})
        \\
        \lesssim & \ (\dyad^{j/2} + \dyad^{k/2})( \|P_j \du_0\|_{L^2}
        + T^\frac12 \|P_j C(u,\bar u, u)\|_{L^2_{x,t}})
        \\ & \ ( \|P_k \du_0\|_{L^2}
        + T^\frac12 \|P_k C(u,\bar u, u)\|_{L^2_{x,t}})
\end{align*}        
    
By choosing $T$ small compared to $M$, we can close the induction step by showing that
\begin{equation}
    \|P_j C(u,\bar u, u)\|_{L^2_{t,x}} \lesssim  M^2 \epsilon \dyad^{-sj} c_j
\end{equation}    
Expanding the inputs of $C$, it will suffice to estimate each term 
separately with an off-diagonal gain 
\begin{equation}\label{tri-L2}
    \|P_j C(P_{j_1}u,P_{j_2}\bar u, P_{j_3} u)\|_{L^2_{t,x}} \lesssim  M^2 \epsilon \dyad^{-sj} c_j \dyad^{\gamma(j_{min}-j_{max})}
\end{equation}
where $j_{min} = \min\{j, j_1,j_2,j_3\}$,  $j_{max} = \max\{j, j_1,j_2,j_3\}$, and $\gamma > 0$.   
    Assuming the trilinear bound \eqref{tri-L2}, expanding the square of the
$L^2$ norm and summing over the frequency triples controls the integral: 
    \begin{align*}
        \|P_j C(u,\bar u, u)\|_{L^2_{t,x}}^2 &\lesssim 
        \sum_{j_1, \ldots, j_6} \int P_j(P_{j_1}uP_{j_2}\bar uP_{j_3}u)
        P_j(P_{j_4}\bar uP_{j_5} uP_{j_6}\bar u) \, dx dt 
        \lesssim M^4 \epsilon^2 \dyad^{-2sj} c_j^2
    \end{align*}
    Note that for a particular term in the above sum to be non-zero
    it must be that the alternating sum of the  three input  frequencies lands in the region corresponding to 
    $\dyad^j$. In particular, we will consider three cases for each 
    $P_j(P_{j_1}uP_{j_2}\bar uP_{j_3}u)$. 
    \begin{itemize}
        \item All three frequencies are comparable to $j$
        \item One frequency is comparable to $j$, one is much less than $j$ 
        and the third is comparable or less than $j$.
        \item The highest frequency is much larger than $j$, in which case the 
        second highest must be comparable, and the highest and the lowest must be separated  by the high frequency.
    \end{itemize}
    Throughout the following argument we will use the 
    fact that 
    \begin{equation}\label{use-L2}
        \epsilon \dyad^{-sj} c_j \lesssim M
    \end{equation}
    where it is convenient, which in practice will be for the 
two lowest frequencies.

\medskip

    In the first case we simply use three $L^6_{t,x}$ estimates, and \eqref{use-L2} twice,
    \begin{align*}
        \|P_j(P_{j_1}uP_{j_2}\bar u P_{j_3}u)\|_{L^2_{t,x}} 
        &\lesssim M^2 \epsilon\dyad^{-sj} c_j.
    \end{align*}

    \medskip
    
    For the second case we order the frequencies $j_1 < j_2 < j_3$
    where $j_3$ is close to $j$, and $j_1$ is smaller. We can estimate 
    the trilinear form in two ways. With one bilinear $L^2_{t,x}$ for the unbalanced pair $(j_1,j_3)$ and one $L^6_{t,x}$ bound, we get
   \begin{align*}
        \|P_j(P_{j_1}uP_{j_2}\bar u P_{j_3}u)\|_{L^\frac32_{t,x}} 
        &\lesssim M^2 \epsilon\dyad^{-sj} c_j \dyad^{-\frac12 j}.
    \end{align*} 
With two $L^6_{t,x}$ bounds for the frequencies $j_2,j_3$ and one 
$L^\infty_{t,x}$ bound for $j_1$ via Bernstein, we obtain
\begin{align*}
        \|P_j(P_{j_1}uP_{j_2}\bar u P_{j_3}u)\|_{L^3_{t,x}} 
        &\lesssim M^2 \epsilon\dyad^{-sj} c_j \dyad^{\frac12 j_1}.
    \end{align*} 
Interpolating the two, we obtain \eqref{tri-L2} with $\gamma = \frac14$.    
\medskip

Finally we consider the third case, where we assume again 
that the three frequencies are ordered $j_1 \leq j_2 \leq j_3$,
with $j_2$ and $j_3$ close and separation between $j_1$ and $j_3$. 
 We further split this into two cases based on how the lowest frequency $j_1$ compares to $j$. 

 \medskip
 
 If $j \leq j_1$, then we use a bilinear $L^2_{t,x}$ bound pairing the frequencies $j_1$ and $j_3$, and energy at frequency $j_2$, followed
 by Bernstein at frequency $j$,
\begin{align*}
        \|P_j(P_{j_1}uP_{j_2}\bar u P_{j_3}u)\|_{L^2_{t,x}} 
        &\lesssim
     \dyad^{\frac{j}2}    \|P_j(P_{j_1}uP_{j_2}\bar u P_{j_3}u)\|_{L^2_{t}L^1_x} \lesssim
        M^2 \epsilon\dyad^{-sj_3} c_{j_3} \dyad^{\frac12 (j-j_3)}.
    \end{align*} 
 Using the slowly varying condition to replace 
    \[c_{j_3} \lesssim c_j \dyad^{\delta(j_3-j)}\]
we obtain 
\[
\|P_j(P_{j_1}uP_{j_2}\bar u P_{j_3}u)\|_{L^2_{t,x}} \lesssim 
 M^2 \epsilon\dyad^{-sj} c_{j} \dyad^{(\frac12+s-\delta )(j-j_3)}.
\] 
which gives \eqref{tri-L2} with $\gamma = \frac12+s-\delta > 0$.

\medskip

Finally we consider the scenario $j_1 < j$. Here we replicate 
the strategy in the second case, obtaining on one hand
\begin{align*}
        \|P_j(P_{j_1}uP_{j_2}\bar u P_{j_3}u)\|_{L^\frac32_{t,x}} 
        &\lesssim M^2 \epsilon\dyad^{-sj_3} c_{j_3} \dyad^{-\frac12 j_3} \lesssim M^2 \epsilon\dyad^{-sj} c_{j} \dyad^{-\frac12 j} \dyad^{(\frac12+s-\delta)(j-j_3)} 
    \end{align*} 
and on the other hand    
\begin{align*}
        \|P_j(P_{j_1}uP_{j_2}\bar u P_{j_3}u)\|_{L^3_{t,x}} 
        &\lesssim M^2 \epsilon\dyad^{-sj_1} c_{j_1} \dyad^{\frac12 j_1}
        \lesssim M^2 \epsilon\dyad^{-sj} c_{j} \dyad^{\frac12 j}
        \dyad^{(\frac12-\delta)(j_1-j)} .
    \end{align*} 
Interpolating, we obtain \eqref{tri-L2} with $2\gamma = \min\{\frac12+s-\delta, \frac12-\delta\} > 0$.

    This concludes the proof of \eqref{tri-L2}, which closes all our estimates. Finally, since the projectors 
    $P_j$ are bounded on the relevant spaces, we may pass 
    to the limit to arrive at the desired bounds for the solution 
    $u$.

\end{proof}

\begin{remark}
    In the nonlinear argument, we will only be able to 
    close the global in time $L^6_{t,x}$ bound with the following estimate
    \[
        \|P_j u\|_{L^6_{t,x}} \lesssim \epsilon^{2/3} \dyad^{(1-4s)j/6} c_j^{2/3}.
    \]
    Here we remark that the local in time bound in Theorem \ref{thm:lwp} is strictly better 
    since $\epsilon < 1$, $\|c_j\|_{\ell^2} \sim 1$, and 
    \[
        -s < -s + \frac{1+2s}6 = \frac{1 - 4s}6.
    \]
\end{remark}


\section{Energy estimates and conservation laws}
\label{s:energy}

\subsection{Conservation laws for the linear problem}
We begin our discussion with the linear Schr\"odinger equation
\begin{equation}
i\partial_t u + \partial_x^2 u = 0, \qquad u(0) = \du_0.    
\end{equation}
For this we consider the following three conserved quantities, the mass
\[
\bM(u) = \int |u|^2 \,dx,
\]
the momentum 
\[
\bP(u) = - 2 \int \Im ( \bar u 
    \partial_x  u) \,dx,
\]
as well as the energy
\[
\bE(u) = 4 \int |\partial_x u|^2\, dx. 
\]

To these quantities we associate corresponding densities
\[
\bbM(u) = |u|^2, 
\qquad \bbP(u) = i ( \bar u \partial_x 
     u -  u \partial_x  \bar u),
 \] 
    \[\bbE(u) = - \bar u \partial_x^2 
    u  +  2  |\partial_x u|^2 -  u \partial_x^2 
\bar u.
\]

The choice of densities here is not entirely straightforward. Symmetry is clearly a criterion, but further motivation is provided by the conservation law computation,
\begin{equation}\label{df-lin}
\partial_t \bbM(u) = \partial_x \bbP(u), \qquad \partial_t \bbP(u) = 
    \partial_x \bbE(u).
\end{equation}
The symbols of these densities viewed as bilinear forms are
\[
\bbm(\xi,\eta) = 1 , \qquad \bbp(\xi,\eta) = -(\xi+\eta), 
\qquad \bbe(\xi,\eta) = (\xi+\eta)^2.
\]

In this work, we will also consider frequency localized versions 
of these symbols: 
\[
    \bbm_j(\xi,\eta) = p_j(\xi)p_j(\eta) , 
    \qquad \bbp_j(\xi,\eta) = -(\xi+\eta)p_j(\xi)p_j(\eta), 
    \qquad \bbe_j(\xi,\eta) = (\xi+\eta)^2p_j(\xi)p_j(\eta).
\]

Then a direct computation yields the density flux relations
\[
\frac{d}{dt} \bbM_j(u,\bar u) = \partial_x \bbP_j(u,\bar u), \qquad 
\frac{d}{dt} \bbP_j(u,\bar u) = \partial_x \bbE_j(u,\bar u).
\]


\subsection{Nonlinear density flux identities for the mass and momentum} \label{s:nonlin_den_flux}
Here we will develop the nonlinear counterpart of the above discussion focusing 
on the balanced part of the nonlinearity. We will see in Section \ref{s:Morawetz} 
how the balanced part of the nonlinearity, with the conservative assumption (H2s) and the defocusing condition (H3),
gives us access to the Strichartz norm $\|u\|_{L_{t,x}^6}$, following the strategy developed in \cite{IT-global}, \cite{IT-qnls}, \cite{IT-conjecture}.

For the remainder of the section we will consider the nonlinear equation 
\begin{equation}\label{eq:nls_bal}
    iv_t + v_{xx} = C^{\bal}(v, \bar v, v) + N^{\unbal}(v).
\end{equation}
Here we assume $C^\bal$ is a trilinear form whose symbol satisfies 
the assumptions \ref{h:c_smooth} - \ref{h:c_defocus} and is localized 
to the set where all three input frequencies are comparable to 
the output frequency: hence the 
name balanced.

The nonlinear term $N^{\unbal}$  will capture the rest of the nonlinearity 
as well as further source terms induced by corrections we will make in Section 
\ref{sec:unbalanced_corrections} all of which will 
be at trilinear or higher order in $v$. For now, we 
assume we will have good control over $N^{\unbal}(v)$.

\subsubsection{The modified mass} 
To motivate what follows, we begin with a simpler computation for the $L^2$ norm of 
a solution $v$ of \eqref{eq:nls_bal}: 
\begin{align*}
\frac{d}{dt} \| v\|_{L^2}^2 
&= \int - i C^{\bal}(v,\bar v,v) \cdot \bar v  + i v \cdot 
\ol{C^{\bal}(v,\bar v,v)} \, dx + \int -iN(v)\cdot \bar v + iv \cdot \ol{N(v)} \, dx\\
&:= \int C^{4,\bal}_m(v,\bar v, v, \bar v) \, dx + 
\int N^{\geq 4, \unbal}_m(v) \, dx.
\end{align*}

A-priori the symbol of the quartic form $C^{4,\bal}_m$, defined on the diagonal $\Delta^4 \xi = 0$,
is given by
\[
c^{4,\bal}_m(\xi_1,\xi_2,\xi_3,\xi_4) = 
- i c^{\bal}(\xi_1,\xi_2,\xi_3) 
+ i \bar c^{\bal}(\xi_2,\xi_3,\xi_4)
\]
We can further symmetrize and replace $c^{4,\bal}$ by 
\begin{align*}
    c^{4,\bal}_m(\xi_1,\xi_2,\xi_3,\xi_4) = \frac{i}2 \Big(
-  c^{\bal}(\xi_1,\xi_2,\xi_3) 
- c^{\bal}(\xi_1,\xi_4,\xi_3)
+    \bar c^{\bal}(\xi_2,\xi_3,\xi_4)
+   \bar c^{\bal}(\xi_2,\xi_1,\xi_4)
 \Big).
\end{align*}
\begin{remark}\label{r:real}
There are three symmetries satisfied by $c^{4}_{bal}$, it is (i) symmetric in $\xi_1,\xi_3$, (ii)  symmetric in $\xi_2,\xi_4$, and (iii) hermitian symmetric when switching the pairs $\xi_1,\xi_3$ and $\xi_2,\xi_4$,
\begin{equation}\label{sym-real}
c^{4,\bal}_m(\xi_2,\xi_1,\xi_4,\xi_3)
= \overline{c}^{4,\bal}_m(\xi_1,\xi_2,\xi_3,\xi_4).
\end{equation}
It is this last condition which ensures that the quartic functional $C^{4}_{bal}(u,\bu,u,\bu)$ is real valued.
\end{remark}

In particular we are interested in the behavior of $c^{4,\bal}_m(\xi_1,\xi_2,\xi_3,\xi_4)$ on the resonant set
\[
\calR = \{(\xi_1, \xi_2, \xi_3, \xi_4)\in \mathbb{R}^4 \, / \,\Delta^4 \xi = 0, \,  \Delta^4 \xi^2 = 0\} = \{ \{ \xi_1,\xi_3\} = \{\xi_2,\xi_4\} \}.
\]

On this set we compute
\begin{align*}
    c^{4,\bal}_m(\xi_1,\xi_1,\xi_3,\xi_3) &= \frac{i}2 \Big(
    - c^{\bal}(\xi_1,\xi_1,\xi_3) 
    - c^{\bal}(\xi_1,\xi_3,\xi_3)\\
&\qquad + \bar c^{\bal}(\xi_1,\xi_3,\xi_3)
+  \bar c^{\bal}(\xi_1,\xi_1,\xi_3)
 \Big)\\
& =  \Im( c^{\bal}(\xi_1,\xi_1,\xi_3) 
+ c^{\bal}(\xi_1,\xi_3,\xi_3)),
\end{align*}
which is symmetric in $\xi_1$ and $\xi_3$.
Then we observe that our (H2s) assumption on $C^{\bal}$ shows that this expression 
vanishes when 
$\xi_1 = \xi_3$ (when the frequencies are balanced). 
Thus, we have cancellation when $\xi_1 = \xi_3$ and by the symmetry 
in $\xi_1$ and $\xi_3$ we can expect to factor out two 
derivatives when 
$\xi_1 \neq \xi_3$ which will be amenable to bilinear estimates.

In particular since $c^{4,\bal}_m$ vanishes on the diagonal and we 
may take it to be symmetric in $\xi_1$ and $\xi_3$ as well 
as $\xi_2$ and $\xi_4$ 
means that we can make the smooth division
\[
    q^{4,\bal}_m(\xi_1,\xi_3) =  \frac{c^{4,\bal}_m(\xi_1,\xi_1,\xi_3,\xi_3)}{
    (\xi_1-\xi_3)^2}.
\]
Now if we set 
\[
    c^{4,\res}_m(\xi_1,\xi_2,\xi_3,\xi_4) = ((\xi_1 - \xi_2)(\xi_4-\xi_3) 
    + (\xi_1 - \xi_4)(\xi_2 - \xi_3))q^{4,\bal}_m(\xi_1,\xi_3)
\]
we see that $c^{4,\res}_m$ and $c^{4,\bal}_m$ agree on 
the resonant set $\calR$. The reason we have chosen to write this 
term in this way is because the 
symbol $((\xi_1 - \xi_2)(\xi_3-\xi_4) + (\xi_1 - \xi_4)(\xi_3 - \xi_2))$ 
exactly corresponds to the multiplier 
\[(\partial (v \bar v))^2\]
which will be controllable using bilinear $L^2_{t,x}$ estimates. To emphasize this 
we will write 
\[
    Q^{4,\bal}_m((\partial |v|^2)^2) = C^{4,\res}_m(v,\bar v, v ,\bar v)
\]
on the physical side to emphasize this connection. We will also use the notation
$(\xi_o - \xi_e)^2$ to refer to the symbol 
$(\xi_1 -\xi_2)(\xi_4 - \xi_3) + (\xi_1 - \xi_4)(\xi_2 - \xi_3)$ since 
the symbol consists of odd factors minus even factors.

Note that while $c^{4,\res}_m$ defined above was chosen to agree with 
$c^{4,\bal}_m$ on the resonant set $\calR$, it is not the unique symbol with 
this property off of the resonant set. In fact, 
later we will want to localize $c^{4,\res}_m$ to dyadic regions which 
will alter the specifics of the definition, see Proposition~\ref{prop:symbols}. 

If we define 
\[
    c^{4,\bal}_m = c^{4,\res}_m + c^{4,\rem}_m,
\]
we see that $c^{4,\rem}_m$ vanishes on the resonant set $\calR$. Thus, 
on the diagonal $\Delta^4\xi = 0$, we will be able to divide by $\Delta^4\xi^2$.
In particular 
we can smoothly divide (see Lemma~\ref{l:division})
\[
    b^{4,\bal}_m(\xi_1,\xi_2,\xi_3,\xi_4) = \frac{i c^{4,\rem}_m(\xi_1,\xi_2,\xi_3,\xi_4)}{\Delta^4 \xi^2}
\]
on $\Delta^4 \xi = 0$.

Note that there is an ambiguity between $q^{4,\bal}_m$ and $b^{4,\bal}_m$. In 
particular we may replace
\[
    b^{4,\bal}_m \to b^{4,\bal}_m + f (\xi_o - \xi_e)^2, \qquad
    q^{4,\bal}_m \to q^{4,\bal}_m + f \Delta^4\xi^2
\]
for any smooth $f$ on $\Delta^4\xi = 0$.

We now use $\bB^{4,\bal}_m$ as an energy correction. Then we obtain the modified
energy relation
\begin{equation}\label{eq:modified-mass}
    \begin{aligned}
        \frac{d}{dt} (\| v\|_{L^2_x}^2+ \bB^{4,\bal}_m(v,\bar v, v,\bar v) )  
        &= \bQ^{4,\bal}_m((\partial |v|^2)^2) + \bR^{\geq 6}_m(v)\\
        &+ \bN^{\geq 4,\unbal}_m(v),
    \end{aligned}
\end{equation}
where $\bR^{\geq 6}_m$ is given by
\begin{equation}\label{R6-m-bal} 
\begin{aligned}
    \bR^{\geq 6}_m(v) &= 
    \bB^{4,\bal}_m(-iC^{\bal}(v,\bar v, v), \bar v, v, \bar v) 
    + \cdots 
    + \bB^{4,\bal}_m(v, \bar v, v, i\ol{C^{\bal}(v,\bar v, v)})\\
    &+\bB^{4,\bal}_m(-iN^{\unbal}(v), \bar v, v, \bar v) 
    + \cdots 
    + \bB^{4,\bal}_m(v, \bar v, v, i\ol{N^{\unbal}(v)}).
\end{aligned}
\end{equation}

In \eqref{eq:modified-mass} the left hand side may be viewed as a modified
energy, while the right hand side
can potentially be estimated using the $L^6_{t,x}$ norm of $v$ and 
bilinear $L^2_{t,x}$ estimates. 

\subsubsection{ The modified mass and momentum density-flux pairs}

The key idea here is that, corresponding to the above modified mass, we also want to write a conservation law for an associated mass density
\begin{equation}\label{m-sharp}
\ms(v) = \bbM(v) + B^{4,\bal}_m(v,\bar v, v,\bar v).
\end{equation}
However, when doing this, we remark that the symbol of $B^{4,\bal}_m$ was previously 
defined only on the diagonal $\Delta^4 \xi = 0$, whereas in order for the above 
expression to be well defined we need to extend it everywhere. For the purpose 
of this computation we simply assume that we have chosen some smooth extension.
A more careful choice will be considered later in Subsection \ref{ss:choice}.

Now we compute 
\begin{align*}
    \partial_t \ms(v) &= \partial_x \bbP(v) 
    + i ( \Delta^4 \xi^2 B^{4,\bal}_m)(v, \bar v, v, \bar v) 
    + C^{4,\rem}_m(v,\bar v, v, \bar v) + Q^{4,\bal}_m((\partial |v|^2)^2)\\ 
    &+ R^{\geq 6}_m(v)
    + N_m^{\geq 4, \unbal}(v).
\end{align*}
By the choice of $B^{4,\bal}_m$, the symbol of the quartic term 
$c^{4,\rem}_m +  i  \Delta^4 \xi^2  b^{4,\bal}_m$ vanishes 
on the diagonal $\{\Delta^4 \xi = 0\}$, therefore we can express it smoothly in the form
\begin{equation*}
c^{4,\rem}_m +  i  \Delta^4 \xi^2 \,  b^{4,\bal}_m = i \Delta^4 \xi\, r^{4,\bal}_m. 
\end{equation*}
Unrolling the definition of $c^{4,\rem}$ we have 
\begin{equation}\label{choose-R4m}
c^{4,bal}_m +  i  \Delta^4 \xi^2 \,  b^{4,\bal}_m = i \Delta^4 \xi\, r^{4,\bal}_m
    + (\xi_o - \xi_e)^2 q^{4,\bal}_m. 
\end{equation}

Hence the above relation can be written in the better form
\begin{equation}\label{dens-flux-m}
\begin{aligned}
    \partial_t \ms(v) &= \partial_x (\bbP(v) + R^{4,\bal}_m(v,\bar v, v,\bar v)) +
    Q^{4,\bal}_m((\partial |v|^2)^2)\\ 
    &+ R^{\geq 6}_m(v) +
    N_m^{\geq 4, \unbal}(v).
\end{aligned}
\end{equation}
Note that while in an integral over all of $\R$ the derivative term vanishes, we 
keep track of $R^{4,\bal}_m$ for Interaction Morawetz estimates which have a spatial 
cutoff.

One may view here the relation \eqref{choose-R4m} as a division problem,
where $c^{4,\rem}_m$ vanishes on the resonant set $\calR$. The symbols 
$b^{4,\bal}_m$, $r^{4,\bal}_m$, and $q^{4,\bal}_m$ 
are not uniquely determined by the relation 
\eqref{choose-R4m}, as we can change them by 
\begin{align*}
    &b^{4,\bal}_m \to b^{4,\bal}_m + f \Delta^4 \xi + g (\xi_o - \xi_e)^2,\\
    &r^{4,\bal}_m \to r^{4,\bal}_m + f \Delta^4 \xi^2 + h (\xi_o - \xi_e)^2,\\
    &q^{4,\bal}_m \to q^{4,\bal}_m + g \Delta^4 \xi^2 - h \Delta^4\xi
\end{align*}
for any smooth $f,g,$ and $h$. However, this is the only 
ambiguity. In particular $r^{4,\bal}_m$ is uniquely determined on the 
set $(\xi_o - \xi_e)^2 = \Delta^4 \xi^2 = 0$ , $b^{4,\bal}_m$ is uniquely determined 
on the set $(\xi_o - \xi_e)^2 = \Delta^4 \xi = 0$ and $q^{4,\bal}_m$ is uniquely 
determined on the set $\Delta^4 \xi = \Delta^4\xi^2 = 0$.

\bigskip

One could carry out a similar computation for the momentum,
where the starting point is the relation
\[
    \partial_t \bbP(v) = \partial_x \bbE(v) + C^{4,\bal}_p(v,\bar v, v, \bar v) 
    + N^{\geq 4, \unbal}_p(v).
\]
Precisely, the symbol of $C^{4,\bal}_p$ is initially given by
\[
    c^{4,\bal}_p(\xi_1,\xi_2,\xi_3,\xi_4) =  i (\xi_1-\xi_2+\xi_3+\xi_4) c^{\bal}(\xi_1,\xi_2,\xi_3) - i 
    (\xi_1+\xi_2-\xi_3+\xi_4)\bar c^{\bal}(\xi_2,\xi_3,\xi_4).
\]
However, we can further symmetrize it and split it exactly as in the case of 
$C_m^{4,\bal}$. 
In particular we define 
\[
C^{4,\bal}_{p} = C^{4,\res}_p + C^{4,\rem}_p
\]
where the symbol of $C^{4,\res}_p$ captures the transversal part of the nonlinearity 
which we can control and is given by
\[
c^{4,\res}_p(\xi_1,\xi_2,\xi_3,\xi_4) = ((\xi_1 - \xi_2)(\xi_4-\xi_3) 
    + (\xi_1 - \xi_4)(\xi_2 - \xi_3)) \frac{c^{4,\bal}_p(\xi_1,\xi_1,\xi_3,\xi_3)}{
    (\xi_1-\xi_3)^2}.
\]
Then,
$C^{4,\rem}_p$ vanishes 
on the resonant set $\calR$, so it admits a 
(nonunique) representation of the form
\begin{equation}\label{choose-R4p}
c^{4,\rem}_p +  i  \Delta^4 \xi^2 b^{4,\bal}_p = i \Delta^4 \xi r^{4,\bal}_p. 
\end{equation}

Hence, as in the case of the mass, we define a quartic correction for the momentum density
\[
\ps(v) = \bbP(v) + B^{4,\bal}_p(v,\bar v, v,\bar v).
\]
This satisfies a conservation law of the form
\begin{equation}\label{dens-flux-p}
    \begin{aligned}
        \partial_t \ps(v) &= \partial_x (\bbE(v) + R^{4,\bal}_p(v,\bar v, v,\bar v)) 
        +Q^{4,\bal}_p((\partial|v|^2)^2)\\ 
        &+R^{\geq 6}_{p}(v)
        + N^{\geq 4, \unbal}_p(v) .
    \end{aligned}
\end{equation}

\bigskip


\subsection{Localized density-flux identities for mass and momentum}

In our analysis later on, we will not use density-flux pairs for global estimates, but instead we will use them only in a frequency localized setting. 

In previous work \cite{IT-global}, we were motivated to 
take very general localization symbols to cover arbitrary intervals. Here 
we will use dyadic intervals. Recall that by scaling the optimal 
size of these intervals would be $\dyad^{-2sj}$ when $\xi \sim \dyad^j$. Since 
$\dyad^{j} \geq \dyad^{-2sj}$ for all $j \geq 0$, we get slightly less control
over energy transfer between frequencies. However, this will dramatically 
simplify the exposition as we will not need to separate the analysis 
of energy transfer between one and several dyadic scales.

We use the Littlewood Paley projections $p_j$ from 
Section \ref{s:not}. Corresponding to 
this frequency projection we define 
\[
    \bbm_j(\xi,\eta) = p_j(\xi)p_j(\eta), \qquad \bbp_j(\xi,\eta) = -(\xi+\eta)
\bbm_j(\xi,\eta), \qquad \bbe_j(\xi,\eta) = (\xi+\eta)^2
\bbm_j(\xi,\eta),
\]
A direct computation yields the relation
\begin{equation}\label{eq:deriv_of_mass}
\partial_t \bbM_j(v) = \partial_x\bbP_j(v) +
    C_{m,j}^{4,\bal}(v) + N^{\geq 4, \unbal}_{m,j}(v),   
\end{equation}
where the symbol $C_{m,j}^{4,\bal}$ is given by 
\[
\begin{aligned}
    c_{m,j}^{4,\bal}(\xi_1,\xi_2,\xi_3,\xi_4) = -\frac{i}{2} 
    [ & \ c^{\bal}(\xi_1,\xi_2,\xi_3) \bbm_j(\xi_1-\xi_2 +\xi_3,\xi_4) 
    +c^{\bal}(\xi_1,\xi_4,\xi_3) \bbm_j(\xi_1-\xi_4 +\xi_3,\xi_2)
   \\ & - \bar{c}^{\bal}(\xi_2,\xi_3,\xi_4) \bbm_j(\xi_1, \xi_2-\xi_3+\xi_4)
   - \bar{c}^{\bal}(\xi_2,\xi_1,\xi_4) \bbm_j(\xi_3, \xi_2-\xi_1+\xi_4)].
\end{aligned}
\]
and $N^{\geq 4, \unbal}_{m,j}$ is defined to be 
\[N^{\geq 4,\unbal}_{m,j}(v) = -i P_jN^{\unbal}(v) P_j\bar v 
+ i P_j v P_j\ol{N^{\unbal}(v)}.\]
Consider the first term in $c^{4,\bal}_{m,j}$: 
$c^{\bal}(\xi_1,\xi_2,\xi_3) m_j(\xi_1-\xi_2+\xi_3,\xi_4)$.
Since $c^{\bal}$ is localized near where all input frequencies are equal
to the output frequency, then in fact all frequencies $\xi_1, \ldots, \xi_4$ 
are localized 
near $\dyad^j$. See Definition \ref{def:c_splitting} for the precise 
details. For now, we note that $C^{4,\bal}_{m,j}$ has 
all frequencies localized in the dyadic region corresponding to $\dyad^j$.

A similar identity applies in the case of the localized momentum,
where we simply replace the symbol $\bbm_j$ by $\bbp_j$.

Again, by 
the condition (H2s), the resonant parts of $c^{4,\bal}_{m,j}$
and $c^{4,\bal}_{p,j}$ are amenable to 
bilinear $L^2_{t,x}$ estimates. The remainder will vanish on the
resonant set $\calR.$ Overall we 
can represent it as in the division relation \eqref{choose-R4m}
\begin{equation}\label{eq:choose_R4mj}
    c^{4,\bal}_{m,j} +  i\Delta^4 \xi^2 b^{4,\bal}_{m,j} = 
    i \Delta^4 \xi r^{4,\bal}_{m,j}
    + (\xi_o - \xi_e)^2q^{4,\bal}_{m,j},
\end{equation}
as well as
\begin{equation}\label{eq:choose_R4pj}
    c^{4,\bal}_{p,j} +  i  \Delta^4 \xi^2 b^{4,\bal}_{p,j} = 
    i \Delta^4 \xi r^{4,\bal}_{p,j}
    +(\xi_o - \xi_e)^2q^{4,\bal}_{p,j},
\end{equation}
Then, defining $\ms_j$ and $\ps_j$ as before, 
\begin{equation}\label{ma-sharp}
    \ms_j(v) = \bbM_j(v) + B^{4,\bal}_{m,j}(v,\bar v, v,\bar v),
\end{equation}
\begin{equation}\label{pa-sharp}
    \ps_j(v) = \bbP_j(v) + B^{4,\bal}_{p,j}(v,\bar v, v,\bar v),
\end{equation}
we obtain density-flux identities akin to
\eqref{dens-flux-m},  namely 
\begin{equation}\label{eq:dens_flux_mj}
 \partial_t \ms_j(v) = \partial_x(\bbP_{j}(v)
    + R^{4,\bal}_{m,j}(v)) + Q^{4,\bal}_{m,j}(|\partial|v|^2|^2) 
    + R^{\geq 6}_{m,j}(v)
    + N^{\geq 4, \unbal}_{m,j}(v),
\end{equation}
and 
\begin{equation}\label{eq:dens_flux_pj}
 \partial_t \ps_j(v) = \partial_x(\bbE_{j}(v)
    + R^{4,\bal}_{p,j}(v)) + Q^{4,\bal}_{p,j}(|\partial|v|^2|^2)
    + R^{\geq 6}_{p,j}(v)
    + N^{\geq 4, \unbal}_{p,j}(v).
\end{equation}

To use these density flux relations we need to have appropriate 
bounds for our symbols:

\begin{proposition} 
\label{prop:symbols}
    Let $j \geq 0$. Then the relations \eqref{eq:choose_R4mj}
and \eqref{eq:choose_R4pj} hold with symbols $b^{4,\bal}_{m,j}$, 
$b^{4,\bal}_{p,j}$, $r^{4,\bal}_{m,j}$,  $r^{4,\bal}_{p,j}$,
$q^{4,\bal}_{m,j}$, and $q^{4,\bal}_{p,j}$
which can be chosen to have the following properties:

\begin{enumerate}[label=\roman*)]
    \item Support: $b^{4,\bal}_{m,j}$ and  
$b^{4,\bal}_{p,j}$, may be chosen to be supported on a region where 
all frequencies are in adjacent dyadic intervals which are comparable to 
$\dyad^j$.
$r^{4,\bal}_{m,j}$, $r^{4,\bal}_{p,j}$,
$q^{4,\bal}_{m,j}$, and $q^{4,\bal}_{p,j}$ will have support in a region 
where all frequencies are merely comparable to $\dyad^j$.

    \item Size and Regularity: 
\[
    |\partial^\alpha b^{4,\bal}_{m,j}| \lesssim \dyad^{-2j - |\alpha| j},
    \qquad 
    |\partial^\alpha b^{4,\bal}_{p,j}| \lesssim \dyad^{-j - |\alpha| j},
\]
\[
|\partial^\alpha r^{4,\bal}_{m,j}| \lesssim \dyad^{-j - |\alpha| j},
\qquad
|\partial^\alpha r^{4,\bal}_{p,j}| \lesssim \dyad^{- |\alpha| j},
\]
\[
    |\partial^\alpha q^{4,\bal}_{m,j}| \lesssim \dyad^{-2j - |\alpha| j},
    \qquad 
    |\partial^\alpha q^{4,\bal}_{p,j}| \lesssim \dyad^{-j - |\alpha| j}.
\]
\end{enumerate}
\end{proposition}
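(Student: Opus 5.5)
We will carry out the division in the coordinates adapted to the resonant set, after a scaling reduction to unit frequency. Since all symbols are localized to $|\xi_i| \approx \dyad^j$ and the hypotheses \eqref{normal-reg} say that $c^{\bal}$ is smooth on the dyadic scale, we rescale $\xi = \dyad^j \zeta$. Then $\bbm_j$, $c^{\bal}$ and $c^{4,\bal}_{m,j}$ become smooth symbols with bounded derivatives in $\zeta$, localized to $|\zeta|\approx 1$. We also have $\Delta^4\xi = \dyad^j\Delta^4\zeta$, $\Delta^4\xi^2 = \dyad^{2j}\Delta^4\zeta^2$ and $(\xi_o-\xi_e)^2 = \dyad^{2j}(\zeta_o-\zeta_e)^2$. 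It therefore suffices to solve \eqref{eq:choose_R4mj} at $j=0$ with symbols $O(1)$ and smooth, supported in the stated regions. Rescaling back then produces exactly the factors $\dyad^{-2j}$ for $b_m,q_m$ and $\dyad^{-j}$ for $r_m$, together with $\dyad^{-|\alpha|j}$ per derivative. For the momentum the extra factor $(\xi+\eta)\sim \dyad^j$ in $\bbp_j$ raises every bound by $\dyad^j$, which matches the stated bounds. For $j=0$ (inhomogeneous low frequencies) no rescaling is needed.

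At unit scale we will solve the division problem in linear coordinates. Use $\Delta^4\zeta$ as a transverse coordinate to the diagonal, and on $\{\Delta^4\zeta=0\}$ use $a=\zeta_1-\zeta_2$, $b=\zeta_1-\zeta_4$ together with $\zeta_1$. On that hyperplane one has $\Delta^4\zeta^2 = -2ab$ (up to sign) and $(\zeta_o-\zeta_e)^2 = a^2+b^2$, so the resonant set is $\{a=0\}\cup\{b=0\}$.

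The steps, in order, are as follows.
\begin{enumerate}
\item Set $q = c^{4,\bal}_{m}(\zeta_1,\zeta_1,\zeta_3,\zeta_3)/(\zeta_1-\zeta_3)^2$, symmetrized and extended off the diagonal. The numerator is the imaginary part computed earlier. It vanishes at $\zeta_1=\zeta_3$ by (H2s), and it is symmetric in $\zeta_1,\zeta_3$, so its first-order term also vanishes. A Taylor/Hadamard division by $(\zeta_1-\zeta_3)^2$ is therefore smooth.
\item The remainder $c^{4,\rem} = c^{4,\bal}-(\zeta_o-\zeta_e)^2 q$ vanishes on $\{a=0\}\cup\{b=0\}$ within the hyperplane. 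By Hadamard's lemma applied successively in $a$ and $b$, it equals $ab\cdot g$ with $g$ smooth, which gives $b^{4,\bal} = ig/2$ up to normalization.
\item Off the hyperplane, $c^{4,\bal}+i\Delta^4\zeta^2 b - (\zeta_o-\zeta_e)^2 q$ vanishes on $\Delta^4\zeta=0$. Dividing by $\Delta^4\zeta$ (again by Hadamard) yields $r$.
\end{enumerate}
All of these divisions preserve smoothness with uniform bounds, because the derivatives of the quotient are controlled by derivatives of the numerator on a compact region.

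The support claims need care, and this is the step I expect to be the main obstacle. The naive quotients are only defined near the support of the numerator. To make $b$ supported where all four frequencies lie in adjacent dyadic intervals, we will multiply $b$ by a cutoff $\chi(\zeta)$ equal to $1$ on the support of $c^{4,\bal}_{m,j}$ intersected with the diagonal. This is possible because $c^{\bal}$ is localized to comparable frequencies and $\bbm_j$ forces $\zeta_4$ (or the output) into $I_j$, so that on the diagonal all four frequencies lie in $\bigcup_{k\cong j}I_k$. Replacing $b$ by $\chi b$ changes the left side of \eqref{eq:choose_R4mj} by $i\Delta^4\zeta^2(\chi-1)b$. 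This vanishes on the resonant set but not on the diagonal, so we absorb it using the ambiguities listed after \eqref{choose-R4m}: we rewrite $(1-\chi)\Delta^4\zeta^2 b$ as a $\Delta^4\zeta$ multiple plus a $(\zeta_o-\zeta_e)^2$ multiple and push these into $r$ and $q$. The price is that $r$ and $q$ are supported only where the frequencies are comparable to $\dyad^j$, rather than in adjacent intervals. The subtle point is to make sure that the decomposition $\Delta^4\zeta^2 = \Delta^4\zeta\cdot(\cdot) + (\zeta_o-\zeta_e)^2\cdot(\cdot)$ is impossible globally: it fails near the diagonal, since $\Delta^4\zeta^2\sim ab$ is not in the ideal generated by $a^2+b^2$ and $\Delta^4\zeta$. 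The remedy is to use it only where $1-\chi\neq0$, where the cutoff $\chi$ guarantees that some pair of frequencies is separated by $\gtrsim 1$. There $(\zeta_o-\zeta_e)^2\gtrsim 1$ on the hyperplane, so we can divide by it outright, with a further cutoff keeping all frequencies comparable to $\dyad^j$. The same argument applies verbatim to the momentum.
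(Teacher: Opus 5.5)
Your proposal is correct and is essentially the paper's argument. Proposition~\ref{prop:symbols} is deduced from Lemma~\ref{l:division}, whose proof works at scale $\lambda=\dyad^j$ in the same coordinates (on $\Delta^4\zeta=0$ your $a,b$ are the paper's $\eta_3/2,\eta_2/2$). It divides using only the vanishing on the line $\xi_1=\xi_2=\xi_3=\xi_4$ together with the $\xi_1\leftrightarrow\xi_3$, $\xi_2\leftrightarrow\xi_4$ symmetries, and it uses cutoffs to confine $b$ to mutually adjacent intervals.

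The only difference is the order of operations. The paper first peels off the region $|\eta_1|\gtrsim 1$ into $r$, then the region where $\eta_2$ or $\eta_3$ is large into $q$ (dividing by $\eta_2^2+\eta_3^2-2\eta_1^2$), and only then divides near the diagonal. You instead divide first and truncate $b$ afterwards. This works provided $\chi\equiv 1$ on a unit-size neighborhood of that line. It also requires, as you indicate, that the division by $(\zeta_o-\zeta_e)^2$ be done only near the hyperplane, with the off-hyperplane remainder going into $r$.
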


\begin{proof}
 This is easily done by applying Lemma~\ref{l:division} below,
 see also Remark~\ref{r:division}.

Here we remark on the $i$ factors in the relations \eqref{choose-R4m}, \eqref{eq:choose_R4mj}, \eqref{eq:choose_R4pj}. These are not present in the Lemma~\ref{l:division}, because they are immaterial there, and can be freely added in the above relations; 
but they are needed in \eqref{choose-R4m}, \eqref{eq:choose_R4mj}, \eqref{eq:choose_R4pj}, in order to insure that the
symbols $b^4$, $r^4$ and $q^4$  have the 
hermitian symmetry \eqref{sym-real}, and thus the associated quartic forms are real valued, see Remark~\ref{r:real}. 
 
\end{proof}

\subsection{The choice for the density-flux corrections} \label{ss:choice}

Here we consider the division problem in \eqref{eq:choose_R4mj}, and ask
what should be a good balance between the symbols $B_{m,j}^{4,\bal}$,
 $R_{m,j}^{4,\bal}$, and $Q^{4,\bal}_{m,j}$.
We recall that $b_{m,j}^{4,\bal}$ is uniquely determined on the diagonal 
$\Delta^4 \xi = 0$,
but we can choose it freely away from the diagonal.

In the next lemma we prove Proposition~\ref{prop:symbols} for $c^{4,\bal}_{m,j}$. However we will use this lemma again in Section~\ref{sec:unbalanced_corrections} so we state it in slightly greater generality.

\begin{lemma}\label{l:division}
Let $\lambda = \dyad^j \geq 1$ be an arbitrary dyadic scale
and let 
$c(\xi_1,\xi_2,\xi_3,\xi_4)$ be a smooth symbol with 
    the following properties. 
    $c$ is symmetric in $\xi_1, \xi_3$ and 
$\xi_2, \xi_4$; $c$  
vanishes on the diagonal $\xi_1=\xi_2=\xi_3=\xi_4$; 
    $c$ is supported where all frequencies are comparable to 
$\lambda$; and $c$ is smooth on scale $\lambda$: 
\[
|\partial^\alpha c| \lesssim_\alpha K \lambda^{-|\alpha|}.
\]
Then it admits a representation
\begin{equation}\label{c4-rep-lem}
c^4 =  \Delta^4 \xi\, r^4 - \Delta^4 \xi^2 \, b^4 + 
    (\xi_o - \xi_e)^2q^4,
\end{equation}
where $b^4$ is supported in a region where different 
frequencies are all in mutually adjacent dyadic intervals; 
    where $r^4$ and $q^4$ are 
    supported in the same region as $c$;
and all of these are 
smooth on scale $\lambda$ with size according to their divisions. Explicitly, 
we have the following size properties

\begin{equation} \label{rb4}
\begin{aligned}
    |\partial^\alpha  r^4|  \lesssim & \  \frac{K}{\lambda}\lambda^{-|\alpha|}, 
\qquad 
|\partial^\alpha b^4|  \lesssim & \  \frac{K}{\lambda^2}\lambda^{-|\alpha|},
\qquad
|\partial^\alpha q^4|  \lesssim & \  \frac{K}{\lambda^2}\lambda^{-|\alpha|}.
\end{aligned}   
\end{equation}    

\end{lemma}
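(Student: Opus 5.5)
The plan is to reduce to unit scale, pass to linear coordinates adapted to the hyperplane $\Delta^4\xi=0$, and perform the three divisions one after another by Hadamard-type (Taylor integral) formulas. All three factors $\Delta^4\xi$, $\Delta^4\xi^2$ and $(\xi_o-\xi_e)^2$ are homogeneous, of degrees $1,2,2$. So after rescaling $\xi\mapsto\lambda\xi$ and dividing by $K$ we may assume $\lambda=1$ and $K=1$. The stated bounds \eqref{rb4} then follow by undoing the scaling, with the factors $\lambda^{-1},\lambda^{-2},\lambda^{-2}$ matching the degrees. Uniformity is automatic once the construction at unit scale is done with fixed cutoffs.

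\emph{Step 1: removing $\Delta^4\xi$.} Use the coordinates $(\xi_1,\xi_2,\xi_4,\sigma)$ with $\sigma=\Delta^4\xi$, so that $\xi_3=\sigma-\xi_1+\xi_2+\xi_4$. Write
\[
c=c_0+\Delta^4\xi\, r_1,\qquad c_0:=c|_{\sigma=0},\qquad r_1=\int_0^1\partial_\sigma c(\cdot,t\sigma)\,dt .
\]
To keep supports where all frequencies are comparable to $\lambda$, multiply $c_0$ (extended to be constant in $\sigma$) by a cutoff $\chi$ equal to $1$ on the support of $c$. The difference $c-\chi c_0$ is then divided by $\sigma$ in the same way, and it vanishes outside a fixed compact set. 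Symmetrizing in $\xi_1\leftrightarrow\xi_3$ and $\xi_2\leftrightarrow\xi_4$ preserves all the properties, since $\Delta^4\xi$ is invariant under these swaps.

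\emph{Step 2: the resonant part.} On the hyperplane set $a=\xi_1-\xi_2$ and $b=\xi_1-\xi_4$. Then $\xi_4-\xi_3=a$ and $\xi_2-\xi_3=b$, so
\[
(\xi_o-\xi_e)^2=a^2+b^2,\qquad \Delta^4\xi^2=-2ab,
\]
and the resonant set is $\{ab=0\}$. Define
\[
q_0(\xi_1,\xi_3)=\frac{c(\xi_1,\xi_1,\xi_3,\xi_3)}{(\xi_1-\xi_3)^2}.
\]
This is a smooth division. Indeed, $g(x,y)=c(x,x,y,y)$ is symmetric and vanishes on $x=y$. Differentiating $g(x,x)=0$ gives $\partial_xg+\partial_yg=0$ on the diagonal, while symmetry gives $\partial_xg=\partial_yg$ there, so $g$ vanishes to second order. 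A second-order Taylor integral formula in the variable $x-y$ then gives $q_0$ smooth, with unit bounds. Extend $q_0$ off the resonant set as a function of $(\xi_1,\xi_3)$ times a support cutoff.

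The remainder $c_0-(a^2+b^2)q_0$ vanishes on $a=0$ by construction. It also vanishes on $b=0$: there $(\xi_1,\xi_2,\xi_3,\xi_4)=(\xi_1,\xi_2,\xi_2,\xi_1)$, and the $\xi_2\leftrightarrow\xi_4$ symmetry reduces this to the case $a=0$. Hence it equals $ab\,\beta$ with $\beta$ smooth, by dividing successively by $a$ and by $b$ via Hadamard's lemma. We tentatively set $b^4=\beta/2$, so that $-\Delta^4\xi^2\,b^4=ab\,\beta$.

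\emph{Step 3: localization of $b^4$, the main obstacle.} The lemma requires $b^4$ to live where all frequencies lie in mutually adjacent dyadic intervals. Outside that region, but still with all frequencies comparable to $\lambda=1$, some pair is separated, so $a^2+b^2\gtrsim 1$. Take a cutoff $\psi$ equal to $1$ on the adjacent region and supported slightly beyond it. Then write
\[
ab\,\beta=ab\,\psi\beta+(a^2+b^2)\,\frac{ab(1-\psi)\beta}{a^2+b^2}.
\]
The last quotient is smooth with unit bounds, because the denominator is bounded below on the support of $1-\psi$, so it can be moved into $q^4$. The delicate point is choosing $\psi$ compatibly with the buffer built into the definitions of $\cong$ and $\ll$, so that $1-\psi$ vanishes near $a=b=0$ while $\psi$ still has the required support. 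I would choose $\psi$ as a function of $(a^2+b^2)$ at a threshold below the minimal separation $\dyad^{j-2}$ guaranteed by the Remark after the definition of $\ll$.

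Finally, symmetrize all three symbols once more. Undoing the scaling then yields \eqref{c4-rep-lem} with the bounds \eqref{rb4}.
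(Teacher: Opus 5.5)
Your construction is correct and is essentially the paper's own proof. On $\Delta^4\xi=0$ (at $\lambda=1$) the paper's coordinates $\eta_3,\eta_2$ are exactly $2a,2b$, its $q^{near}$ agrees with your $q_0$, and its division of $c(0,\eta_2,\eta_3,\eta_4)-c(0,0,\eta_2+\eta_3,\eta_4)$ by $\eta_2\eta_3$ is your division by $ab$. It also localizes $b^4$ the same way, moving the part where $|\Delta^4\xi|\gtrsim\lambda$ into $r^4$ and the part where $(\xi_o-\xi_e)^2\gtrsim\lambda^2$ into $q^4$; it just does this first rather than last.

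Three small repairs are needed:
\begin{itemize}
\item On the hyperplane $\Delta^4\xi^2=+2ab$, not $-2ab$, so $b^4=-\beta/2$.
\item A cutoff in $a^2+b^2$ alone is constant in the $\Delta^4\xi$ direction, so it cannot localize $b^4$ near the diagonal of $\mathbb R^4$. You also need a cutoff in $\Delta^4\xi$, as in the paper's factor $1-\chi(16\eta_1)$, with the discarded piece divided by $\Delta^4\xi$ and put into $r^4$.
\item $\psi$ must be supported inside the adjacent region, with threshold $\ll(\dyad-1)\lambda$ rather than merely below $\dyad^{j-2}$. The separation in the Remark holds only for $\ll$-separated intervals, not for comparable but non-adjacent ones.
\end{itemize}
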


Here and later in the paper by ``smooth on scale $\lambda$" we mean that the above functions and all their derivatives are bounded, with bounds as in \eqref{rb4}, and where the implicit constant is allowed to depend on $\alpha$, but not on anything else. As usual, only finitely many derivatives are needed in our analysis, but we do  not take the extra step of determining how many.

\begin{remark}\label{r:division}
Compared to the earlier article \cite{IT-global}, here we use a simpler version of the division Lemma,  where 
balanced corrections are localized to nearby dyadic intervals, as opposed to lattice type scale, see also Remark~\ref{r:Hs}.
\end{remark}

\medskip

\begin{proof}
To simplify the notation, we introduce new linear coordinates $(\eta_1, \eta_2, \eta_3, \eta_4)$ where
\[
\eta_1 = \frac{\Delta^4 \xi}{\lambda}, \quad \eta_2 = \frac{\xi_1+\xi_2-\xi_3-\xi_4}{\lambda}, \quad \eta_3 = 
\frac{\xi_1-\xi_2-\xi_3+\xi_4}\lambda, \]

For $\eta_4$ we can choose in a symmetric fashion
\[
\eta_4 = \frac{\xi_1+\xi_2+\xi_3+\xi_4}\lambda.
\]

Note that 
\[
    \frac12 \lambda^2 (\eta_1\eta_4 + \eta_2 \eta_3) = \Delta^4 \xi^2,
\]
\[
    \frac14\lambda^2(\eta_2 - \eta_1)(\eta_2 + \eta_1) = (\xi_2-\xi_3)(\xi_1-\xi_4),
\]
and
\[
    \frac14\lambda^2(\eta_3 - \eta_1)(\eta_3 + \eta_1) = (\xi_4-\xi_3)(\xi_1-\xi_2).
\]
In these coordinates, the condition that $c$  vanishes on the diagonal
becomes the 
condition that 
$c$  vanishes on $\eta_1 = \eta_2 = \eta_3 = 0$. 
The condition that $c$ is symmetric in $\xi_1,\xi_3$ and $\xi_2,\xi_4$ becomes the 
condition that 
\[
    c(\eta_1,\eta_2,\eta_3,\eta_4) = c(\eta_1,-\eta_2,-\eta_3,\eta_4) = 
    c(\eta_1,\eta_3,\eta_2,\eta_4) = 
    c(\eta_1,-\eta_3,-\eta_2,\eta_4).
\]
Further, the condition that $c$ is smooth and compactly 
supported in frequencies comparable to $\lambda$ in these coordinates 
becomes a parallel statement with $\lambda$ replaced by $1$.

First, we eliminate the part of $c$ which is not localized to 
mutually adjacent dyadic intervals. We do this by defining an even 
cutoff $\chi(x)$ which is 1 when $|x|>1$, 0 when $|x| < 1/2$ and smooth 
on scale one. Then on the support of $\chi(16 \eta_1)$, 
$|\eta_1| > 1/32$, and so division by $\eta_1$ is favorable. We do similarly
for $\eta_2$ and $\eta_3$:
\[
    c(\eta_1,\eta_2,\eta_3,\eta_4) = 
    (1-\chi(16\eta_1)+\chi(16\eta_1))(1-\chi(2\eta_2)+\chi(2\eta_2))
    (1-\chi(2\eta_3)+\chi(2\eta_3))c(\eta_1,\eta_2,\eta_3,\eta_4)
\]
Then we write 
    \begin{align*}
        c(\eta_1,\eta_2,\eta_3,\eta_4) &= 
    (1-\chi(16\eta_1))(1-\chi(2\eta_2))
    (1-\chi(2\eta_3))c(\eta_1,\eta_2,\eta_3,\eta_4)\\
        &+ \lambda \eta_1 r^{far}(\eta_1,\eta_2,\eta_3,\eta_4) + 
        \frac14\lambda^2 (\eta_2^2 + \eta_3^2-2\eta_1^2) q^{far}(\eta_1,\eta_2,\eta_3,\eta_4) 
    \end{align*}
    where 
    \[
        r^{far} = \frac{\chi(16\eta_1)}{\lambda\eta_1}
        c(\eta_1,\eta_2,\eta_3,\eta_4)
    \]
    \[
        q^{far} = 4(1-\chi(16\eta_1))\frac{\chi(2\eta_2)
        + \chi(2\eta_3)-\chi(2\eta_2)\chi(2\eta_3))}
        {\lambda^2(\eta_2^2 + \eta_3^2-2\eta_1^2)} c(\eta_1,\eta_2,\eta_3,\eta_4),
    \]
with the additional remark that $\eta_2^2+\eta_3^2-2\eta_1^2\sim\eta_2^2+\eta_3^2 \gtrsim 1$ within the support of $q^{far}$,
so that the last division above is smooth.

Note that on the support of $(1-\chi(16\eta_1))(1-\chi(2\eta_2))
    (1-\chi(2\eta_3))$ we have for instance 
\[
    |\xi_1 - \xi_2| = \frac{\lambda}2 |\eta_1 + \eta_3|
    \leq \frac{\lambda}{2}
\]
which means that $\xi_1$ and $\xi_2$ are in adjacent dyadic regions.
Similarly, every pair is in adjacent regions. So what remains 
has the desired localization for $b$.

For what remains we let $\psi$ be a cutoff which is 1 on the support of $c$ 
and smooth on scale 1.
We write 
\begin{align*}
    c(\eta_1,\eta_2,\eta_3,\eta_4) &= 
    \Big(c(\eta_1,\eta_2,\eta_3,\eta_4)-c(0,\eta_2,\eta_3,\eta_4)\Big)\psi\\
                                   &+\Big(c(0,\eta_2,\eta_3,\eta_4))
    -c(0,0,\eta_2+\eta_3,\eta_4)\Big)\psi\\
                                   &+ c(0,0,\eta_2+\eta_3,\eta_4)\psi.
\end{align*}
The first difference can be smoothly divided by $\eta_1$ since it vanishes when 
$\eta_1 =0$. The second difference can be smoothly divided by $\eta_2\eta_3$ since 
it vanishes when either $\eta_2=0$ or $\eta_3=0$ because of the symmetry property:
$c(0,\eta_2,0,\eta_4) = c(0,0,\eta_2,\eta_4)$.
The third term can be divided by $(\eta_2 + \eta_3)^2$ since $c(0,0,0,\eta_4)= 0$ 
and, in addition, the symmetries
$c(\eta_1,\eta_2,\eta_3,\eta_4)=c(\eta_1,\eta_3,\eta_2,\eta_4)
=c(\eta_1,-\eta_3,-\eta_2,\eta_4)$ make $c(0,0,\cdot,\eta_4)$ even, which
gives $\partial_3 c(0,0,0,\eta_4)=0$.
In particular, we set 
\[
    q^{near} 
    = \frac4{\lambda^2} \frac{c(0,0,\eta_2+\eta_3,\eta_4)}{(\eta_2 + \eta_3)^2}\psi,
\]
\[
    b^{near} = -\frac2{\lambda^2} \frac{c(0,\eta_2,\eta_3,\eta_4)
    - c(0,0,\eta_2+\eta_3,\eta_4)}{\eta_2\eta_3}\psi - q^{near},
\]
and 
\[
    r^{near} = \frac1{\lambda} \frac{c(\eta_1,\eta_2,\eta_3,\eta_4)
    - c(0,\eta_2,\eta_3,\eta_4)}{\eta_1}\psi + \frac12 \lambda \eta_4 b^{near}
    + \frac12 \lambda \eta_1 q^{near}.
\]
We then set 
\[
    b = (1-\chi(16\eta_1))(1-\chi(2\eta_2)) (1-\chi(2\eta_3)) b^{near}
\]
\[
    r = (1-\chi(16\eta_1))(1-\chi(2\eta_2)) (1-\chi(2\eta_3)) r^{near}
    + r^{far}
\]
\[
    q = (1-\chi(16\eta_1))(1-\chi(2\eta_2)) (1-\chi(2\eta_3)) q^{near}
    + q^{far}
\]

This ensures that
\begin{align*}
    \Delta^4 \xi\, r - &\Delta^4 \xi^2 \, b + 
\left[(\xi_1-\xi_4)(\xi_2-\xi_3) + (\xi_1-\xi_2)(\xi_4-\xi_3)\right]q\\
&= \lambda\eta_1 r - \frac12\lambda^2(\eta_2\eta_3+\eta_1\eta_4) b + \frac14\lambda^2(\eta_2^2 + \eta_3^2 - 2\eta_1^2)q\\
&= c(\eta_1,\eta_2,\eta_3,\eta_4)\psi - c(0,0,\eta_2+\eta_3,\eta_4)\psi
+ \frac14 \lambda^2(\eta_2+\eta_3)^2q\\
&= c(\eta_1,\eta_2,\eta_3,\eta_4)
\end{align*}

The support property follows from the cutoff $\psi$.
And, since we chose coordinates at scale $\lambda$, we can see more easily the size properties \eqref{rb4}.
This is because $c^4$ is smooth on the unit scale with respect to 
$\eta_1, \ldots, \eta_4$ and thus so are the 
divisions. Further, by the chain rule, we get a factor of $\lambda^{-1}$ for each $\xi_1, \ldots, \xi_4$ derivative,
which give the desired bounds.

\end{proof}



\section{Interaction Morawetz identities}
\label{s:Morawetz}

\subsection{The linear Schr\"odinger equation} 
\label{s:Morawetz-lin}
The interaction Morawetz inequality aims to capture the fact that the 
momentum moves to the right faster than the mass. 
Here the left/right symmetry is broken
due to the sign choice which is implicit in the choice of the momentum.

\subsubsection{A global computation} To warm up,  
we start with two solutions $u$ and $w$ for the linear 
Schr\"odinger equation. To these we associate
the interaction functional 
\[
\bI(u,w) = \int_{x > y} \bbM(u)(x) \bbP(w)(y) - \bbP(u)(x) \bbM(w)(y)  \,dx dy,
\]
and compute $d\bI/dt$ using the conservation laws \eqref{df-lin}. We have
\[
\begin{aligned}
\frac{d}{dt} \bI(u,w) =  & \ \int_{x > y} \partial_x \bbP(u)(x)
\bbP(w)(y) +  \bbM(u)(x) \partial_y  \bbE(w)(y)
\\ & \ \ \ \ 
-\partial_x \bbE(u)(x)
\bbM(w)(y) - \bbP(u)(x) \partial_y  \bbP(w)(y)\,
dx dy 
\\
= &  \int \bbM(u) \bbE(w) + \bbM(w) \bbE(u) 
- 2 \bbP(u)\bbP(w) \,dx:= 
\int J^4(u,\bu,w,\bw)\, dx
.\end{aligned}
\]
Here  $J^4$ can be chosen\footnote{ Recall that 
a-priori the symbol of $j^4$ is only determined uniquely on the diagonal $\Delta^4 \xi = 0$.}
to have  symbol
\[
j^4(\xi_1,\xi_2,\xi_3,\xi_4) =    4(\xi_1-\xi_4)(\xi_2-\xi_3).
\]
This is because of the following computation on the diagonal $\Delta^4 \xi = 0$:
\[
(\xi_1+\xi_2)^2 + (\xi_3+\xi_4)^2 - 2 (\xi_1+\xi_2)(\xi_3+\xi_4) = (\xi_1+\xi_2-\xi_3 - \xi_4)^2 
=  4 (\xi_1 - \xi_4)(\xi_2 - \xi_3).
\]
Thus we have  the positivity
\[
J^4(u,\bu,w,\bw) = 4 |\partial_x (u \bw)|^2.
\]
The above computation is classically done using integration by parts, see \cite{PV}. 
However, it is more interesting to do it at the symbol level because we want to 
apply it in a more general context. Classically this is done with $u=w$, but here 
we find it convenient to break the symmetry. Primarily, our $w$'s will be spatial 
translations of $u$.


\subsubsection{A frequency localized bound}
Here we localize the solutions $u$ and $w$ at first to the 
same dyadic frequency corresponding to $\dyad^j$.
Consider the following frequency localized functional:
\begin{equation}\label{Ia-def}
\bI_j(u,w) =  \int_{x > y} \bbM_j(u)(x) \bbP_j(w)(y)  -   \bbP_j(u) (x) \bbM_j(w)(y) \, dx dy.
\end{equation}

As above, its time derivative is 
\[
\frac{d}{dt} \bI_j(u,w) =  
\bJ^4_j(u,\bu,w,\bw),
\]
where $J_j^4$ has symbol
\[
j_j^4(\xi_1,\xi_2,\xi_3,\xi_4) = 
4p_j(\xi_1)p_j(\xi_2)p_j(\xi_3)p_j(\xi_4)(\xi_1-\xi_4)(\xi_2-\xi_3).
\]
Then $J_j^4$'s integral is a localized version of the unlocalized version:
\[
    \bJ_j^4 (u,w) = 4 \int |\partial(P_j(u)P_j(\bw))|^2 \, dx.
\]

\subsubsection{ Interaction Morawetz for separated velocities} \label{s:AB}

Here we instead choose two dyadic frequencies $j, k$. In the nonlinear 
problem we will choose $j$ and $k$ to be separated, but for the linear 
problem separation will not come into play.
We write the interaction Morawetz functional 
\[
\bI_{jk} = \int_{x > y} \bbM_j(u)(x) \bbP_k(w)(y)    - \bbP_j(u)(x) \bbM_k(w)(y) \, dx dy ,
\]

Then we compute
\[
\frac{d}{dt} \bI_{jk} = 
\int \bbM_j(u) \bbE_k(w) + \bbE_j(u) \bbM_k(w) - 2  \bbP_j(u) \bbP_k(w) \, dx :=
\bJ^4_{jk}(u,\bu,w,\bw), 
\]
where $\bJ^4_{jk}$ has symbol
\[
j^4_{jk}(\xi_1,\xi_2,\xi_3,\xi_4) = 
4p_j(\xi_1)p_j(\xi_2) p_k(\xi_3)p_k(\xi_4)(\xi_1-\xi_4)(\xi_2-\xi_3).
\]
We can write $\bJ^4_{jk}$ as
\[
    \bJ^4_{jk} = 4\int |\partial(P_j(u)P_k(\bar w))|^2 \, dx.
\]

When $j \ll k$ for instance, then
$(\xi_1-\xi_4)$ and $(\xi_2-\xi_3)$ are both size $\dyad^k$ and 
\[
    \bJ_{jk} \sim \dyad^{2k} \|P_j u  P_k \bw\|_{L^2_x}^2.
\]

\subsection{Nonlinear interaction Morawetz estimates}
Here we consider the same interaction Morawetz functional as above, 
but now apply it to (two) solutions $v$ and $w$ of the nonlinear equation 
\eqref{eq:nls_bal}. 

\subsubsection{ A simple case }
As a starting point, here we consider density-flux pairs as in \eqref{dens-flux-m}, 
\eqref{dens-flux-p} to which we associate the nonlinear interaction functional 
\begin{equation}
\bI(v,w) = \iint_{x > y} \ms(v)(x) \ps(w)(y) - \ps(v)(x) \ms(w)(y) \, dx dy .    
\end{equation}
Using the density-flux relations we obtain
\begin{equation}
\frac{d\bI}{dt} = \bJ^4 + \bJ^6 + \bJ^8 + \bK^{\geq 6},
\end{equation}
where $\bJ^4$ is the same as above, while $\bJ^6$ and $\bJ^8$ are given by
\begin{equation}
\begin{aligned}
    \bJ^6(v,w) =  \int & \ \bbM(v) R^{4,\bal}_{p}(w)+ B^{4,\bal}_m(v) \bbE(w)
    - \bbP(v)B^{4,\bal}_p(w)- R^{4,\bal}_m(v)\bbP(w) +
\\ & \ 
    \bbM(w) R^{4,\bal}_{p}(v)+ B^{4,\bal}_m(w) \bbE(v)
    - \bbP(w)B^{4,\bal}_p(v)- R^{4,\bal}_m(w)\bbP(v)\,  dx ,
\end{aligned}
\end{equation}
respectively 
\begin{equation}
    \bJ^8(v,w) =  \int B^{4,\bal}_m(v) R^{4,\bal}_{p}(w) 
    - R^{4,\bal}_{m}(v) B^{4,\bal}_p(w)
    + B^{4,\bal}_m(w) R^{4,\bal}_{p}(v) 
    - R^{4,\bal}_{m}(w) B^{4,\bal}_p(v)\,  dx .
\end{equation}
Finally, we are also left with the double integral
\begin{equation}
\begin{aligned}
    \bK^{\geq 6} &= \iint_{x > y} \ms(v)(x) 
    (Q^{4,\bal}_p(|\partial|w|^2|^2)(y) + N^{\geq 4, \unbal}_p(w)(y) 
    + R^{\geq 6}_{p}(w)(y))\\
    &\qquad + \ps(w)(y) (Q^{4,\bal}_m(|\partial|v|^2|^2)(x) 
    + N^{\geq 4, \unbal}_m(v)(x) 
    + R^{\geq 6}_{m}(v)(x)) \, dx dy \\ 
    \quad &- \iint_{x > y}
    \ms(w)(y) (Q^{4,\bal}_p(|\partial|v|^2|^2)(x) + N^{\geq 4, \unbal}_p(v)(x) 
    +  R^{\geq 6}_{p}(v)(x))\\
    &\qquad + \ps(v)(x) (Q^{4,\bal}_m(|\partial|w|^2|^2)(y) 
    + N^{\geq 4, \unbal}_m(w)(y) 
    +  R^{\geq 6}_{m}(w)(y)) \, dx dy,
\end{aligned}
\end{equation}
whose leading part has order $6$ but also contains terms of orders $8$ and higher;  
but we will treat it all perturbatively later.

\bigskip

    It is instructive to consider the case where $C^{\bal} = 1$, 
    which corresponds to the cubic defocusing NLS. 
There we may take $B^{4,\bal}_m= 0$ and $B^{4,\bal}_p=0$.
Further, $R^{4,\bal}_{m} = 0$ but $R^{4,\bal}_p = 1$.
Thus in particular we get 
\[
\bJ^6(v,v) =2 \int |v|^6 \, dx.
\]
This is where the focusing/defocusing type of the equation comes in, as it determines the sign of $\bJ^6$ 
(relative to the sign of $\bJ^4$).

\subsubsection{Nonlinear interaction Morawetz: the localized diagonal case}

Here we use the frequency localized mass density-flux \eqref{eq:dens_flux_mj}
and the corresponding momentum density-flux \eqref{eq:dens_flux_pj}
in order to produce a localized interaction Morawetz estimate.

Correspondingly, we have the localized mass and momentum densities
\[
    \ms_j = \bbM_j(v,\bar v) + B^{4,\bal}_{m,j}(v,\bar v, v,\bar v),
\]
\[
    \ps_{j} = \bbP_{j}(v,\bar v) + B^{4,\bal}_{p,j}(v,\bar v, v,\bar v),
\]
which satisfy the conservation laws \eqref{eq:dens_flux_mj}
and \eqref{eq:dens_flux_pj} copied below:
\[
 \partial_t \ms_j(v) = \partial_x(\bbP_{j}(v)
    + R^{4,\bal}_{m,j}(v)) + Q^{4,\bal}_{m,j}(|\partial|v|^2|^2) 
    + R^{\geq 6}_{m,j}(v)
    + N^{\geq 4,\unbal}_{m,j}(v),
\]
and 
\[
 \partial_t \ps_j(v) = \partial_x(\bbE_{j}(v)
    + R^{4,\bal}_{p,j}(v)) + Q^{4,\bal}_{p,j}(|\partial|v|^2|^2)
    +R^{\geq 6}_{p,j}(v)+ N^{\geq 4, \unbal}_{p,j}(v).
\]

For these we define the interaction Morawetz functional 
\begin{equation}\label{Ia-sharp-def}
\bI_{j}(v,w) =   \iint_{x > y} \ms_j(v)(x) \ps_{j}(w) (y) -  
\ps_{j}(v)(x) \ms_{j}(w) (y) \, dx dy.
\end{equation}

The time derivative of $\bI_{j}$ is
\begin{equation}\label{interaction-xi}
    \frac{d}{dt} \bI_{j} =  \bJ^4_{j} + \bJ^6_{j} + \bJ^8_{j} + \bK^{\geq 6}_{j}. 
\end{equation}

Here the quartic contribution $\bJ^4_{j}$ is 
the same as in the linear case,
\[
\bJ^4_{j}(v,w) = \int \bbM_j(v) \bbE_{j}(w) 
+ \bbM_j(w) \bbE_{j}(v)
- 2 \bbP_{j}(v) \bbP_{j}(w)\, dx.
\]
The sixth order term $\bJ^6_{j}$ has the form
\begin{equation}\label{eq:J6_def}
\begin{aligned}
    \bJ^6_j(v,w) =  \int & \ \bbM_j(v) R^{4,\bal}_{p,j}(w)
    + B^{4,\bal}_{m,j}(v) \bbE_{j}(w)
    - \bbP_{j}(v)B^{4,\bal}_{p,j}(w)- R^{4,\bal}_{m,j}(v)\bbP_{j}(w) 
\\ & \ 
    + \bbM_j(w) R^{4,\bal}_{p,j}(v)+ B^{4,\bal}_{m,j}(w) \bbE_{j}(v)
    - \bbP_{j}(w)B^{4,\bal}_{p,j}(v)- R^{4,\bal}_{m,j}(w)\bbP_{j}(v)\,  dx .
\end{aligned}
\end{equation}
Next we have 
\begin{equation}\label{eq:J8_def}
\bJ^8_{j}(v,w) =   \int 
    B^{4,\bal}_{m,j}(v) R^{4,\bal}_{p,j}(w) 
    - R^{4,\bal}_{m,j}(v) B^{4,\bal}_{p,j}(w)
    + B^{4,\bal}_{m,j}(w) R^{4,\bal}_{p,j}(v) 
    - R^{4,\bal}_{m,j}(w) B^{4,\bal}_{p,j}(v)
 \, dx .
\end{equation}

Finally the remaining term $\bK^{\geq 6}_{j}$ has the form
\begin{equation}\label{eq:Kg6_def}
\begin{aligned}
    \bK^{\geq 6}_j &= \iint_{x > y} \ms_j(v)(x) 
    (Q^{4,\bal}_{p,j}(|\partial|w|^2|^2)(y) + 
    N^{\geq 4,\unbal}_{p,j}(w)(y) 
    +R^{\geq 6}_{p,j}(w)(y))\\
    &\qquad + \ps_j(w)(y) (Q^{4,\bal}_{m,j}(|\partial|v|^2|^2)(x) 
    + N^{\geq 4, \unbal}_{m,j}(v)(x) 
    + R^{\geq 6}_{m,j}(v)(x)) \, dx dy \\ 
    \quad &- \iint_{x > y}
    \ms_j(w)(y) (Q^{4,\bal}_{p,j}(|\partial|v|^2|^2)(x) 
    + N^{\geq 4,\unbal}_{p,j}(v)(x) 
    + R^{\geq 6}_{p,j}(v)(x))\\
    &\qquad + \ps_j(v)(x) (Q^{4,\bal}_{m,j}(|\partial|w|^2|^2)(y) 
    + N^{\geq 4,\unbal}_{m,j}(w)(y) 
    + R^{\geq 6}_{m,j}(w)(y)) \, dx dy,
\end{aligned}
\end{equation}

Importantly, here we compute the symbol of $\bJ^6_{j}$ on the diagonal
$\xi_1 = \xi_2=\xi_3=\xi_4=\xi_5=\xi_6:=\xi$. The next result, proved in \cite{IT-global}, will be essential later on in order to obtain bounds for the $L^6_{t,x}$ Strichartz norm.

\begin{lemma}
The diagonal trace of the symbol $j^6_{j}$
is 
\begin{equation}\label{good-J6}
j^6_{j}(\xi) = p_j^4 (\xi) c (\xi, \xi, \xi) .
\end{equation}
\end{lemma}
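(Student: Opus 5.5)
The plan is to compute the diagonal trace directly at the symbol level, in the spirit of the corresponding computation in \cite{IT-global}. The only input needed is the division relations \eqref{eq:choose_R4mj}, \eqref{eq:choose_R4pj}, expanded to first and second order at the diagonal point $\xi_1=\xi_2=\xi_3=\xi_4=\xi$.

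\emph{Step 1: evaluate the quadratic symbols.} On the full diagonal all six frequencies equal $\xi$, and the quadratic symbols take the values
\[
\bbm_j=p_j^2(\xi),\qquad \bbp_j=-2\xi p_j^2(\xi),\qquad \bbe_j=4\xi^2p_j^2(\xi).
\]
Each of the two halves of \eqref{eq:J6_def} then contributes
\[
p_j^2\big(r^{4,\bal}_{p,j}+4\xi^2 b^{4,\bal}_{m,j}+2\xi b^{4,\bal}_{p,j}+2\xi r^{4,\bal}_{m,j}\big),
\]
with all quartic symbols evaluated at $(\xi,\xi,\xi,\xi)$. This is unambiguous. By the remarks after \eqref{choose-R4m}, the values of $b$ and $r$ at the diagonal point are uniquely determined, since that point lies in $\{(\xi_o-\xi_e)^2=\Delta^4\xi=0\}$. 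Equivalently, the admissible modifications $f,g,h$ add only terms vanishing there. So it suffices to compute these four diagonal values.

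\emph{Step 2: extract the diagonal values from the division relations.} I will Taylor expand \eqref{eq:choose_R4mj} and \eqref{eq:choose_R4pj} at the diagonal point, working in the $\eta$ coordinates of Lemma~\ref{l:division}.

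At the diagonal point, $c^{4,\bal}_{m,j}$, $\Delta^4\xi$, $\Delta^4\xi^2$ and $(\xi_o-\xi_e)^2$ all vanish, and $(\xi_o-\xi_e)^2$ vanishes to second order. At first order we have $\nabla\Delta^4\xi=(1,-1,1,-1)$ and $\nabla\Delta^4\xi^2=2\xi(1,-1,1,-1)$. The first order expansion therefore gives
\[
\partial_{\eta_1}c^{4,\bal}_{m,j}=i\lambda\,(r^{4,\bal}_{m,j}-2\xi b^{4,\bal}_{m,j}),
\]
and the analogous identity for the momentum.

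To separate $r$ from $b$ I use the second order term in the $\eta_1\eta_4$ direction. There $\Delta^4\xi^2=\tfrac12\lambda^2(\eta_1\eta_4+\eta_2\eta_3)$ sees $b$ alone, while $\Delta^4\xi\, r$ sees $\partial_{\eta_4}r$. Alternatively, I can note that along the line $\eta_2=\eta_3=0$ the relation reads $c=\lambda\eta_1(r-\tfrac12\lambda\eta_4 b)$ to first order in $\eta_1$, for every $\eta_4$. Differentiating this in $\eta_4$ gives a closed system for $b$ and $r$ on the diagonal line.

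\emph{Step 3: compute the derivatives of $c^{4,\bal}_{m,j}$ and $c^{4,\bal}_{p,j}$.} The $\eta_1$-derivatives are computed from the explicit formulas in terms of $c^{\bal}$ and $\bbm_j$ (resp.\ $\bbp_j$). Two kinds of terms appear:
\begin{itemize}
\item terms where the derivative falls on $p_j$;
\item terms where it falls on $c^{\bal}$, or on the linear factors $(\xi_1\pm\dots)$ in the momentum case.
\end{itemize}
I expect the $p_j'$ terms to cancel after symmetrization. The terms with derivatives of $c^{\bal}$ come in conjugate pairs, so by (H2s) only $\Re\, c^{\bal}$ and imaginary parts of derivatives appear. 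The latter should cancel in the final combination, which is consistent with dropping the $\nabla c$ part of the old (H2). What survives is the contribution where the derivative hits the linear momentum weights, producing $c^{\bal}(\xi,\xi,\xi)\,p_j^2$.

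\emph{Step 4: assemble and identify the main difficulty.} Summing the two halves of \eqref{eq:J6_def} should give exactly $p_j^4(\xi)\,c(\xi,\xi,\xi)$, using that $c^{\bal}=c$ on the diagonal. As a consistency check, for $c=1$ one has $b=0$, $r_m=0$, $r_p=1$ and $p_j\equiv1$, which recovers $\bJ^6=2\int|v|^6$.

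The main obstacle is the bookkeeping in Steps 2--3: verifying that the $\xi$-dependent terms ($4\xi^2 b_m$, $2\xi b_p$, $2\xi r_m$) combine with $r_p$ so that all $\xi$, $p_j'$ and $\Im\nabla c$ contributions cancel. To organize this I will write $r-2\xi b$ (resp.\ its momentum analogue) as the invariant first-order quantities and express the target combination through them. I will then check the remaining second order contribution against the $c=1$ model.
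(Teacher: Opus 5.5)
Your Step 1 is correct: the diagonal values of $b$ and $r$ are unambiguous, and one half of $j^6_j$ at $(\xi,\dots,\xi)$ is $p_j^2(\xi)\,(r_p+4\xi^2 b_m+2\xi b_p+2\xi r_m)$. Here $r_m,b_m,r_p,b_p$ denote the values of the quartic symbols at $(\xi,\xi,\xi,\xi)$.

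The gap is in Step 2. The $\eta_1\eta_4$ coefficient does not separate $r$ from $b$. On the line $\eta_1=\eta_2=\eta_3=0$ your relation only says that $\partial_{\eta_1}c=i\lambda\,(r-\tfrac12\lambda\eta_4 b)$ for every $\eta_4$. Differentiating in $\eta_4$ just differentiates this one identity and introduces $\partial_{\eta_4}r$, $\partial_{\eta_4}b$ as new unknowns, so nothing closes. (The value of $b$ alone sits in the $\eta_2\eta_3$ coefficient, i.e.\ in $\partial_{\eta_2}\partial_{\eta_3}c$.)

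More importantly, no separation is needed; the apparent need comes from the sign in front of $b$. Take $iu_t+u_{xx}=C$ and $\bbp_j=-(\xi+\eta)\bbm_j$; these are the conventions in which $\partial_t\bbM_j=\partial_x\bbP_j$ holds. Under the linear flow, $\partial_t$ multiplies the symbol of a quartic form by $-i\Delta^4\xi^2$, while $\partial_x$ multiplies it by $i\Delta^4\xi$. Hence \eqref{eq:dens_flux_mj}, \eqref{eq:dens_flux_pj} hold only if
\[
c^4=i\Delta^4\xi\, r+i\Delta^4\xi^2\, b+(\xi_o-\xi_e)^2q ,
\]
which has the opposite sign in front of $b$ from the one displayed in \eqref{eq:choose_R4mj}, \eqref{eq:choose_R4pj}.

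With this sign, the first order invariant at the diagonal point is $F=r+2\xi b$; there $\nabla c^4=iF\,(1,-1,1,-1)$. Your target is then exactly
\[
r_p+4\xi^2b_m+2\xi b_p+2\xi r_m=(r_p+2\xi b_p)+2\xi\,(r_m+2\xi b_m)=F_p+2\xi F_m ,
\]
so only first derivatives of $c^4_{m,j}$ and $c^4_{p,j}$ matter. To compute these, proceed as follows:
\begin{itemize}
\item write $c^4_{m,j}=-\frac{i}{2}(\tilde A+\tilde A'-\tilde B-\tilde B')$ in terms of its four summands, and set $\bar\xi=\frac14(\xi_1+\xi_2+\xi_3+\xi_4)$;
\item split the momentum weights as $\xi_1-\xi_2+\xi_3+\xi_4=\Delta^4\xi+2\xi_4$, $\xi_1+\xi_2-\xi_3+\xi_4=-\Delta^4\xi+2\xi_1$, and so on.
\end{itemize}
This gives
\[
c^4_{p,j}=-2\bar\xi\,c^4_{m,j}+\tfrac{i}{2}\Delta^4\xi\,(\tilde A+\tilde A'+\tilde B+\tilde B')+i\big((\xi_4-\bar\xi)\tilde A+(\xi_2-\bar\xi)\tilde A'-(\xi_1-\bar\xi)\tilde B-(\xi_3-\bar\xi)\tilde B'\big).
\]
By (H2s), all four summands equal $p_j^2c(\xi,\xi,\xi)$ at the diagonal, and $c^4_{m,j}=0$ there. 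Differentiating at the diagonal gives $F_p=p_j^2c(\xi,\xi,\xi)-2\xi F_m$, so each half contributes $p_j^4c(\xi,\xi,\xi)$.

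Three further remarks:
\begin{itemize}
\item $F_m$ itself contains $\Re\nabla c$ and $p_jp_j'$ terms. They drop out only in the combination $F_p+2\xi F_m$, not after symmetrization as your Step 3 anticipates.
\item If you keep the printed sign and separate $b$ correctly, you get a spurious extra term. For $p_j\equiv1$ the half trace comes out as $c-4\xi\,\Re\,\partial_{\xi_2}c$ at $(\xi,\xi,\xi)$. This is not Galilean covariant, and your $c=1$ check cannot detect it.
\item The two halves together give $2p_j^4c(\xi,\xi,\xi)$, as your own check $\bJ^6=2\int|v|^6$ shows. So the stated formula is the one-half normalization, which is harmless since only its sign is used later. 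Step 4 as written contradicts your check.
\end{itemize}
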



\subsubsection{Nonlinear interaction Morawetz: the transversal case}

Here we return to the setting of Section~\ref{s:AB}
where we have two frequency intervals corresponding to $\dyad^j$ and $\dyad^k$ 
where $k \gg j$.
Our interaction Morawetz functional is given by
\begin{equation}\label{interaction-bi}
 \bI_{jk} = \int_{x > y} \ms_j(v)(x) \ps_{k}(w)(y)  
    - \ps_{j}(v)(x) \ms_k(w)(y)\, dx dy. 
\end{equation}

Using again the frequency localized mass density-flux 
\eqref{eq:dens_flux_mj} and the corresponding momentum density-flux 
\eqref{eq:dens_flux_pj}
we produce a localized interaction Morawetz estimate,
\begin{equation}\label{interaction-xi-AB}
    \frac{d}{dt} \bI_{jk} =  \bJ^4_{jk} + \bJ^6_{jk} + \bJ^8_{jk} + \bK^{\geq 6}_{jk}.
\end{equation}

Here the quartic contribution $\bJ^4_{jk}$ is 
the same as in the linear case,
\[
\bJ^4_{jk} = \int \bbM_j(v)(x) \bbE_{k}(w)(x) + \bbM_k(w)(x) \bbE_{j}(v)(x)-  
2 \bbP_{j}(v)(x) \bbP_{k}(w)(x)\, dx,
\]
and captures the 
bilinear $L^2_{t,x}$ bound.

The sixth order term $\bJ^6_{jk}$ has the form
\begin{equation}\label{eq:J6jk_def}
\begin{aligned}
    \bJ^6_{jk} &= \int -( \bbP_{j}(v) B^{4,\bal}_{p,k}(w)
    + \bbP_{k}(w) R^{4,\bal}_{m,j}(v)) + ( 
    \bbM_j(v) R^{4,\bal}_{p,k}(w) +\bbE_{k}(w) B^{4,\bal}_{m,j}(v))\\  &
    \qquad -( \bbP_{k}(w) B^{4,\bal}_{p,j}(v)
    + \bbP_{j}(v) R^{4,\bal}_{m,k}(w))+ ( 
    \bbM_k(w) R^{4,\bal}_{p,j}(v) +\bbE_{j}(v) B^{4,\bal}_{m,k}(w))\, dx.
\end{aligned}
\end{equation}

Next we have 
\begin{equation}\label{eq:J8jk_def}
    \bJ^8_{jk} =   \int - R^{4,\bal}_{m,j} B^{4,\bal}_{p,k}
 +  
 B^{4,\bal}_{m,j} R^{4,\bal}_{p,k}  + \text{symmetric} \,      dx .
\end{equation}

Finally the remainder term $\bK^{\geq 6}_{jk}$ has the form
\begin{equation}\label{eq:Kg6jk_def}
\begin{aligned}
    \bK^{\geq 6}_{jk} &= \iint_{x > y} \ms_j(u)(x) 
    (Q^{4,\bal}_{p,k}(|\partial|w|^2|^2)(y) + 
    N^{\geq 4,\unbal}_{p,k}(w)(y) 
    + R^{\geq 6}_{p,k}(w)(y))\\
    &\qquad + \ps_k(w)(y) (Q^{4,\bal}_{m,j}(|\partial|v|^2|^2)(x) 
    + N^{\geq 4, \unbal}_{m,j}(v)(x) 
    + R^{\geq 6}_{m,j}(v)(x)) 
    - \text{symmetric} \, dx dy 
\end{aligned}
\end{equation}



\section{Frequency envelopes and the bootstrap argument}

\label{s:boot}

The goal of this section is to develop the structure we will need in 
the remainder of the paper to prove our main result, Theorem~\ref{t:main}.
Although our main goal is to establish a small global $L^\infty_t H^s_x$
bound for solutions with $L^2$ initial data which is small in $H^s$, 
along the way we will also 
establish bilinear $L^2_{t,x}$ and $L^6_{t,x}$ Strichartz bounds for the solutions.
These will both 
play an essential role in the proof of Theorem~\ref{t:main},
and will also establish the dispersive properties of our global solutions.

Since the proof of our estimates loops back in a complex manner, 
it is most convenient to establish the bilinear $L^2_{t,x}$ and the $L^6_{t,x}$ Strichartz 
bounds in the setting of a bootstrap argument, where we already assume that the 
desired bilinear and Strichartz estimates hold but with weaker constants. 

Note that so far, we have developed the tools to handle balanced interactions,
but not yet all unbalanced interactions, the analysis of which is in 
Section \ref{sec:unbalanced_corrections}. The motivation for introducing the 
bootstrap estimates first is that Section \ref{sec:unbalanced_corrections} relies 
heavily on the bilinear $L^2_{t,x}$ and Strichartz bounds in order to understand which interactions
are uncontrollable.

\bigskip

To start with, we assume that the initial data has small $H^s$ size,
\[
\| \du_0\|_{H^s} \lesssim \epsilon,
\]
but potentially large $L^2$ size,
\[
\| \du_0\|_{L^2} < \infty.
\]
We consider a frequency decomposition for the initial data on an
inhomogeneous dyadic spatial scale, 
\[
    \du_0 = \sum_{j \in \Z^{\geq 0}} \du_{0,j}.
\]
Then we place the initial data components under an admissible frequency envelope 
according to Definition~\ref{def:admissible_envelope} 
\[
    \|\du_{0,j}\|_{H^{s}} \leq \epsilon c_j, \qquad c \in \ell^2,
\]
where the envelope $\{c_j\}$ is normalized,
\begin{equation*}
\| c\|_{\ell^2} \approx 1
\end{equation*}
and also preserves the $L^2$ norm of $\du_0$,
\begin{equation*}
  \|\du_0\|_{L^2} \approx  \epsilon \| \dyad^{-sj}c_j\|_{\ell_j^2}.
\end{equation*}

Our goal will be to establish the following frequency envelope bounds for the solution:

\begin{theorem}\label{t:boot}
Fix $-\frac12 < s < 0$.
Let $u \in C([0,T];L^2)$ be a solution for the equation \eqref{nls}
with initial data $\du_0$  which has $H^s$ size at most $\epsilon$, where $\epsilon \ll 1$.
Let $\{\epsilon c_j\}$ be an admissible frequency envelope for the initial data 
in $H^s$, with $c_j$ normalized in $\ell^2$. Then the solution $u$ satisfies 
the following bounds, where all space-time norms are taken over
$[0,T]\times\R$ and the implicit constants are independent of $T$:
\begin{enumerate}[label=(\roman*)]
\item Uniform frequency envelope bound:
\begin{equation}\label{eq:uj_ee}
    \| P_ju \|_{L^\infty_t L^2_x} \lesssim \epsilon c_j\dyad^{-sj}
\end{equation}
\item Localized Strichartz bound:
\begin{equation}\label{eq:uj_se}
    \| P_j u \|_{L^6_{t,x}} \lesssim (\epsilon c_j)^{2/3}\dyad^{(1 - 4s)j/6} ,
\end{equation}
\item Localized Interaction Morawetz:
\begin{equation}\label{eq:uj_loc_bi}
\| \partial_x (P_j u P_j \bar u)  \|_{L^2_{t,x}} \lesssim  \epsilon^2 c_j^2 
    \dyad^{(1-4s)j/2} ,
\end{equation}
\item Transversal bilinear $L^2_{t,x}$ bound:
\begin{equation} \label{eq:uj_sep_bi}
    \|\partial( P_j u P_k \bar u(\cdot+x_0))  \|_{L^2_{t,x}} 
    \lesssim \epsilon^2 c_{j} c_{k} 
    (\dyad^{j/2} + \dyad^{k/2}) \dyad^{-sj}\dyad^{-sk},
\end{equation}
 for all $x_0 \in \R$.
\end{enumerate}
\end{theorem}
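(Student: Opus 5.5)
We will prove Theorem~\ref{t:boot} by a continuity (bootstrap) argument in $T$. Local well-posedness, Theorem~\ref{thm:lwp}, shows that all four bounds hold with constant $1$ on a short interval. Note that the local $L^6$ bound there is stronger than \eqref{eq:uj_se}, by the remark following it. The quantities involved depend continuously on $T$. It therefore suffices to show the following: if \eqref{eq:uj_ee}--\eqref{eq:uj_sep_bi} hold on $[0,T]$ with a large constant $C$, then they hold with constant $C/2$, provided $\epsilon \ll_C 1$.

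\emph{Step 1: normal form.} We pass from $u$ to the implicit normal form variable
\[
v = u + \sum_{n=1}^N B^{2n+1}(v,\bar v,\dots,v),
\]
with $N$ chosen so that $s > -\frac{N}{2N+1}$. Then $v$ solves \eqref{eq:nls_bal}, with $C^{\bal}$ the dyadically balanced part of $C$. Under the bootstrap hypotheses:
\begin{enumerate}[label=(\alph*)]
\item the map $u \mapsto v$ is invertible (by contraction, using Lemma~\ref{lem:sep} together with the composition remark to control iterated forms);
\item $P_j(v-u)$ is $O(\epsilon^3 c_j\dyad^{-sj})$ in energy, so all four bounds transfer between $u$ and $v$;
\item the source term satisfies
\[
\sup_{x_0}\|P_jN^{\unbal}(v)P_j\bar v(\cdot+x_0)\|_{L^1_{t,x}}\lesssim \epsilon^4 c_j^2\dyad^{-2sj},
\]
together with its transversal analogue for pairs $(j,k)$.
\end{enumerate}
This is the content of Section~\ref{sec:unbalanced_corrections}, and we take it as the input here.

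\emph{Step 2: energy bound \eqref{eq:uj_ee}.} We integrate the localized mass identity \eqref{eq:dens_flux_mj} in $x$ and $t$. The correction $B^{4,\bal}_{m,j}$ has size $\dyad^{-2j}$ by Proposition~\ref{prop:symbols}, so at fixed time it is bounded by $\epsilon^4c_j^4\dyad^{-4sj}\dyad^{-j}\ll \epsilon^2c_j^2\dyad^{-2sj}$, using Bernstein and $s>-\frac12$. The space-time source terms are handled as follows.
\begin{itemize}
\item The $Q^{4,\bal}_{m,j}$ term is bounded by $\dyad^{-2j}\|\partial(P_jvP_j\bar v)\|_{L^2}^2$ through Lemma~\ref{lem:sep}, and \eqref{eq:uj_loc_bi} gives $\epsilon^4c_j^4\dyad^{-4sj}\dyad^{-j}$. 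Since $c_j\lesssim 1$, this is acceptable.
\item The $R^{\geq6}_{m,j}$ terms are sextic and balanced at scale $\dyad^{j}$, with symbol size $\dyad^{-2j}$. They are controlled by $\|P_jv\|_{L^6}^6\dyad^{-2j}\lesssim \epsilon^4c_j^4\dyad^{(1-4s)j}\dyad^{-2j}$. This is again $\lesssim \epsilon^4 c_j^2\dyad^{-2sj}$, because $1-4s-2\le -2s$.
\item The $N^{\unbal}$ term is handled by Step 1(c).
\end{itemize}
Altogether $\|P_jv\|_{L^\infty L^2}^2\le \epsilon^2c_j^2\dyad^{-2sj}(1+O(C^k\epsilon^2))$.

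\emph{Step 3: Strichartz and bilinear bounds.} We use the diagonal functional \eqref{Ia-sharp-def} with $w=v(\cdot+x_0)$. The functional itself is bounded by
\[
\|\ms_j\|_{L^1}\,\|\ps_j\|_{L^1}\lesssim \dyad^{j}\epsilon^4c_j^4\dyad^{-4sj}.
\]
For $x_0=0$, $\bJ^4_j$ controls the left side of \eqref{eq:uj_loc_bi} squared. $\bJ^6_j$ has diagonal symbol $p_j^4c(\xi,\xi,\xi)\ge c_0p_j^4$ by \eqref{good-J6} and (H3). We write $\bJ^6_j = c_0\int|P_jv|^6 + (\text{symbol vanishing on the diagonal})$. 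The error symbol is then divided, as in Lemma~\ref{l:division}, into terms with a factor of the form $\partial(P_jvP_j\bar v)$. These are bounded by (bilinear)$\times$(energy)$\times L^6$-type products and have a small constant. The terms $\bJ^8_j$ and $\bK^{\geq 6}_j$ are bounded similarly to Step 2. For $\bK^{\geq6}_j$ the Step 1 source bound is used, together with $\|\ms_j\|_{L^\infty_tL^1_x}$. Together these yield \eqref{eq:uj_se}, giving $\|P_jv\|_{L^6}^6\lesssim\dyad^{j}\epsilon^4c_j^4\dyad^{-4sj}$, and \eqref{eq:uj_loc_bi}. The transversal bound \eqref{eq:uj_sep_bi} follows in the same way from $\bI_{jk}$ in \eqref{interaction-bi}, with $w=v(\cdot+x_0)$. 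In that case
\[
\bJ^4_{jk}\sim\dyad^{2k}\|P_jvP_k\bar w\|_{L^2}^2,
\]
the functional is of size $\dyad^{k}\epsilon^4c_j^2c_k^2\dyad^{-2sj-2sk}$, and uniformity in $x_0$ is automatic.

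\emph{Main obstacle.} I expect the main difficulty to be Step 1, specifically the source bound (c) for the $hhm\to l$ interactions. It requires the implicit transformation and the enhanced symbol regularity \eqref{extra-reg}. A secondary difficulty is checking that every error in Steps 2--3 actually gains a power of $\epsilon$ or an off-diagonal dyadic factor that can be summed against the admissible envelope. For the error terms in $\bJ^6_j$ I would first try bounding them by (bilinear bound)$\times\|P_jv\|_{L^\infty L^2}\times$Bernstein. If that fails, I would instead refine the choice of extension of $b^{4,\bal}$, in the spirit of Subsection~\ref{ss:choice}.
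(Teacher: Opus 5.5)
Your outline follows the paper's own proof. It uses a continuity argument started from Theorem~\ref{thm:lwp}, takes the implicit normal form of Section~\ref{sec:unbalanced_corrections} as a black box, integrates the localized mass identity for the energy bound, and uses the diagonal and transversal interaction Morawetz functionals for the $L^6_{t,x}$ and bilinear bounds. Your bounds for $B^{4,\bal}_{m,j}$, $Q^{4,\bal}_{m,j}$, the sextic part of $R^{\geq 6}_{m,j}$, $\bI_j$ and $\bI_{jk}$ agree with the paper's.

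\textbf{The $\bJ^6_j$ step fails as written.} The decomposition $\bJ^6_j = c_0\int |P_jv|^6 + (\text{symbol vanishing on the diagonal})$ is false.
\begin{itemize}
\item By \eqref{good-J6} the diagonal trace of $j^6_j$ is $p_j^4(\xi)c(\xi,\xi,\xi)$.
\item The diagonal trace of $c_0\int|P_jv|^6$ is $c_0p_j^6(\xi)$.
\item So your remainder has diagonal trace $p_j^4(c-c_0p_j^2)$. This is nonnegative but not zero, and comparable in size to the main term.
\end{itemize}
Consequently you cannot treat the remainder by factoring out $\xi_{odd}-\xi_{even}$. You also cannot discard it: a $6$-linear form is not nonnegative merely because its diagonal symbol is. The fix, which is what the paper does, is to match the full diagonal trace by a perfect sixth power. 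Set $b(\xi)=p_j^{2/3}(\xi)\,c^{\bal}(\xi,\xi,\xi)^{1/6}$ and write $j^6_j=b(\xi_1)\cdots b(\xi_6)+j^{6,rem}_j$. Then $\int_0^T\bJ^6_j\,dt=\|b(D)v\|_{L^6_{t,x}}^6+\int_0^T\bJ^{6,rem}_j\,dt$, and now $j^{6,rem}_j$ genuinely vanishes on the diagonal. The remainder is then bounded as you propose: one bilinear bound, three $L^6_{t,x}$ bounds, and one Bernstein $L^\infty_{t,x}$ bound. Finally, compare $\|P_jv\|_{L^6_{t,x}}$ with $\|b(D)v\|_{L^6_{t,x}}$ using (H3), as in the paper.

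\textbf{Smaller points.}
\begin{itemize}
\item By \eqref{R6-m-bal}, $R^{\geq 6}_{m,j}$ also contains $B^{4,\bal}_{m,j}(N^{\unbal}(v),\bar v,v,\bar v)$ and its permutations. These are neither sextic nor balanced, and must be handled with the source bound plus Bernstein.
\item For $w=v(\cdot+x_0)$ with $x_0\neq0$, and in the near-parallel case $j\sim k$, the sextic term has no sign. Those bilinear bounds must therefore be closed after, and using, the $L^6_{t,x}$ bound obtained at $x_0=0$.
\item No transversal version of the source bound is needed. $\bK^{\geq 6}_{jk}$ only involves $N^{\geq4,\unbal}_{m,j}(v)$ and $N^{\geq4,\unbal}_{p,k}(w)$, each paired with an $L^\infty_tL^1_x$ bound on the other density.
\item ``Continuity in $T$'' must be uniform in $j$. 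The paper achieves this by restarting Theorem~\ref{thm:lwp} at the maximal time, using the same envelope, which controls both the $H^s$ and the $L^2$ size.
\end{itemize}
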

Here \eqref{eq:uj_loc_bi} can be seen as a particular case of 
\eqref{eq:uj_sep_bi} with $x_0 = 0$ and $j=k$; 
we stated it separately in order 
to ease comparison with earlier work on Interaction Morawetz estimates.

To prove this theorem, we make a bootstrap assumption where we assume the same bounds but with a worse constant $C$, as follows:

\begin{enumerate}[label=(\roman*)]
\item Uniform frequency envelope bound:
\begin{equation}\label{eq:uj_ee_boot}
\| P_ju \|_{L^\infty_t L^2_x} \lesssim C \epsilon c_j\dyad^{-sj} ,
\end{equation}
\item Localized Strichartz bound:
\begin{equation}\label{eq:uj_se_boot}
    \| P_j u \|_{L^6_{t,x}} \lesssim C (\epsilon c_j)^{2/3}\dyad ^{(1 - 4s)j/6} ,
\end{equation}
\item Localized Interaction Morawetz:
\begin{equation}\label{eq:uj_loc_bi_boot}
\| \partial_x (P_j u P_j \bar u)  \|_{L^2_{t,x}} \lesssim  C \epsilon^2 c_j^2 
    \dyad^{(1-4s)j/2} ,
\end{equation}
\item Transversal bilinear $L^2_{t,x}$ bound:
\begin{equation} \label{eq:uj_sep_bi_boot}
    \|\partial( P_j u P_k \bar u(\cdot+x_0))  \|_{L^2_{t,x}} 
    \lesssim C \epsilon^2 c_{j} c_{k} 
    (\dyad^{j/2} + \dyad^{k/2}) \dyad^{-sj}\dyad^{-sk},
\end{equation}
 for all $x_0 \in \R$.
\end{enumerate}

Then we seek to improve the constant in these bounds. 
The gain will come from the fact that the $C$'s will always come paired
with extra $\epsilon$'s. 

We also remark on the need to add translations to the bilinear 
$L^2_{t,x}$ estimates. This is because, unlike the linear bounds
\eqref{eq:uj_ee_boot} and \eqref{eq:uj_se_boot} 
which are inherently invariant with respect to translations, 
bilinear estimates are not invariant 
with respect to separate translations for the two factors.
Translation invariance is required to estimate multilinear symbols with 
integrable kernels, as in Lemma~\ref{lem:sep}. This will be the only
way in which we utilize the translation invariance.

Before we continue, we provide the continuity argument which shows that it 
suffices to prove Theorem~\ref{t:boot} under the bootstrap assumptions 
\eqref{eq:uj_ee_boot}-\eqref{eq:uj_sep_bi_boot}. 

For this, we denote by $T^*$ the maximal time for which the bounds 
\eqref{eq:uj_ee_boot}-\eqref{eq:uj_sep_bi_boot}
hold in $[0,T^*]$. By the local well-posedness
result, Theorem \ref{thm:lwp}, we have $T^* > 0$. Assume by contradiction that $T^*$
is finite.
Then the bootstrap version of the theorem implies that the bounds 
\eqref{eq:uj_ee}-\eqref{eq:uj_sep_bi}
hold in $[0,T^*]$. 

In particular, $u(T^*)$ will also be controlled 
by the same frequency envelope $c_j$ coming from the initial data, 
which we have chosen to preserve $L^2_x$ as well as $H^s_x$ so 
that we may apply Theorem \ref{thm:lwp}.
This implies in turn that 
the bounds \eqref{eq:uj_ee_boot}-\eqref{eq:uj_sep_bi_boot} hold in $[T^*,T^*+T]$
with $C \approx 1$. Adding this to the bounds \eqref{eq:uj_ee}-\eqref{eq:uj_sep_bi}
in $[0,T^*]$, using  the subadditivity of the $L^6_{t,x}$ and bilinear norms over time intervals, it follows that  \eqref{eq:uj_ee_boot}-\eqref{eq:uj_sep_bi_boot} 
hold in $[0,T^*+T]$, thereby contradicting the maximality of $T^*$.


\section{Unbalanced normal form corrections}
\label{sec:unbalanced_corrections}

In this section, we turn our attention to the unbalanced part of the nonlinearity 
in 
\[
    C = C^{\bal} + C^{\unbal}
\]
where at least one input frequency is separated from the output frequency. 
Note that throughout
this section, we will unsymmetrize multilinear forms so that their 
highest frequency input is the first input, the next highest is the second 
and so on.

Unlike the analysis in Section \ref{s:nonlin_den_flux} 
for the balanced part of the nonlinearity, we 
will not need to take advantage of symmetry to control 
the resonant part of $C^{\unbal}$ since we will have enough separation 
to utilize bilinear $L^2_{t,x}$ estimates. Thus, for simplicity of 
the following argument, we choose to interpret $C^{\unbal}$ as a 
source term at the level of the equation. 

In particular, we will define resonant and nonresonant interactions 
based on the size of the source term version of the symbol $\Delta^4\xi^2$ given by
\[
    \Delta^4\xi^2 = \xi_1^2 - \xi_2^2 + \xi_3^2 - (\xi_1 - \xi_2 + \xi_3)^2
    =2(\xi_1 - \xi_2)(\xi_2 - \xi_3)
\]
where $\xi_1 - \xi_2 + \xi_3$ is the output frequency of the trilinear form, here
$C^{\unbal}$. In the unbalanced regime, we can classify 
resonant and non-resonant interactions based on the relative dyadic 
sizes of the input and output frequencies. In particular, typical 
unbalanced resonant interactions will be of the form
\[
    \sum_{h \gg j} P_j(P_h u P_h \bar u P_j u)
\]
as $\Delta^4\xi^2$ may be close to zero in this region,
while typical nonresonant interactions will be of the form 
\[
    \sum_{h \gtrsim m \gg j} P_j(P_h u P_h \bar u P_m u)
\]
with $\Delta^4\xi^2 \sim \dyad^h \dyad^m$. Note that as $s$ approaches 
$-1/2$, we will lose control of high frequencies more rapidly than 
low frequencies as 
\[
    \|P_j u\|_{L^2_x} \sim \dyad^{-sj}\|P_j u\|_{H^s}.
\]
Thus we expect both resonant interactions and nonresonant interactions which 
involve frequencies lower than the output frequency 
to be perturbative. 

Based on this,
we further divide $C^{\unbal}$ into perturbative, resonant, and non-resonant parts:
\[
    C^{\unbal} = C^{\low} + C^{\res} + C^{\nonres}
\]
where we will pin down the specifics of each of these in the following 
subsections.

As we will see in more detail in Section \ref{ss:cubic_corrections}, 
for $s$ close to $-1/2$ we lose control of $C^{\nonres}$. This prompts 
us to look for a normal form change of variables of the form
\[
    v = u + B^{3}(u,\bar u, u)
\]
where we choose the symbol of $B^3$ to satisfy 
\[
    \Delta^4\xi^2 \cdot 
    b^3 = -c^{\nonres} 
\]
because then $v$ will satisfy an improved Schr\"odinger equation 
\begin{align*}
    (i\partial_t + \partial_x^2) v &= C(u, \bar u, u) - C^{\nonres}(u, \bar u, u)\\
    &+ B^3(C(u,\bar u, u), \bar u, u) - B^3(u, \ol{C(u,\bar u, u)}, u)
    + B^3(u, \bar u, C(u,\bar u, u)) 
\end{align*}
where $C^{\nonres}$ is canceled. Unfortunately, we will 
not be able to control the fifth order terms. In Section
\ref{ss:cubic_corrections} we take advantage of the fact that 
$B^3(C^{\nonres}(u,\bar u, u), \bar u, u)$ can be corrected 
up to lower order terms by $B^3(B^3(u,\bar u, u),\bar u, u)$. 
This will prompt us 
to instead consider \emph{implicit normal forms} 
of the form 
\begin{align*}
    v &= u + B^{3}(v,\bar v, v)\\
    &= u + B^3(u, \bar u, u) + B^3(B^3(u, \bar u, u), \bar u, u) + \cdots.
\end{align*}
This improves our situation but does not fully 
remove all unbounded quintic terms.
In Section \ref{ss:quintic_corrections} we add to our implicit normal 
form corrections to account for the quintic term
\[
    B^3(C^\bal(u,\bar u, u), \bar u, u) - B^3(u, \ol{C^\bal(u,\bar u, u)}, u).
\]
to get a normal form transformation of the form
\[
    v = u + B^{3}(v,\bar v, v) + B^5(v, \bar v, v, \bar v, v).
\]
And finally in Section \ref{ss:higher_corrections}, we will 
make corrections concerning $B^3(u, \bar u, C^{\res}(u, \bar u, u))$ as 
well as uncontrollable higher order errors arising from the implicit normal form. 
These 
corrections will generate more errors up to 
some large finite order $2N+1$, depending on $s$, leading 
overall to a normal form change of variables 
\[
    v = u + \sum_{n=1}^N B^{2n+1}(v,\ldots, v). 
\]

For this change of variables to be useful, we want two properties to hold.
First, we want $u$ and $v$ to be comparable in the frequency envelope 
controlled versions of the $L^2_x$, $L^6_{t,x}$ and bilinear 
$L^2_{t,x}$ norms, given enough smallness. Second we will want 
the resulting unbalanced source term in the equation for $v$ to be small 
in the frequency envelope controlled dual norm given smallness in $v$ 
again in frequency envelope controlled $L^2_x$, $L^6_{t,x}$ and bilinear 
$L^2_{t,x}$ norms. 

That both of these are possible is given in the following theorem.

\begin{theorem}\label{thm:unbal_correction}
    Let $u$ solve \eqref{nls} and assume the bootstrap assumptions 
    \eqref{eq:uj_ee_boot}-\eqref{eq:uj_sep_bi_boot} hold for $u$ 
    under an admissible envelope $c$. 

    Then there exists an integer $N$, $2n+1$ linear forms $B^{2n+1}$
    for each $1 \leq n \leq N$, and a unique solution $v$ to 
    \[
        v = u + \sum_{n=1}^N B^{2n+1}(v, \bar v, \ldots, v)
    \]
    such that $u$ and $v$ are comparable in the bootstrap norms:
    for each $T$ we have fixed time energy estimates 
    \[
        \|P_ju(T) - P_j v(T)\|_{L^2_x} \lesssim 
        C^3\epsilon^3\dyad^{-sj}c_j,
    \]
    spacetime Strichartz estimates
    \[
        \|P_ju - P_j v\|_{L^6_{t,x}} \lesssim C^3\epsilon^{8/3}c_j^{2/3}
        \dyad^{(1-4s)j/6},
    \]
    and the following mixed bilinear $L^2_{t,x}$ estimates for all 
    $j, k$ and offsets $x_0$
    \[
        \|\partial_x(P_j(u-v) \ol{P_k(u(\cdot + x_0)-v(\cdot + x_0))})\|_{L^2_{t,x}} 
        \lesssim C^5\epsilon^{6}c_jc_k(\dyad^{j/2} + \dyad^{k/2})\dyad^{-sj}
        \dyad^{-sk}
    \]
    and 
    \[
        \|\partial_x(P_j(u-v) \ol{P_k(u(\cdot + x_0))})\|_{L^2_{t,x}} 
        +\|\partial_x(P_j(u-v) \ol{P_k(v(\cdot + x_0))})\|_{L^2_{t,x}} 
        \lesssim C^3\epsilon^{4}c_jc_k(\dyad^{j/2} + \dyad^{k/2})\dyad^{-sj}
        \dyad^{-sk};
    \]
    and $v$ solves a Schr\"odinger equation 
    \begin{equation}\label{eq:new_nls_simp}
        (i\partial_t + \partial_x^2) v
        = C^\bal(v,\bar v, v) + N^{\unbal}(v)
    \end{equation}
    for some translation invariant nonlinear form $N^{\unbal}$
    such that 
    \begin{equation}\label{eq:source_term}
\sup_{x_0\in \mathbb R}\ \big\| P_j N^{\unbal}(v)\ P_k \ol v(\cdot+x_0)\big\|_{L^1_{t,x}}
\lesssim C^2\epsilon^4\, c_j^2\, \dyad^{-2sj}, \qquad j \cong k,
\end{equation}
where $c$ is the frequency envelope in the bootstrap bounds.

\end{theorem}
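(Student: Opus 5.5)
We will construct the transformation inductively in the order of multilinearity, following the outline given just before the statement.

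\textbf{Splitting and cubic correction.} First we fix the splitting $C^{\unbal} = C^{\low} + C^{\res} + C^{\nonres}$ by dyadic cutoffs in the input and output frequencies. $C^{\res}$ collects the terms $P_j(P_hu\,P_h\bar u\,P_ju)$ with $h\gg j$. $C^{\nonres}$ collects the $hhm\to l$ terms with $h\gtrsim m\gg j$. $C^{\low}$ collects everything in which some input is at or below the output frequency but separated from the highest one. On the support of $c^{\nonres}$ we have $|\Delta^4\xi^2| = 2|(\xi_1-\xi_2)(\xi_2-\xi_3)|\sim \dyad^{h+m}$. We define
\[
b^3 = -\frac{c^{\nonres}}{\Delta^4\xi^2}.
\]
By \eqref{normal-reg}, together with \eqref{extra-reg} in the regime $|\xi_1-\xi_2+\xi_3|\ll|\xi_3|\ll|\xi_1|\sim|\xi_2|$, the symbol $b^3$ is smooth on the relevant product of dyadic scales with size $\dyad^{-h-m}$. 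Lemma~\ref{lem:sep}, together with the subsequent remark on compositions, then shows that $B^3$ has an integrable kernel after dyadic localization.

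\textbf{Perturbative errors.} Using the bootstrap bounds \eqref{eq:uj_ee_boot}--\eqref{eq:uj_sep_bi_boot}, we will check that $C^{\low}$ and $C^{\res}$ satisfy the source bound \eqref{eq:source_term}. For this we pair $P_jC(\cdot)$ with $P_k\bar v$, $j\cong k$, and estimate in $L^1_{t,x}$ by two transversal bilinear $L^2_{t,x}$ bounds. The gain $\dyad^{-h}$ from the high--low pairs beats the loss $\dyad^{-2sh}$ of the high frequencies exactly when $s>-\frac12$, after using the one-sided slowly varying property of $c$. Strictly, $C^{\res}$ is only perturbative when paired directly; its interaction with $B^3$ later generates the terms $B^3(u,\bar u,C^{\res})$, which require their own corrections.

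\textbf{Implicit normal form and higher orders.} Writing $v=u+B^3(v,\bar v,v)$ and differentiating, we obtain quintic terms of the form $B^3(N(v),\bar v,v)$, where $N$ is the full right-hand side. Their nonresonant parts are either cancelled by the implicit structure or removed by a $B^5$ with symbol (quintic nonresonant symbol)$/\Delta^{6}\xi^2$. The same procedure is iterated: at order $2n+1$ we split the error into a \emph{good} part and a part with a nonresonant phase that can be divided out, defining $B^{2n+1}$. A good part is one whose product with $P_k\bar v$ is bounded by $\epsilon^4c_j^2\dyad^{-2sj}$ times a power of $\epsilon$. Each additional order gains a factor $\dyad^{-(1+2s)h}$ relative to the $\dyad^{-2sh}$ cost of an extra high-frequency pair. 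The condition $s>-\frac{N}{2N+1}$ is precisely the condition that all terms of order at least $2N+3$ are good, so we fix such an $N$. Compositions are controlled by the remark after Lemma~\ref{lem:sep}, with constants $C(N)^m$; these are summable because each composition carries a factor $(C\epsilon)^2$.

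\textbf{Solving for $v$ and comparing $u$ with $v$.} The map $v\mapsto u+\sum_n B^{2n+1}(v,\ldots)$ is a contraction in the envelope-weighted norms built from \eqref{eq:uj_ee_boot}--\eqref{eq:uj_sep_bi_boot}. Hence there is a unique solution $v$, and $u-v = \sum_n B^{2n+1}(v,\ldots)$. Estimating $B^3$ with two high inputs in $L^2$ and one in $L^\infty$ via Bernstein, using the factor $\dyad^{-h-m}$, gives the $C^3\epsilon^3$ energy bound. The Strichartz and bilinear bounds for $u-v$ follow from the same multilinear estimates with Strichartz or bilinear norms placed on the factors. The mixed bilinear bounds with a single factor of $u-v$ gain only $\epsilon^4$, while those with two such factors gain $\epsilon^6$.

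\textbf{Main obstacle.} The hardest part will be the high-order bookkeeping: defining the hierarchy $B^n_h$ and the class of good remainders so that the process closes at a finite order $2N+1$. In particular, the $hhm\to l$ interactions regenerated through $B^3(u,\bar u,C^{\res})$ and through the compositions in the implicit form must remain nonresonant with divisible symbols. This is exactly where \eqref{extra-reg} is needed, to keep the divided symbols regular enough for Lemma~\ref{lem:sep}. We would first attempt an induction on $n$ with the hypothesis
\[
|\text{symbol of } B^{2n+1}| \lesssim \dyad^{-(h_1+m_1)}\cdots,
\]
together with dyadic regularity, with one factor $\dyad^{-h-m}$ for each division.
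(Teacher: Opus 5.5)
Your overall architecture matches the paper: a cubic normal form on $C^{\nonres}$, the implicit form $v=u+B^3(v,\bar v,v)$, iteration to a finite order $2N+1$ with $s>-\frac{N}{2N+1}$, and a fixed point for $v$. The gap is in the inductive step you propose, namely splitting each new error into a good part and a part with a nonresonant phase, and dividing the latter by $\Delta^{2n+2}\xi^2$. This breaks down because the obstructing terms are neither good nor uniformly nonresonant.

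The first instance is $B^{3}(C^{\bal}(v,\bar v,v),\bar v,v)-B^{3}(v,\ol{C^{\bal}(v,\bar v,v)},v)$. Here four inputs lie in comparable dyadic intervals near $\dyad^h$, and $\Delta^6\xi^2=2(\xi_1-\xi_2)(\xi_2-\xi_3)+O(\dyad^{h+m})$. The first term can be as large as $\dyad^{2h}$ with either sign, so there is no phase to divide by. The direct bound (three $L^6_{t,x}$ factors and one bilinear factor) closes only for $s>-\frac13$. The paper removes this term with a balanced correction $B^{5,\bal}$ (Proposition~\ref{prop:b5bal}), obtained from the division Lemma~\ref{l:division} applied in the four high frequencies. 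This writes $c^{5,\bal}+\Delta^4\xi^2\, b^{5,\bal}$ as a good remainder containing the null form $(\xi_o-\xi_e)^2q$, which two bilinear bounds control. The hypothesis of that lemma, vanishing of the symbol on the diagonal, is exactly the conservative condition \ref{h:c_conserv+}, $\Im c(\xi,\xi,\xi)=0$. Your argument never uses this condition. The same step recurs at every order ($B^{2n+3,\bal}_h$, Proposition~\ref{prop:bnbal}).

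The second instance is the matched-pair terms $B^{3}(v_{\cong h},\bar v_{\cong h},C^{\res}(v_{\cong h},\bar v_{\cong h},v))$ and $C^{\nonres}(v_{\cong h},\bar v_{\cong h},B^{3}(v_{\cong h},\bar v_{\cong h},v))$. The latter arises when $C(u,\bar u,u)$ is rewritten using $u=v-B^3(v,\bar v,v)$. With $\alpha_k,\delta_k$ the sum and difference of the $k$-th high pair, $\Delta^6\xi^2=\alpha_1\delta_1+\alpha_2\delta_2+O(\dyad^{2m})$. This can vanish when the two medium frequencies are comparable, while the unsymmetrized symbols do not vanish there.

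The paper's mechanism has three parts:
\begin{itemize}
\item It replaces $c^{\himed}(\xi_1,\xi_2,\xi_3)$ by $c^{\himed}(\xi_1,\xi_2,\xi_2-\xi_1)$ in Definition~\ref{def:c_splitting}, so that each factor depends only on its own pair.
\item It undoes the implicit correction in this regime, through the alternating compositions such as $-B^{3}_h\circ B^{3}_h$ in \eqref{eq:normal_form}.
\item It symmetrizes over the pairs, so that the symbol carries the factor $\sum_k\alpha_k\delta_k/\prod_k\alpha_k\delta_k$.
\end{itemize}
One then divides only by $\sum_k\alpha_k\delta_k$ when the two largest medium frequencies are comparable (the cutoff $\chi$ in Definition~\ref{def:bn}), and the $O(\dyad^{2m})$ discrepancy is good. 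This is where \eqref{extra-reg} is actually used: to control the error of that replacement. It is not needed for the regularity of $b^3$, which follows from \eqref{normal-reg} alone (Proposition~\ref{prop:b3_kernel_size}). Your ``main obstacle'' paragraph locates this region, but an induction on symbol sizes does not produce the cancellation.

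Finally, your notion of ``good'' (a bound after pairing with $P_k\bar v$) cannot be pushed through the Neumann series $\sum_k L_v^k$ that the implicit form forces, because $L_v$ inserts terms into high-frequency slots of the corrections. The paper instead uses the unpaired bound of Definition~\ref{def:good_L6_terms}, which corrections preserve via \eqref{eq:bn_pointwise_L1} (Proposition~\ref{prop:b_preserves_good}). The terms of order above $2N+1$ satisfy only the paired bound (Proposition~\ref{prop:unbal_L1}).
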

\begin{remark}
    The four estimates involving $u-v$ show that if we can close the 
    bootstrap estimates for $v$ (or $u$) without the constant $C$, 
    then as long as $\epsilon$ is small enough, we can close the 
    bootstrap estimates for $u$ (or $v$) also without the constant $C$. This 
    is possible because each difference estimate has more powers of 
    $\epsilon$ than the corresponding bootstrap estimate.

    In addition, with the exception of 
    one special term which we will call $B^{3,\hihi}$ in Section 
    \ref{ss:cubic_corrections}, 
    we will be able to construct each $B^{2n+1}$ so 
    that it depends on $v$ through higher frequencies than the output 
    frequencies. This reflects the fact that the contribution of 
    high frequencies is the most problematic as $s \to -1/2$. But even 
    in $B^{3,\hihi}$, two of the three input frequencies will be much 
    higher than the output frequency.

    Finally, in the source term estimate \eqref{eq:source_term}, 
    note that while we include a translation $x_0$ so 
    that we may apply this estimate after applying Lemma \ref{lem:sep}, 
    our proofs going forward 
    will be independent of $x_0$, so we omit it from our arguments going forward
    for simplicity.
\end{remark}

In the following sections, the estimate \eqref{eq:source_term} will 
be of special importance. We will prove it by splitting $N^{\unbal}$ into 
a sum of multilinear forms
\[
    N^{\unbal}(v) = \sum_{n=1}^\infty F^{2n+1}(v)
\]
where we would like to prove \eqref{eq:source_term} for each $F$ with enough 
powers of $\epsilon$ to converge geometrically in the sum. 
It will often be more convenient to prove the following stronger 
version of \eqref{eq:source_term} for a multilinear form:

\begin{definition}\label{def:good_L6_terms}
Fix a constant $\tilde C$ and let $v$ satisfy the bootstrap estimates 
    \eqref{eq:uj_ee_boot}-\eqref{eq:uj_sep_bi_boot}.
For $n \geq 2$, we will call a $2n+1$-linear form $G$ \emph{good} 
    if we have the estimate
    \[
        \|P_jG\|_{L^1_{t,x}} \leq (\tilde C)^{2n+1} \epsilon^{2n-1} 
        \dyad^{(-1/2 -s)j}c_j
    \]
\end{definition}
\begin{remark}
The constant $\tilde C$ will be some large constant depending on $s$ 
and the regularity of the symbol $c$. The reason we insist on 
this constant is because, in the following sections, we will 
sum over $n$ and will need the sum to be at worst geometric.

We also remark that cubic terms can never be good under this definition since 
    the bootstrap estimates \eqref{eq:uj_ee_boot}-\eqref{eq:uj_sep_bi_boot} do 
    not provide a way to estimate $\|v^3\|_{L^1_{t,x}}$. However, quintic terms 
    may, in principle, 
    be estimated by 
    \[
        \|v^5\|_{L^1_{t,x}} \lesssim \|v^2\|_{L^2_{t,x}}\|v\|_{L^6_{t,x}}^3
    \]
    or 
    \[
        \|v^5\|_{L^1_{t,x}} \lesssim \|v^2\|^2_{L^2_{t,x}}\|v\|_{L^\infty_{t,x}}
    \]
    and septic and higher order terms, in addition to these estimates, 
    also have access to 
    \[
        \|v^7\|_{L^1_{t,x}} \lesssim \|v\|^6_{L^6_{t,x}}\|v\|_{L^\infty_{t,x}}.
    \]
    Note that the last of these estimates gives the power of $\epsilon$ in the 
    above definition.
\end{remark}

It is clear that a sum of good terms will 
satisfy \eqref{eq:source_term} from Bernstein's inequality and 
the bootstrap estimate \eqref{eq:uj_ee_boot}. 

This stronger estimate will be helpful as it is respected by $B$ terms 
under the following mild condition.
\begin{proposition}\label{prop:b_preserves_good}
Suppose $G$ is a good term, $v$ satisfies the bootstrap estimates of Theorem
    \ref{thm:unbal_correction}, and $B$ is an order $2N+1$ 
    term satisfying the $L^1$ based fixed time estimate 
\begin{equation}\label{eq:bn_pointwise_L1}
    \begin{aligned}
        &\|P_jB(f, \bar v, \ldots, v)\|_{L^1_x} + \cdots + 
        \|P_jB(v, \bar v, \ldots, \bar v, f)\|_{L^1_x}\\
        &\qquad\leq (\tilde C \epsilon)^{2N} \left(
        \sum_{h \geq j} \|P_h f\|_{L^1_x} + 
        \dyad^{(-1/2-s)j}c_j \sum_{\ell \leq j} \|P_\ell f\|_{L^1_x} \right),
    \end{aligned}
\end{equation}
then
\[ 
    B(G,\bar v, \ldots, v) + \cdots + B(v, \ldots,\bar v,  G)
\]
has the good estimate from Definition \ref{def:good_L6_terms} with
constants depending on $N$.
\end{proposition}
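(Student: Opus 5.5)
We need to show that for each position $i$ in which $G$ is inserted, the term $B(v,\ldots,G,\ldots,v)$ is good. By hypothesis $G$ is a $(2n+1)$-linear good form, so the composite is a $(2n+2N+1)$-linear form and we must prove
\[
\|P_j B(v,\ldots,G,\ldots,v)\|_{L^1_{t,x}} \lesssim_N (\tilde C)^{2n+2N+1}\epsilon^{2n+2N-1}\dyad^{(-1/2-s)j}c_j .
\]
The plan is to apply the fixed time estimate \eqref{eq:bn_pointwise_L1} with $f=G(t)$ at each time $t$, and then integrate in time. Since \eqref{eq:bn_pointwise_L1} is linear in the norms of $f$ with coefficients independent of $t$, integrating in $t$ gives
\[
\|P_jB(\ldots,G,\ldots)\|_{L^1_{t,x}} \le (\tilde C\epsilon)^{2N}\Big(\sum_{h\ge j}\|P_hG\|_{L^1_{t,x}} + \dyad^{(-1/2-s)j}c_j\sum_{\ell\le j}\|P_\ell G\|_{L^1_{t,x}}\Big).
\]
Summing over the $2N+1$ positions costs only a factor $2N+1$, which is absorbed in the $N$-dependent constant.

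It then remains to sum the good bounds for $G$ in the two dyadic sums. For the high sum, $h\ge j$, we use $\|P_hG\|_{L^1_{t,x}}\le \tilde C^{2n+1}\epsilon^{2n-1}\dyad^{(-1/2-s)h}c_h$, together with the admissibility of the envelope, $c_h\lesssim c_j\dyad^{\delta(h-j)}$. Since $-1/2-s<0$, choosing $\delta<1/2+s$ makes the series $\sum_{h\ge j}\dyad^{(-1/2-s+\delta)(h-j)}$ geometric. Its sum is bounded by a constant depending on $s$, giving $\lesssim \tilde C^{2n+1}\epsilon^{2n-1}\dyad^{(-1/2-s)j}c_j$.

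For the low sum, $\ell\le j$, we do not need to exploit decay in $\ell$. We simply bound $\dyad^{(-1/2-s)\ell}c_\ell\le c_\ell\le \|c\|_{\ell^2}\lesssim 1$. The danger is that this leaves a factor $j+1$ after summing; to avoid it we instead use $\sum_{\ell\le j}\dyad^{(-1/2-s)\ell}c_\ell\lesssim \sum_{\ell\ge 0}\dyad^{(-1/2-s)\ell}\lesssim_s 1$, since the exponent is negative and the frequencies are inhomogeneous, $\ell\ge 0$. The prefactor $\dyad^{(-1/2-s)j}c_j$ in \eqref{eq:bn_pointwise_L1} then supplies exactly the required output weight.

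Combining the two sums, the total is bounded by
\[
K_{s,N}\,(\tilde C\epsilon)^{2N}\,\tilde C^{2n+1}\epsilon^{2n-1}\,\dyad^{(-1/2-s)j}c_j
= K_{s,N}\,\tilde C^{2n+2N+1}\epsilon^{2n+2N-1}\dyad^{(-1/2-s)j}c_j ,
\]
which is precisely the good bound for a $(2(n+N)+1)$-linear form, up to a constant depending on $N$ and $s$ but not on $n$. The main (mild) obstacle is exactly this bookkeeping: the constant must be uniform in $n$, so that later sums over compositions stay geometric. This holds because both dyadic sums are bounded independently of $n$. The one point to check is that the parameter $\delta$ in the admissibility definition is taken smaller than $1/2+s$, which is consistent with the choice of $\delta$ made in the local theory, where $\frac12+s-\delta>0$ was already required.
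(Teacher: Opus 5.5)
Your proposal is correct and follows essentially the same route as the paper. You apply the fixed-time estimate \eqref{eq:bn_pointwise_L1} with $f=G$, integrate in time, sum the high-frequency tail using the good bound for $G$ together with the slowly varying condition (which implicitly requires $\delta<\frac12+s$, as you note), and sum the low frequencies geometrically using $c_\ell\lesssim 1$ and $s>-\frac12$, with the prefactor $\dyad^{(-1/2-s)j}c_j$ supplying the output weight. Your bookkeeping of the powers $\tilde C^{2(n+N)+1}\epsilon^{2(n+N)-1}$ and of the $n$-independence of the constant is just more explicit than in the paper's brief proof.
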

\begin{proof}
    The proof follows immediately from the fixed time estimate 
    \eqref{eq:bn_pointwise_L1}, Definition \ref{def:good_L6_terms},
    and the slowly varying condition in Definition~\ref{def:admissible_envelope}
    as long as $-1/2 < s$. For the high frequency 
    terms 
    \[
        \|P_h f\|_{L^1_x} \leq 
        \tilde C^{2n+1}\epsilon^{2n-1}
        \dyad^{(-1/2 - s)h} c_h
    \]
    which sums geometrically to the desired bound if $s > -1/2$.
For the low frequency 
    terms we do not need the frequency envelope since it is provided in 
    the estimate. We get 
    \[
        \|P_\ell f\|_{L^1_x} \leq 
        \tilde C^{2n+1}\epsilon^{2n-1}
        \dyad^{(-1/2 - s)\ell} 
    \]
    which sums geometrically to $1$ since $s > -1/2$ which suffices.

\end{proof}

Before we move on with our construction of the relevant corrections, we give 
sufficient conditions on the $B^{2n+1}$ to show existence, uniqueness, and 
comparability of the functions $u$ and $v$ above. 

\begin{definition}\label{def:good_corrections}
    A $2n+1$ linear form $B$ will be a good correction if it satisfies the 
    following estimates:

  (i)   For each $w$ which satisfies the bootstrap estimates 
    \eqref{eq:uj_ee_boot}-\eqref{eq:uj_sep_bi_boot} and appropriate $f$, 
    we have the estimate \eqref{eq:bn_pointwise_L1}

\begin{equation*}
    \begin{aligned}
        &\|P_jB(f, \bar w, \ldots, w)\|_{L^1_x} + \cdots + 
        \|P_jB(w, \bar w, \ldots, \bar w, f)\|_{L^1_x}\\
        &\qquad\leq (\tilde C \epsilon)^{2n}\left(
        \sum_{h \geq j} \|P_h f\|_{L^1_x} + 
        \dyad^{(-1/2-s)j}c_j \sum_{\ell \leq j} \|P_\ell f\|_{L^1_x} \right),
    \end{aligned}
\end{equation*}
as well as $L^2$ and $L^6$ based versions, with $p=2$ and $p=6$ below
\begin{equation}\label{eq:bn_pointwise}
    \begin{aligned}
        &\|P_jB^{2n+1}(f, \bar w, \ldots, w)\|_{L^p_x} + \cdots + 
    \|P_jB^{2n+1}(w, \bar w, \ldots, \bar w, f)\|_{L^p_x}\\
        &\qquad\lesssim (C\epsilon)^{2n} \left(
    \dyad^{j/2}\sum_{h \geq j} \dyad^{-h/2}\|P_h f\|_{L^p_x}
        + c_j\sum_{\ell \leq j} \|P_\ell f\|_{L^p_x}\right),
    \end{aligned}
\end{equation}
and a bilinear estimate
\begin{equation}\label{eq:bn_bilinear_1}
        \|P_jB^{2n+1}(w, \ldots, w)\|_{L^2_{t,x}} 
        \lesssim (C\epsilon)^{2n+1} c_j \dyad^{(-1 - s)j} 
    \end{equation}

(ii) Further, we have the following mixed bilinear estimates whenever 
    $w_1$ and $w_2$ satisfy the bootstrap estimates \eqref{eq:uj_ee_boot} 
    -\eqref{eq:uj_sep_bi_boot} and in addition mutually satisfy the bilinear 
    estimate
    \[
        \|\partial_x(P_jw_1 \ol{P_kw_2(\cdot + x_0)})\|_{L^2_{t,x}}
        \lesssim C\epsilon^2 (\dyad^{j/2} + \dyad^{k/2})\dyad^{-sj}\dyad^{-sk}
        c_j c_k
    \]
    then we have the mixed bilinear estimates for $B$:
    \begin{equation}\label{eq:bn_bilinear_2}
        \| \partial_x(P_jB^{2n+1}(w_1, \ldots, w_1) P_k \bar w_2(\cdot + x_0))\|_{L^2_{t,x}} 
        \lesssim (C\epsilon)^{2n+2} (\dyad^{j/2} + \dyad^{k/2})
        \dyad^{-sj}\dyad^{-sk}c_jc_k 
    \end{equation}
\end{definition}
Now, we are in a position to prove the existence of a 
comparable $v$ from $u$ satisfying the bootstrap assumptions if in addition 
we assume Definition \ref{def:good_corrections} for our $B$ terms. 
\begin{proposition}\label{prop:contraction}
    Suppose for each $1 \leq n \leq N$, $B^{2n+1}$ is a good correction in the sense of Definition 
    \ref{def:good_corrections}. 
Then for every  $u \in C^0_t L^2_x$ satisfying
    the bootstrap estimates \eqref{eq:uj_ee_boot}-\eqref{eq:uj_sep_bi_boot} we have a unique solution $v$ to 
    \[
        v = u + \sum_{n=1}^N B^{2n+1}(v, \ldots, v)
    \]
    which satisfies the bootstrap estimates \eqref{eq:uj_ee_boot}-\eqref{eq:uj_sep_bi_boot}
    and is comparable\footnote{Using also the $C$ constant.} to $u$ according to Theorem \ref{thm:unbal_correction}.
\end{proposition}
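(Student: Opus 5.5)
We will construct $v$ by a fixed point (contraction) argument for the map
\[
\Phi(v) = u + \sum_{n=1}^N B^{2n+1}(v,\bar v,\ldots,v),
\]
posed in the space $X$ of functions $w\in C^0_tL^2_x$ satisfying the bootstrap bounds \eqref{eq:uj_ee_boot}--\eqref{eq:uj_sep_bi_boot} with constant $2C$. The metric on $X$ is the frequency-envelope weighted norm
\[
\|w\|_{Y} = \sup_j \frac{\|P_j w\|_{L^\infty_tL^2_x}}{\epsilon c_j \dyad^{-sj}} + \sup_j \frac{\|P_j w\|_{L^6_{t,x}}}{\epsilon^{2/3} c_j^{2/3}\dyad^{(1-4s)j/6}},
\]
supplemented by the mixed bilinear quantities against functions satisfying the bootstrap bounds. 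The plan is to show that $\Phi$ maps $X$ into itself and is a contraction in $\|\cdot\|_Y$, with contraction constant $O(C\epsilon)$. Uniqueness and the comparability statements of Theorem~\ref{thm:unbal_correction} then follow from the same estimates applied to $v-u=\sum_n B^{2n+1}(v,\ldots,v)$.

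First, the mapping property. For $w\in X$ we estimate each $P_jB^{2n+1}(w,\ldots,w)$ in turn.

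\emph{Energy norm.} Use the $p=2$ version of \eqref{eq:bn_pointwise}, taking one input $f=w$ at fixed time and keeping the remaining $2n$ inputs as $w$. The high-frequency sum $\dyad^{j/2}\sum_{h\ge j}\dyad^{-h/2}\epsilon c_h\dyad^{-sh}$ is summed using the admissibility bound $c_h\lesssim c_j\dyad^{\delta(h-j)}$. It converges because $-\tfrac12-s+\delta<0$ when $s>-\tfrac12$ and $\delta$ is small, giving $\epsilon c_j\dyad^{-sj}$. For the low-frequency sum $c_j\sum_{\ell\le j}\epsilon c_\ell\dyad^{-s\ell}$ we use the second admissibility bound $c_\ell\lesssim c_j\dyad^{(\delta-s)(j-\ell)}$; this gives at worst $\epsilon c_j\dyad^{-sj}$ up to a $\dyad^{\delta j}$-type factor. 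If that factor is not harmless, we instead bound $c_\ell\le 1$ and use the explicit prefactor $c_j$. The net effect is that the $B$ terms contribute $(C\epsilon)^{2n}\cdot C\epsilon c_j\dyad^{-sj}$.

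\emph{Strichartz norm.} The same argument with $p=6$ gives the corresponding Strichartz bound.

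\emph{Bilinear norms.} These follow directly from \eqref{eq:bn_bilinear_2}, taking $w_1=w_2=w$ or $w_2=u$, which yields the gain $(C\epsilon)^{2n+2}$ relative to the bootstrap size $C\epsilon^2$. This is consistent with the exponents $C^3\epsilon^4$ and $C^5\epsilon^6$ claimed in Theorem~\ref{thm:unbal_correction}: the leading contribution comes from $n=1$.

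Adding $u$, which satisfies the bounds with constant $C$, and taking $\epsilon\ll C^{-1}$ closes the self-map property.

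Second, the contraction. For $w_1,w_2\in X$ we expand the multilinear difference $B(w_1,\ldots,w_1)-B(w_2,\ldots,w_2)$ telescopically as a sum of $2n+1$ terms. Each term has exactly one entry $f=w_1-w_2$ and the remaining entries equal to $w_1$ or $w_2$. Definition~\ref{def:good_corrections} is stated with all other entries equal to a single $w$. Its proof by separation of variables (Lemma~\ref{lem:sep}) only uses the bootstrap bounds of each entry, so we will apply it with mixed entries. The fixed-time estimates \eqref{eq:bn_pointwise} then give
\[
\|\Phi(w_1)-\Phi(w_2)\|_Y\lesssim C\epsilon\,\|w_1-w_2\|_Y .
\]
Banach's fixed point theorem yields a unique $v$. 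Comparability with $u$ is read off from $v-u=\sum_n B^{2n+1}(v,\ldots,v)$, using the first step and the bilinear estimate \eqref{eq:bn_bilinear_2} for the mixed $(u-v)\times u$ and $(u-v)\times(u-v)$ bilinear norms. For the latter we apply \eqref{eq:bn_bilinear_2} twice, or once with $w_2$ replaced by $u-v$ scaled, which produces the extra $C^2\epsilon^2$.

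The main obstacle is the bilinear component of the contraction. Closing the bilinear bounds for $\Phi(w_1)-\Phi(w_2)$ requires mixed bilinear control between $w_1-w_2$ and the other functions. However, \eqref{eq:bn_bilinear_2} is stated only for $B$ evaluated on a single function. To handle this, we would run the contraction only in the linear norms (energy and $L^6$), which the fixed-time estimates \eqref{eq:bn_pointwise} control directly. Once the fixed point $v$ exists, we verify the bilinear bounds a posteriori by applying \eqref{eq:bn_bilinear_2} to $w_1=v$. This requires knowing that $v$ and $u$ mutually satisfy the bilinear bound. We obtain that by a continuity argument in time, or along the Picard iterates, each of which inherits the bilinear bound with constant $2C$ by the first step.
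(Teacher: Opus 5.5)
Your architecture is the paper's: you find a fixed point for $v=u+\sum_n B^{2n+1}(v,\ldots,v)$ in the frequency-envelope weighted $L^\infty_tL^2_x\cap L^6_{t,x}$ norms using only the fixed-time bounds \eqref{eq:bn_pointwise} (the paper phrases this via the inverse function theorem). You then prove the bilinear bounds a posteriori along the Picard iterates $v_m=u+\sum_nB^{2n+1}(v_{m-1})$, $v_0=u$. The linear part is fine; in the low-frequency sum, the route $c_\ell\le 1$ with the explicit prefactor $c_j$ is the one that works.

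The gap is in the bilinear step. Expanding $P_jv_m\,\overline{P_kv_m(\cdot+x_0)}$ gives three kinds of terms:
\begin{itemize}
\item the $u\times u$ term, which is the bootstrap;
\item two cross terms $B(v_{m-1})\times u$, handled by \eqref{eq:bn_bilinear_2} with $(w_1,w_2)=(v_{m-1},u)$;
\item the quadratic term $B(v_{m-1})\times B(v_{m-1})$, which \eqref{eq:bn_bilinear_2} does not cover for $w_2\in\{w,u\}$.
\end{itemize}
So your claim that each iterate ``inherits the bilinear bound by the first step'' is unsupported. Your suggestions for the same quadratic term in the $(u-v)\times(u-v)$ estimate are circular. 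Definition~\ref{def:good_corrections}(ii) requires $w_2$ to satisfy the full bootstrap bounds, including its own bilinear bound \eqref{eq:uj_sep_bi_boot}; for $w_2\propto u-v$ that is exactly the estimate being proved. Also, \eqref{eq:bn_bilinear_2} has a $B$ in only one slot, so applying it twice does not bound a product of two $B$'s. Writing $v\times v=u\times v+B(v)\times v$ at the fixed point and using $w_1=w_2=v$ presupposes the self-bilinear bound for $v$. The continuity-in-time argument you mention is not set up. From fixed-time bounds alone, the normalized bilinear norms on $[0,T']$ are only controlled by something like $T'^{1/2}\dyad^{(j+k)/2}$, which is not uniformly small in $j,k$.

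The missing ingredient is hypothesis \eqref{eq:bn_bilinear_1}. You never use it, and it is in Definition~\ref{def:good_corrections} precisely for this term. For $j\ge k$, put the higher-frequency factor in $L^2_{t,x}$ via \eqref{eq:bn_bilinear_1}, and the lower one in $L^\infty_{t,x}$ via the $p=2$ case of \eqref{eq:bn_pointwise} and Bernstein:
\[
\begin{aligned}
\|\partial_x(P_jB^{2n+1}(w)\,\overline{P_kB^{2m+1}(w)(\cdot+x_0)})\|_{L^2_{t,x}}
&\lesssim \dyad^{j}\|P_jB^{2n+1}(w)\|_{L^2_{t,x}}\,\dyad^{k/2}\|P_kB^{2m+1}(w)\|_{L^\infty_tL^2_x}\\
&\lesssim (C\epsilon)^{2n+2m+2}c_jc_k\,\dyad^{k/2}\dyad^{-sj}\dyad^{-sk}.
\end{aligned}
\]
With this, the induction along the iterates closes with constants uniform in $m$, and you pass to the limit by lower semicontinuity of the $L^2_{t,x}$ norms. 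The same computation with $w=v$ gives the $(u-v)\times(u-v)$ bound. The $(u-v)\times u$ and $(u-v)\times v$ bounds then follow from \eqref{eq:bn_bilinear_2} with $(w_1,w_2)=(v,u)$ and $(v,v)$, as you say.
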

\begin{proof}
    The estimates for the linearization of each $B^{2n+1}$ near zero 
    \eqref{eq:bn_pointwise}, in
    addition to the slowly varying condition of the $c_j$ and the fact that 
    $s > -1/2$, show 
    that in a neighborhood of zero including $u$, the nonlinear operator
    \[
        F(v) = v - \sum_{n=1}^N B^{2n+1}(v, \ldots, v)
    \]
    has a bounded, and uniformly invertible derivative in frequency 
    envelope controlled $C^0_t L^2_x \cap L^6_{t,x}$
    provided $\epsilon$ is small enough. In 
    particular, the inverse function theorem applied in frequency envelope weighted norms implies the existence and uniqueness 
    of a $v$ which solves $u = F(v)$ which also satisfies estimates 
    comparable to the bootstrap estimates \eqref{eq:uj_ee_boot} 
    and \eqref{eq:uj_se_boot}. The difference estimates follow now 
    from the linearized estimates \eqref{eq:bn_pointwise} and the bootstrap 
    estimates \eqref{eq:uj_ee_boot} and \eqref{eq:uj_se_boot} on $v$. 
    Note that since the $L^2_x$ contraction holds pointwise 
    and $u$ is continuous in $L^2$, it follows that $v$ is continuous in $L^2_x$
    as well.

    All that remains is to use the estimates \eqref{eq:bn_bilinear_1} 
    and \eqref{eq:bn_bilinear_2} to show that 
    the bilinear bootstrap estimates \eqref{eq:uj_loc_bi_boot} and 
    \eqref{eq:uj_sep_bi_boot} hold for $v$ and that we have 
    the two bilinear difference estimates in Theorem \ref{thm:unbal_correction}. 

    To show $v$ satisfies the bilinear bootstrap estimates, 
    we follow an approximating sequence for $v$. We set 
    \[
        v_m = u + \sum_{n}B^{2n+1}(v_{m-1}, \ldots, v_{m-1})
    \]
    and $v_0 = u$. Now since $v_0$ trivially mutually satisfies the bilinear
    estimate with $u$, we can use the estimates \eqref{eq:bn_bilinear_1} 
    and \eqref{eq:bn_bilinear_2} to show that $v_1$ 
    is controlled by $u$ and that $v_1$ mutually satisfies the bilinear estimate 
    with $u$ by writing 
    \begin{align*}
        P_j v_1 \ol{P_k v_1(\cdot + x_0)} 
        &= 
        P_j (u + \sum_n B^{2n+1}(v_0)) 
        \ol{P_k (u + \sum_n B^{2n+1}(v_0))(\cdot + x_0)}\\
        &=P_j u \ol{P_k u(\cdot + x_0)}
        + \sum_n P_j B^{2n+1}(v_0) \ol{P_k u(\cdot + x_0)}\\
        &\qquad + \sum_n P_j u \ol{P_k B^{2n+1}(v_0)(\cdot + x_0)} 
        + \sum_{n,m} P_j B^{2n+1}(v_0) \ol{P_k B^{2m+1}(v_0)(\cdot + x_0)}.
    \end{align*}
    Note that in the first term above we use the corresponding bootstrap estimates 
    \eqref{eq:uj_loc_bi_boot} and \eqref{eq:uj_sep_bi_boot} for $u$; for the 
    second and third terms we use \eqref{eq:bn_bilinear_2}; and for the 
    fourth term we use \eqref{eq:bn_bilinear_1} on the higher frequency of 
    $j$ and $k$, and use the $L^2_x$ version of \eqref{eq:bn_pointwise} 
    as well as Bernstein's inequality on the lower frequency.

    By induction, we then have that each $v_m$ is controlled 
    in the frequency envelope controlled bilinear norm by $u$, and by taking 
    limits, we have the bilinear bootstrap estimates \eqref{eq:uj_loc_bi_boot}
    and \eqref{eq:uj_sep_bi_boot} for $v$.

    From here, we use the estimates \eqref{eq:bn_bilinear_1} and 
    \eqref{eq:bn_bilinear_2} on the difference. 
    In particular we can show the estimates in Theorem 
    \ref{thm:unbal_correction}, by replacing 
    \[
        u-v = \sum_n B^{2n+1}(v).
    \]
\end{proof}

After this proposition, the only remaining task in the 
proof of Theorem~\ref{thm:unbal_correction} 
will be to construct the good correction terms 
$B^{2n+1}$ in such a way that $v$ satisfies \eqref{eq:new_nls_simp} 
with a source term $N^{\unbal}$ which satisfies \eqref{eq:source_term}. This is 
the objective of the following three subsections.

\subsection{Cubic Corrections}
\label{ss:cubic_corrections}

In this section, we focus on correcting 
\[
C^{\nonres}(u,\bar u, u).
\]
Before we begin, let us pin down the precise definition of this term, which 
is meant to capture nonresonant interactions which are not perturbative. Recall
that a set of frequencies is nonresonant when $\Delta^4\xi^2$ is bounded away from 
zero
and nonperturbative when at least one input frequency is very large 
compared to the output frequency. In all cases where at least one frequency 
is much higher than the output frequency, we must have that two inputs 
have frequencies which are comparable to the highest frequency. There
are two situations where this happens which we would like to distinguish. 

First
when the first and third inputs are not comparable, by unsymmetrizing we may 
assume that the first is larger, then we must have $\xi_1 \sim \xi_2 \sim \dyad^h$
and $\xi_3 \sim \dyad^m$ where $m \ll h$. If the output frequency is
$\xi_1 - \xi_2 + \xi_3 \sim \dyad^j$
with $j \ll m$ then we have that 
$|\xi_1 - \xi_2| \sim \dyad^m$ and we calculate 
\[
    \Delta^4\xi^2 = 2(\xi_1 - \xi_2)(\xi_2 - \xi_3)  
\]
This is unambiguously large if $m \gg j$ with 
$\Delta^4\xi^2 \sim \dyad^h\dyad^m$.
Since the size of $\Delta^4\xi^2$ depends on both frequencies $h$ and $m$ 
we will denote this situation $C^{\nonres, \himed}$.

In the other situation, the first and third frequencies are comparable and so 
we have $\xi_1 \sim \xi_3 \sim \dyad^h$ and $\xi_2 \sim \dyad^m$ with 
$h \gtrsim m$. Here we instead have $\xi_1 - \xi_2 \sim \xi_3 \sim \dyad^h$ 
and similarly $\xi_2 - \xi_3 \sim \xi_1 \sim \dyad^h$ so we calculate 
\[
    \Delta^4\xi^2 = 2(\xi_1 - \xi_2)(\xi_2 - \xi_3)
    \sim \dyad^{2h}
\]
In parallel to $C^{\nonres, \himed}$ we will denote this term 
$C^{\nonres, \hihi}$.

The following definition makes this splitting precise:
\begin{definition}\label{def:c_splitting}
    We first split into cases based on the relationship between the first and 
    third inputs to $c$.
    We define 
    \[
        c^{\himed}(\xi_1,\xi_2, \xi_3) =
        \sum_{h \gg m}
        p_{h}(\xi_1)p_{m}(\xi_3) c(\xi_1,\xi_2,\xi_3)
    \]
    and 
    \[
        c^{\hihi}(\xi_1,\xi_2, \xi_3) =
        \sum_{j_1 \sim j_3}
        p_{j_1}(\xi_1)p_{j_3}(\xi_3) c(\xi_1,\xi_2,\xi_3).
    \]
    From $c^{\himed}$ we further split based on the relationship of 
    $h$ and $m$ to the output frequency $j$ corresponding to 
    $\xi_1 - \xi_2 + \xi_3$. In these terms we will make very symmetric 
    definitions to ease the analysis going forward and as such 
    we replace $\xi_3$ by $\xi_2-\xi_1$ at the cost of a perturbative error.
    An interaction will be \emph{nonresonant} when $m \gg j$
    \[
        c^{\nonres, \himed}(\xi_1,\xi_2,\xi_3) 
        = \sum_{h \gg m \gg j} p_j(\xi_1- \xi_2 + \xi_3) 
        p_m(\xi_1 - \xi_2)
        p_h(\frac12 (\xi_1 + \xi_2)) c^{\himed}(\xi_1, \xi_2, \xi_2 - \xi_1);
    \]
    will be \emph{near resonant} when $h \gg j \gtrsim m$
    \[
        c^{\res, \himed}(\xi_1,\xi_2,\xi_3) 
        = \sum_{h \gg j \gtrsim m} p_j(\xi_1- \xi_2 + \xi_3) 
        p_m(\xi_1 - \xi_2)
        p_h(\frac12 (\xi_1 + \xi_2)) c^{\himed}(\xi_1, \xi_2, \xi_2 - \xi_1);
    \]
    and \emph{perturbative} otherwise
    \begin{align*}
        c^{\low, \himed}&(\xi_1,\xi_2,\xi_3) 
        = \sum_{j \sim m \gtrsim h} p_j(\xi_1- \xi_2 + \xi_3) 
        p_m(\xi_1 - \xi_2)
        p_h(\frac12 (\xi_1 + \xi_2))c^{\himed}(\xi_1, \xi_2, \xi_3)\\
        &+ \sum_{j \gtrsim h \gg m} p_j(\xi_1- \xi_2 + \xi_3) 
        p_m(\xi_1 - \xi_2)
        p_h(\frac12 (\xi_1 + \xi_2))c^{\himed}(\xi_1, \xi_2, \xi_3)\\
        &+ 
\sum_{h \gg j, h \gg m } p_j(\xi_1- \xi_2 + \xi_3) 
        p_m(\xi_1 - \xi_2)
        p_h(\frac12 (\xi_1 + \xi_2)) 
        \left(c^{\himed}(\xi_1, \xi_2, \xi_3) - c^{\himed}(\xi_1, \xi_2,\xi_2 - \xi_1)\right)
    \end{align*}
    From $c^{\hihi}$ we compare the frequencies of $j_1, j_2$ and the output
    frequency.
    An interaction where all frequencies are comparable we will call 
    \emph{balanced}
    \[
        c^{\bal}(\xi_1,\xi_2,\xi_3) = 
        \sum_{j}\sum_{j_1 \sim j, j_2 \sim j, j_3 \sim j} 
        p_j(\xi_1-\xi_2 + \xi_3)p_{j_1}(\xi_1)p_{j_2}(\xi_2)p_{j_3}(\xi_3)
        c^{\hihi}(\xi_1,\xi_2,\xi_3);
    \]
    where the first and third frequencies are much greater than the output
    we will call \emph{nonresonant}
    \[
        c^{\nonres,\hihi}(\xi_1,\xi_2,\xi_3) = 
        \sum_{j}\sum_{j_3 \sim j_1 \gg j}
        p_j(\xi_1-\xi_2 + \xi_3)p_{j_1}(\xi_1)p_{j_2}(\xi_2)p_{j_3}(\xi_3)
        c^{\hihi}(\xi_1,\xi_2,\xi_3)
    \]
    and the remainder will be \emph{perturbative}
    \begin{align*}
        c^{\low,\hihi}(\xi_1,\xi_2,\xi_3)
        &=\sum_{j}\sum_{\substack{j_1 \lesssim j, j_2 \lesssim j, j_3 \lesssim j\\ 
        \min(j_1,j_2,j_3) \ll j}} 
        p_j(\xi_1-\xi_2 + \xi_3)p_{j_1}(\xi_1)p_{j_2}(\xi_2)p_{j_3}(\xi_3)
        c^{\hihi}(\xi_1,\xi_2,\xi_3)
    \end{align*}
    We define the combined forms
    \[
        c^{\nonres} = c^{\nonres,\himed} + c^{\nonres,\hihi},
    \]
    \[
        c^{\low} = c^{\low,\himed} + c^{\low,\hihi}
    \]
    and in parallel
    \[
        c^{\res} = c^{\res,\himed}.
    \]
    We also define, when appropriate, a name for a specific term in 
    the above sums, for instance 
    \[
        c^{\nonres,\himed}_{h,m,j}(\xi_1,\xi_2,\xi_3) = 
        p_j(\xi_1-\xi_2 + \xi_3)p_h(\frac12(\xi_1+\xi_2))p_m(\xi_1 - \xi_2) 
        c^{\himed}(\xi_1,\xi_2,\xi_2-\xi_1)
    \]
    so that 
    \[
        c^{\nonres,\himed} = \sum_{h \gg m \gg j} c^{\nonres,\himed}_{h,m,j}.
    \]
Finally, it is clear that after symmetrizing, these symbols 
sum to $c$:
\begin{align*}
    c(\xi_1, \xi_2, \xi_3) &= c^{\bal}(\xi_1, \xi_2, \xi_3)
    + \frac12 [c^{\res}(\xi_1, \xi_2, \xi_3) + c^{\res}(\xi_3, \xi_2, \xi_1)]\\
    &+ \frac12 [c^{\nonres} (\xi_1, \xi_2, \xi_3) + 
    c^{\nonres}(\xi_3, \xi_2, \xi_1)]
    + \frac12 [c^{\low}(\xi_1, \xi_2, \xi_3) + c^{\low}(\xi_3, \xi_2, \xi_1)]
\end{align*}

\end{definition}
\begin{remark}
    Note that these symbols are smooth on the dyadic scale and 
    so we can apply Lemma \ref{lem:sep} to these on each term in the above sums.

    In this section we will not take advantage of the replacement of 
    $\xi_3$ by $\xi_2 - \xi_1$ in $c^{\nonres,\himed}$ and $c^{\res, \himed}$, 
    but it will be necessary in Section \ref{ss:higher_corrections}
    and we will remark on it in more depth in Definition
    \ref{def:c2_h}. For 
    now we note that by the enhanced smoothness condition (H1s), we have 
    that the error term in $c^{\low,\himed}$ is separable with size 
    $\dyad^{-h/2}\dyad^{-m/2}\dyad^{j}$. This will be used 
    in 
    Proposition \ref{prop:b3_good_bad_splitting} to show that this term is 
    indeed perturbative. This is also the only place where we use 
    the enhanced smoothness condition (H1s).

    We also note that $c^{\low}$ captures all interactions 
    where all frequencies are comparable or less to the output frequency, 
    but at least one is much less. Of course, at least one of the inputs
    has to be at least comparable to the output. 
\end{remark}

Before we move on,
we discuss why the bootstrap estimates \eqref{eq:uj_ee_boot}-\eqref{eq:uj_sep_bi_boot} are 
insufficient to prove the source term estimate \eqref{eq:source_term} for 
$C^{\nonres}$. 

After applying Lemma \ref{lem:sep}, $C^{\nonres, \himed}$ is given by
\[
    \sum_{h \gg m \gg j} P_j(P_h u P_h \bar u P_m u).
\]
Then we can only use two bilinear $L^2_{t,x}$ estimates \eqref{eq:uj_sep_bi_boot} 
to control 
the left hand side of \eqref{eq:source_term}. Thus, the best estimate we can hope
for is 
\begin{align*}
    \|P_j C^{\nonres,\himed}(u,\bar u, u) P_j \bar u\|_{L^1_{t,x}}
    &\lesssim \sum_{h \gg m \gg j} C^2\epsilon^4
    \dyad^{(-1-2s)h}\dyad^{-sm}\dyad^{-sj}c_h^2 c_m c_j
\end{align*}
Note this sum diverges if $s < -1/3$. Heuristically, at lower regularity 
we allow much more mass at high frequencies, and we see here that 
the bilinear estimate is only good enough to control these high frequency 
interactions when $s$ is large enough. However, in situations where $m \lesssim j$,
which are captured by $C^{\res}$, the sum will converge for $s > -1/2$ which 
is sharp by scaling.

Thus, we are motivated to take advantage of the fact that these bad terms are 
nonresonant by introducing a normal form change of variables. That is, 
since $\Delta^4\xi^2$ is large on the support of $c^{\nonres}$, we can set
$b^3$ according to the division relation
\[
    \Delta^4\xi^2 b^3(\xi_1, \xi_2, \xi_3) = - c^{\nonres}(\xi_1,\xi_2,\xi_3).
\]
Based on this relation, we can calculate that 
\begin{align}\label{eq:cubic_correction}
    (i\partial_t + \partial_x^2) &B^3(u,\bar u, u) =
    - C^{\nonres}(u,\bar u, u)\\ 
\nonumber&+ B^3((i\partial_t + \partial_x^2)u,\bar u, u)
    - B^3(u,\ol{(i\partial_t + \partial_x^2)u}, u) 
+ B^3(u,\bar u, (i\partial_t + \partial_x^2)u).
\end{align}
In particular, by setting 
\[
    v = u + B^3(u,\bar u, u),
\]
we see that $C^{\nonres}$ will cancel in the Schr\"odinger equation for $v$. 

The following definition pins down the symbol of $b^3$ to respect the 
splitting we have chosen in Definition \ref{def:c_splitting}.

\begin{definition}\label{def:b3_nonres}
For $h \gg m \gg j$ we put 
\[
    b^{3,\himed}_{h,m,j}(\xi_1,\xi_2,\xi_3) = 
    -\frac{c^{\nonres,\himed}_{h,m,j} (\xi_1,\xi_2,\xi_3)}
{\xi_1^2 - \xi_2^2 + \xi_3^2 - (\xi_1 - \xi_2 + \xi_3)^2}
\]
and for $h \gg j$ and $m \lesssim h$ free we put
\[
    b^{3,\hihi}_{h,m,j}(\xi_1,\xi_2,\xi_3) = 
    -\frac{c^{\nonres,\hihi}_{h,m,j} (\xi_1,\xi_2,\xi_3)}
{\xi_1^2 - \xi_2^2 + \xi_3^2 - (\xi_1 - \xi_2 + \xi_3)^2}.
\]
Or expanding the definitions of the $c$ terms in \ref{def:c_splitting} we have 
with the same restrictions on $h$, $m$, and $j$
\[ 
    b^{3,\himed}_{h, m, j}(\xi_1,\xi_2,\xi_3) = 
    p_j(\xi_1-\xi_2+\xi_3) p_h(\frac{1}{2}\left( \xi_1 + \xi_2) \right)
    p_m(\xi_1-\xi_2)\frac{c^{\himed}(\xi_1,\xi_2,\xi_2-\xi_1)}
    {\xi_1^2 - \xi_2^2 + \xi_3^2 - (\xi_1 - \xi_2 + \xi_3)^2}
\]
and
\[ 
    b^{3,\hihi}_{h, m, j}(\xi_1,\xi_2,\xi_3) = 
    \sum_{j_3 \sim h} p_j(\xi_1 - \xi_2 + \xi_3)p_{h}(\xi_1)
    p_{m}(\xi_2)p_{j_3}(\xi_3)\frac{c^{\nonres, \hihi}(\xi_1,\xi_2,\xi_3)}
{\xi_1^2 - \xi_2^2 + \xi_3^2 - (\xi_1 - \xi_2 + \xi_3)^2}.
\]
In parallel to the definitions for $c$
we will further write 
\[
    b^{3,\himed} = \sum_{h\gg m \gg j} b^{3,\himed}_{h,m,j},
\]
\[
    b^{3,\hihi} = \sum_{h\gg j} \sum_{m\lesssim h} b^{3,\hihi}_{h,m,j},
\]
and 
    \[b^3 = b^{3,\himed} + b^{3,\hihi}\]
\end{definition}

In agreement with the discussion above, we may calculate the 
size of the symbols  $b^{3,\himed}_{h,m,j}$ 
and $b^{3,\hihi}_{h,m,j}$ to 
see the gain of the division by $\Delta^4\xi^2$.

\begin{proposition}\label{prop:b3_kernel_size}
    We have the following kernel estimates for the  
    terms in Definition~\ref{def:b3_nonres}
    
\[
    \|\check b^{3,\himed}_{h,m,j}\|_{L^1} \lesssim \dyad^{-h}\dyad^{-m}
\]
and
\[
    \|\check b^{3,\hihi}_{h,m,j}\|_{L^1} \lesssim \dyad^{-2h}.
\]
\end{proposition}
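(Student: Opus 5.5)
The plan is to reduce both bounds to Lemma~\ref{lem:sep}. That lemma requires showing that each localized symbol is smooth on the appropriate scales, with size $M$ equal to the claimed bound; the kernel then has $L^1$ norm $\lesssim M$. There is one subtlety. The localizations are not in the original variables $\xi_1,\xi_2,\xi_3$. For $b^{3,\himed}_{h,m,j}$ they are in the linear combinations $\zeta_1 = \xi_1-\xi_2+\xi_3$ (scale $\dyad^j$), $\zeta_2=\xi_1-\xi_2$ (scale $\dyad^m$), and $\zeta_3 = \frac12(\xi_1+\xi_2)$ (scale $\dyad^h$). So I first pass to these coordinates. This is an invertible linear change of variables with bounded Jacobian, and it preserves the $L^1$ norm of the kernel up to a constant (the kernel transforms by the inverse transpose map). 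Lemma~\ref{lem:sep} then applies in the $\zeta$ variables, with each $\zeta_i$ localized to its own dyadic scale.

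For the himed term, I write the denominator as $\Delta^4\xi^2 = 2(\xi_1-\xi_2)(\xi_2-\xi_3)$. In the new variables $\xi_1-\xi_2 = \zeta_2$, and $\xi_2-\xi_3 = \zeta_2 + \xi_2 - \xi_1 + \xi_2 - \xi_3$; more directly, $\xi_2 - \xi_3 = \zeta_2 - \zeta_1 + 2\xi_2 - 2\xi_1 + \ldots$. It is cleanest to note that $\xi_3 = \zeta_1-\zeta_2$ and $\xi_2 = \zeta_3 - \zeta_2/2$. Hence $\xi_2-\xi_3 = \zeta_3 + \zeta_2/2 - \zeta_1$, which has size $\sim\dyad^h$ on the support, since $h\gg m\gg j$. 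Thus $1/\Delta^4\xi^2 = \frac{1}{2\zeta_2(\zeta_3+\zeta_2/2-\zeta_1)}$ is a product of a factor of size $\dyad^{-m}$, smooth on scale $\dyad^m$ in $\zeta_2$, and a factor of size $\dyad^{-h}$. The second factor's derivatives in $\zeta_3$ cost $\dyad^{-h}$, and its derivatives in $\zeta_1,\zeta_2$ cost even less than their own scales, again because $j,m\ll h$.

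The numerator $c^{\himed}(\xi_1,\xi_2,\xi_2-\xi_1)$ depends only on $\zeta_2,\zeta_3$. By \eqref{normal-reg}, with $\xi_1,\xi_2\sim\dyad^h$ and the third argument $-\zeta_2\sim\dyad^m$, it is bounded and smooth on scale $\dyad^h$ in $\zeta_3$ and on scale $\dyad^m$ in $\zeta_2$. The cutoffs $p_j(\zeta_1)p_m(\zeta_2)p_h(\zeta_3)$ are smooth on the matching scales. The product rule gives \eqref{eq:smooth_on_scales} in the $\zeta$ variables with $M=\dyad^{-h-m}$, and Lemma~\ref{lem:sep} yields the first bound.

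For the hihi term I keep the original coordinates, localized by $p_h(\xi_1)$, $p_m(\xi_2)$, $p_{j_3}(\xi_3)$ and $p_j(\xi_1-\xi_2+\xi_3)$, with $j_3\sim h\gg j$ and $m\lesssim h$. The output localization $p_j$ is the issue: it lives on scale $\dyad^j$ in the combination $\zeta_1$, not in any single coordinate. So I again change variables to $(\xi_1,\xi_2,\zeta_1)$, with $\xi_3 = \zeta_1-\xi_1+\xi_2$. On the support, $\xi_1-\xi_2 = \xi_1 + \xi_3 - \zeta_1 - \xi_2 + \ldots$. More simply, $\xi_2-\xi_3 = \xi_1-\zeta_1$ has size $\sim\dyad^h$, and $\xi_1-\xi_2 = \zeta_1 - \xi_3 + \ldots$, which equals $\zeta_1-\xi_3$ up to sign and is also $\sim\dyad^h$, since $|\xi_3|\sim\dyad^h\gg\dyad^j$. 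Hence $|\Delta^4\xi^2|\sim\dyad^{2h}$, and the denominator is smooth on scale $\dyad^h$ in every variable.

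The main obstacle is the $\xi_2$ localization. When $m\ll h$, $\xi_2$ is localized on scale $\dyad^m$, and $c^{\nonres,\hihi}$ must be smooth on scale $\dyad^m$ in $\xi_2$. This holds by \eqref{normal-reg}, which gives $\la\xi_2\ra^{-\alpha_2}$ gains. The bound $\xi_1\sim\xi_3$ is used to keep the $\xi_1,\xi_3$ derivatives at scale $\dyad^h$, and $\xi_3$ derivatives become $\zeta_1,\xi_1,\xi_2$ derivatives, each costing at most the relevant scale. Finally, the sum over $j_3\sim h$ has boundedly many terms, so $M=\dyad^{-2h}$ and Lemma~\ref{lem:sep} gives the second bound.
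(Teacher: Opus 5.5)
Your proposal is correct and follows essentially the paper's route: reduce to Lemma~\ref{lem:sep} after a linear change of variables adapted to the localizations, and factor $\Delta^4\xi^2 = 2(\xi_1-\xi_2)(\xi_2-\xi_3)$. The two factors have sizes $\dyad^m$ and $\dyad^h$ in the himed case (resp.\ $\dyad^h$ and $\dyad^h$ in the hihi case), and their derivatives cost no more than the localization scales. The only difference is cosmetic: you check all factors, including $c$, in a single adapted coordinate system, whereas the paper treats $c$, $p_m(\xi_1-\xi_2)/(\xi_1-\xi_2)$ and the remaining denominator as separately separable factors (and for the hihi term inserts auxiliary cutoffs $p_{\sim h}(\xi_1-\xi_2)p_{\sim h}(\xi_2-\xi_3)$).
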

\begin{remark}
Note that this makes $b^{3,\hihi}$ more 
favorable than $b^{3,\himed}$ since the symbol size is one over the highest 
frequency squared, compared to one over high times one over medium.
\end{remark}
\begin{proof}
    The two estimates will follow from Lemma \ref{lem:sep} if 
    we can split each symbol into parts each of which we can show is 
    smooth on some scales after a linear change of coordinates. We may 
    freely introduce larger smooth cutoffs, for instance to the 
    dyadic regions of $\xi_1, \xi_2$ and $\xi_3$. This makes it clear, by 
    the regularity condition (H1s), that $c(\xi_1,\xi_2, \xi_3)$ is separable. 
    In addition the projections $p_j(\xi_1)$ will be separable. Thus all that 
    remains is the denominators of the two symbols.

    Starting with $b^{3,\himed}_{h,m,j}$, the remaining part 
    after factoring out $c^{\himed}$ is 
    \[
        \frac{p_j(\xi_1-\xi_2+\xi_3)p_m(\xi_1-\xi_2)p_h(\frac{1}{2}\left(\xi_1+\xi_2)\right)}
        {\xi_1^2 - \xi_2^2 + \xi_3^2 - (\xi_1 - \xi_2 + \xi_3)^2}.
    \]
    We rewrite the denominator as
    \[
    \frac{1}{\xi_1^2 - \xi_2^2 + \xi_3^2 - (\xi_1 - \xi_2 + \xi_3)^2} =\frac{1}{(\xi_1 - \xi_2)(2\xi_2 - 2\xi_3)}
    \]
    Now it is clear that 
    \[ 
        \frac{p_m(\xi_1-\xi_2)}{\xi_1 - \xi_2}
    \]
    is separable of size $\dyad^{-m}$ after a change of coordinates, since 
    it is smooth on scale $\dyad^m$. 

    For the remainder we write 
    \[
        2\xi_2 - 2\xi_3 = (\xi_1 + \xi_2) + (\xi_1 - \xi_2) - 2(\xi_1 - \xi_2 + \xi_3)
    \]
    If we change coordinates to $\alpha = \xi_1 + \xi_2$, 
    $\delta = \xi_1 - \xi_2$ and $\xi_j = \xi_1 - \xi_2 + \xi_3$,
    we get
    \[
        \frac{p_j(\xi_j)p_m(\delta)p_h(\alpha)}
        {\alpha + \delta -2 \xi_j}.
    \]
    Since $h \gg m \gg j$, we see that any derivative landing on the 
    denominator will produce a factor of size $\dyad^{-h}$, so the 
    integral kernel will have size $\dyad^{-h}$ by Lemma \ref{lem:sep}.
    This shows that the symbol has integral kernel of size $\dyad^{-h}\dyad^{-m}$.

    ~

    For $b^{3,\hihi}_{h,m,j}$ we introduce cutoffs
    $p_{\sim h}(\xi_1 - \xi_2)p_{\sim h}(\xi_2 - \xi_3)$ 
    which does not modify the symbol 
    since $c^{\hihi}$ is supported near $\xi_1, \xi_3 \sim \dyad^h$ and 
    $\xi_1 - \xi_2 + \xi_3 \sim \dyad^j \ll \dyad^h$.
    Thus using the same argument as above proves that 
    $b^{3,\hihi}_{h,m,j}$ has kernel of size $\dyad^{-2h}.$

\end{proof}

\begin{corollary}
   The trilinear form $B^3$  constructed above satisfies Definition 
    \ref{def:good_corrections}.
\end{corollary}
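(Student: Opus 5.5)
We need to check each item of Definition~\ref{def:good_corrections} for $n=1$. The plan is to decompose $B^3=\sum b^{3,\himed}_{h,m,j}+\sum b^{3,\hihi}_{h,m,j}$, apply Lemma~\ref{lem:sep} to each dyadic piece using the kernel bounds of Proposition~\ref{prop:b3_kernel_size}, and reduce every estimate to translated product estimates. Each product estimate is then controlled by the bootstrap bounds \eqref{eq:uj_ee_boot}--\eqref{eq:uj_sep_bi_boot}, together with Bernstein's inequality. After that one sums the dyadic series, using the admissibility of $c$ (Definition~\ref{def:admissible_envelope}) and $s>-1/2$ to obtain geometric convergence.

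\textbf{Fixed-time linearized bounds.} For \eqref{eq:bn_pointwise_L1} and \eqref{eq:bn_pointwise} we place $f$ in each slot in turn, and the remaining two slots are filled by $w$.

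\begin{itemize}
\item \emph{$f$ at a high frequency $h$ (or $m$ with $m\ge j$), $\himed$ term.} We use H\"older with $L^\infty$ on the two $w$ factors, and Bernstein gives $\|P_k w\|_{L^\infty}\lesssim C\epsilon c_k\dyad^{(1/2-s)k}$. The pair of $w$'s at frequencies $h,m$, against the kernel size $\dyad^{-h-m}$, produces $\dyad^{-(1/2+s)(h+m)}$ or $\dyad^{-(1/2+s)h}\dyad^{(1/2-s)m}\dyad^{-h-m}$. Both are bounded by $\epsilon^2$ and sum in the free index since $s>-1/2$. This yields the $\sum_{h\ge j}\|P_hf\|$ term.
\item \emph{Weighted variant for \eqref{eq:bn_pointwise}.} The extra factor $\dyad^{(j-h)/2}$ is obtained from the same surplus decay, since $\dyad^{-h-m}\dyad^{(1/2-s)(h+m)}=\dyad^{-(1/2+s)(h+m)}$ and $-(1/2+s)<0$.
\item \emph{$\hihi$ terms.} These are easier, since their kernel is $\dyad^{-2h}$.
\item \emph{$f$ at a frequency below the output (only the low slot in $\hihi$).} The two high $w$ factors give $(C\epsilon)^2\dyad^{(-1-2s)h}c_h^2$ via $L^\infty$. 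After summing over $h\gg j$ and using the admissibility of $c$, this becomes $\epsilon^2 c_j\dyad^{(-1/2-s)j}$, which is exactly the low-frequency term in \eqref{eq:bn_pointwise_L1}.
\end{itemize}

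The $L^2$ and $L^6$ versions are handled identically, since Lemma~\ref{lem:sep} applies for any $q$.

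\textbf{Bilinear estimates.} For \eqref{eq:bn_bilinear_1} we pair the two highest-frequency inputs using the transversal bilinear bound \eqref{eq:uj_sep_bi_boot}, translated as allowed by Lemma~\ref{lem:sep}.

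\begin{itemize}
\item \emph{$\himed$ case.} Pair the $h$ and $m$ inputs, whose frequency difference is of size $\dyad^h$. This gives $\|P_hwP_m\bar w\|_{L^2}\lesssim C\epsilon^2\dyad^{-h/2}\dyad^{-s(h+m)}c_hc_m$.
\item \emph{$\hihi$ case.} Pair the inputs at $h$ and $m$, or at $h$ and $j_3$, whichever is transversal. The remaining factor goes in $L^\infty$ via Bernstein.
\end{itemize}

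With the kernel factor this yields $\dyad^{-(3/2+s)h}\dyad^{-(1+s)m}\dyad^{(1/2-s)m}$-type bounds. These sum to $(C\epsilon)^3c_j\dyad^{(-1-s)j}$ after using $m\gg j$ and admissibility.

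For \eqref{eq:bn_bilinear_2} we do not expand $P_jB^3(w_1)$. Instead we treat $\partial_x(P_jB^3\,P_k\bar w_2)$ through the pointwise bound \eqref{eq:bn_pointwise} with $f$ absorbed, and use the mutual bilinear hypothesis between a $w_1$ factor and $w_2$. Concretely:

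\begin{itemize}
\item \emph{$k\gg j$ or $k\ll j$.} We pair $w_2$ with one $w_1$ input via the mutual estimate (translation invariance allows the offset). The other two inputs are placed in $L^\infty$, giving a gain of $\epsilon^2$ times summable powers.
\item \emph{$k\cong j$.} Here the derivative costs at most $\dyad^{j}$. We instead bound $\|P_jB^3\|_{L^2_{t,x}}$ by \eqref{eq:bn_bilinear_1} and $\|P_kw_2\|_{L^\infty}$ by Bernstein.
\end{itemize}

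\textbf{Main obstacle.} The delicate point is the low-frequency regime of \eqref{eq:bn_bilinear_2} and the $\himed$ sum for the $L^1$ estimate. There one must verify that the admissibility exponents $\delta$ and $\delta-s$ leave enough room, because high inputs transfer mass downward with weight $\dyad^{-sh}$. I expect to need exactly $s>-1/2$ and a small choice of $\delta$, and to check carefully that the sum over $m$ between $j$ and $h$ produces no logarithmic loss.
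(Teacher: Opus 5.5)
Your overall scheme (dyadic pieces, Lemma~\ref{lem:sep} with the kernel sizes of Proposition~\ref{prop:b3_kernel_size}, bootstrap bounds, summation) matches the paper's. Your arguments for \eqref{eq:bn_bilinear_1} and for the cases $k\cong j$, $k\ll j$ of \eqref{eq:bn_bilinear_2} do close. The gap is your recurring device of putting the non-$f$ inputs in $L^\infty$ by Bernstein. An input at the high frequency $\dyad^h$ then costs $\dyad^{(1/2-s)h}$, while the kernel pays only $\dyad^{-h}$ per high pair, and this is too expensive for $s<0$.

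Concretely, when $f$ sits in the medium slot of $B^{3,\himed}_{h,m,j}$, both remaining inputs are at $\dyad^h$ (not at $\dyad^h,\dyad^m$ as you wrote). Your bound then becomes $\dyad^{-h-m}\|P_hw\|_{L^\infty}^2\|P_mf\|_{L^1}\lesssim (C\epsilon)^2c_h^2\dyad^{-2sh}\dyad^{-m}\|P_mf\|_{L^1}$, which is not summable over $h\gg m$. So not even \eqref{eq:bn_pointwise_L1} follows.

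The weighted bound \eqref{eq:bn_pointwise} with $f$ at $\dyad^h$ also fails. After summing in $m$, your coefficient $\dyad^{-(1/2+s)h}\dyad^{-(1/2+s)j}$ exceeds the required $\dyad^{(j-h)/2}$ by $\dyad^{-sh}\dyad^{-(1+s)j}$, which is unbounded in $h$. Because $s<0$, $\dyad^{-(1/2+s)h}$ decays more slowly than $\dyad^{-h/2}$, not faster.

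The paper avoids this by using the structure $P_j(P_m(P_hf_1P_h\bar f_2)P_mf_3)$. The high pair goes in $L^2\times L^2$ (product in $L^1$), and Bernstein is applied only at the low frequencies $\dyad^m$ and $\dyad^j$. This gives $\dyad^{-h}\dyad^{-m/2}\|P_hf_1\|_{L^2}\|P_hf_2\|_{L^2}\|P_mf_3\|_{L^2}$, and the weighted $L^2$ bound follows after one more Bernstein at $\dyad^j$. Each high pair then contributes $\dyad^{(-1-2s)h}$, summable exactly because $s>-\frac12$.

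The same loss breaks your proof of \eqref{eq:bn_bilinear_2} for $k\gg j$. Take $k\gg h\gg m\gg j$. Pairing $P_k\bar w_2$ with one $P_hw_1$ and putting $P_hw_1$, $P_mw_1$ in $L^\infty$ gives $\dyad^{k}\dyad^{-h-m}\cdot\dyad^{-k/2}\dyad^{-sh-sk}\cdot\dyad^{(1/2-s)h}\dyad^{(1/2-s)m}$, times envelope factors. After summing in $m$, the ratio to the target $\dyad^{k/2}\dyad^{-sj-sk}$ is $\dyad^{(-1/2-2s)h}\dyad^{-j/2}$. This is not summable over $h\ll k$ uniformly in $k$ once $s<-\frac14$. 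The range $j\ll k\ll h$ fails the same way, with a factor $\dyad^{(-1/2-2s)k}$.

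The paper splits according to $h$ versus $k$.
\begin{itemize}
\item For $h\gtrsim k$ it keeps the trilinear expression intact. It bounds $\|P_j(P_hw_1P_h\bar w_1P_mw_1)\|_{L^2_{t,x}}$ as in \eqref{eq:bn_bilinear_1}: one bilinear bound on the $(h,m)$ pair, $P_hw_1$ in $L^\infty_tL^2_x$, Bernstein at $\dyad^j$. It then puts $P_kw_2$ in $L^\infty_tL^2_x$.
\item For $k\gg h$ it squares the norm. With $F=P_jB^3(w_1,\bar w_1,w_1)$ and $G=P_k\bar w_2$, one has $\|FG\|_{L^2_{t,x}}^2\le\|F\|_{L^\infty_{t,x}}\|F|G|^2\|_{L^1_{t,x}}$. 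Here $\|F\|_{L^\infty}$ is bounded by Bernstein at the low output frequency $\dyad^j$ with both high inputs in $L^2$. Meanwhile $\|F|G|^2\|_{L^1}\lesssim \dyad^{-h-m}\|P_mw_1\|_{L^\infty}\sup_{x_0}\|P_hw_1(\cdot+x_0)P_k\bar w_2\|_{L^2_{t,x}}^2$ uses the mutual bilinear hypothesis on both high factors.
\end{itemize}
This yields $\dyad^{(-1-2s)h}$ and closes for all $s>-\frac12$. The real obstruction is therefore the high-frequency regime, not the low-frequency one you single out.
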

\begin{proof}
    We prove these estimates in turn from the symbol size of $B^3$ in 
    Proposition \ref{prop:b3_kernel_size}. Note that using Lemma \ref{lem:sep} 
    introduces translations, but since $L^p$ and 
    the assumed bilinear $L^2_{t,x}$ norms are 
    translation invariant, we drop these translations in the argument below.
    In each of the following estimates $B^{3,\hihi}$
    is better than $B^{3,\himed}$ so for the sake of concision, 
    we prove the estimates only for $B^{3,\himed}$.
    
    For the bound \eqref{eq:bn_pointwise_L1} we have 
    \begin{align*}
        \|P_j B^{3,\himed}(f_1, \bar f_2, f_3)\|_{L^1_x}
        &\lesssim \sum_{h \gg m \gg j} \dyad^{-h}\dyad^{-m}
        \|P_j (P_m(P_h f_1 P_h \bar f_2) P_m f_3)\|_{L^1_x}\\
        &\lesssim \sum_{h \gg m \gg j} \dyad^{-h}\dyad^{-m/2}
        \|P_h f_1 P_h \bar f_2\|_{L^1_x}\|P_m f_3\|_{L^2_x}\\
        &\lesssim \sum_{h \gg m \gg j} \dyad^{-h}\dyad^{-m/2}
        \|P_h f_1\|_{L^2_x} \|P_h \bar f_2\|_{L^2_x}\|P_m f_3\|_{L^2_x}.
    \end{align*}
    Then the bound \eqref{eq:bn_pointwise_L1} 
    follows by Bernstein's inequality and the bootstrap 
    estimate \eqref{eq:uj_ee_boot}. 

    The $L^2$ version of \eqref{eq:bn_pointwise} can be proven 
    directly from the above estimate and Bernstein's inequality, 
    but the $L^6$ version has 
    different numerology:
    \begin{align*}
        \|P_j B^{3,\himed}(f_1, \bar f_2, f_3)\|_{L^6_x}
        &\lesssim \sum_{h \gg m \gg j} \dyad^{-h}\dyad^{-m}
        \|P_j (P_m(P_h f_1 P_h \bar f_2) P_m f_3)\|_{L^6_x}\\
        &\lesssim \sum_{h \gg m \gg j} \dyad^{-h}\dyad^{-m/3}
        \|P_m(P_h f_1 P_h \bar f_2)\|_{L^{3/2}_x}\|P_m f_3\|_{L^6_x}\\
        &\lesssim \sum_{h \gg m \gg j} \dyad^{-h}\|P_h f_1\|_{L^2_x}
        \|P_h f_2\|_{L^2} \|P_m f_3\|_{L^6_x}.
    \end{align*}

    For the first bilinear estimate, we pair the high and medium frequencies 
    which are separated roughly $\dyad^h$ (or in the case of $B^{3,\hihi}$ 
    we have this separation because of the projection to lower frequency).
    \begin{align*}
        \|P_jB^{3}(u, \bar u, u)\|_{L^2_{t,x}} 
        &\lesssim \dyad^{j/2}\sum_{h \gg m \gg j}\dyad^{-h}\dyad^{-m}
        \|P_j(P_h u P_h \bar u P_m u)\|_{L^2_tL^1_x}\\
        &\lesssim \dyad^{j/2}\sum_{h \gg m \gg j}\dyad^{-h}\dyad^{-m}
        \|P_h u\|_{L^\infty_t L^2_x}
        \dyad^{-h}\|\partial(P_h \bar u P_m u)\|_{L^2_tL^2_x}\\
        &\lesssim C^2\epsilon^3\dyad^{j/2}\sum_{h \gg m \gg j}\dyad^{(-3/2-2s)h}
        \dyad^{(-1-s)m} c_h^2c_m\\
        &\lesssim C^2\epsilon^3c_j\dyad^{(-2-3s)j}
    \end{align*}
    which suffices because $s > -1/2$.

    For the mixed bilinear estimate, we calculate 
    \begin{align*}
        \|\partial(P_jB^{3}(u, \bar u, u)P_k \bar v)\|_{L^2_{t,x}} 
        &\lesssim \sum_{h \gg m \gg j}\dyad^{-h}\dyad^{-m}
        \|\partial(P_j(P_h u P_h \bar u P_m u)P_k \bar v)\|_{L^2_tL^2_x}.
    \end{align*}
    Now, depending on the balance of $h$ and $k$, we opt for two different estimates.
    If $h \gtrsim k$ then we use the above argument by simply placing $P_k v$ 
    in $L^\infty_t L^2_x$. Note that the derivative has size $\max(\dyad^j, \dyad^k)$,
    but Bernstein can be lost on the smaller of the two sizes. Thus the overall 
    loss will be $(\dyad^{j/2}+ \dyad^{k/2})\dyad^{j/2}\dyad^{k/2}$.
    \begin{align*}
        &\sum_{k \lesssim h \gg m \gg j}\dyad^{-h}\dyad^{-m}
        \|\partial(P_j(P_h u P_h \bar u P_m u)P_k \bar v)\|_{L^2_tL^2_x}\\
        &\lesssim (\dyad^{j/2}+ \dyad^{k/2})\dyad^{j/2}\dyad^{k/2}
        \sum_{k \lesssim h \gg m \gg j}\dyad^{-h}\dyad^{-m}
        \|P_j(P_h u P_h \bar u P_m u)\|_{L^2_tL^2_x}\|P_k \bar v\|_{L^\infty_tL^2_x}\\
        &\lesssim C^3\epsilon^4 (\dyad^{j/2}+ \dyad^{k/2})\dyad^{j}
        \dyad^{(1/2-s)k}c_k\sum_{k \lesssim h \gg m \gg j} \dyad^{(-3/2-2s)h}
        \dyad^{(-1-s)m} c_h^2c_m\\
        &\lesssim C^3\epsilon^4(\dyad^{j/2}+ \dyad^{k/2})\dyad^{-sj}\dyad^{(-2-3s)k}
        c_jc_k
    \end{align*}
    which suffices since $s > -1/2$.

    Otherwise, we estimate the square of the norm:
    
    \begin{align*}
        &\sum_{k \gg h \gg m \gg j}\dyad^{-h}\dyad^{-m}
        \|\partial(P_j(P_h u P_h \bar u P_m u)P_k \bar v)\|_{L^2_tL^2_x}\\
        &\lesssim \dyad^{k}
        \sum_{k \gg h \gg m \gg j}\dyad^{-h}\dyad^{-m}
        \|P_j(P_h u P_h \bar u P_m u)P_j(P_h \bar u P_h u P_m \bar u) 
        P_k\bar v P_k v\|^{1/2}_{L^1_{t,x}}\\
        &\lesssim \dyad^{k}
        \sum_{k \gg h \gg m \gg j}\dyad^{-h}\dyad^{-m}
        \|P_j(P_h \bar u P_h u P_m \bar u)\|^{1/2}_{L^\infty_{t,x}}
        \|P_m u\|^{1/2}_{L^\infty_{t,x}}
        \|P_h uP_k\bar v \|_{L^2_{t,x}}\\
        &\lesssim C^3\epsilon^4 \dyad^{j/2}
        \dyad^{(1/2-s)k}c_k\sum_{k \gg h \gg m \gg j} \dyad^{(-1-2s)h}
        \dyad^{(-1/2-s)m} c_h^2c_m\\
        &\lesssim C^3\epsilon^4 \dyad^{k/2}\dyad^{(-1-3s)j}\dyad^{-sk}
        c_jc_k
    \end{align*}
    which suffices because $s > -1/2$ and completes the proof.

\end{proof}

If we now consider the Schr\"odinger equation 
for $v = u + B^3(u,\bar u, u)$,
\[
\begin{aligned}
    (i\partial_t + \partial_x^2) v = & \ C(u, \bar u , u) - C^{\nonres}(u,\bar u , u)
    + B^3(C(u,\bar u, u), \bar u, u)  
    \\ & - B^3(u, \ol{C(u,\bar u, u)}, u)  
    + B^3(u,\bar u, C(u,\bar u, u))  
\end{aligned}
\]
we see that while we have removed the contribution of $C^{\nonres}$ to cubic 
terms, it appears in quintic terms. This will create the same problem as 
$C^{\nonres}$. We will demonstrate this for 
$B^3(C^{\nonres,\himed}(u, \bar u, u), \bar u, u)$ which we need 
to control as in \eqref{eq:source_term}.

In particular, after applying Proposition \ref{prop:b3_kernel_size} 
and separating variables, the expression

$P_jB^3(C^{\nonres,\himed}(u, \bar u, u), \bar u, u) P_j \bar u$ 
is of the form
\[
    \sum_{h_1 \gg m_1 \gg h_2 \gg m_2 \gg j} \dyad^{-h_2}\dyad^{-m_2}
    P_j(P_{h_2}(P_{h_1}uP_{h_1}\bar u P_{m_1} u) P_{h_2} \bar u P_{m_2} u).
\]
With the bootstrap estimates \eqref{eq:uj_ee_boot}-\eqref{eq:uj_sep_bi_boot}, 
the best estimate we can achieve pairs the first four frequencies in bilinear 
estimates and uses Bernstein's inequality on the fifth giving 
\[
    C^4\epsilon^6\sum_{h_1 \gg m_1 \gg h_2 \gg m_2 \gg j} 
    \dyad^{(-1 -2s)h_1}c_{h_1}^2\dyad^{-sm_1}c_{m_1}\dyad^{(-1 - s)h_2}c_{h_2}
    \dyad^{(-1/2 - s)m_2}c_{m_2}.
\]
Since $s < 0$, we see that the sum in $m_1$ of $\dyad^{-sm_1}$ converges to 
$\dyad^{-sh_1}$. And for $\dyad^{(-1-3s)h_1}$ to converge, we would need 
$s > -1/3$. 

Thus, we see that correcting $C^{\nonres}$ did not improve the convergence of our 
source terms. However, we again may check for non-resonance by calculating 
$\Delta^6\xi^2$ for 
$B^3(C^{\nonres,\himed}(u, \bar u, u), \bar u, u)$. If we localize 
to frequencies $h_1, m_1, h_2, m_2, $ and $j$ as above, we see
\begin{align*}
    \Delta^6\xi^2 &= \xi_1^2 - \xi_2^2 + \xi_3^2 - (\xi_1 - \xi_2 + \xi_3)^2
+ (\xi_1 - \xi_2 + \xi_3)^2 - \xi_4^2 + \xi_5^2\\
    &\sim \Delta^4\xi^2 + \dyad^{2h_2}\\
    &\sim \dyad^{h_1}\dyad^{m_1} + \dyad^{2h_2}\\
    &\sim \dyad^{h_1}\dyad^{m_1}.
\end{align*}
Thus, the contribution to $\Delta^6\xi^2$ is dominated by the inner factor 
coming from 
$C^{\nonres,\himed}$, which we can correct with $B^3$. In particular,
we should be able to correct $B^3(C^{\nonres}(u,\bar u, u), \bar u, u)$ by 
$B^3(B^3(u,\bar u, u), \bar u , u)$. Of course, the time derivative will 
produce higher order terms which can be corrected similarly.

This motivates a normal form change of variables of the form 
\[
    v = u + B^3(u, \bar u , u) + B^3(B^3(u, \bar u, u), \bar u, u) + \cdots
\]
which can be more cleanly written in the implicit form 
\[v = u + B^3(v,\bar v, v).\]
With this change of variables the equation \eqref{nls} becomes 
\[
(i\partial_t + \partial_x^2)(v - B^3(v,\bar v, v)) = 
C(v - B^3(v,\bar v, v),\ol{v - B^3(v,\bar v, v)},v - B^3(v,\bar v, v)).
\]
After using the relation \eqref{eq:cubic_correction} and collecting 
terms we have
\begin{equation}\label{eq:cubic_equation}
    (I-L_v)((i\partial_t + \partial_x^2)v) = C^{\bal}(v,\bar v,v) 
    + C^{\low}(v,\bar v ,v)
    + C^{\res}(v,\bar v, v) + C^5(v) + C^7(v) + C^9(v).
\end{equation}
Here the real-linear operator $L_v$ is defined by 
\[L_v(f) =  B^3(f, \bar v, v) - B^3(v ,\bar f , v) + B^3(v,\bar v, f)\]
and the nonlinear terms $C^5$, $C^7$, and $C^9$ are defined by 
\[C^5(v) = -C(B^3(v,\bar v , v), \bar v, v) - 
C(v, \ol{B^3(v,\bar v, v)}, v) - C(v, \bar v, B^3(v,\bar v , v)),\]
\[C^7(v) = C(B^3(v,\bar v , v), \ol{B^3(v,\bar v, v)}, v) + 
C(v, \ol{B^3(v,\bar v, v)}, B^3(v,\bar v , v)) + C(B^3(v,\bar v , v), \bar v, B^3(v,\bar v , v)),\]
and
\[C^9(v) = -C(B^3(v,\bar v , v), \ol{B^3(v,\bar v, v)}, B^3(v,\bar v , v)).\]
To make equation \eqref{eq:cubic_equation} look like a Schr\"odinger equation 
we have to apply $(I - L_v)^{-1}$ to both sides (which will be justified 
by the smallness of $B$ when we prove the linearized estimates 
\eqref{eq:bn_pointwise}). We can use 
the Neumann series:
\begin{equation}\label{eq:neumann}
(I - L_v)^{-1} = \sum_{k = 0}^\infty L_v^k.
\end{equation}
This gives the expansion 
\begin{equation}\label{eq:cubic_equation_expanded+}
    (i\partial_t + \partial_x^2)v = \sum_{k=0}^\infty L_v^k(C^{\bal}(v,\bar v,v)
    + C^{\low}(v,\bar v, v)+ C^{\res}(v,\bar v, v) + C^5(v) + C^7(v) + C^9(v)).
\end{equation}
Note that in \eqref{eq:cubic_equation_expanded+} the only contribution of 
$C^{\nonres}$ occurs in $C^5, C^7, $ and $C^9$ where the medium frequency 
of $C^{\nonres}$ is not the highest, so we have eliminated the worst of 
our problems.

Of course, we would like every source term in \eqref{eq:cubic_equation_expanded+} 
to obey the estimate \eqref{eq:source_term}, but there are many terms to check. 
In fact there will be several important terms which will not satisfy 
\eqref{eq:source_term}.

Luckily, many of these terms satisfy Definition \ref{def:good_L6_terms} and 
so we will immediately get the estimate \eqref{eq:source_term} for the 
sum of powers of $L_v$ since $B^3$ satisfies Definition
\ref{def:good_corrections}. The following proposition shows which 
of the source terms in \eqref{eq:cubic_equation_expanded+} are favourable: 

\begin{proposition}\label{prop:b3_good_bad_splitting}
    First, we have that $C^{\low}$ and $C^{\res}$ satisfy 
    the source term estimate \eqref{eq:source_term} of Theorem 
    \ref{thm:unbal_correction}.
    
    Further, we have that the following part of 
    $L^1_v(C^{\low} + C^{\bal} + C^{\res})$ is good:
    \begin{equation*}
\begin{aligned}
    &L^1_v(C^{\low} + C^{\bal} + C^{\res}) -
        \Big(B^{3,\himed}(C^{\bal}(v, \bar v, v) ,\bar v, v) - 
    B^{3,\himed}(v, \ol{C^{\bal}(v, \bar v, v)}, v)\Big)\\  
    &- \Big(\sum_h B^{3,\himed}(P_{\cong h}v, P_{\cong h}\bar v , 
    C^{\res}(P_{\cong h} v, P_{\cong h}\bar v, v))\Big)
\end{aligned}
    \end{equation*}
    satisfies Definition \ref{def:good_L6_terms}.

    Further, the following part of $C^5$ is good:
    \begin{align*}
        -C^5 &-\Big(\sum_{h} C^{\low}(P_{\cong h}v, \ol{B^{3,\himed}(P_{\cong h} v, 
    P_{\cong h} \bar v, v)}, P_{\cong h} v)\Big)\\
        -&\Big((C^{\nonres, \himed} + C^{\res, \himed})
    (P_{\cong h}v, P_{\cong h}\bar v , B^{3,\himed}(P_{\cong h}v, P_{\cong h}\bar v, v))\Big)
    \end{align*}
    satisfies Definition \ref{def:good_L6_terms}.

    Also, the first bad term above, 
\[
\sum_{h} C^{\low}(P_{\cong h}v, \ol{B^{3,\himed}(P_{\cong h} v, 
    P_{\cong h} \bar v, v)}, 
        P_{\cong h} v)
        \]
    still satisfies \eqref{eq:source_term}, and in addition
    \[L_v^1
\Big(\sum_{h} C^{\low}(P_{\cong h}v, \ol{B^{3,\himed}(P_{\cong h} v, 
    P_{\cong h} \bar v, v)}, 
        P_{\cong h} v)\Big)
        \]
    satisfies Definition \ref{def:good_L6_terms}.

    Lastly, $C^7$ and $C^9$ unambiguously satisfy 
    Definition \ref{def:good_L6_terms}.
 
    Note that by Proposition \ref{prop:b_preserves_good}
    $L^{k}_v$ applied to the above good terms will also be good.

\end{proposition}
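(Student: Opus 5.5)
The proof will be a dyadic case analysis, term by term. Throughout, Lemma~\ref{lem:sep} reduces each multilinear expression to translated products of Littlewood--Paley pieces. Those products are then estimated with the bootstrap bounds \eqref{eq:uj_ee_boot}--\eqref{eq:uj_sep_bi_boot} for $v$, which $v$ inherits by Proposition~\ref{prop:contraction}. Each dyadic sum is closed using the slowly varying property of Definition~\ref{def:admissible_envelope} together with $s>-\tfrac12$.

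\textbf{Cubic source estimate.} First I will verify \eqref{eq:source_term} for $C^{\low}$ and $C^{\res}$. For $C^{\res,\himed}$ the terms have the form $P_j(P_hvP_h\bar vP_mv)P_j\bar v$ with $m\lesssim j\ll h$. I will pair $P_hv$ with $P_jv$ and $P_h\bar v$ with $P_mv$ in two transversal bilinear bounds \eqref{eq:uj_sep_bi_boot}, each gaining $\dyad^{-h/2}$. This gives a factor $\dyad^{(-1-2s)h}c_h^2\,\dyad^{-sm}c_m\,\dyad^{-sj}c_j$. The sum over $m\lesssim j$ is harmless, and the sum over $h\gg j$ converges because $s>-\tfrac12$.

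For $C^{\low,\hihi}$ and the first two pieces of $C^{\low,\himed}$ the output is the highest frequency. There I will use Strichartz or bilinear bounds plus Bernstein on the lowest frequency, exactly as in the proof of Theorem~\ref{thm:lwp}. The third piece of $C^{\low,\himed}$, the symbol difference, is where (H1s) enters: its kernel is separable of size $\dyad^{-h/2-m/2+j}$. That gain compensates the high-frequency divergence, so the same two-bilinear pairing then closes for all $s>-\tfrac12$.

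\textbf{Good quintic and higher terms.} Next I will show that the stated parts of $L_v^1(C^{\low}+C^{\bal}+C^{\res})$, of $C^5$, and all of $C^7,C^9$ satisfy Definition~\ref{def:good_L6_terms}. Here the aim is an $L^1_{t,x}$ bound on $P_j$ of the quintic form. I will use Proposition~\ref{prop:b3_kernel_size}: $B^{3,\himed}$ gains $\dyad^{-h-m}$ and $B^{3,\hihi}$ gains $\dyad^{-2h}$. The output of $B^3$ is at the low frequency, and $C$ acts on it, so the $B^3$ factor becomes an input to $C$ whose own frequencies lie below the $B^3$ high frequency. One can then pair two bilinear estimates inside $B^3$, where the high and medium frequencies are separated. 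The remaining cubic factor is placed in $L^6$ or $L^2$ via Bernstein, or a third bilinear bound is used.

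The bad parts are excluded precisely where the outer $B^3$ or $C$ has its high input comparable to the high input of the inner factor ($P_{\cong h}$). In that configuration no transversality is available, and a genuine $\dyad^{(-1-3s)h}$ loss appears, as in the heuristic computation before the proposition. For $C^7$ and $C^9$ the extra $B^3$ gains $\dyad^{-h-m}$, and $\dyad^{-2h}$ for $B^{3,\hihi}$, offset the $L^2_x$ growth $\dyad^{-sh}$ of the additional factors. Together with the $\|v\|_{L^6}^6\|v\|_{L^\infty}$ type bound, this gives good estimates with the correct powers of $\epsilon$.

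\textbf{Main obstacle.} I expect the hardest step to be the special term $\sum_hC^{\low}(P_{\cong h}v,\overline{B^{3,\himed}(P_{\cong h}v,P_{\cong h}\bar v,v)},P_{\cong h}v)$. I would first prove \eqref{eq:source_term} for it directly, pairing the outer $P_{\cong h}$ factors with the testing factor $P_j\bar v$. After one more application of $L_v$, the extra $B^3$ gain $\dyad^{-h'-m'}$ makes it good. Finally, goodness persists under all $L_v^k$ by Proposition~\ref{prop:b_preserves_good}, since $B^3$ satisfies \eqref{eq:bn_pointwise_L1} by the Corollary. The constants grow at most like $C(N)^m$ by the remark after Lemma~\ref{lem:sep}, which keeps the Neumann series geometric.
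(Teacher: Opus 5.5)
Your toolbox is the right one: Lemma~\ref{lem:sep}, the kernel sizes of Proposition~\ref{prop:b3_kernel_size}, the bootstrap norms, slowly varying envelopes, and $s>-\frac12$. Your treatment of $C^{\res}$ and of the cubic error piece of $C^{\low,\himed}$ is correct.

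The gap is in the criterion you use for the quintic claims. You assume that unmatched high frequencies give transversality and hence goodness, while matched ones ($P_{\cong h}$) are exactly the excluded bad terms. That is not what the statement asserts. Several matched configurations are not excluded and must be shown good, for instance:
\begin{itemize}
\item every term containing $B^{3,\hihi}$, e.g.\ $B^{3,\hihi}(C^{\bal}(v,\bar v,v),\bar v,v)$;
\item the composition $B^{3,\himed}(P_{\cong h}v,P_{\cong h}\bar v,P_{m_1}C^{err}(P_{\cong h}v,P_{\cong h}\bar v,P_{m_2}v))$, where $C^{err}$ is the symbol-difference part of $C^{\low,\himed}$;
\item $P_hC^{\low}(P_{\lesssim h}v,P_{\lesssim h}\bar v,B^{3,\himed}(P_{\cong h}v,P_{\cong h}\bar v,v))$.
\end{itemize}
A quintic $L^1_{t,x}$ bound needs time exponents summing to exactly one. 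So the only admissible pairings are two bilinear bounds plus one $L^\infty_{t,x}$ factor, or one bilinear bound plus three $L^6_{t,x}$ bounds. ``Two bilinear estimates inside $B^3$'' with the rest in $L^6$ is therefore not available; $B^{3,\himed}$ has only one separated pair anyway. For matched terms you need one of two mechanisms that your sketch never supplies. The first is a second separation forced by the frequency constraints: in the last example the output localization gives $\xi_1-\xi_2\sim\dyad^h$. The second is extra symbol decay that pays for the $L^6$ route: $\dyad^{-2h}$ for $B^{3,\hihi}$, and $\dyad^{-3h/2-m_2/2}$ for the $B^{3,\himed}\circ C^{err}$ composition. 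The latter is where (H1s) is indispensable at quintic order. With only \eqref{normal-reg}, the same pairing gives $\dyad^{(-1-3s)h}$, which diverges for $s<-\frac13$.

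The cubic step for the rest of $C^{\low}$ has the same defect in simpler form. The quantity in \eqref{eq:source_term} is quartic in $L^1_{t,x}$, so only two bilinear $L^2_{t,x}$ bounds can close it, and $L^6_{t,x}$ cannot enter. The argument of Theorem~\ref{thm:lwp} does not transfer, because it is local in time: it uses $T^{1/2}$ and the possibly large $L^2$ norm $M$. What you need is that the output localization forces two separated pairs. For example, if $\xi_3$ is much smaller than $\dyad^j$ then $\xi_1-\xi_2\sim\dyad^j$. This gives $\dyad^{-2j}\|\partial_x(P_{\sim j}vP_{j_1}\bar v)\|_{L^2_{t,x}}\|\partial_x(P_{\cong j}vP_{j_2}\bar v)\|_{L^2_{t,x}}\lesssim C^2\epsilon^4c_j^2\dyad^{(-1-4s)j}$.

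Similarly, for the outlier $\sum_hC^{\low}(P_{\cong h}v,\ol{B^{3,\himed}},P_{\cong h}v)$ you must explain why the test factor is separated. When the two $P_{\cong h}$ inputs are not separated, the output sits near $2\dyad^h$, well above all inputs. That separation, together with the medium frequency inside $B^{3,\himed}$, supplies the two bilinear pairings.
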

\begin{remark}
    After this proposition, we have two groups of important source terms in
    \eqref{eq:cubic_equation_expanded+}: those which 
    satisfy the source term estimate \eqref{eq:source_term} but are not good,
    and those which do not satisfy the source term estimate \eqref{eq:source_term}. 
    In the first, we have the terms
    \begin{equation}\label{eq:b3_terms_to_check}
        C^{\bal}, \quad C^{\low}, \quad C^{\res}, \quad 
        \sum_{h} C^{\low,\hihi}(P_{\cong h}v, 
        \ol{B^{3,\himed}(P_{\cong h} v, P_{\cong h} \bar v, v)}, 
        P_{\cong h} v).
    \end{equation}
    (Note that $C^{\bal}$ falls in this list because of our analysis
     in Section \ref{s:energy}.)
    And in the second we have the terms
    \begin{equation}\label{eq:b3_remaining_bad_terms}
\begin{aligned}
&B^{3,\himed}(C^{\bal}(v, \bar v, v) ,\bar v, v) - 
B^{3,\himed}(v, \ol{C^{\bal}(v, \bar v, v)}, v)\\  
&+ \sum_h B^{3,\himed}(v_{\cong h}, \bar v_{\cong h} , 
    C^{\res}(v_{\cong h}, \bar v_{\cong h}, v))
    -(C^{\nonres, \himed} + C^{\res, \himed})
    (v_{\cong h}, \bar v_{\cong h} , B^{3,\himed}(v_{\cong h}, \bar v_{\cong h}, v)).
\end{aligned}
    \end{equation}
Both groups will appear in the remaining analysis, although the focus will 
    be on terms in \eqref{eq:b3_remaining_bad_terms} as they demand the most nuanced 
    analysis.
\end{remark}
\begin{proof}[Proof of Proposition~\ref{prop:b3_good_bad_splitting}]
 Note that the order of the terms above is at most a fixed finite number, 
    in this case 9, so we need not worry about the growth rates of the 
    constant in Definition \ref{def:good_L6_terms} or the source term 
    estimate \eqref{eq:source_term}. 

    Since there are many cases we will outline the general strategy
    and give details for the edge cases. 

    First, whenever there are two pairs of frequency separations at 
    the level of the highest frequency we can use two bilinear 
    estimates \eqref{eq:uj_sep_bi_boot}. This happens for instance 
    when $B^3$ falls on the highest frequency or the highest frequencies 
    of $B^3$ and $C$ are unmatched. One situation where this only barely 
    happens is in 
    \[
        P_h C^{\low}(P_{\lesssim h} v,  P_{\lesssim h}\bar v, 
        B^{3,\himed}(P_{\cong h}v, P_{\cong h}\bar v, v))
    \]
    because here we must have that 
    \[
        \dyad^h \sim \xi_1 - \xi_2 + \xi_3 - \xi_4 + \xi_5 
        \sim \xi_1 - \xi_2 + \dyad^\ell \sim \xi_1 - \xi_2.
    \]
    Thus, even though $\xi_1$ and $\xi_2$ are localized in nearby dyadic 
    regions, they must always have separation. 

    Another delicate example of this is involving the error 
    term in $C^{\low}$ (see Definition \ref{def:c_splitting}) which 
    we will call $C^{err}$:
    \[
        \sum_{h_1 \gg h_2 \gg h_3 \gg m} 
        B^{3,\himed}(P_{h_3}C^{err}(P_{\cong h_1} v,  P_{\cong h_1}\bar v, P_{h_2}v), 
        P_{h_3} \bar v, P_{m} v).
    \]
    Here, the enhanced smoothness condition (H1s) tells us 
    this symbol has size $\dyad^{-h_1/2} \dyad^{-h_2/2}\dyad^{-m}$.
    Here the strategy described above takes the following form where 
    we have omitted spatial translations for concision.
    \begin{align*}
       \sum_{h_1 \gg h_2 \gg h_3 \gg m} 
        \| B^{3,\himed}&(P_{h_3}C^{err}(P_{\cong h_1} v,  P_{\cong h_1}\bar v, P_{h_2}v), 
        P_{h_3} \bar v, P_{m} v) \|_{L^1_{t,x}}\\
        &\lesssim \sum_{h_1 \gg h_2 \gg h_3 \gg m} 
        \dyad^{-h_1/2}\dyad^{-h_2/2}\dyad^{-m}
        \|P_{\cong h_1} \bar v P_{\cong h_2} v\|_{L^2_{t,x}}
        \|P_{\cong h_1} vP_{h_3} \bar v \|_{L^2_{t,x}}
        \|P_{m} v\|_{L^\infty_{t,x}}\\
        &\lesssim C^3\epsilon^5
        \sum_{h_1 \gg h_2 \gg h_3 \gg m} 
        \dyad^{(-3/2-2s)h_1}\dyad^{(-1/2-s)h_2}\dyad^{-sh_3}\dyad^{(-1/2 - s)m}c_{h_1}^2c_{h_2}c_{h_3}c_m.
    \end{align*}
    This suffices because $h_1 \geq h_3$. 

    The other case is when a term involves the error 
    term of $C^{\low}$ when high frequencies are matched or in 
    any term involving $B^{3,\hihi}$. 
    Here, we get extra smallness from the size $\dyad^{-h/2}\dyad^{-m/2}\dyad^{j}$ 
    of the error term or respectively Proposition \ref{prop:b3_kernel_size} 
    which 
    allows us to use one bilinear estimate \eqref{eq:uj_sep_bi_boot} and three 
    $L^6$ estimates \eqref{eq:uj_se_boot}
    with only one separation. Since $B^{3,\hihi}$ and the error 
    term in $C^{\low}$ 
    always comes with a separation these are harmless. As an example we analyze
    \[
    P_j B^{3,\himed}(P_{\cong h} v, P_{\cong h} \bar v, 
    P_{m_1} C^{err}(P_{\cong h} v,P_{\cong h} \bar v, P_{m_2}v))
\]
    Here, the symbol size is 
    $\dyad^{-3h/2}\dyad^{-m_2/2}$ by Proposition \ref{prop:b3_kernel_size},
    and the enhanced smoothness condition (H1s) and 
    we may estimate by Lemma \ref{lem:sep} and the 
    bootstrap estimates \eqref{eq:uj_ee_boot}-\eqref{eq:uj_sep_bi_boot}
    \begin{align*}
        \|P_j B^{3,\himed}&(P_{\cong h} v, P_{\cong h} \bar v, 
        P_{m_1} C^{err}(P_{\cong h} v,P_{\cong h} \bar v, P_{m_2}v))\|_{L^1_{t,x}}\\
        &\lesssim \dyad^{-3h/2}\dyad^{-m_2/2} \sup_{x_0}
        \|P_{\cong h} v P_{m_2} v\|_{L^2_{t,x}}
        \|P_{\cong h} v\|_{L^6_{t,x}}^3\\
        &\lesssim C^4\epsilon^4\dyad^{(-3/2-3s)h}\dyad^{(-1/2 -s)m_2} 
        c_h^3c_{m_2}
    \end{align*}
    which suffices after summing since $s > -1/2$. Note that in this 
    term, the enhancement of (H1s) is required in this case.

    Now we consider the terms which do not have good estimates. 
    In $B^{3,\himed}(C^{\bal}(v, \bar v, v), \bar v , v)$ as well as its symmetric 
    counterpart, we find ourselves near a situation where the first four 
    frequencies are very near each other and the fifth frequency is 
    much smaller, so we have 
    only one separation at high frequency, which gains at most 
    $\dyad^{-h/2}$. 
    Since $B^{3,\himed}$ only has size $\dyad^{-h}\dyad^{-m}$ this means 
    we can only control three high frequencies, but we have four. 

    Similarly, in $B^{3,\himed}(v, \bar v, C^{\res}(v, \bar v, v))$ 
    as well as in its symmetric counterpart 
    which is
    $-(C^{\res} + C^{\nonres})(v, \bar v, B^{3,\himed})$, we may find ourselves 
    in a situation where all high frequencies are matched although here 
    they come in two pairs.
    In particular
    we do have control of this term when there is separation among the 
    high frequencies, which is why the bad term reduces to 
    \[
\Big(\sum_{h} C^{\low}(P_{\cong h}v, \ol{B^{3,\himed}(P_{\cong h} v, 
    P_{\cong h} \bar v, v)}, 
        P_{\cong h} v)\Big).
    \]
    This is also the reason why the similar term
    $B^{3,\himed}(P_{\cong h} v, P_{\cong h}\bar v, 
    C^{\low}(P_{\cong h}v, \bar v, P_{\cong h}v))$
    is in fact good, because the high frequencies of $C^{\low}$, in this regime, 
    have opposite signs, and thus separation.

    The terms $C^{\res}$ and $C^{\low}$ can never be good since they are 
    trilinear, but easily satisfy the source term estimate 
    \eqref{eq:source_term} since they both have two high frequency separations
    after being paired against as in \eqref{eq:source_term}, and have at most 
    two frequencies higher than the output frequency. In $C^{\res}$ and 
    the error part of $C^{\low}$ it is clear why we have two separations at the highest 
    frequency after pairing. 
    
    For the remaining part of $P_j C^{\low}$ we have
    all three frequencies less or comparable to $j$, with one much less. There 
    are two cases up to symmetry. If $\xi_3$ is much less, then 
    $\xi_1 - \xi_2 \sim \dyad^j$ which means $\xi_1$ and $\xi_2$ as well as 
    $\xi_3$ and the measurement frequency are separated. Otherwise, if $\xi_2$ is 
    much less, then $\xi_1 + \xi_3 \sim \dyad^j$ and either one is smaller than $\dyad^j$
    and we have two separations, or $\xi_1 \sim \xi_3 \sim \dyad^j/2 \ll \dyad^j$ which 
    puts us in the first case. In either case we may verify the source 
    term estimate \eqref{eq:source_term}:
    \begin{align*}
        \|P_j C^{\low}(v, \bar v, v) P_{\cong j} \bar v\|_{L^1_{t,x}}
        &\lesssim \sum_{j_1, j_2\lesssim j} \dyad^{-2j}\sup_{x_0, x_1}
        \|\partial_x (P_{\sim j} v P_{j_1} \bar v(\cdot + x_0))\|_{L^2_{t,x}}
        \|\partial_x (P_{\cong j} v P_{j_2} \bar v(\cdot + x_1))\|_{L^2_{t,x}}\\
        &\lesssim C^2\epsilon^4c_j^2 \dyad^{(-1-4s)j}.
    \end{align*}

    Finally, we have the outlier 
    \[
        C^{\low}(v, \ol{B^{3,\himed}(v, \bar v, v)}, v)
    \]
    where, unlike the case of $C^{\low}$ above, knowing that
    $\xi_1 + \xi_5 \sim \dyad^j$ does not enforce separation. 
    However, this term does satisfy the source term estimate \eqref{eq:source_term} 
    because if there is no separation between 
    the high frequencies, then the output frequency of this outlier term is 
    \[
    \xi_1 - \xi_2 + \xi_3 -\xi_4 + \xi_5 \sim 2\dyad^h.
    \]
    In particular, in \eqref{eq:source_term}, we measure against a much higher frequency,
    and thus there are two separations: one between the medium frequency of $B^{3,\himed}$
    and a size $\cong h$ frequency, and one against the high output frequency.

    Lastly, it is easy to see that 
    \[
        L^1_v(C^{\low}(P_{\cong h} v, \ol{B^{3,\himed}(P_{\cong h} v, P_{\cong h} \bar v, v)}, P_{\cong h} v))
    \]
    satisfies the good estimate as either the high frequency of 
    this term is unmatched with the high frequency of $B^3$ and 
    we have several lower frequency terms, or it is matched and we have 
    a separation from $C^{\low,\hihi}$ and one from $B^3$.

\end{proof}


\subsection{Balanced Quintic Corrections}
\label{ss:quintic_corrections}

Here we discuss the corrections we need for the first set of 
bad terms from \eqref{eq:b3_remaining_bad_terms}:
\[ 
    B^{3,\himed}(C^{\bal}(v, \bar v, v) ,\bar v, v) - 
    B^{3,\himed}(v, \ol{C^{\bal}(v, \bar v, v)}, v)  
\]

It will be useful to consider these terms as a single multilinear form so 
we make the following definition.
\begin{definition}\label{def:c5}
    We define the 5 linear form
\[
C^{5,\bal}(v,\bar v,v,\bar v,v) = B^{3,\himed}(C^{\bal}(v,\bar v, v),\bar v, v) 
- B^{3,\himed}(v, \ol{C^{\bal}(v,\bar v, v)}, v).
\]
Put
\[\xi_6 = \xi_1 - \xi_2 + \xi_3 - \xi_4 + \xi_5\]
and 
\[\eta_1 = \xi_1 - \xi_2 + \xi_3 - \xi_4, \quad 
\eta_2 = \xi_1 - \xi_2 - \xi_3 + \xi_4, \quad 
\eta_3 = \xi_1 + \xi_2 - \xi_3 - \xi_4, \quad 
\eta_4 = \xi_1 + \xi_2 + \xi_3 + \xi_4.\]
    Then, we may take the symbol of $c^{5,\bal}$ to be 
\begin{align*}
c^{5,\bal}(\xi_1, \xi_2, \xi_3, \xi_4, \xi_5) 
= &\frac12 c^{\bal}(\xi_1,\xi_2,\xi_3)b^{3,\himed}(\xi_1-\xi_2+\xi_3, \xi_4, \xi_5)\\
    -&\frac12 \ol{c^{\bal}}(\xi_2,\xi_3,\xi_4)b^{3,\himed}(\xi_1,\xi_2-\xi_3+\xi_4,\xi_5)\\
+&\frac12 c^{\bal}(\xi_1,\xi_4,\xi_3)b^{3,\himed}(\xi_1-\xi_4+\xi_3, \xi_2, \xi_5)\\
    -&\frac12 \ol{c^{\bal}}(\xi_2,\xi_1,\xi_4)b^{3,\himed}(\xi_3,\xi_2-\xi_1+\xi_4,\xi_5)\\
\end{align*}
    where we have symmetrized the first four inputs. 
    Here, it will be convenient to partition 
    the symbol into dyadic regions which respect the symmetry between 
    $\xi_1, \xi_2, \xi_3$ and $\xi_4$. 
    Or expanding out Definitions \ref{def:c_splitting} 
    and \ref{def:b3_nonres} and using the new coordinates, we may take
\begin{align*}
    c^{5,\bal}_{h,m,j}&(\xi_1, \xi_2, \xi_3, \xi_4, \xi_5) = 
    -p_j(\xi_6)p_{m}(\eta_1)p_h(\eta_4/4)\sum_{j_1 \sim h, j_2 \sim h, j_3 \sim h,
     j_4 \sim h} p_{j_1}(\xi_1) p_{j_2}(\xi_2)p_{j_3}(\xi_3) p_{j_4}(\xi_4)\\
&\Big[
    \frac{c^{\bal}(\xi_1,\xi_2,\xi_3)
    c^{\nonres,\himed}(\eta_1 + \xi_4,\xi_4,-\eta_1)}
{\eta_1^2 + 2\eta_1\xi_4 + \xi_5^2 - \xi_6^2}
    +\frac{c^{\bal}(\xi_1,\xi_4,\xi_3)
    c^{\nonres, \himed}(\eta_1 + \xi_2,\xi_2,-\eta_1)} {\eta_1^2 + 2\eta_1\xi_2 + \xi_5^2 - \xi_6^2}\\
    &-\frac{\ol{c^{\bal}}(\xi_2,\xi_3,\xi_4)
    c^{\nonres,\himed}(\xi_1,\xi_1-\eta_1,-\eta_1)}
{-\eta_1^2 + 2\eta_1\xi_1  + \xi_5^2 - \xi_6^2}
    -\frac{\ol{ c^{\bal}}(\xi_2,\xi_1,\xi_4)
    c^{\nonres, \himed}(\xi_3,\xi_3-\eta_1,-\eta_1)}
{-\eta_1^2 + 2\eta_1\xi_3  + \xi_5^2 - \xi_6^2}
\Big]
\end{align*}
so that 
    \[
        c^{5,\bal} = \sum_{h \gg m \gg j} c^{5,\bal}_{h,m,j}.
    \]
\end{definition}

The reason that $C^{5,\bal}$
fails to satisfy the source term estimate, namely \eqref{eq:source_term}, 
follows from the fact that since
the four high frequencies can be close to each other
we can not simultaneously use Bernstein's inequality at low 
frequencies and use the bilinear $L^2_{t,x}$ estimate to make a gain at high frequency.
In particular, the structure of the first two terms above, after measurement 
against $P_j v$, is schematically
\[\sum_{h \gg m \gg j} P_j(P_h(v_h \bar v_h v_h) \bar v_h v_m) \bar v_j.\] 

If we use the Strichartz estimate \eqref{eq:uj_se_boot},
placing 3 high frequencies in $L^6_{t,x}$, bilinear $L^2_{t,x}$ between
a $\bar v_h$ and $v_m$ and Bernstein on $\bar v_j$, we heuristically get
\begin{align*}
    \sum_{h \gg m \gg j} \|P_j(P_h(P_h v P_h \bar v P_h v) P_h \bar v P_m v) 
    P_j \bar v\|_{L^1_{t,x}}
    &\lesssim C^5\epsilon^4 \sum_{h \gg m \gg j} 
\dyad^{(-3/2 + (1 - 4s)/2 -s)h}\dyad^{(-1-s)m}\dyad^{(1/2-s)j}c_h^4c_mc_j\\
    &=C^5\epsilon^4c_j\sum_{h \gg m \gg j} 
\dyad^{(-1 -3s)h}\dyad^{(-1-s)m}\dyad^{(1/2-s)j}c_h^4c_m.
\end{align*}
This only converges if $s > -1/3$. If instead we use two bilinear estimates, 
the result would be worse, as we would be forced to place Bernstein at high 
frequency.

Thus, we are motivated to take advantage of the cancellation in $C^{5,\bal}$
between the positive and negative components.
We expect this to be possible because this difference is morally similar to 
\[(C^{\bal}(v,\bar v, v)\bar v - v\ol{C^{\bal}(v,\bar v, v)})\]
which we have seen in Section \ref{s:energy} admits a favourable division. 
The situation is complicated by the symbol of $B^{3,\himed}$ and the presence of the 
medium frequency, but we are assisted by only needing to 
do the division modulo good terms. In particular, up to a good term, we will be 
able to apply Lemma \ref{l:division}. 

\begin{proposition}\label{prop:b5bal}
    $c^{5,\bal}_{h,m,j}$ admits a decomposition of the form 
    \[ 
    c^{5,\bal}_{h,m,j} + \Delta^4\xi^2 b^{5,\bal}_{h,m,j} 
    = g^{5,\bal}_{h,m,j},
    \]
    where $\Delta^4 \xi^2$ is taken with respect to the 
    first four frequencies, and such that $G^{5,\bal}_{h,m,j}$ satisfies Definition 
    \ref{def:good_L6_terms} and 
    \[
    \|\check b^{5,\bal}_{h,m,j}\|_{L^1} \lesssim \dyad^{-3h}\dyad^{-m}.
    \]
\end{proposition}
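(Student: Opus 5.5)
Our plan is to reduce Proposition~\ref{prop:b5bal} to the division Lemma~\ref{l:division} applied, for fixed $\xi_5$ (equivalently fixed $\eta_1$ and $\xi_6$), to the quartic block in the first four frequencies, all of which are comparable to $\dyad^h$. First we will freeze the ``slow'' variables. In each of the four terms of $c^{5,\bal}_{h,m,j}$ in Definition~\ref{def:c5} we will replace the $b^{3,\himed}$ factor by a common symbol depending only on $(\eta_1,\eta_4,\xi_5)$, namely
\[
\beta(\eta_1,\eta_4,\xi_5)=-\frac{c^{\nonres,\himed}(\eta_1+\eta_4/4,\eta_4/4,-\eta_1)}{\eta_1^2+\eta_1\eta_4/2+\xi_5^2-\xi_6^2}\,p_j(\xi_6)p_m(\eta_1)p_h(\eta_4/4).
\]
The differences between the actual arguments ($\xi_4,\xi_2,\xi_1-\eta_1,\xi_3-\eta_1$) and $\eta_4/4$ are linear combinations of $\eta_2,\eta_3$ and $\eta_1$. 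By the dyadic smoothness \eqref{normal-reg} and Proposition~\ref{prop:b3_kernel_size}, each replacement error is therefore a symbol of size $\dyad^{-h}\dyad^{-m}\cdot\dyad^{-h}(|\eta_2|+|\eta_3|+|\eta_1|)$. The $\eta_1$ part costs $\dyad^{m-h}$, which is good directly, since it gains a full high-frequency factor over the bad heuristic. The parts proportional to $\eta_2,\eta_3$ are differences of high frequencies, i.e.\ they carry a derivative of a product $P_h v P_h\bar v$ at frequency $|\xi_1-\xi_2|$ or similar. These we will estimate as good terms: we write the error as $(\xi_1-\xi_2)\times$(smooth symbol of size $\dyad^{-2h}\dyad^{-m}$) and use one bilinear bound \eqref{eq:uj_sep_bi_boot} on that pair, two $L^6$ bounds \eqref{eq:uj_se_boot}, the transversal bound for the pair $(h,m)$ and Bernstein on the output. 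We will check that the resulting exponent in $h$ is negative for $s>-\frac12$, mirroring the computations in Proposition~\ref{prop:b3_good_bad_splitting}.

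After freezing, the main part has the form $\beta(\eta_1,\eta_4,\xi_5)\,c^{4,\bal}_{m}(\xi_1,\xi_2,\xi_3,\xi_4)$, up to the factor $i$ and the localizations. Here $c^{4,\bal}_m$ is exactly the symmetrized mass symbol from Section~\ref{s:energy}, now evaluated off the diagonal $\Delta^4\xi=\eta_1\neq0$. This symbol is symmetric in $\xi_1,\xi_3$ and in $\xi_2,\xi_4$, and by (H2s) it vanishes on the full diagonal. It is supported at frequencies $\sim\dyad^h$ and smooth on scale $\dyad^h$ with size $K=1$. So Lemma~\ref{l:division} gives
\[
c^{4,\bal}_m=\Delta^4\xi\,r^4-\Delta^4\xi^2\,b^4+(\xi_o-\xi_e)^2q^4,
\]
with $r^4,b^4,q^4$ of sizes $\dyad^{-h},\dyad^{-2h},\dyad^{-2h}$. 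We will then set $b^{5,\bal}_{h,m,j}=\beta\, b^4$. Its kernel size is $\dyad^{-h-m}\cdot\dyad^{-2h}=\dyad^{-3h}\dyad^{-m}$, as claimed, and we will verify separability via Lemma~\ref{lem:sep} after the linear change to $(\eta,\xi_5)$ coordinates, as in the proof of Proposition~\ref{prop:b3_kernel_size}. The remaining terms go into $g^{5,\bal}$:
\begin{itemize}
\item \emph{The $r^4$ term.} Here $\Delta^4\xi=\eta_1\sim\dyad^m$, so $\beta\,\eta_1 r^4$ has size $\dyad^{-2h}$. This gains $\dyad^{m-h}$ over $c^{5,\bal}$, and four $L^6$/bilinear bounds plus $L^\infty$ on the low factor close.
\item \emph{The $q^4$ term.} This carries the null structure $(\partial|v|^2)^2$ at frequency $\dyad^h$. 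We will use two bilinear $L^2_{t,x}$ bounds \eqref{eq:uj_loc_bi_boot} or \eqref{eq:uj_sep_bi_boot} on the four high factors, paying size $\dyad^{-3h-m}$, and Bernstein/$L^\infty$ on $v_m$.
\end{itemize}

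The main obstacle is the bookkeeping that makes the $q^4$ and frozen-error terms genuinely good in the sense of Definition~\ref{def:good_L6_terms}. In particular, we must confirm that the resulting $L^1_{t,x}$ bounds carry the right power $\epsilon^{3}$ and the output weight $\dyad^{(-1/2-s)j}c_j$ after summing $h\gg m\gg j$. Tentatively, the $q^4$ term yields
\[
\dyad^{-3h-m}\cdot\dyad^{(1-4s)h}\cdot\dyad^{(1/2-s)m},
\]
which sums in $h$ for $s>-\frac12$. The low-frequency weight is then recovered from $m\gg j$ using the slowly varying envelope. If the frozen-error $\eta_2,\eta_3$ terms do not close this way, we will instead absorb them into the Lemma~\ref{l:division} step, by applying the lemma to the unfrozen four-term symbol at fixed $(\eta_1,\xi_5)$ and treating $\eta_1$ as a parameter.
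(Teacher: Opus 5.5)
The primary route has a gap at the freezing step, specifically in the errors proportional to $\eta_2,\eta_3$.

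\textbf{Why those errors are not good.} On the support of $c^{\bal}$ the four high frequencies are only mutually $\sim$, not $\cong$. So $|\eta_2|,|\eta_3|$ can be of size $\dyad^h$, and the error $\dyad^{-2h-m}(|\eta_2|+|\eta_3|)$ is then as large as $c^{5,\bal}_{h,m,j}$ itself. Its only gain is a single odd-minus-even difference, and that does not suffice.

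The H\"older scheme you propose (a differentiated high--high pair, two $L^6$ factors and an $(h,m)$ transversal pair) uses six functions for a quintic term. The admissible choices all fail near $s=-\frac12$:
\begin{itemize}
\item Pairing $\|\partial(P_hv\,P_h\bar v)\|_{L^2_{t,x}}$ with $\|P_hv\,P_m\bar v\|_{L^2_{t,x}}$ and $\|P_hv\|_{L^\infty_{t,x}}$ gives $\dyad^{(-3/2-4s)h}\dyad^{(-1-s)m}$, which diverges for $s\le-\frac38$.
\item The differentiated pair with three $L^6$ factors gives $\dyad^{(-7/6-10s/3)h}$, which diverges for $s\le-\frac{7}{20}$.
\item Discarding the derivative returns the $s>-\frac13$ heuristic.
\end{itemize}
So only the $\eta_1$-component of the freezing error (size $\dyad^{-2h}$) is good.

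A smaller bookkeeping point: for your $r^4$ term there is no room for an $L^\infty$ on the medium factor. Four comparable high factors cannot be put in $L^1_{t,x}$ with $L^6$ norms alone. Use three $L^6$ high factors and the transversal bound on $P_hv\,P_m\bar v$, which gives $\dyad^{(-2-3s)h}\dyad^{-sm}$.

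\textbf{The fix is your fallback.} It is essentially the paper's argument, and it makes freezing unnecessary. The freezing error is itself symmetric and vanishes on the diagonal, so it has to go through the division rather than be estimated.

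To make the fallback precise, replace the medium variable wherever it enters the four terms by an independent parameter $\theta$, and keep $\xi_5$ as a parameter too. The places are: the third argument $-\eta_1$ of $c^{\himed}$, the cutoff $p_m(\eta_1)$, the offsets $\pm\frac12\eta_1$ of the two high arguments of $b^{3,\himed}$ from their center $\alpha$, and $\xi_6=\eta_1+\xi_5$.

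For fixed $(\theta,\xi_5)$ the four-term bracket is then a symbol in $\xi_1,\dots,\xi_4$ with these properties:
\begin{itemize}
\item it is symmetric under $\xi_1\leftrightarrow\xi_3$ and $\xi_2\leftrightarrow\xi_4$;
\item it is smooth on scale $\dyad^h$ with size $\dyad^{-h-m}$;
\item it vanishes on the diagonal, because all four centers $\alpha$ reduce to $\xi$ there and $\Im c(\xi,\xi,\xi)=0$ by (H2s).
\end{itemize}

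The construction in Lemma~\ref{l:division} is linear in the symbol, so the resulting $r,b,q$ inherit the regularity in $\theta$. Substituting $\theta=\Delta^4\xi$ gives a $\Delta^4\xi\,r$ term of size $\dyad^{-2h}$ and $b,q$ of size $\dyad^{-3h-m}$. The kernel bounds follow from Lemma~\ref{lem:sep}, using scale $\dyad^m$ in the $\eta_1$ direction. This is also what justifies that the division gains a full $\dyad^{-h}$ in the $\Delta^4\xi$ direction, even though the symbol varies on the medium scale in $\eta_1$.

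The paper in addition first replaces the denominators by $\pm\eta_1^2+2\eta_1\xi_4$, at the cost of a good error of size $\dyad^{-2h}$. After extracting $p_j(\xi_6)p_m(\eta_1)/\eta_1$, the bracket then depends on $\xi_1,\dots,\xi_4$ only. With $\xi_5$ held as a parameter you can skip this step. Your treatment of the $q^4$ term then goes through as written.
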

\begin{proof}
The first step is to factor out a good term so that we may apply Lemma 
    \ref{l:division} in the first four inputs. We define 
\begin{align*}
    c^{5,\bal,simp}_{h,m,j}&(\xi_1, \xi_2, \xi_3, \xi_4, \xi_5) = 
    -p_j(\xi_6)p_{m}(\eta_1)p_h(\eta_4/4)\sum_{j_1 \sim h, j_2 \sim h, j_3 \sim h,
     j_4 \sim h} p_{j_1}(\xi_1) p_{j_2}(\xi_2)p_{j_3}(\xi_3) p_{j_4}(\xi_4)\\
    &\frac1{\eta_1}\Big[
    \frac{c^{\bal}(\xi_1,\xi_2,\xi_3)
    c^{\nonres,\himed}(\eta_1 + \xi_4,\xi_4,-\eta_1)}
{\eta_1 + 2\xi_4}
    +\frac{c^{\bal}(\xi_1,\xi_4,\xi_3)
    c^{\nonres, \himed}(\eta_1 + \xi_2,\xi_2,-\eta_1)} {\eta_1 + 2\xi_2}\\
    &-\frac{\ol{c^{\bal}}(\xi_2,\xi_3,\xi_4)
    c^{\nonres,\himed}(\xi_1,\xi_1-\eta_1,-\eta_1)}
{-\eta_1 + 2\xi_1}
    -\frac{\ol{ c^{\bal}}(\xi_2,\xi_1,\xi_4)
    c^{\nonres, \himed}(\xi_3,\xi_3-\eta_1,-\eta_1)}
{-\eta_1 + 2\xi_3}
\Big]
\end{align*}
and wish to prove that the difference will have the good estimate of 
Definition \ref{def:good_L6_terms}.

The analysis will be symmetric for each of the four terms. We note that 
    \[
        \frac{1}{\pm \eta_1^2 + 2\eta_1\xi_4 + \xi_5^2 - \xi_6^2}
        - \frac{1}{\pm \eta_1^2 + 2\eta_1\xi_4 }
        = \frac{\xi_5^2 - \xi_6^2}{(\pm \eta_1^2 + 2\eta_1\xi_4 + \xi_5^2 - \xi_6^2)
        (\pm \eta_1^2 + 2\eta_1\xi_4 )}
    \]
    which is separable by Lemma~\ref{lem:sep} with size 
    \[
        \frac{\dyad^{2m}}{\dyad^{2h}\dyad^{2m}} = \dyad^{-2h}.
    \]
    This extra smallness allows us, assuming the bootstrap estimates
    \eqref{eq:uj_se_boot} and \eqref{eq:uj_sep_bi_boot}, to estimate 
    \begin{align*}
        \|P_j(C^{5,\bal} - C^{5,\bal,simp})\|_{L^1_{t,x}}
        & \lesssim \sum_{h\gg m \gg j} \dyad^{-2h} \|P_h v\|_{L^6_{t,x}}^3
        \sup_{x_0}\|P_h v P_m \bar v(\cdot + x_0)\|_{L^2_{t,x}}\\
        & \lesssim \epsilon^4\sum_{h \gg m \gg j} \dyad^{(-5/2 + (1-4s)/2-s)h}\dyad^{-sm}
        c_h^4c_m\\
        & \lesssim \epsilon^4\dyad^{(-2 -4s)j}c_j.
    \end{align*}
    We let the sum of these symbols be $g$.

    Next, we note that $\frac{\eta_1}{p_j(\xi_6)p_m(\eta_1)} c^{5,\bal,simp}_{h,m,j}$ 
    depends only on $\xi_1, \xi_2, \xi_3, \xi_4$ and vanishes on the diagonal
    and so we may apply Lemma \ref{l:division} to get 
    \[
     c^{5,\bal}_{h,m,j} + \frac1{\eta_1} {p_j(\xi_6)p_m(\eta_1)}\Delta^4 \xi^2 b =
    \frac1{\eta_1} {p_j(\xi_6)p_m(\eta_1)}(\xi_o - \xi_e)^2 q + {p_j(\xi_6)p_m(\eta_1)}r + g
    \]
    with 
    \[\|\check b\|_{L^1} \lesssim \dyad^{-3h}\]
    \[\|\check q\|_{L^1} \lesssim \dyad^{-3h}\]
    \[\|\check r\|_{L^1} \lesssim \dyad^{-2h}.\]
    Note that compared to the result of Lemma 
    \ref{l:division}, each term has an extra factor of $\dyad^{-h}$ 
    which comes 
    from the fact that the overall symbol size of $c^{5,\bal,simp}$ has 
    an extra factor of $\dyad^{-h}$.

    Now, in the support of $c^{5,\bal}_{h,m,j}$ we have $\eta_1 \sim \dyad^m$, so by 
    multiplying through by a larger cutoff to this region, we may have this support 
    property in all terms. In particular, we need not worry about the division
    by $\eta_1,$ and may set 
    \[
        b^{5,\bal}_{h,m,j} = \frac1{\eta_1} b
    \]
    which will have the correct support properties.
    Finally we need to show that $r$ and $q$ both satisfy Definition 
    \ref{def:good_L6_terms}. 

    For $Q$ associated to the symbol $\frac1{\eta_1} (\xi_o - \xi_e)^2 q$ we have 
    \begin{align*}
        \|Q\|_{L^1_{t,x}}
        & \lesssim \sum_{h\gg m \gg j} \dyad^{-3h}\dyad^{-m} 
        \sup_{x_0}\|\partial(P_{\cong h} v P_{\cong h} \bar v(\cdot + x_0))\|_{L^2_{t,x}}^2
        \|P_m v \|_{L^\infty_{t,x}}\\
        & \lesssim \epsilon^4\sum_{h \gg m \gg j} \dyad^{(-2 -4s)h}\dyad^{(-1/2-s)m}
        c_h^4c_m\\
        & \lesssim \epsilon^4\dyad^{(-5/2 -5s)j}c_j.
    \end{align*}
    And for $R$ associated to the symbol $r$ we have
    \begin{align*}
        \|R\|_{L^1_{t,x}}
        & \lesssim \sum_{h\gg m \gg j} \dyad^{-2h} \|P_h v\|_{L^6_{t,x}}^3
        \sup_{x_0}\|P_h v P_m \bar v(\cdot + x_0)\|_{L^2_{t,x}}\\
        & \lesssim \epsilon^4\sum_{h \gg m \gg j} \dyad^{(-5/2 + (1-4s)/2-s)h}\dyad^{-sm}
        c_h^4c_m\\
        & \lesssim \epsilon^4\dyad^{(-2 -4s)j}c_j
    \end{align*}
    Thus we may set 
    \[g^{5,\bal}_{h,m,j} = i\frac1{\eta_1} (\xi_o - \xi_e)^2 q + ir + g\]
    which completes the proof.
\end{proof}

Then we set 
\[
    B^{5,\bal} = \sum_{h\gg m \gg j} B^{5,\bal}_{h,m,j}
\]
and 
\[
    G^{5,\bal} = \sum_{h\gg m \gg j} G^{5,\bal}_{h,m,j}
\]

We may use the frequency support and the above symbol size conditions of $B^{5,\bal}$
to verify the conditions of Propositions \ref{prop:contraction} and 
\ref{prop:b_preserves_good}.
\begin{corollary}
    $B^{5,\bal}$ as constructed above satisifes Definition 
    \ref{def:good_corrections}.
\end{corollary}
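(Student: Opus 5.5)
The plan is to mirror the proof of the corollary for $B^3$, replacing Proposition~\ref{prop:b3_kernel_size} by the kernel bound $\|\check b^{5,\bal}_{h,m,j}\|_{L^1}\lesssim \dyad^{-3h}\dyad^{-m}$ from Proposition~\ref{prop:b5bal}. On the support of $b^{5,\bal}_{h,m,j}$, with $h\gg m\gg j$, the first four inputs lie in adjacent dyadic intervals at frequency $\dyad^h$. The combination $\eta_1=\xi_1-\xi_2+\xi_3-\xi_4$ has size $\dyad^m$, the fifth input $\xi_5=\xi_6-\eta_1$ is then at frequency $\sim\dyad^m$, and the output is at $\dyad^j$. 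By Lemma~\ref{lem:sep} each dyadic piece reduces to a product estimate, up to translations that we drop as in the $B^3$ case. The structural picture is $P_j\big(P_m(v_h\bar v_hv_h\bar v_h)\,v_m\big)$ with coefficient $\dyad^{-3h-m}$. Compared with $B^{3,\himed}$, this carries two extra high-frequency factors and an extra $\dyad^{-2h}$.

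\textbf{Fixed-time estimates.} For \eqref{eq:bn_pointwise_L1}, we place the input $f$ in each slot in turn, put the remaining high factors in $L^2_x$ via \eqref{eq:uj_ee_boot}, and use Bernstein to go from $L^1$ to $L^2$ at frequency $\dyad^m$ for the quartic product. When $f$ sits in a high slot, we get $\|P_hf\|_{L^1}$ times
\[
\dyad^{-3h-m}\dyad^{m/2}\,(\epsilon\dyad^{-sh})^3\epsilon\dyad^{-sm}\dyad^{m/2},
\]
which is summable and small since $-3-3s<0$. When $f$ sits in the medium slot $\ell=m\ge j$, the coefficient $\dyad^{(-3-4s)h}$ is summed over $h\gg m$. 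The $L^2$ and $L^6$ versions of \eqref{eq:bn_pointwise} go the same way: we use H\"older with $L^\infty$ Bernstein on high factors, with $\dyad^{h/2}$ per factor. The worst power is then $\dyad^{-3h}\dyad^{3h/2}\dyad^{-4sh}=\dyad^{(-3/2-4s)h}$. Combined with the $\dyad^{-h/2}$ weight, or with the low-frequency $c_j$ weight and the slowly varying envelope, this is summable for $s>-1/2$. One should check that $-2-4s<0$ and $-3/2-4s+\delta<0$, which hold since $s>-1/2$.

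\textbf{Bilinear estimates.} For \eqref{eq:bn_bilinear_1}, we bound $\|P_jB^{5,\bal}(w)\|_{L^2_{t,x}}\lesssim \dyad^{j/2}\|\cdot\|_{L^2_tL^1_x}$. We then place one high/medium pair in a transversal bilinear estimate \eqref{eq:uj_sep_bi_boot}, which gives $\dyad^{-h}\dyad^{h/2}\dyad^{-sh-sm}$, and the three remaining high factors in $L^\infty_tL^2_x$, using $L^\infty$ Bernstein on one of them. This yields
\[
\dyad^{j/2}\sum_{h\gg m\gg j}\dyad^{-3h-m}\dyad^{(-1/2-4s)h}\dyad^{h/2}\dyad^{-sm}c_h^4c_m ,
\]
which sums to $\lesssim\epsilon^5 c_j\dyad^{(-1-s)j}$ because $-3-4s<0$. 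For the mixed estimate \eqref{eq:bn_bilinear_2}, we split according to whether $k\lesssim h$ or $k\gg h$, exactly as in the $B^3$ corollary. If $k\lesssim h$, we put $P_k\bar w_2$ in $L^\infty_tL^2_x$ and reuse the previous bound. If $k\gg h$, we use the squared-norm trick, pairing $P_hw_1$ with $P_k\bar w_2$ in the mutual bilinear estimate and placing the remaining factors in $L^\infty$. Both cases gain at least $\dyad^{-2h}$ over the $B^3$ computation, so they close.

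\textbf{Main obstacle.} The only delicate point is that $b^{5,\bal}$ is not of product form on dyadic boxes. It was produced via Lemma~\ref{l:division} and the factor $1/\eta_1$, so Lemma~\ref{lem:sep} must be applied in the coordinates $(\eta_1,\dots,\eta_4,\xi_5)$, with a larger cutoff $\tilde p_m(\eta_1)$ inserted. This is what lets us treat the four-fold high product as a function localized at frequency $\dyad^m$, which is needed for the Bernstein step at frequency $m$. I would verify this first, using the support statement in Proposition~\ref{prop:b5bal} and the $L^1$ kernel bound, which together make every estimate above a product estimate.
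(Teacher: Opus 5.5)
Your overall route is the one the paper intends. The paper omits the proof, saying it is almost identical to the $B^3$ case once the kernel bound $\|\check b^{5,\bal}_{h,m,j}\|_{L^1}\lesssim \dyad^{-3h}\dyad^{-m}$ of Proposition~\ref{prop:b5bal} is available. Your ``main obstacle'' is also resolved correctly: since $b^{5,\bal}_{h,m,j}$ carries the factor $p_m(\eta_1)$, expanding its integrable kernel writes $B^{5,\bal}_{h,m,j}$ as an average of translated expressions $\tilde P_m(Q)\,f_5$, where $Q$ is the quartic high product.

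\textbf{The failing step.} Your exponent bookkeeping does not use this localization where it matters, and one step fails as written. In the $L^2$/$L^6$ versions of \eqref{eq:bn_pointwise} you put three high factors in $L^\infty$ via Bernstein at $\dyad^h$, and the worst power is $\dyad^{(-3/2-4s)h}$. This worst case is $f$ in the medium slot. There the target weight $\dyad^{(j-m)/2}$ does not depend on $h$, so you must sum over all $h\gg m$. You assert $-3/2-4s+\delta<0$ for every $s>-1/2$, but this is false for $s\le -3/8$; for $s=-0.45$ the exponent is $0.3$. That is exactly the range where the normal form is needed, and the sum diverges. The fix is to spend Bernstein at the low frequencies:
\[
\|P_j(\tilde P_m(Q)f_m)\|_{L^2}\lesssim \dyad^{j/2}\|\tilde P_mQ\|_{L^2}\|f_m\|_{L^2}\lesssim \dyad^{j/2}\dyad^{m/2}\|w_h\|_{L^2}^2\|w_h\|_{L^\infty}^2\|f_m\|_{L^2}.
\]
This gives the coefficient $\epsilon^4\dyad^{(j-m)/2}\dyad^{(-2-4s)h}$, which is summable because $-2-4s<0$. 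The $L^6$ case is identical after using $\|P_jg\|_{L^6}\lesssim\dyad^{j/2}\|g\|_{L^{3/2}}$.

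\textbf{Other bookkeeping.} Several of your H\"older counts omit Bernstein factors. For instance, $\|P_hf\|_{L^1}$ times three $L^2$ factors is not an $L^1$ bound. In \eqref{eq:bn_bilinear_1}, the three leftover high factors need two $L^\infty$ factors, not one. After correction these cases still close: the $L^1$ coefficients are $\dyad^{(-3/2-3s)h}\dyad^{(-1/2-s)m}$ (high slot) and $\dyad^{(-2-4s)h}$ (medium slot), and \eqref{eq:bn_bilinear_1} gives $\dyad^{j/2}\dyad^{(-5/2-4s)h}\dyad^{(-1-s)m}$. However, the net gain over the $B^3$ computation is only $\dyad^{(-1-2s)h}$, not $\dyad^{-2h}$, because the two extra high factors cost $\dyad^{h}\dyad^{-2sh}$.

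This matters in the case $k\gg h$ of \eqref{eq:bn_bilinear_2}. If, after the squared-norm trick, you bound each leftover factor in $L^\infty$ at its own frequency, you again get $\dyad^{(-3/2-4s)h}$, now summed over $h\ll k$, which diverges for $s\le -3/8$. You must instead, as in the $B^3$ proof, bound one copy of $P_jB^{5,\bal}(w_1)$ in $L^\infty$ by Bernstein at $\dyad^j$. After the square root this yields $\dyad^{(-2-4s)h}$ and closes. Finally, the low-frequency branch $c_j\sum_{\ell\le j}$ is vacuous here, since all five inputs of $B^{5,\bal}_{h,m,j}$ lie at frequency $\gtrsim\dyad^m\gg\dyad^j$.
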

\begin{proof}
    These bounds follow in an almost identical fashion to the bounds 
    for $B^3$ so we omit the proof.
\end{proof}

Thus we are in a good position to use $B^{5,\bal}$ as a correction.

In particular we have
\begin{align*}
    (i\partial_t + \partial_x^2) B^{5,\bal}(v,\ldots, v) &= -C^{5,\bal}(v, \ldots, v)
    + G^{5,\bal}(v,\ldots, v) \\
                            &+B^{5,\bal}((i\partial_t + \partial_x^2)v, \bar v, \ldots, v) 
                            -B^{5,\bal}(v, \ol{(i\partial_t + \partial_x^2)v}, \ldots, v)\\
                            &+\cdots + B^{5,\bal}(v, \bar v, \ldots, (i\partial_t + \partial_x^2)v) 
\end{align*}

Then we amend our normal form correction to 
\begin{equation}\label{eq:normal_form_quintic}
    v = u + B^3(v, \bar v, v) + B^{5,\bal}(v,\bar v, v, \bar v, v)
\end{equation}
Plugging this into \eqref{nls} and expanding the Neumann series gives 
\begin{align}\label{eq:quintic_equation_expanded}
    (i\partial_t + \partial_x^2) v = C^\bal + 
    \sum_{k=0}^\infty (L_v)^k\Big(
    &B^{3,\hihi}(C^{\bal},\bar v, v) 
    -B^{3,\hihi}(v, \ol{C^{\bal}}, v) 
    +B^{3,\hihi}(v ,\bar v, C^{\bal})\\
    \nonumber &+ B^{5,\bal}(C^\bal, \bar v, v, \bar v, v) - \cdots 
    + B^{5,\bal}(v, \bar v, v, \bar v, C^{\bal})\\
    \nonumber &+ B^{3,\himed}(v, \bar v, C^\bal) + C^{\res}
    + C^{\low} + G^{5,\bal} + C^{\geq5}\Big)
\end{align}
where 
\begin{align*}
    L_{v}(f) = &B^3(f,\bar v, v) - B^3(v, \bar f, v) + B^3(v,\bar v, f)\\
    &+ B^{5,\bal}(f,\bar v, v,\bar v , v) - B^{5,\bal}(v, \bar f, v, \bar v , v) + \cdots + B^{5,\bal}(v,\bar v,v,\bar v, f)
\end{align*}
and 
\begin{align*}
    C^{\geq 5} = 
               C(v - {B^3-B^{5,\bal}}, \ol{v - B^3-B^{5,\bal}}, {v - B^3-B^{5,\bal}})
               - C(v, \bar v , v).
\end{align*}

As in Section \ref{ss:cubic_corrections}, we would like to 
determine which source terms are good. In Proposition 
\ref{prop:b3_good_bad_splitting}, we studied 
terms involving 
$B^3$. Here we will show that $B^{5,\bal}$ landing on 
the bad terms from \eqref{eq:b3_terms_to_check} 
produces good terms and that the new parts of $C^{\geq 5}$ are also good.

\begin{proposition}\label{prop:b5_good_bad_splitting}
    $B^{5,\bal}$ applied to the terms from \eqref{eq:b3_terms_to_check} 
    are good:
    \[
        B^{5,\bal}\Big(C^{\bal} + C^{\low} + C^{\res} + 
        \sum_h C^{\low}(P_{\cong h} v, 
        \ol{B^{3,\himed}(P_{\cong h} v, P_{\cong h} \bar v, v)}, P_{\cong h} v)
        , \bar v, v, \bar v, v\Big), \ldots,
    \]
and
\[
        B^{5,\bal}\Big(v, \bar v, v, \bar v, C^{\bal} + C^{\low} + C^{\res} + 
        \sum_h C^{\low}(P_{\cong h} v, 
        \ol{B^{3,\himed}(P_{\cong h} v, P_{\cong h} \bar v, v)}, P_{\cong h} v)
        \Big)
    \]
    satisfies Definition \ref{def:good_L6_terms}.

    In addition $G^{5,\bal}$ is good.
    
    Finally, the following part of $C^{\geq 5}$ is good:
    \begin{align*}
        -C^{\geq 5} &-\Big(\sum_{h} C^{\low}(P_{\cong h}v, \ol{B^{3,\himed}(P_{\cong h} v, 
    P_{\cong h} \bar v, v)},
        P_{\cong h} v)\Big)\\
        -&\Big((C^{\nonres, \himed} + C^{\res})
    (P_{\cong h}v, P_{\cong h}\bar v , B^{3,\himed}(P_{\cong h}v, P_{\cong h}\bar v, v))\Big)
    \end{align*}
    satisfies Definition \ref{def:good_L6_terms}.

    Also, after another iteration, the first bad term above is 
    good:
    \[L_v
\Big(\sum_{h} C^{\low}(P_{\cong h}v, \ol{B^{3,\himed}(P_{\cong h} v, 
    P_{\cong h} \bar v, v)}, 
        P_{\cong h} v)\Big)
        \]
    satisfies Definition \ref{def:good_L6_terms}.
\end{proposition}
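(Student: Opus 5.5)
The plan is to handle each group of terms in Proposition~\ref{prop:b5_good_bad_splitting} by the same mechanism as in Proposition~\ref{prop:b3_good_bad_splitting}. We apply Lemma~\ref{lem:sep} dyadic piece by dyadic piece, using the kernel bound $\|\check b^{5,\bal}_{h,m,j}\|_{L^1}\lesssim \dyad^{-3h}\dyad^{-m}$ from Proposition~\ref{prop:b5bal}. We then estimate the resulting product in $L^1_{t,x}$ by the bootstrap bounds \eqref{eq:uj_ee_boot}--\eqref{eq:uj_sep_bi_boot}. The target throughout is the good bound $\dyad^{(-1/2-s)j}c_j$, with the slowly varying property of $c$ used to transfer envelopes to the output frequency.

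\emph{Terms $B^{5,\bal}(\cdot)$ applied to the list \eqref{eq:b3_terms_to_check}.} Compared to $B^{3,\himed}$, the symbol of $B^{5,\bal}$ gains an extra factor $\dyad^{-2h}$ at the cost of two extra high-frequency inputs. This is exactly the surplus that made the septic estimate $\|v^6\|_{L^1}\|v\|_{L^\infty}$ available in Definition~\ref{def:good_L6_terms}.

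When the cubic argument $C^{\bal},C^{\low},C^{\res}$ (or the outlier $C^{\low}(v,\ol{B^{3,\himed}},v)$) is inserted in one of the four high slots $\cong h$, we proceed as follows.
\begin{itemize}
\item Place five of the high-frequency factors in $L^6_{t,x}$, at cost $\dyad^{5(1-4s)h/6}$ with envelope $c_h^{5}$ (up to slow variation).
\item Pair one remaining high factor with the medium factor $P_m v$ in a transversal bilinear $L^2_{t,x}$ estimate, at cost $\dyad^{-h/2}\dyad^{-sh-sm}$.
\item Use Bernstein at the output frequency $\dyad^j$.
\end{itemize}
Power counting then gives a summand of order $\dyad^{(-3-3s+\ldots)h}\dyad^{-m}\dyad^{\ldots m}$, which has large negative power in $h$ for all $s>-\tfrac12$. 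We sum first over $h\gg m$ and then over $m\gg j$, exactly as in the estimate of $R$ in the proof of Proposition~\ref{prop:b5bal}.

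When the inner argument sits in the medium slot $\xi_5$, the inner output is at frequency $\dyad^m$. We use the source-type bound for $C^{\low},C^{\res}$ together with two bilinear estimates at frequency $h$ (the four $\cong h$ inputs of $B^{5,\bal}$ paired against the medium slot), which is even better. Inner cubic terms with inputs higher than $h$ always carry an unmatched high frequency, hence a bilinear separation, and we treat them like the corresponding $B^3$ cases.

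\emph{$G^{5,\bal}$.} This was estimated piece by piece in the proof of Proposition~\ref{prop:b5bal} (the $Q$, $R$ and difference bounds). It only remains to sum over $h\gg m\gg j$ and check that the exponents $(-5/2-5s)j$ and $(-2-4s)j$ are at least as good as $(-1/2-s)j$, which holds for $j\ge 0$ and $s>-\tfrac12$. This uses that $(-5/2-5s)j\le(-1/2-s)j$ and $(-2-4s)j\le(-1/2-s)j$ because $-2-4s<-\tfrac12-s$.

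\emph{$C^{\geq5}$ and the $L_v$ iterate.} Expand $C^{\geq5}$ multilinearly. Its terms are of three kinds.
\begin{itemize}
\item The pieces containing only $B^3$ coincide with $C^5,C^7,C^9$ of Proposition~\ref{prop:b3_good_bad_splitting}. These are good except for the two listed bad terms, which we carry over verbatim.
\item The new pieces contain at least one $B^{5,\bal}$ and have order $\ge 7$. For these we use the kernel bound $\dyad^{-3h}\dyad^{-m}$ and the septic estimate, with one bilinear pairing supplied by the high--medium separation inside $B^{5,\bal}$.
\item Mixed terms of order $\ge 9$ follow with room to spare.
\end{itemize}
For the last claim, $L_v$ applied to the outlier, the $B^3$ part of $L_v$ is Proposition~\ref{prop:b3_good_bad_splitting}. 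For the $B^{5,\bal}$ part, either the high frequency of the outlier is unmatched with those of $B^{5,\bal}$, giving two separations, or it is matched. In the matched case we have one separation from $C^{\low,\hihi}$ and one from the medium input of $B^{5,\bal}$.

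\emph{Main obstacle.} The delicate case is $B^{5,\bal}$ acting on $C^{\bal}$ placed in a high slot. Here all seven high frequencies may be nearly equal, and the only separation is the high--medium one. I expect the count to close precisely because of the extra $\dyad^{-2h}$ in $b^{5,\bal}$: six $L^6$ norms cost $\dyad^{(1-4s)h}$ against the gain $\dyad^{-3h}$. The first thing to verify carefully is that $-3+(1-4s)-\tfrac12-s+\ldots<0$ uniformly for $s>-\tfrac12$.

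I would also check that the $\dyad^{-m}$ factor combines with $\|P_m v\|$ so that the $m$-sum converges. If $L^\infty$ Bernstein at $m$ costs too much, I would instead use the bilinear pairing at $m$ and place the output $P_j$ by Bernstein.
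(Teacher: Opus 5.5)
Your framework (separation of variables with the kernel bound $\dyad^{-3h}\dyad^{-m}$, followed by a case split according to the slot where the inner term is inserted) is the same as the paper's. However, the estimate you propose for the key term fails. The term $P_jB^{5,\bal}(C^{\bal}(v,\bar v,v),\bar v,v,\bar v,v)$ is $7$-linear, with six factors at frequency $\cong\dyad^h$ and one at $\dyad^m$. You put five of them in $L^6_{t,x}$ and the remaining high factor together with $P_mv$ in a bilinear $L^2_{t,x}$ norm. The time exponents then add to $\frac56+\frac12=\frac43>1$, so H\"older yields at best an $L^{3/4}_t$ quantity, which does not control $L^1_t([0,T])$ uniformly in $T$. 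Bernstein at the output frequency $\dyad^j$ only trades spatial integrability and cannot repair this. It is also not needed, since goodness in Definition~\ref{def:good_L6_terms} is already an $L^1_{t,x}$ bound. The same over-count recurs in your ``main obstacle'' paragraph (six $L^6$ norms plus the high--medium bilinear pairing) and in your treatment of the new pieces of $C^{\geq 5}$ (``septic estimate, with one bilinear pairing''). The large negative power of $\dyad^h$ you report is an artifact of this over-count.

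Nor can the bilinear route be rescued. A valid count that keeps the high--medium pairing uses one bilinear $L^2_{t,x}$ norm, three high factors in $L^6_{t,x}$, and two high factors in $L^\infty_{t,x}$ by Bernstein. The summand is then $\dyad^{(-2-5s)h}\dyad^{(-1-s)m}$, and the $h$-sum diverges for $s<-\frac25$. The missing idea is to abandon the separation for this term. The paper puts all six high factors in $L^6_{t,x}$ and $P_mv$ in $L^\infty_{t,x}$ via Bernstein, which gives, up to powers of $C\epsilon$,
\[
\sum_{h\gg m\gg j}\dyad^{-3h}\dyad^{-m}\|P_{\cong h}v\|_{L^6_{t,x}}^6\|P_mv\|_{L^\infty_{t,x}}
\lesssim \sum_{h\gg m\gg j}\dyad^{(-2-4s)h}\dyad^{(-\frac12-s)m}c_h^4c_m .
\]
This converges exactly because $s>-\frac12$, so the count is critical at the endpoint rather than comfortable. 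You list $L^\infty$ at $m$ only as a fallback; it has to be the main estimate.

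The ``one bilinear plus three $L^6$'' pattern is only legitimate on five-factor blocks. For example, for $C^{\low}(v,\ol{B^{5,\bal}},v)$ the paper places $P_\ell B^{5,\bal}$ in $L^1_tL^\infty_x$ by Bernstein at its own output $\dyad^\ell$. It then estimates the five inputs of $B^{5,\bal}$ by one bilinear and three $L^6$ norms, and puts the two outer factors in $L^\infty_tL^2_x$.

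Your medium-slot argument has the same weakness. First, \eqref{eq:source_term} is a pairing against $P_{\cong m}\bar v$, which is not what appears inside $B^{5,\bal}$. Second, if the high inputs of an inner $C^{\res}$ sit at $h'\sim h$, there is only one separation. The paper again uses the septic count there: six factors at frequency $\lesssim\dyad^h$ in $L^6_{t,x}$ and the low input of $C^{\res}$ in $L^\infty_{t,x}$. It reserves two bilinear estimates for the case $h'\gg h$.
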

\begin{remark}
    Unlike Proposition \ref{prop:b3_good_bad_splitting}, $B^{5,\bal}$ produces 
    only good terms through $L_v$. In particular the bad terms that remain are 
    the same as before except that we have now removed 
    $B^{3,\himed}(C^{\bal}, \bar v, v) - B^{3,\himed}(v, \ol{C^{\bal}}, v)$.
    That is \eqref{eq:b3_terms_to_check} remains unchanged, but 
    \eqref{eq:b3_remaining_bad_terms} is modified to 
    \begin{equation}\label{eq:b5_remaining_bad_terms}
\begin{aligned}
& \sum_h B^{3,\himed}(v_{\cong h}, \bar v_{\cong h} , 
    C^{\res}(v_{\cong h}, \bar v_{\cong h}, v))
    -(C^{\nonres, \himed} + C^{\res, \himed})
    (v_{\cong h}, \bar v_{\cong h} , B^{3,\himed}(v_{\cong h}, \bar v_{\cong h}, v)).
\end{aligned}
    \end{equation}
\end{remark}
\begin{proof}
    Again, we outline the strategy used to apply the good estimates 
    in Definition \ref{def:good_L6_terms}. The most important 
    new term which is good is $B^{5,\bal}(C^{\bal}, \bar v, v, \bar v, v)$ 
    since the corresponding term for $B^3$ in fact required correction. This 
    term can be estimated with the $L^6_{t,x}$ estimate \eqref{eq:uj_se_boot}.

    \begin{align*}
        \|P_j B^{5,\bal}(C^{\bal}(v,\bar v ,v), \bar v, v, \bar v, v)\|_{L^1_{t,x}}
        &\lesssim \sum_{h \gg m \gg j} \dyad^{-3h}\dyad^{-m}
        \|P_{\cong h} v\|_{L^6_{t,x}}^6 \|P_{m} v\|_{L^\infty_{t,x}}\\
        &\lesssim C^3\epsilon^5
        \sum_{h \gg m \gg j} \dyad^{(-2 - 4s)h}\dyad^{(-1/2 - s)m}c_h^4c_m
    \end{align*}
    which suffices by the slowly varying condition and $s > -1/2$.
    A similar estimate suffices for 
    \[B^{5,\bal}(P_{\cong h}v, P_{\cong h}\bar v, P_{\cong h}v , P_{\cong h}\bar v,
    C^{res}(P_{\lesssim h} v, P_{\lesssim h} \bar v, P_{m} v))\]
    and 
    \[C(P_{\lesssim h} v, P_{\lesssim h}, B^{5,\bal}(P_{\cong h}v, P_{\cong h}\bar v, P_{\cong h}v , P_{\cong h}\bar v, P_m v))\]

    Again, when there are two separations
    at the highest frequency, we can use two bilinear estimates 
    \eqref{eq:uj_sep_bi_boot}. For instance, 
    $B^{5,\bal}(C^{\res}, \bar v, v, \bar v, v)$ has two separations between
    the highest two frequencies $C^{\res}$ and the less high frequencies 
    of $B^{5,\bal}$ 
    In particular we have 
    \begin{align*}
        &\|P_j B^{5,\bal}(C^{\res}, \bar v, v, \bar v, v)\|_{L^1_{t,x}}\\
&\lesssim \sum_{h_1 \gg h_2 \gg m \gg j} \dyad^{-3h_2}\dyad^{-m}
        \sup_{x_2}
        \|P_{\cong h_1} v P_{\cong h_2} \bar v(\cdot + x_2))\|^2_{L^2_{t,x}}
        \|P_{\cong h_2} v\|_{L^\infty_{t,x}}^2 
        \|P_m v\|_{L^\infty_{t,x}}\\
        &\lesssim C^5\epsilon^7
        \sum_{h_1 \gg h_2 \gg m \gg j} 
        \dyad^{(-1-2s)h_1}\dyad^{(-2 - 4s)h_2}\dyad^{(-1/2 - s)m}
        c_{h_1}^2c_{h_2}^4c_m
    \end{align*}
    which suffices by the slowly varying condition and $s > -1/2$.

    When this does not happen, we resort to 
    one bilinear estimate \eqref{eq:uj_sep_bi_boot} and three $L^6_{t,x}$ estimates 
    \eqref{eq:uj_se_boot}. Here, by Proposition
    \ref{prop:b5bal}, terms involving $B^{5,\bal}$ or $G^{5,\bal}$ have 
    enough factors of $\dyad^{-h}$ to apply this strategy.

    We use as example 
    the term $C^{\low}(v, \ol{B^{5,\bal}}, v)$ whose 
    counterpart in Proposition \ref{prop:b3_good_bad_splitting} failed to be good. 

    Here, when the high frequencies are matched, 
    we may place all of $B^{5,\bal}$ in $L^1_tL^\infty_x$ by Bernstein's 
    inequality, and place the separation in $B^{5,\bal}$ in bilinear $L^2_{t,x}$:

    \begin{align*}
        &\|P_j C^{\low}(v, \ol{B^5}, v)\|_{L^1_{t,x}}\\
        &\lesssim \sum_{j\sim h \gg m \gg \ell} 
        \|P_{\sim j}v\|_{L^\infty L^2_x}\|P_{\sim j}v\|_{L^\infty_tL^2}
        \|P_\ell B^5(P_{\sim h} v, P_{\sim h}\bar v, 
        P_{\sim h}v, P_{\sim h}\bar v, P_{m} v)\|_{L^1_tL^\infty_x}\\
        &\lesssim \sum_{j \gg m \gg \ell} \dyad^{-3j}\dyad^{-m}\dyad^{\ell}
        \sup_{x_0}
        \|\partial(P_{\sim j} v P_{m} \bar v(\cdot + x_0))\|_{L^2_{t,x}}
        \|P_{\sim j}v\|^2_{L^\infty L^2_x} \|P_{\sim j}v\|^3_{L^6_{t,x}}\\
        &\lesssim C^6\epsilon^6\sum_{j \gg m \gg \ell} \dyad^{(-7/2 -3s + (1-4s)/2)j}
        \dyad^{(-1 - s)m}\dyad^{\ell}c_j^6c_m
    \end{align*}
    which again suffices since $s > -1/2$, noting that the dyadic summation becomes critical exactly as $s$ approaches  $-\frac12$.

\end{proof}


\subsection{Higher Order Corrections}
\label{ss:higher_corrections}
\newcommand\ord[1]{{2(#1)+1}}
Here we analyze the two remaining bad terms from \eqref{eq:b5_remaining_bad_terms}
\[
\sum_h B^{3,\himed}(v_{\cong h}, \bar v_{\cong h} , C^{\res}(v_{\cong h}, 
\bar v_{\cong h}, v))
-(C^{\nonres,\himed}+C^{\res,\himed})(v_{\cong h}, \bar v_{\cong h} , B^{3,\himed}(v_{\cong h}, \bar v_{\cong h}, v))
\]

Note that in the above, we are assuming pairing of the high frequencies 
because, as we saw in Proposition~\ref{prop:b3_good_bad_splitting}, when 
the high frequencies are separated, we may use bilinear estimates to get control. 

\subsubsection{Structure of the remaining terms}

Luckily, or perhaps unluckily, 
the implicit normal form employed above produces two terms which have 
parallel problems.

These terms are the result of the implicit normal form attempting to 
correct the quintic term $B^3(u, \bar u, C(u, \bar u, u))$ with a correction 
\[B^3(u, \bar u, B^3(u, \bar u, u)).\]
When we take the time derivative of this correction we get 
\[
    -C^{\nonres}(u, \bar u , B^3(u, \bar u, u)) 
    - B^3(u, \bar u, C^{\nonres}(u, \bar u, u)).
\]
The second term cancels part of the bad quintic term at the expense of an
error. 
In other words, the implicit normal form assumes that the innermost $C$ 
dominates the behavior of the term, and that the outer $B^3$ is lower order. This
is in fact a good assumption when the medium frequency from $C$ is much 
larger than the corresponding output frequency of $C$, i.e. the $C^{\nonres,\himed}$
part of $C$. 
And indeed this is exactly the 
case the implicit normal form corrects in. 

However, we have not eliminated all of the bad nonresonant part of 
$B^3(u,\bar u, C(u,\bar u, u))$. In particular, when 
the high frequencies are matched, we can have nonresonance when $C$ is resonant
when $B^3$ dominates. This happens when the medium frequency from $C$ is below
its output, but above the overall output of the quintic term. Algebraically
this is because, even though $\Delta^4\xi^2$ for the inner $C$ vanishes as 
$C$ becomes resonant, we have that $\Delta^6\xi^2$ is still bounded 
away from zero. We will continue by calculating this discrepancy more precisely.

To simplify the notation 
in what follows we make the following definitions:
\[\alpha_1 = \xi_{1} + \xi_2, \qquad \delta_1 = \xi_1 - \xi_2\]
\[\alpha_2 = \xi_{3} + \xi_4, \qquad \delta_2 = \xi_3 - \xi_4.\]
We will 
reserve simply $\xi$ for the last unpaired frequency. 
In this section there will be many frequencies. So to be a bit more clear,
we will reserve $\ell$ for the low output frequency, $m$ for the 
several medium frequencies, and $h$ for the high frequency.

That is, we will let $\dyad^\ell$ be the dyadic size of 
the overall frequency or $\xi + \delta_1 + \delta_2 $, 
$\dyad^{m_k}$ be the dyadic size of $\delta_k$, 
$\dyad^m$ be the dyadic size of the unpaired medium
frequency $\xi$, and 
$\dyad^h$ be the dyadic size of all $\alpha_k$.
Note that the $\alpha_k$ are high frequencies (since they \emph{average} nearby 
high frequencies) and the $\delta_k$ can be small (since they 
subtract nearby high frequencies) thus justifying the notation.

Using this notation, we can see clearly why $\Delta^6\xi^2$ for 
$B^{3,\himed}(u,\bar u, C(u, \bar u, u))$ is not well approximated by 
$\Delta^4 \xi^2$ for the inner $C(u, \bar u, u)$ 
if the $\alpha$'s from each are matched 
and the medium frequency from $C$ (i.e. $\delta_2$) is small.

Note that $B^{3,\himed}$ is localized to situations where 
$\ell \ll m_1.$
This means that
either $\delta_1 + \delta_2 \sim \dyad^m$, or $\dyad^m \lesssim \dyad^\ell$. In 
the second case we have control, so we focus on the first.
\begin{align*}
    \Delta^6 \xi^2 &= \xi_1^2 - \xi_2^2+ \xi_3^2- \xi_4^2 + \xi^2 
    - (\xi_1-\xi_2+\xi_3-\xi_4 + \xi)^2\\
&= \alpha_1\delta_1 + \alpha_2\delta_2  
+ \xi^2 - (\delta_1 + \delta_2 + \xi)^2\\
&= \frac12 (\alpha_1 + \alpha_2)(\delta_1+\delta_2) + \frac12 (\alpha_1 -\alpha_2)
(\delta_1 - \delta_2) 
+ \xi^2 - (\delta_1 + \delta_2 + \xi)^2\\
\end{align*}
When $\alpha_1 \approx \alpha_2$ we see that this is dominated by 
$\frac12 (\alpha_1 + \alpha_2)(\delta_1+\delta_2) \sim \dyad^h \dyad^m.$
On the other hand 
\begin{align*}
    \Delta^4 \xi^2 &= \xi_3^2- \xi_4^2 + \xi^2 - (\xi_3-\xi_4+\xi)^2\\
&= \alpha_2\delta_2 + \xi^2 - (\delta_2 + \xi)^2
\end{align*}
will have size $\alpha_2\delta_2 \sim \dyad^h\dyad^{m_2}$. 
Thus there is a discrepancy when 
$m_2 \not \sim m.$

\subsubsection{Corrections to the implicit normal form}

Even though this discrepancy is at its worst when $m_2 \not \sim m$, it will 
simplify the analysis
if we correct the term $B^3(u,\bar u, C(u, \bar u, u))$ 
more precisely over the entire correction region instead of approximating the 
correction by 
$B^3(u, \bar u, B^3(u, \bar u, u))$. We wish to replace the 
implicit normal form for these terms, but only in the situation where there 
is matching of the high frequencies and only when the correction lands on the 
medium frequency term of $B^3$. The cleanest way to accomplish this is to 
subtract the bad correction. It is easy to see how to 
do this here but inspired by higher order computations, we introduce the 
following framework to decide what must be subtracted. 

Recall the implicit normal form was given by 
\[v = u + B^3(v,\bar v, v) + B^{5,\bal}(v, \bar v, v, \bar v, v)\]
This corresponds to a series of usual normal forms by replacing 
each $v$ by the normal form which will expand one order. But we were unhappy with 
the expansion in the last variable of $B^3$. So let us not be implicit in that 
input. This gives 
\[v = u + B^3(v,\bar v, u) + B^{5,\bal}(v, \bar v, v, \bar v, v).\]
This is too brutal however, because we still want to use the implicit normal 
form when the nonlinearity hitting that term has unmatched high frequencies, so 
we try to replace the $u$ in $B^3$ by $v$ according to this equation to get more 
control. In particular, we see that 
\[(I - T) u = v - B^{5,\bal}(v, \bar v, v, \bar v, v)\]
where 
\[T u = - B^3(v, \bar v, u).\] 
Or using Neumann series, 
\begin{equation}\label{eq:normal_form_correction}
    u = \sum_{k=0}^\infty T^k (v - B^{5,\bal}) = v - B^{5,\bal} - B^3 
+ B^{3}(v, \bar v, B^{5,\bal}) + B^3(v,\bar v, B^3) + \cdots
\end{equation}

Thus, we see that not using the implicit normal form in the third variable 
is equivalent to the implicit normal form, but with extra terms:
\[v = u + B^3(v,\bar v, \sum_{k=0}^\infty T^k (v - B^{5,\bal})) + B^{5,\bal}(v, \bar v, v, \bar v, v).\]
Note that this recovers the na\"ive idea to subtract off the bad correction because 
this contains the term
\[-B^3(v, \bar v, B^3(v,\bar v, v))\]
but also has higher order terms which we will come to later:
\[
    -B^3(v, \bar v, B^{5,\bal}) + B^3(v, \bar v, B^3(v, \bar v, B^3 + B^{5,\bal}))
    - \cdots.
\]
However, we want to modify the operator $T$ to only include the part of 
$B^3(v, \bar v, B^3)$ which 
has matched high frequencies. In particular, we want to subtract 
\[
    \begin{aligned}
        -\sum_h &B^3(P_{\cong h}v, P_{\cong h}\bar v, B^3(P_{\cong h}v, 
    P_{\cong h}\bar v, v) 
        + B^{5,\bal}(P_{\cong h}v, P_{\cong h} \bar v, P_{\cong h} v, P_{\cong h} \bar v, v))\\
        &+ B^3(P_{\cong h}v, P_{\cong h}\bar v, B^3(P_{\cong h}v, P_{\cong h}\bar v, B^3(P_{\cong h}v, 
    P_{\cong h}\bar v, v) + B^{5,\bal}(P_{\cong h}v, P_{\cong h} \bar v, P_{\cong h} v, P_{\cong h} \bar v, v)))\\
        &- \cdots.
    \end{aligned}
\]

Lastly, we will need only finitely 
many terms from the $T$ sum, which we will pin down later. 

\subsubsection{First of the higher order corrections}
Going forward, to simplify the notation we will make new names 
for the forms
    $C^{\himed}, C^{\himed, \nonres},$ and $C^{\himed, \res}$
forms in a way that will better generalize to higher orders and in
the notation we have developed in the section so far.
\begin{definition}
    We define the following symbols corresponding to 
    $C^{\himed}, C^{\himed, \nonres},$ and $C^{\himed, \res}$:
\[c^{\ord1}_h(\xi) = \sum_{\ell \ll h} \sum_{h \gg m_1} p_{\ell}(\delta_1 + \xi) 
    p_{h}(\alpha_1) p_{m_1}(\delta_1) c^\himed(\alpha_1, -\delta_1),\]
\[c^{\ord1,\nonres}_h(\xi) = \sum_{\ell \ll h} \sum_{h \gg m_1 \gg \ell} 
    p_{\ell}(\delta_1 + \xi) 
    p_{h}(\alpha_1) p_{m_1}(\delta_1) c^\himed(\alpha_1, -\delta_1),\]
    and
\[c^{\ord1,\res}_h(\xi) = \sum_{m_1 \lesssim \ell \ll h} 
    p_{\ell}(\delta_1 + \xi) 
    p_{h}(\alpha_1) p_{m_1}(\delta_1) c^\himed(\alpha_1, -\delta_1),\]
where here the $(1)$ represents how many pairs of high frequencies there are,  
the $h$ represents the specific high frequency that is being matched, and we 
use the shorthand 
\[c^\himed(\alpha_i, \xi) := c^\himed(\frac12 (\alpha_i + \delta_i), \frac12 (\alpha_i - \delta_i), \xi).\]
    Further, we define the symbol $b^{\ord1}_h$ corresponding to 
    the correction $b^{3,\himed}$ 
\[b^{\ord1}_h = -\sum_{\ell \ll h} \sum_{h \gg m_1 \gg \ell} 
    p_{\ell}(\delta_1 + \xi) 
    p_{h}(\alpha_1) p_{m_1}(\delta_1) \frac{c^\himed( 
    \alpha_1, -\delta_1)}{\alpha_1\delta_1 + \delta_1^2 - 2\delta_1(\delta_1 + \xi)}.
\]
We also will define the associated multilinear forms in the usual way. 
\end{definition}
\begin{remark}
Note that since the individual high frequencies plays no role we have abused notation 
by writing
    $c^{\ord1}_h(\xi)$ as if it is a function of a single variable instead of three.

    Note that here as in Section \ref{ss:cubic_corrections}, 
    we have modified the symbol of $b^{\ord1}_h$ to depend less heavily on 
    $\xi$. This will be convenient because the symbol of 
    $B^3(v, \bar v, B^3(v, \bar v , v))$ will correspond more closely 
    to the product of the two symbols, since $\xi$ is replaced by 
    $\xi_3 - \xi_4 + \xi_5$ in such a composition. We will 
    see more clearly in Definition \ref{def:c2_h} how we will take advantage 
    of this product structure.

Unfortunately, we cannot remove dependence on $\xi$
completely since we cannot eliminate the lowest projection and we cannot remove 
$\xi$ from the factor $\Delta^4\xi^2$ in the denominator. This is because the error 
after commuting with $i\partial_t + \partial_x^2$ is of size 
\[\frac{\xi^2 - (\delta_1 + \xi)^2}{\alpha_1 \delta_1} \sim 
\frac{\dyad^m}{\dyad^h}\]
which does not have enough decay in $\dyad^m$ to get the source term estimate
    \eqref{eq:source_term}. Note
however that we have expressed $\Delta^4\xi^2$ in a way which will be helpful 
later on.
\end{remark}

We will further simplify notation by 
dropping the input to all these forms when it is simply $v$. Since 
only the last input will be important we have dropped the dependence on the 
first two. In particular, our new candidate for implicit normal form may be written
as
\[v = u + \sum_h B^{\ord1}_h - B^{\ord1}_h\circ B^{\ord1}_h 
+ B^{3,\hihi}(v,\bar v, v) + B^{5,\bal}(v, \bar v, v, \bar v, v).\]
Note that here we further abuse notation by writing 
$\sum_h B^{\ord1}_h\circ B^{\ord1}_h$ 
to mean
\[\sum_{h_1 \cong h_2} B^{\ord1}_{h_1}(v, \bar v, B^{\ord1}_{h_2}(v, \bar v, v))\]
which is suggestive of the composition of the two forms, but specifically 
in this last input.
We will continue to use this notation below.

Now, with this new notation, our equation for $v$ becomes 
\begin{align*}
    (i\partial_t + \partial_x^2) v &= 
    C^{\bal} + C^{\res} + C^{\low} + C^{\low}(v, \ol{B^{3,\himed}}, v)\\ 
    &\sum_{k=0}^\infty L_v^k \Big(G + \sum_{h} B^{\ord1}_h \circ C^{\ord1,\res}_h 
- C^{\ord1}_h \circ B^{\ord1}_h\\
    &- [i\partial_t + \partial_x^2, B^{\ord1}_h \circ
B^{\ord1}_h] + C^{\ord1}_h \circ B^{\ord1}_h \circ B^{\ord1}_h\Big)
\end{align*}
where $G$ is a sum of good terms, including errors from replacing 
$C$ with $C^{\ord 1}_h$ which we will analyze in more detail in the next section. 
We also use the notation $[i\partial_t + \partial_x^2, F^{2n+1}]$
to refer to 
\[
    (i\partial_t + \partial_x^2) F^{2n+1}(v, \ldots, v) 
    - F^{2n+1}((i\partial_t + \partial_x^2)v, \bar v, \ldots, v) 
    - F^{2n+1}(v, \ol{(i\partial_t + \partial_x^2)v}, v, \ldots, v) 
    - \cdots  
\]
which we treat as a commutator.

We compute that the commutator above is 
\[[i\partial_t + \partial_x^2, B^{\ord1}_h \circ B^{\ord1}_h] = 
- C^{{\ord1},\nonres}_h \circ B^{\ord1}_h - 
B^{\ord1}_h \circ C^{{\ord1},\nonres}_h\]
so we may simplify the bad terms in the sum as
\begin{align*}
     \sum_{h} &B^{\ord1}_h \circ C^{{\ord1}}_h 
    - C^{{\ord1}, \res}_h \circ B^{\ord1}_h +
    C^{\ord1}_h \circ B^{\ord1}_h \circ B^{\ord1}_h.
\end{align*}
Of these terms, the first will require correction; the second 
has the source term estimate \eqref{eq:source_term} because 
the uncontrolled medium frequency is below the output frequency; 
and the third is higher order which we will leave for later.

We would like to treat the first term $B^{\ord 1}_h \circ C^{\ord 1}_h$
as being parallel to $C^{\ord 1}_h$ but higher order, 
which prompts the following definition.

\begin{definition}\label{def:c2_h}
    We define $C^{\ord 2}_h$ according to the symbol 
\begin{align*}
    c^{\ord2}_h &= -\sum_{\ell \ll h_1 \cong h_2} \sum_{h_1 \gg m_1 \gg \ell} 
    p_{\ell}(\delta_1 + \delta_2 + \xi) 
    p_{h_1}(\alpha_1) p_{m_1}(\delta_1) \frac{c^{\himed}(\alpha_1, -\delta_1)}{\alpha_1\delta_1}\\
& \cdot \sum_{h_2  \gg m_2} 
    p_{h_2}(\alpha_2) p_{m_2}(\delta_2) c^{\himed}(\alpha_2 , -\delta_2).
\end{align*}
    We also split $C^{\ord 2}_h$ into near resonant and nonresonant parts according 
    to the relationship between $m_2$ and $\ell$, noting that if $m_2 \gg \ell$
    then we may symmetrize:
\begin{align*}
    c^{{\ord 2},\nonres}_h &= -\frac12 
\frac{\alpha_1\delta_1 + \alpha_2\delta_2}{\alpha_1\delta_1\alpha_2\delta_2}
\sum_{h_1\cong h_2}\sum_{h_1 \gg m_1 \gg \ell , h_2 \gg m_2 \gg \ell} 
    p_{\ell}(\delta_1 + \delta_2 + \xi) 
    p_{h_1}(\alpha_1) p_{h_2}(\alpha_2)\\
                    &\qquad \cdot p_{m_1}(\delta_1) p_{m_2}(\delta_2)
    c^{\himed}(\alpha_1, -\delta_1)c^{\himed}(\alpha_2 , -\delta_2)
\end{align*}
\begin{align*}
c^{{\ord 2},\res}_h
&=-\sum_{\ell \ll h_1 \cong h_2} \sum_{h_1 \gg m_1 \gg \ell} 
    p_{\ell}(\delta_1 + \delta_2 + \xi) 
    p_{h_1}(\alpha_1) p_{m_1}(\delta_1) \frac{c^{\himed}(\alpha_1, -\delta_1)}{\alpha_1\delta_1}\\
& \qquad \cdot \sum_{h_2 \gg m_2 \lesssim \ell} 
    p_{h_2}(\alpha_2) p_{m_2}(\delta_2) c^{\himed}(\alpha_2 , -\delta_2) 
\end{align*}
\end{definition}
\begin{remark}
    Notice that in $c^{{\ord 2}, \nonres}_h$ we have symmetrized 
    between the two pairs. This was only possible because 
    we replaced $c^{\himed}(\alpha_2, \xi)$ by $c^{\himed}(\alpha_2, -\delta_2)$ 
    in the previous definitions. This symmetrization is essential because 
    it allows us to see that $c^{{\ord 2}, \nonres}_h$ is nonresonant because 
    we may factor out $\Delta^6\xi_2 \sim \alpha_1\delta_1 + \alpha_2\delta_2$ 
    which only appeared because of symmetry. 

    This replacement is the reason we generated the error terms in $C^{\low, \himed}$
    which in turn is the reason for the enhanced smoothness condition (H1s).
\end{remark}

Of course, $C^{\ord 2}_h$ is not precisely equal to $B^{\ord 1}_h \circ C^{\ord 1}_h$.
Instead, they differ by a good error.

\begin{proposition}\label{prop:c2_h_approx}
    $C^{\ord 2}_h$ is well approximated by  $B^{\ord 1}_h \circ C^{\ord 1}_h$
    in the sense that their difference satisfies 
    Definition \ref{def:good_L6_terms}.
\end{proposition}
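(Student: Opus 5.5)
The plan is to compare the two quintic symbols directly, term by term in the dyadic parameters $h_1\cong h_2$, $m_1$, $m_2$, $\ell$, and show that their difference is a symbol of size $\dyad^{-2h}$, with an integrable kernel, supported on four high frequencies $\cong \dyad^h$ and one lower frequency. Such a term is good by the same estimate already used for $C^{5,\bal}-C^{5,\bal,simp}$ in the proof of Proposition~\ref{prop:b5bal}.

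\emph{Step 1: write out the composition.} In $B^{\ord1}_{h_1}(v,\bar v, C^{\ord1}_{h_2}(v,\bar v,v))$ the last input of $b^{\ord1}_{h_1}$ is evaluated at $\xi' = \delta_2+\xi$. The symbol of the composition is therefore
\[
b^{\ord1}_{h_1}(\alpha_1,\delta_1,\delta_2+\xi)\, c^{\ord1}_{h_2}(\alpha_2,\delta_2,\xi).
\]
The lowest cutoff is $p_\ell(\delta_1+\delta_2+\xi)$, which is exactly the cutoff in $c^{\ord2}_h$. The factor $c^{\himed}(\alpha_1,-\delta_1)$ and the inner factor $c^{\himed}(\alpha_2,-\delta_2)$ already appear in the same form in both symbols. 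This is where the replacement $\xi_3\to\xi_2-\xi_1$ made in Definition~\ref{def:c_splitting} is used. The resulting discrepancies sit in $C^{\low,\himed}$, whose contribution has already been shown to be good in Proposition~\ref{prop:b3_good_bad_splitting}.

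\emph{Step 2: identify the remaining differences.} Two differences remain.
\begin{enumerate}
\item The inner symbol $c^{\ord1}_{h_2}$ carries the output cutoff $\sum_{\ell'\ll h_2}p_{\ell'}(\delta_2+\xi)$, which is absent in $c^{\ord2}_h$. On the support of the composition, $\delta_1\sim\dyad^{m_1}$ and $\delta_1+\delta_2+\xi\sim\dyad^{\ell}$ with $\ell\ll m_1\ll h$. Hence $|\delta_2+\xi|\sim\dyad^{m_1}\ll\dyad^{h}$, so this cutoff equals $1$ there. If needed, one inserts a slightly larger cutoff, which produces at most a boundary error of the same type as in (2).
\item The denominators differ. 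The composition has $\alpha_1\delta_1+\delta_1^2-2\delta_1(\delta_1+\delta_2+\xi)$, while $c^{\ord2}_h$ has $\alpha_1\delta_1$. The difference of reciprocals is
\[
\frac{-\delta_1^2+2\delta_1(\delta_1+\delta_2+\xi)}{\alpha_1\delta_1\bigl(\alpha_1\delta_1+\delta_1^2-2\delta_1(\delta_1+\delta_2+\xi)\bigr)}.
\]
After the linear change of variables to $(\alpha_1,\delta_1,\delta_1+\delta_2+\xi)$, this is smooth on the dyadic scales $\dyad^h,\dyad^{m_1},\dyad^{\ell}$. Its size is $\dyad^{2m_1}/(\dyad^{h+m_1})^2=\dyad^{-2h}$. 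Multiplying by the bounded, separable factors $c^{\himed}$ and by the projections, Lemma~\ref{lem:sep} (with the composition remark following it) gives an $L^1$ kernel bound of $\dyad^{-2h}$.
\end{enumerate}

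\emph{Step 3: estimate the difference.} The difference is a quintic form with symbol of size $\dyad^{-2h}$, whose inputs are four frequencies $\cong\dyad^{h}$ and the unpaired frequency $\xi$. The key observation is that $\xi$ is separated from the high frequencies: $\xi=(\delta_1+\delta_2+\xi)-\delta_1-\delta_2$ is at most of size $\max(\dyad^{m_1},\dyad^{m_2})\ll\dyad^{h}$. Place three high factors in $L^6_{t,x}$ via \eqref{eq:uj_se_boot}, and pair the fourth high factor with $P_{\lesssim m}v$ in the transversal bilinear bound \eqref{eq:uj_sep_bi_boot}. As in the proof of Proposition~\ref{prop:b5bal}, this gives
\[
\epsilon^4\dyad^{(-5/2+(1-4s)/2-s)h}\dyad^{-sm}c_h^4c_m .
\]
One then sums over $m_1,m_2\ll h$ and $h\gg\ell$. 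The admissibility of the envelope and $s>-\tfrac12$ give the bound $\epsilon^4\dyad^{(-1/2-s)\ell}c_\ell$ required by Definition~\ref{def:good_L6_terms}.

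\emph{Main obstacle.} I expect the delicate step to be the uniform kernel bound in (2) together with the summation over the free parameter $m_2$. The parameter $m_2$ can be either above or below $\ell$, since $c^{\ord1}_{h_2}$ includes both its resonant and nonresonant parts. When $m_2\gtrsim m_1$, the relation $\delta_2+\xi\approx-\delta_1$ forces $\xi$ and $\delta_2$ to be comparable, and the separation of $\xi$ from the high frequencies must be checked there. If the summation in $m_2$ proves only logarithmically divergent, I would first try to absorb it using the additional $\dyad^{-h}$ room in the exponent, since $-2-4s<0$ is strictly subcritical.
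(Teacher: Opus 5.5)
Your proposal is correct and follows essentially the same route as the paper: you remove the inner output cutoff $\sum_{j\ll h_2}p_j(\delta_2+\xi)$, whose complement is supported only where $m_1$ is comparable to $h$, so that $b^{2(1)+1}_h$ already has size $\dyad^{-2h}$ there; you then replace the denominator by $\alpha_1\delta_1$ at the cost of a symbol of size $\dyad^{-2h}$; and you observe that such $\dyad^{-2h}$ quintic terms are good. Your Step 3 makes explicit the estimate the paper leaves implicit, including the logarithmic loss from the $m_1,m_2$ sums, which is absorbed because $-2-4s<0$.
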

\begin{proof}
We analyze the symbol of $B^{\ord 1}_h \circ C^{\ord 1}_h$:
\begin{align*}
    b^{\ord 1}_h c^{\ord 1}_h &= -\sum_{\ell \ll h_1 \cong h_2} \sum_{h_1 \gg m_1 \gg \ell} 
    p_{\ell}(\delta_1 + \delta_2 + \xi) 
    p_{h_1}(\alpha_1) p_{m_1}(\delta_1) \frac{c^{\himed}(\alpha_1, -\delta_1)}
{\alpha_1\delta_1 + \delta_1^2 - 2\delta_1(\delta_1 + \delta_2 + \xi)}\\
& \cdot \sum_{j \ll h_2} \sum_{h_2 \gg m_2} p_{j}(\delta_2 + \xi) 
    p_{h_2}(\alpha_2) p_{m_2}(\delta_2) c^{\himed}(\alpha_2 , -\delta_2).
\end{align*}
    We seek to remove the projection $p_j(\delta_2 + \xi)$ and replace the 
    denominator $\alpha_1\delta_1 + \delta_1^2 - 2\delta_1(\delta_1 + \delta_2 + \xi)$
    by $\alpha_1\delta_1$.

First, we remove the projection $p_j(\delta_2 + \xi)$.
Note that the summand would be good if we allowed $j \gtrsim h$ since 
\[\dyad^j \sim (\delta_2 + \xi) + (\delta_1) \sim \dyad^j + \dyad^{m_1}\]
and since $j \ll m_1 \ll h$, if $j \gtrsim h$ either the summand is $0$
or $m_1$ is comparable to $h$ for a slightly larger definition than 
    $\sim$. But if $m_1$ is comparable to $h$ then we have a factor of
    $\dyad^{-2h}$ making this term good. 
    In summary, we may freely unrestrict the $j$ sum and use the fact that 
\[\sum_j p_j( \cdot ) = 1.\]
Finally, we may replace the denominator by $\alpha_1\delta_1$ since 
\begin{align*}
    \frac1{\alpha_1\delta_1 + \delta_1^2 - 2\delta_1(\delta_1 + \delta_2 + \xi)} - \frac1{\alpha_1\delta_1}
&= \frac{\delta_1^2 - 2\delta_1(\delta_1+\delta_2+\xi)}
{(\alpha_1\delta_1)(\alpha_1\delta_1 + \delta_1^2 - 2\delta_1(\delta_1 + \delta_2 + \xi))}\\
&= \frac{1 - 2(\delta_1+\delta_2+\xi)/\delta_1}
{(\alpha_1)(\alpha_1 + \delta_1 - 2(\delta_1 + \delta_2 + \xi))}\\
    &\lesssim {\dyad^{-2h}}
\end{align*}
since $h \gg m_1 \gg \ell$. This makes this term good as well
because of the extra factor of $\dyad^h$ in the denominator.

This concludes the proof.
\end{proof}

The final ingredient we need before generalizing this is to understand what is the optimal 
way to correct $c^{{\ord 2},\nonres}_h$ up to a good error. Recall 
that for $c^{{\ord 1},\nonres}_h$ we were forced to correct by exactly $\Delta\xi^2$. 
Here doing this would be a problem because we cannot always guarantee, 
up to a good error estimate, that
\[
    \alpha_1\delta_1 + \alpha_2\delta_2 \sim \Delta^6\xi^2
    ={\alpha_1\delta_1 + \alpha_2\delta_2 
+ (\delta_1+\delta_2)^2 - 2(\delta_1+\delta_2)(\delta_1+\delta_2 + \xi)}.\]
This is because there may be cancellation so that 
$\alpha_1\delta_1 + \alpha_2\delta_2 \ll \dyad^{2m} .$
However, this only happens when $\delta_1 \sim \delta_2$. In 
this case, we can actually handle not correcting by the full symbol 
$\Delta^6\xi^2$ since 
the error here will be at worst 
\[
    \frac{-(\delta_1 + \delta_2)^2}{\alpha_1\delta_1 \alpha_2\delta_2} 
    \sim \frac1{\alpha_1\alpha_2} \sim \dyad^{-2h}.
\]
Thus the intuition 
is that we correct by $\alpha_1\delta_1 + \alpha_2\delta_2$ when 
$\delta_1 \sim \delta_2$ and by the full symbol $\Delta\xi^2$ otherwise
\begin{align*}
    b^{{\ord 2},\nonres}_h &= \frac12 
\frac1{\alpha_1\delta_1\alpha_2\delta_2}
\sum_{h_1\cong h_2}\sum_{h_1 \gg m_1 \gg \ell , h_2 \gg m_2 \gg \ell} 
    p_{\ell}(\delta_1 + \delta_2 + \xi) 
    p_{h_1}(\alpha_1) p_{h_2}(\alpha_2)\\
                    &\qquad \cdot p_{m_1}(\delta_1) p_{m_2}(\delta_2)
    c(\alpha_1, -\delta_1)c(\alpha_2 , -\delta_2)\\
& \qquad \cdot \Big[ \chi_{m_1 \sim m_2} 
    + \chi_{m_1 \not\sim m_2} 
\frac{\alpha_1\delta_1 + \alpha_2\delta_2}
    {\alpha_1\delta_1 + \alpha_2\delta_2 +  
(\delta_1+\delta_2)^2 - 2(\delta_1+\delta_2)(\delta_1+\delta_2 + \xi)}\Big]
\end{align*}
which we will again use an implicit normal form for.

~

\subsubsection{General corrections}

This motivates the following definition:
\renewcommand\ord[1]{2{#1}+1}
\begin{definition}\label{def:bn}
    Suppose $n \geq 2$ and 
    let $\chi = \chi(m_1, \cdots, m_n)$ be 0 if $m_k$, the largest  
    of $m_1, \ldots, m_n$, is so large that 
    $\sum_{i}\alpha_i\delta_i \sim \alpha_k \delta_{k}$
    for $\delta_i$ in the support of $p_{m_i}$ and $\alpha_i$ in
    the support of $p_{\cong h}$
    and 1 otherwise. Define
\begin{align*}
    &b^{{\ord n}}_h = \frac{(-1)^n}{n!} 
\frac1{\alpha_1\delta_1\cdots \alpha_n\delta_n}
\sum_{h_k \cong h_\ell} 
p_{\ell}(\delta_1 + \cdots + \delta_n + \xi)
        \prod_{1 \leq k \leq n}\sum_{\ell \ll m_k \ll h_k} 
        p_{h_k}(\alpha_k)p_{m_k}(\delta_k) c^{\himed}(\alpha_k, -\delta_k)\\
& \qquad \cdot \Big[ \chi 
    + (1-\chi)
\frac{\alpha_1\delta_1 + \cdots + \alpha_n\delta_n}
    {\alpha_1\delta_1 + \cdots +\alpha_n\delta_n +  
(\delta_1+\cdots +\delta_n)^2 - 2(\delta_1+\cdots+\delta_n)
(\delta_1+\cdots +\delta_n + \xi)}\Big]\\\\
    &c^{{\ord n},\nonres}_h = -\frac{(-1)^n}{n!} 
    \frac{\alpha_1\delta_1 + \cdots + \alpha_n\delta_n}{\alpha_1\delta_1\cdots \alpha_n\delta_n}
\sum_{h_j \cong h_k} p_{\ell}(\delta_1 + \cdots + \delta_n + \xi)\\
    &\qquad\qquad\qquad \prod_{1 \leq k \leq n}\sum_{\ell \ll m_k \ll h_k} 
        p_{h_k}(\alpha_k)p_{m_k}(\delta_k) c^{\himed}(\alpha_k, -\delta_k)
\end{align*}
\begin{align*}
    c^{{\ord n},\res}_h &= -\frac{(-1)^n}{(n-1)!} 
    \frac{\alpha_n\delta_n}{\alpha_1\delta_1\cdots \alpha_n\delta_n}
\sum_{h_j \cong h_k} p_{\ell}(\delta_1 + \cdots + \delta_n + \xi)
    \sum_{m_n \lesssim \ell \ll h_n} 
        p_{h_n}(\alpha_n)p_{m_n}(\delta_n) c^{\himed}(\alpha_n, -\delta_n)\\
    &\qquad \prod_{1 \leq k \leq n-1}\sum_{\ell \ll m_k \ll h_k} 
        p_{h_{k}}(\alpha_k)p_{m_k}(\delta_k) c^{\himed}(\alpha_k, -\delta_k),
\end{align*}
and 
    \[C^{\ord n}_h = C^{{\ord n},\res}_h + C^{{\ord n}, \nonres}_h\]
\end{definition}
\begin{lemma}\label{lem:bnh_kernel_size}
    The summands of the symbols $b^{\ord n}_h$, $c^{{\ord n},nres}_h$ and 
    $c^{{\ord n},res}$ 
    have integrable kernels with the following sizes:
    \[
        \| \check b^{\ord n}_h(y)\|_{L^1_{y}} \lesssim \frac{1}{\dyad^{nh}}\prod 
        \frac 1{\dyad^{m_i}}
    \]
    \[
        \| \check c^{{\ord n},\nonres}_h(y)\|_{L^1_{y}} \lesssim \frac{1}{\dyad^{(n-1)h}}\prod_{i \neq max}
        \frac 1{\dyad^{m_i}}
    \]
    \[
        \| \check c^{{\ord n},\res}_h(y)\|_{L^1_{y}} \lesssim \frac{1}{\dyad^{(n-1)h}}\prod_{i \leq n-1}
        \frac 1{\dyad^{m_i}}
    \]
\end{lemma}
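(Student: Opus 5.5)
The plan is to reduce everything to Lemma~\ref{lem:sep} together with the composition principle in the Remark following it. Each summand (fixed $h_1,\dots,h_n$, $m_1,\dots,m_n$, $\ell$) factors as a product of pieces. Each piece depends on a small set of linear coordinates, namely $\alpha_k$, $\delta_k$, and $\zeta:=\delta_1+\cdots+\delta_n+\xi$, and is smooth on a dyadic scale in those coordinates. The kernel of a product of symbols is a convolution of kernels after a linear change of variables, so the $L^1$ norm of the kernel is bounded by the product of the $L^1$ norms of the factor kernels. Linear changes of variables preserve $L^1$ norms up to harmless Jacobian factors, since $L^1$ of the kernel corresponds to the algebra norm of the Fourier multiplier.

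The factors to control are as follows.

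\emph{The cutoffs and symbols.} The cutoffs $p_{h_k}(\alpha_k)p_{m_k}(\delta_k)p_\ell(\zeta)$ and the factors $c^\himed(\alpha_k,-\delta_k)$ are smooth on the scales $\dyad^{h_k},\dyad^{m_k},\dyad^\ell$. For $c^\himed$ this follows from (H1s), via \eqref{normal-reg} and the change of coordinates, exactly as in Proposition~\ref{prop:b3_kernel_size}. Each such factor contributes $O(1)$.

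\emph{The reciprocals.} Each factor $\frac{p_{m_k}(\delta_k)}{\delta_k}$ contributes $\dyad^{-m_k}$, and each factor $\frac{p_{h_k}(\alpha_k)}{\alpha_k}$ contributes $\dyad^{-h}$, using $h_k\cong h$. This already gives the bound for $b^{\ord n}_h$ on the part where $\chi=1$.

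\emph{Reduction of the $c$ symbols.} For $c^{\ord n,\nonres}_h$, multiply by $\alpha_1\delta_1+\cdots+\alpha_n\delta_n$ and write it as $\sum_k\alpha_k\delta_k$. In the $k$-th term this cancels the $k$-th reciprocal pair, leaving a sum of $n$ products of the previous type. Each product has size $\dyad^{-(n-1)h}\prod_{i\neq k}\dyad^{-m_i}$, which is at most $\dyad^{-(n-1)h}\prod_{i\neq\max}\dyad^{-m_i}$. The $c^{\ord n,\res}_h$ case is identical, with the single factor $\alpha_n\delta_n$ cancelling and $m_n\lesssim\ell$.

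\emph{The main obstacle: the $(1-\chi)$ piece of $b^{\ord n}_h$.} Here one must show that
\[
\frac{\sum_i\alpha_i\delta_i}{\sum_i\alpha_i\delta_i+(\delta_1+\cdots+\delta_n)^2-2(\delta_1+\cdots+\delta_n)\zeta}
\]
is an $O(1)$ multiplier with integrable kernel. On the support where $\chi=0$, the largest $m_k$ dominates and $\sum_i\alpha_i\delta_i\sim\dyad^h\dyad^{m_k}$. Meanwhile the correction is $O(\dyad^{2m_k})$, because $|\zeta|\sim\dyad^\ell\ll\dyad^{m_k}$ and $|\sum\delta_i|\lesssim\dyad^{m_k}$. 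Hence the ratio has the form $1/(1+\varepsilon)$ with $|\varepsilon|\lesssim\dyad^{m_k-h}\ll1$.

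Smoothness of this ratio on the product of scales is the delicate step. I would write it as $F(\varepsilon)$ with $F$ analytic near $0$, and verify that $\varepsilon$ itself satisfies the derivative bounds \eqref{eq:smooth_on_scales}. Derivatives of the numerator and denominator in $\alpha_i$ gain $\dyad^{-h}$ relative to size, because the dependence on $\alpha_i$ is through $\alpha_i\delta_i$, whose size is at most that of the dominant term. Derivatives in $\delta_i$ and $\zeta$ gain at least $\dyad^{-m_i}$, respectively $\dyad^{-\ell}$, since the denominator is bounded below by $\dyad^{h+m_k}$.

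To make this rigorous, I would insert a slightly enlarged cutoff $\tilde\chi$ in the $\delta_k$ variable that enforces dominance. This keeps Lemma~\ref{lem:sep} applicable on the dyadic box. Alternatively, one can expand $F$ in a geometric series, where each term is a product of separable factors of size $\lesssim(\dyad^{m_k-h})^r$, so the series converges in the kernel $L^1$ norm.

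\emph{The $1/n!$ and the sums.} The constant growth is at most $C^n$, by the composition Remark, which is acceptable. Combining the pieces yields all three bounds.
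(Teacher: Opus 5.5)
Your proposal is correct and follows essentially the same route as the paper. In both, each summand is factored into dyadic cutoffs, the factors $c^{\himed}(\alpha_k,-\delta_k)$, the reciprocals $1/(\alpha_k\delta_k)$ and polynomial numerators (your termwise splitting of $\sum_k\alpha_k\delta_k$ is a more explicit version of the paper's treatment of that numerator as a polynomial of size $\dyad^h\dyad^{m_k}$), and the $(1-\chi)$ ratio is handled by a chain-rule argument using that the denominator has size $\dyad^h\dyad^{m_k}$ on that support. The only difference is cosmetic: you write the ratio as $F(\varepsilon)$ with $F(x)=1/(1+x)$, whereas the paper writes it as the numerator times $f\circ g$ with $f(x)=1/x$; also, since $\chi$ depends only on the dyadic indices, it is constant on each summand, so the enlarged cutoff you propose is unnecessary.
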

\begin{proof}
    We have analyzed before the kernel associated to $(1/\eta) p_\ell(\eta)$ to 
    have size $1/\dyad^\ell$, and the projections are also smooth on their respective
    scales. The polynomial numerators are also easily seen to 
    be smooth on their respective scales since they only have finitely many 
    non-zero derivatives. All that remains is this factor:
    \[
(1-\chi) 
\frac{\alpha_1\delta_1 + \cdots + \alpha_n\delta_n}
    {\alpha_1\delta_1 + \cdots +\alpha_n\delta_n +  
(\delta_1+\cdots +\delta_n)^2 - 2(\delta_1+\cdots+\delta_n)
(\delta_1+\cdots +\delta_n + \xi)}
    \]
    where $1-\chi$ is only supported on indices where the largest $m_i$ is much 
    greater than all others. Let's suppose that this largest medium
    frequency is $m_k$. Then 
    the numerator has integrable kernel of size $\dyad^h \dyad^{m_k}$ by simply using the 
    coordinates $\alpha_i,$ $\delta_i$. 

    For the denominator, first we note that the size of the denominator 
in the support of $(1-\chi)$ is $\dyad^h\dyad^{m_k}$. 

Now we consider this
to be the composition of $f(x) = 1/x$ with 
\[
g(\alpha_1, \ldots, \delta_n, \eta) = {\alpha_1\delta_1 + \cdots +\alpha_n\delta_n +  
(\delta_1+\cdots +\delta_n)^2 - 2(\delta_1+\cdots+\delta_n)
\eta}
\]
where 
$\eta = (\delta_1+\cdots +\delta_n + \xi)$ is size $\dyad^\ell$.

Now each derivative on $f$ gains a factor of the size of its input so 
that 
\[\partial^n f \sim (\dyad^{-h}\dyad^{-m_k})^{n+1}\]

For $g$ we see that 
\[\partial_{\delta_i} g = \alpha_i + 2\sum_j \delta_j - 2\eta\]
\[\partial_{\alpha_i} g = \delta_i\]
\[\partial_{\eta} g = -2\sum_j \delta_j\]
and all second derivatives are constant.
Now suppose $\partial^\gamma f \circ g$ has size 
$\dyad^{-h}\dyad^{-m_k} \dyad^{-h\gamma_\alpha} \dyad^{-m_k\gamma_\delta}.$
Note that $\partial^\gamma f \circ g$ is a sum of functions of the form 
\[f^{n}(g)(\partial^{k_1} g)(\partial^{k_2} g) \cdots.\]
Any subsequent derivative will either land on $f(g)$ which 
produces an extra factor of $\dyad^{-h}\dyad^{-m_k}$ and loses either 
$\dyad^{-m_k}$ or $\dyad^{-h}$ depending on whether the derivative was 
in the $\alpha$ or $\delta,\eta$ direction, which is good, 
or it lands on one of the factors of $g$ which also is good since the
derivatives of $g$ are polynomials. 
Thus, we have that all derivatives gain a factor corresponding to 
their size, and we have an integrable kernel by Lemma~\ref{lem:sep}.

\end{proof}

\begin{corollary}
    $B^{\ord n}_h$ as constructed above satisfies Definition 
    \ref{def:good_corrections}.
\end{corollary}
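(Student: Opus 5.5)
The plan is to follow the proof of the Corollary establishing that $B^3$ is a good correction, using as the only new input the kernel bound of Lemma~\ref{lem:bnh_kernel_size}:
\[
\|\check b^{2n+1}_h\|_{L^1}\lesssim \dyad^{-nh}\prod_i \dyad^{-m_i}.
\]
By Lemma~\ref{lem:sep} and the remark following it, each dyadic summand of $B^{2n+1}_h$ reduces to a translated product. It has $n$ high pairs $P_{\cong h}v\,P_{\cong h}\bar v$, paired at medium separation $\dyad^{m_k}$, together with one unpaired input at frequency $\xi$ and output at $\dyad^\ell$, with $\ell\ll m_k\ll h$. The constant grows at most like $C(N)^n$, which the factor $\tilde C^{2n+1}$ in Definition~\ref{def:good_corrections} is meant to absorb. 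Since the norms in the bootstrap assumptions are translation invariant, the translations can be dropped throughout.

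For the fixed time estimates \eqref{eq:bn_pointwise_L1} and \eqref{eq:bn_pointwise}, we place $f$ in each input slot in turn and use $L^2_x$ for the remaining high inputs.
\begin{itemize}
\item Each pair $P_{h}w\,P_h\bar w$ is first projected to frequency $\dyad^{m_k}$ and then taken in $L^1_x$. Combined with the symbol factor $\dyad^{-h-m_k}$ and Bernstein (gaining $\dyad^{m_k/2}$), each pair costs $\dyad^{-h-m_k/2}\cdot C^2\epsilon^2\dyad^{-2sh}c_h^2$.
\item This is summable in $h$ because $-1-2s>0$ fails only when $s\le -\tfrac12$. Summing $\dyad^{-m_k/2}$ over $m_k\gg\ell$ is geometric.
\item The unpaired input is at most at frequency $\dyad^{m}$ with $m\lesssim\max m_k$, or is comparable to the output. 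This produces the $\sum_{h\ge j}$ and $\sum_{\ell\le j}$ structure in \eqref{eq:bn_pointwise_L1}, with the $c_j$ factor coming from the slowly varying condition of Definition~\ref{def:admissible_envelope}.
\item When $f$ sits in a high slot, $\dyad^{j/2}\dyad^{-h/2}$ (respectively $\dyad^{(-1/2-s)j}$) comes from Bernstein at the output.
\item The $L^6$ version uses $L^{3/2}$ for one pair and $L^6$ for $f$, exactly as for $B^3$.
\end{itemize}

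For the bilinear estimates \eqref{eq:bn_bilinear_1} and \eqref{eq:bn_bilinear_2}, we pair one high input with the unpaired medium input, or with a high input of a different pair, and use the separation $\gtrsim\dyad^h$ in \eqref{eq:uj_sep_bi_boot}. The other $n-1$ pairs and the remaining inputs go in $L^\infty_tL^2_x$, with Bernstein at the lower frequencies, and $\dyad^{j/2}$ comes from the output projection. Each extra pair costs $\dyad^{-h-m_k}\dyad^{m_k/2}\dyad^{-2sh}$, which is again summable for $s>-\tfrac12$. The mixed bound \eqref{eq:bn_bilinear_2} then splits as in the $B^3$ case.
\begin{itemize}
\item If $h\gtrsim k$, we put $P_k\bar w_2$ in $L^\infty_tL^2_x$ and pay $(\dyad^{j/2}+\dyad^{k/2})\dyad^{j/2}\dyad^{k/2}$.
\item If $k\gg h$, we pair $P_k\bar w_2$ with a high input via the bilinear bound, using the squared-norm argument, and put the rest in $L^\infty$.
\end{itemize}

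I expect the main obstacle to be uniformity in $n$ together with the cutoff factor $\chi$. The $(1-\chi)$ piece has kernel size $\dyad^{-h-m_k}$ from Lemma~\ref{lem:bnh_kernel_size}, so it does not change the numerology. The real issue is that the sums over the $n$ medium frequencies $m_k$ and the $h_k\cong h$ must produce only $C^n$ growth. The $1/n!$ in the symbol and the geometric decay $\dyad^{-m_k/2}$ should handle this, but I would check carefully that the constraint $\ell\ll m_k$ leaves no logarithmic loss near $s\to-\tfrac12$.
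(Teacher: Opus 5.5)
Your top-level route is the paper's: the paper only asserts that the $B^3$ argument carries over once Lemma~\ref{lem:bnh_kernel_size} is available. However, the bookkeeping you give for $n\ge 2$ pairs is wrong, and it hides the one place where $B^{2n+1}_h$ really differs from $B^3$.

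First, the per-pair cost. In a fixed-time product estimate the Hölder exponents sum to one, so the pairs cannot all gain $\dyad^{-m_k/2}$. In the natural distribution all pairs but one sit in $L^\infty_x$, and such a pair costs $\dyad^{-h-m_k}\cdot\dyad^{m_k}\|P_hw\|_{L^2}^2\lesssim C^2\epsilon^2\dyad^{(-1-2s)h}c_h^2$, with no decay in $m_k$. The sums over $\ell\ll m_k\ll h$ are therefore not geometric; each such pair contributes a factor $h$. The logarithmic loss you hoped to rule out is real. It is harmless only because $h\,\dyad^{(-1-2s)h}$ is bounded for $s>-\frac12$ (that is, $-1-2s<0$, not $>0$), exactly as in the proof of Proposition~\ref{prop:unbal_L1}. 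Conversely, uniformity in $n$ and the factor $\chi$ are not the obstacle: $n\le N$ is fixed, and $(1-\chi)$ is already absorbed into the kernel bound.

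The genuinely new point is which pair is allowed to gain. The unpaired input has frequency $\lesssim\dyad^{m^*}$, where $m^*=\max_k m_k$. Its cost, $\dyad^{-sm^*}$ in $L^2_x$ or $\dyad^{(1/2-s)m^*}$ in $L^\infty_x$, can only be paid by the factor $\dyad^{-m^*}$ of the $m^*$-pair. For $B^3$ there is only one pair, so this is automatic. For $n\ge2$, however, \eqref{eq:bn_pointwise_L1} and \eqref{eq:bn_pointwise} also require $f$ to sit in a high slot of a pair with $m_{k_0}\ll m^*$. If you copy the $B^3$ distribution there (the $f$-pair in $L^1_x$, the unpaired input in $L^\infty_x$), the coefficient of $\|P_hf\|_{L^1}$ is $\approx\dyad^{-m_{k_0}}\dyad^{(-1-4s)h}$, which is unbounded in $h$ once $s<-\frac14$. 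What works is the following:
\begin{itemize}
\item Put the $f$-pair in $L^\infty_x$ by Bernstein from $\|P_hf\|_{L^1}\|P_hw\|_{L^\infty}$; its cost $\dyad^{m_{k_0}}$ cancels its own symbol factor.
\item Put the $m^*$-pair and the unpaired input in $L^2_x\times L^2_x$.
\end{itemize}
This gives $\dyad^{(-3/2-3s)h}\dyad^{(-1/2-s)m^*}$. The $L^2$ and $L^6$ versions go the same way after Bernstein at the output from $L^1_x$.

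Two smaller points. In \eqref{eq:bn_pointwise_L1} the output is measured in $L^1_x$, so the factor $\dyad^{(-1/2-s)j}c_j$ cannot come from Bernstein at the output. It multiplies the low-frequency part of $f$ and comes from the decay $\dyad^{(-1-2s)h}c_h^2$ of the high pairs together with the slowly varying envelope. In the bilinear bounds, only the pairing of a high input with the unpaired input is guaranteed to be transversal; two high inputs from different pairs need not be separated.
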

\begin{proof}
    These bounds follow in an almost identical fashion to the bounds 
    for $B^3$ so we again omit the proof.
\end{proof}

Next, we quantify the way in which $B^{\ord n}_h$ and $C^{\ord n}_h$ are related up to good 
errors. Note, while we know that $B^{\ord k}_h(G)$ is good for any 
good term $G$, it is not immediately obvious that $G(B^{\ord k}_h)$ will be 
good. This was not a problem previously because the Neumann operator 
$L_v$ only contains factors of $B$, and so good terms $G$ never appear 
on the outside. However, here we will be interested in producing good errors 
in many applications of $B^{\ord k}_h$. For instance, 
the commutator with the Schr\"odinger operator of several applications of $B^{\ord k}_h$ 
is the sum of the commutators as applied to each factor $B$. Each commutator 
$[i\partial_t + \partial_x^2, B^{\ord k}_h]$ 
we would like to say is equal to $-C^{{\ord k},\nonres}_h$ up to a good error, but we have 
to do so in this context:
\[
    [i\partial_t + \partial_x^2, B^{\ord {k_1}}_h \circ \cdots \circ B^{\ord {k_n}}_h] 
    = \cdots + 
    B^{\ord {k_1}}_h \circ \cdots \circ [i\partial_t + \partial_x^2, B^{\ord {k_j}}_h] 
    \circ \cdots \circ B^{\ord {k_n}}_h + \cdots 
\]
This leads to the following proposition which is the higher order analog of 
Proposition \ref{prop:c2_h_approx}.
\begin{proposition}\label{prop:good_errors}
    Let 
    \[D_h = B^{\ord {j_1}}_h \circ \cdots \circ B^{\ord {j_k}}_h\]
    be some arbitrary composition of the $B^{\ord {n_h}}$ functions of varying orders.

    Then the following differences are good for all $n \geq 1$
    \[C^{\ord {(n+1)}}_h\circ D_h - B^{\ord {n}}_h \circ C^{\ord {(1)}}_h \circ D_h,\]
and 
    \[[i\partial_t + \partial_x^2, B^{\ord {n}}_h] \circ D_h + 
    C^{\ord n,\nonres}_h \circ D_h\]
\end{proposition}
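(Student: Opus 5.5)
The plan is to reduce both statements to symbol-level identities modulo errors with an extra factor of $\dyad^{-h}$ (or a gain in some $\dyad^{m_i-h}$), and then to show such errors are good by the same counting used in Propositions~\ref{prop:c2_h_approx} and \ref{prop:b5bal}. The composition $D_h$ enters only through its last input. By the composition remark after Lemma~\ref{lem:sep}, the symbol of $F\circ D_h$ is the product of the symbols, with the free variable $\xi$ of $F$ replaced by the output frequency of $D_h$. Moreover, every factor $B^{\ord{j_i}}_h$ in $D_h$ carries its own matched high pair at frequency $\cong h$, with kernel size $\dyad^{-j_i h}\prod \dyad^{-m}$ by Lemma~\ref{lem:bnh_kernel_size}. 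So it suffices to prove the symbol identities uniformly in $\xi$, with error symbols that are separable on the dyadic scales (Lemma~\ref{lem:sep}). The composed kernels then have constants growing at most like $C(N)^k$, which is harmless for the geometric summation in Definition~\ref{def:good_L6_terms}.

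For the first difference, I would write out $b^{\ord n}_h c^{\ord 1}_h$ exactly as in the proof of Proposition~\ref{prop:c2_h_approx}, and carry out three replacements.
\begin{itemize}
\item[(i)] Remove the inner output projection $p_j(\delta_{n+1}+\xi)$ by summing over all $j$. The added terms $j\gtrsim h$ force some $m_k\sim h$ and therefore carry an extra $\dyad^{-h}$.
\item[(ii)] In the $(1-\chi)$ part of $b^{\ord n}_h$, replace the full denominator $\Delta^{2n+2}\xi^2$ by $\sum_{i\le n}\alpha_i\delta_i$. The error is
\[
\frac{(\sum\delta_i)^2-2(\sum\delta_i)\eta}{(\sum\alpha_i\delta_i)\,\Delta\xi^2},
\]
which has size $\dyad^{-2h}$ on that support, as in the computation after Definition~\ref{def:c2_h}.
\item[(iii)] Check the algebraic identity that $b^{\ord n}_h c^{\ord1}_h$, with the symmetrized numerator, reproduces $c^{\ord{(n+1)},\nonres}_h+c^{\ord{(n+1)},\res}_h$. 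The non-resonant part comes from symmetrizing over the $n+1$ pairs, which explains the combinatorial factors $1/n!$ versus $1/(n+1)!$ and the factor $\sum\alpha_i\delta_i/\prod\alpha_i\delta_i$. The resonant part is just the case $m_{n+1}\lesssim\ell$ of the new pair.
\end{itemize}
In (iii), the replacement of $c^\himed(\alpha,\xi)$ by $c^\himed(\alpha,-\delta)$ has already been made in the definitions, so the factors genuinely commute. Symmetrizing the non-resonant part is then legitimate up to errors of the $C^{\low,\himed}$ type, which are controlled by (H1s).

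For the second difference, I would compute the commutator $[i\partial_t+\partial_x^2,B^{\ord n}_h]$ at the symbol level. It equals $-\Delta^{2n+2}\xi^2\, b^{\ord n}_h$. On the $(1-\chi)$ part this is exactly $-c^{\ord n,\nonres}_h$ by construction. On the $\chi$ part, where the largest $m_k$ is not dominant and hence $\delta$'s comparable, the discrepancy is
\[
\frac{(\sum\delta_i)^2-2(\sum\delta_i)\eta}{\prod\alpha_i\delta_i},
\]
which has one fewer $\dyad^{h}$ in the numerator than $c^{\ord n,\nonres}_h$, i.e. an extra factor $\dyad^{m-h}$.

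Goodness of all these errors I would verify as in Proposition~\ref{prop:b5_good_bad_splitting}. Each error term has at least $n+1+\sum j_i\ge 2$ matched high pairs at scale $h$, together with an extra $\dyad^{-h}$. That is enough to place the high frequencies in $L^6_{t,x}$ via \eqref{eq:uj_se_boot}, use one bilinear bound \eqref{eq:uj_sep_bi_boot} on a high–medium separation, put the remaining medium frequencies in $L^\infty$ by Bernstein, and sum geometrically using $s>-\tfrac12$ and the one-sided slowly varying envelope. I expect the main obstacle to be the $\chi=1$ region with near-cancellation $\sum\alpha_i\delta_i\ll\dyad^{h+m}$. There the error bound must not lose on the small denominators, so one has to check that the $\prod(\alpha_i\delta_i)^{-1}$ structure still yields integrable kernels of the stated size, with constants uniform in the arbitrary length of $D_h$.
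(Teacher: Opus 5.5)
Your proposal is correct and follows the paper's proof essentially step for step. It uses the same two substitutions: summing out $p_j(\delta_{n+1}+\xi_d)$, where $j\gtrsim h$ forces some $m_k\sim h$, and replacing the $(1-\chi)$ fraction by $1$ at relative cost $\dyad^{m_k-h}$. After these, $B^{2n+1}_h\circ C^{3}_h\circ D_h$ cancels the unsymmetrized $C^{2(n+1)+1}_h\circ D_h$ exactly. It also uses the same commutator computation, which leaves only the $\chi$-term with bracket $O(\dyad^{2m_1})$, absorbed using $m_1\sim m_2$, and the same three-$L^6_{t,x}$-plus-one-bilinear estimate.

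One point you should make explicit: the pair coming from $C^{3}_h$ carries no $(\alpha\delta)^{-1}$ gain, so it must be the pair broken open against the last input, not the one placed in $L^\infty$ via Bernstein.

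There are three minor slips. First, the commutator symbol is $+\Delta\xi^2\, b^{2n+1}_h$, so that it equals $-c^{2n+1,\nonres}_h$ on the support of $1-\chi$. Second, symmetrizing produces no new $C^{\low,\himed}$-type errors, because the definitions already use $c^{\himed}(\alpha,-\delta)$. Third, the $\chi=1$ region is not an obstacle, since there $b^{2n+1}_h$ involves no division by $\sum_i\alpha_i\delta_i$.
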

\begin{proof}
    We begin by considering the symbol of the first difference 
    \[(C^{\ord {(n+1)}}_h - B^{\ord{n}}_h \circ C^{\ord {(1)}}_h) \circ D_h.\]
    We write the definition of the symbol of 
    the difference 
    unsymmetrizing $C^{\ord {(n+1)},\nonres}_h$ in $C^{\ord {(n+1)}}_h$ to be parallel
    to $B^{\ord{n}}_h\circ C^{\ord {(1)}}_h$. We will also 
    define $\xi_d = \delta_{n+1} + \cdots + \delta_{N} + \xi$ 
    to be the frequency output of the frequencies coming from $D_h$.
    Then 
\begin{align*}
    &c^{\ord {(n+1)}}_h d_h - b^{\ord {n}}_hc^{\ord {(1)}}_h d_h\\
    &= -\frac{(-1)^{n+1}}{n!} 
    \frac{\alpha_{n+1}\delta_{n+1}}{\alpha_1\delta_1\cdots \alpha_{n+1}\delta_{n+1}}
\sum_{h_{k_1} \cong h_{k_2}}p_{\ell}(\delta_1 + \cdots + \xi_d)
    \prod_{1 \leq k \leq n}\sum_{\ell \ll m_k \ll h_k}
    p_{h_k}(\alpha_k)p_{m_k}(\delta_k)c^{\himed}(\alpha_k, -\delta_k) \\
        &\hspace{12em}
    \sum_{m_n \ll h_n \gg j}p_{h_n}(\alpha_n)p_{m_n}(\delta_n)
    c^{\himed}(\alpha_n , -\delta_n)d(\delta_{n+1}, \ldots, \xi)\\
&-\frac{(-1)^{n}}{n!} 
\frac1
{\alpha_1\delta_1\cdots \alpha_{n}\delta_{n}}
\sum_{h_{k_1} \cong h_{k_2}} 
p_{\ell}(\delta_1 + \cdots +  \xi_d)
    \prod_{1 \leq k \leq n}\sum_{\ell \ll m_k \ll h_k}
    p_{h_k}(\alpha_k)p_{m_k}(\delta_k) c^{\himed}(\alpha_k, -\delta_k)\\
&\cdot \Big[ \chi 
    + (1-\chi) 
    \frac{\alpha_1\delta_1 + \cdots + \alpha_{n}\delta_{n}}
    {\alpha_1\delta_1 + \cdots +\alpha_{n}\delta_{n} +  
        (\delta_1+\cdots +\delta_{n})^2 - 2(\delta_1+\cdots+\delta_{n})
(\delta_1+\cdots + \xi_d)}\Big]\\
    &\hspace{5em}\sum_{j \ll h_{n+1}}\sum_{h_{n+1} \gg m_{n+1}} p_j(\delta_{n+1} + \xi)
    p_{h_n}(\alpha_{n+1}) p_{m_n}(\delta_{n+1}) c^{\himed}(\alpha_{n+1}, -\delta_{n+1})d(\delta_{n+2}, \ldots, \xi).
\end{align*}
To make the second term look like the first we have to make two substitutions:
\[\sum_{j \ll h_{n+1}} p_j(\delta_{n+1} + \xi_d) \mapsto 1\] 
and
\[(1-\chi)\frac{\alpha_1\delta_1 + \cdots + \alpha_{n}\delta_{n}}
    {\alpha_1\delta_1 + \cdots +\alpha_{n}\delta_{n} +  
        (\delta_1+\cdots +\delta_{n})^2 - 2(\delta_1+\cdots+\delta_{n})
(\delta_1+\cdots +\delta_{n+1} + \xi_d)} \mapsto (1-\chi)\] 
After making these substitutions, the two terms will cancel 
exactly, so we need only show that each substitution produces a good term.

The strategy for both of these estimates will 
simply be to find the largest $m_k$ which is not the 
    uncorrected frequency $m_{n+1}$ among both $D_h$ and 
    $B^{\ord {n}}_h \circ C^{\ord{(1)}}_h$,
place this pair in $L^6_{t,x}$, and pair the uncorrected $m_{n+1}$ terms 
with the last factor corresponding to $\xi$ and in $L^6_{t,x}$. We 
will show this general estimate below.

Suppose 
that the size of the integrable kernel of the 
difference of one of the replacements is $K$, after localizations to 
some set of dyadic regions (which may depend on the $m$'s, $j$ and $h$).
Then the separation of variables Lemma~\ref{lem:sep} tells us we can 
estimate the difference on this specific set of dyadic regions by
\begin{align*}
    K\left(\prod_{i \neq n+1}\frac1{\dyad^h\dyad^{m_i}}\right)
    &\sup_{x_i}\|\left(\prod_{1 \leq i \leq N} 
    P_{m_i}(P_{h_1} v(x + x_{2i-1})P_{h_1}\bar v (x-x_{2i}))\right)
 P_m v(x)\|_{L^1_{t,x}}\\
    &\lesssim \frac K{\dyad^h \dyad^{m_k}}\sup_{x_n} \|P_{h_k}v\|_{L^6_{t,x}}^2
    \|P_{h_n} v\|_{L^6_{t,x}} 
    \| P_{h_n}\bar v(x+x_n) P_m v(x)\|_{L^2_{t,x}} \prod_{i\neq n+1,k} 
    \frac{1}{\dyad^h}\|P_{h_i} v\|^2_{L^2_{t,x}}
\end{align*}
For this to be good, we must have that 
\[ K \lesssim \frac{\dyad^{m_k}}{\dyad^{h/2} \dyad^{m/2}}\]
where $m_k$ is the largest medium frequency which is not the uncorrected 
frequency $m_{n+1}$, and where $m$ is the medium frequency corresponding 
to the last frequency $\xi$. We verify this for each term.

~

First, we look at the projection. This will follow from the fact 
that if $j \gtrsim h_{n+1}$, then we have a good estimate, because  
\[\sum_{j \ll h_{n+1}} p_j(\delta_{n+1} + \xi_d) + \sum_{j \gtrsim h_{n+1}} 
p_j(\delta_{n+1} + \xi_d)
= 1\]

If $j \gtrsim h_{n+1}$, then 
we must have that the largest of the first $n$ medium frequencies 
is also comparable to $h_{n+1}$ (potentially with a worse 
constant which depends on $n$) since 
\[
    \sum_{i \leq n} \dyad^{m_i} + 2^j \sim 
    \sum_{i\leq n} \delta_i + (\delta_{n+1} + \xi_d)
    \sim 2^\ell \ll \dyad^{h_{n+1}}.
\] 
Then we have the bound
\[
    1 \lesssim \frac{\dyad^{h}}{\dyad^{h/2}\dyad^{m/2}} \sim_n
\frac{\dyad^{m_k}}{\dyad^{h/2}\dyad^{m/2}}.
\]
Again $p_j$ is smooth on scale $j$, so the bounds hold for derivatives 
allowing an application of Lemma \ref{lem:sep}.

~

Finally, for the fraction we write the difference as 
\[-(1-\chi)\frac{(\delta_1+\cdots +\delta_{n})^2 - 2(\delta_1+\cdots+\delta_{n})
(\delta_1+\cdots +\delta_{n+1} + \xi_d)}
    {\alpha_1\delta_1 + \cdots +\alpha_{n}\delta_{n} +  
        (\delta_1+\cdots +\delta_{n})^2 - 2(\delta_1+\cdots+\delta_{n})
(\delta_1+\cdots +\delta_{n+1} + \xi_d)} \] 
Note that we already analyzed the separability estimate of the 
denominator, and the numerator is a polynomial and will thus have a 
good separability estimate given the overall size, so we focus on the size here.

Note that on the support of $(1-\chi)$ the largest medium 
frequencies is much larger than the rest so that this is comparable to 
\[-(1-\chi)\frac{(\dyad^{m_k})^2 - 2(\dyad^{m_k})
(\dyad^\ell)}
{\dyad^h \dyad^{m_k} + {(\dyad^{m_k})^2 - 2(\dyad^{m_k})
(\dyad^\ell)}} \lesssim \frac{\dyad^{m_k}}{\dyad^h} \lesssim 
\frac{\dyad^{m_k}}{\dyad^{h/2}\dyad^{m/2}} 
\]

Now, we analyze the other term 
\[[i\partial_t + \partial_x^2, B^{\ord {n}}_h] \circ D_h + C^{\ord n,\nonres}_h \circ D_h.\]
Since there is nothing to do when $n=1$ we focus on $n \geq 2$.

We have already seen that the commutator with the Schr\"odinger operator 
is multiplication by $\Delta \xi^2$ on the frequency side. From 
definition \ref{def:bn}, we compute that 
$[i\partial_t + \partial_x^2, B^{\ord n}_h] + C^{\ord n, \nonres}_h$ 
cancels except for a term involving $\chi$: 

\begin{align*}
    \Delta \xi^2 b^{\ord n}_h + c^{\ord n, \nonres}_h
  &= \frac{(-1)^n}{n!} 
 \frac{1}
 {\alpha_1\delta_1\cdots \alpha_n\delta_n}
\sum_{h_j \cong h_k} 
    p_{\ell}(\delta_1 + \cdots + \xi)\\
    &\quad \prod_{1\leq k \leq n}
    \sum_{\ell \ll m_k \ll h_k} p_{h_k}(\alpha_k) 
p_{m_k}(\delta_k)
    c^{\himed}(\alpha_k, -\delta_k)\\
& \quad \cdot \Big[ {(\delta_1+\cdots +\delta_n)^2 
        - 2(\delta_1+\cdots+\delta_n)
(\delta_1+\cdots +\delta_n + \xi)} \Big]\chi.
\end{align*}
Thus we just need to show that this term 
is good even when $\xi$ is replaced by the sum of frequencies 
coming from $D_h$. Here $D_h$ does not matter since 
$\xi$ only appears in the term $\sum \delta + \xi$ which 
is projected to the lowest frequency $\dyad^\ell$.

Note, everything will have integrable kernel 
since the above is a sum of products of terms which are
either a projection, $1/(\alpha\delta)$ (which we have analyzed already), 
or a polynomial. Thus, it suffices to measure the size.

Note, in the support of $\chi$ the two largest medium frequencies are comparable. 
Without loss of generality assume that the largest medium frequency 
are $\delta_1$ and $\delta_2$. Then we may estimate 
\[
\Big[ {(\delta_1+\cdots +\delta_n)^2 
        - 2(\delta_1+\cdots+\delta_n)
(\delta_1+\cdots +\delta_n + \xi)} \Big] \lesssim \dyad^{2m_1}
\]
Based on this, we arrive at the good estimate, Definition 
\ref{def:good_L6_terms} by pairing one of the $m_1$ 
high frequency $v$'s with the last medium frequency $v$, and to put 
the remaining $m_1$ and the two $m_2$ frequency $v$'s in $L^6_{t,x}$. This 
suffices since the high frequency gain from the bilinear pairing controls 
the last frequency, and the $\dyad^{m_1}$ is controlled by $\dyad^{-m_1}\dyad^{-m_2}$
coming from the denominator. Finally, there are enough high frequencies 
in the denominator: usually the commutator 
with the Schr\"odinger operator produces a term of size $\dyad^h\dyad^m$, but here 
we have only produced $\dyad^{2m}$.
\end{proof}

\newcommand\ordbal[1]{2#1 + 3}

Finally, before we state the updated normal form, we must introduce 
corrections parallel to $B^{5,\bal}$ but for $B^{\ord{n}}_{h}$. Luckily, these 
corrections follow in the exact same way as in Section \ref{ss:quintic_corrections}.

In particular, there are $n$ pairs of high frequencies in $B^{\ord n}_h$ each 
treated symmetrically, so without loss of generality we may think of  
$C^{\ordbal n, \bal}_h$ as $C^{\bal}$ landing in the first pair
of $B^{\ord n}_h$. As in
Section \ref{ss:quintic_corrections}, replacing $\bar v$ with $\ol{C^{\bal}}$ 
will come with a negative sign which will allow us to apply Lemma 
\ref{l:division}. 
In particular 
the relevant form is 
\begin{align*}
    C^{\ordbal n, \bal}_h &= B^{\ord n}_h(C^{\bal}(v, \bar v, v), \bar v, v, \cdots, v)
    - B^{\ord n}_h(v, \ol{C^{\bal}(v, \bar v, v)}, v, \cdots, v)\\
    &+ \cdots  
    + B^{\ord n}_h(v, \cdots, C^{\bal}(v, \bar v, v), \bar v, v)
    - B^{\ord n}_h(v, \cdots, v \ol{C^{\bal}(v, \bar v, v)}, v).
\end{align*}

The important fact is the following.
\begin{proposition}\label{prop:bnbal}
    There is a decomposition 
    \[
        c^{\ordbal{n},\bal}_h + \Delta^{2n+4}\xi^2 b^{\ordbal{n},\bal}_h = 
        g^{\ordbal{n}, \bal}_h
    \]
    where the corresponding multilinear form $G^{\ordbal{n},\bal}_h$ 
    to $g^{\ordbal{n},\bal}$ 
    satisfies Definition \ref{def:good_L6_terms}, and in addition 
    \[
        G^{\ordbal{n}, \bal}_h \circ B^{\ord {j_1}}_h \circ \cdots \circ 
        B^{\ord {j_k}}_h
    \]
    satisfies Definition \ref{def:good_L6_terms},
    and 
    $b^{\ordbal{n},\bal}_h$ has 
    the same support as $c^{\ordbal{n},\bal}_h$ with the estimate 
    \[
        \|\check b^{\ordbal{n},\bal}_h\|_{L^1}
        \lesssim \dyad^{(-2 -n)h}\prod_i \dyad^{-m_i}.
    \]
    after localization to some set of medium frequency pairs $m_i$.
\end{proposition}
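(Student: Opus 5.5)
We will follow the proof of Proposition~\ref{prop:b5bal} almost verbatim, treating $B^{\ord n}_h$ as a product of $n$ factors of the type $b^{\ord 1}_h$ and observing that the balanced substitution only affects one pair $(\alpha_k,\delta_k)$ at a time. By the symmetry of $b^{\ord n}_h$ in the $n$ pairs, it suffices to treat $C^{\bal}$ landing in the first pair $(\xi_1,\xi_2)$ and then sum the $n$ symmetric copies. Writing the four frequencies of $C^{\bal}$ together with its partner in the pair, we obtain four high frequencies $\xi_1,\dots,\xi_4 \sim \dyad^h$ whose alternating sum $\eta_1$ plays the role of $\delta_1$, with $\dyad^{m_1}\sim|\eta_1|$. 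The symbol is then of the same four-term form as in Definition~\ref{def:c5}, with the remaining factors (the other $n-1$ pairs, the projection $p_\ell$, and the $\chi$ bracket) evaluated at $\delta_1=\eta_1$. The substitution keeps $\delta_1$ fixed and only moves $\alpha_1$.

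\textbf{Step 1: simplification modulo good terms.} In each of the four terms, the factor depending on the substituted high frequency is $1/(\alpha_1\delta_1)$ together with $c^{\himed}(\alpha_1,-\delta_1)$. Inside the $(1-\chi)$ bracket, the denominator also depends on $\alpha_1$.

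1. Replace the full denominator by its $\alpha$-only part, as in the proof of Proposition~\ref{prop:good_errors}. This costs a factor $\dyad^{m_k}/\dyad^h$, which is a good error by the same bilinear-plus-$L^6_{t,x}$ count: three high frequencies from $C^{\bal}$ go in $L^6_{t,x}$, one bilinear pairing is taken against the last frequency $\xi$, and the remaining pairs are controlled by $L^2$ and Bernstein.
2. After this step, $\eta_1\cdot(\text{symbol})/(p_\ell\cdots)$ depends on $(\xi_1,\dots,\xi_4)$ only through the single-pair factor. That factor is exactly the one of Proposition~\ref{prop:b5bal}, multiplied by a function of the other variables that does not depend on $\xi_1,\dots,\xi_4$.

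\textbf{Step 2: division.} Apply Lemma~\ref{l:division} in $(\xi_1,\dots,\xi_4)$, with the other variables as parameters. The hypotheses hold as follows:
1. Vanishing on the diagonal comes from (H2s) together with the $\pm$ signs.
2. Symmetry comes from the four-term symmetrization.
3. The size is $K\sim\dyad^{-h}$ times the product of the other factors, which is $\dyad^{-(n-1)h}\prod_{i\ge2}\dyad^{-m_i}$ by Lemma~\ref{lem:bnh_kernel_size}.

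This produces $b$, $r$, $q$ with the extra factors $\dyad^{-2h}$, $\dyad^{-h}$, $\dyad^{-2h}$ respectively. Dividing by $\eta_1\sim\dyad^{m_1}$ gives
\[
\|\check b^{\ordbal n,\bal}_h\|_{L^1}\lesssim \dyad^{(-2-n)h}\prod_i\dyad^{-m_i}.
\]
Two points remain to check:
1. The product with the parameter-dependent factors still has an integrable kernel. This follows from the composition remark after Lemma~\ref{lem:sep}, since all pieces are smooth on their respective scales after the change of coordinates.
2. $\Delta^{2n+4}\xi^2$ agrees with the four-frequency $\Delta^4\xi^2$ up to terms that can be absorbed. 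Here we use that the extra part of the denominator has already been frozen in Step 1, and that on the relevant support the remaining pairs contribute only to the $\alpha$-only part.

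\textbf{Step 3: goodness of $r$ and $q$, including after composition with $D_h$.} The $r$ term gains $\dyad^{-h}$ relative to $c$. Place the four balanced high frequencies with three in $L^6_{t,x}$ and one bilinear against the final frequency $\xi$. Each remaining pair costs $\dyad^{-h-m_i}$, which is compensated by $L^\infty_tL^2_x$ bounds and Bernstein at frequency $\dyad^{m_i}$.

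The $q$ term carries $(\xi_o-\xi_e)^2$, so we use two bilinear estimates on the balanced quadruple plus Bernstein on the rest, as in the $Q$ estimate of Proposition~\ref{prop:b5bal}.

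When composing with $D_h=B^{\ord{j_1}}_h\circ\cdots$, the inner factors only contribute additional matched pairs, each with weight $\dyad^{-h-m}$ and bounded by the same fixed-time bounds. The final frequency $\xi$, which carries the bilinear pairing, is unchanged. The estimate therefore persists with constant $\tilde C^{\text{order}}$, which is geometric as required in Definition~\ref{def:good_L6_terms}.

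\textbf{Main obstacle.} The delicate step is Step 1 for the $\chi$ bracket. That factor is not a product across pairs, and it depends on $\alpha_1$ through $\sum\alpha_i\delta_i$. The replacement of $\alpha_1$ by the four balanced frequencies must be shown to produce only good errors uniformly in the regime where $m_1$ is the dominant medium frequency.

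My first attempt will be to expand the bracket to first order in the difference of the $\alpha_1$ values, which is of size $\dyad^{m_1}$. This gives a relative error of $\dyad^{m_1}\cdot\dyad^{m_k}/(\dyad^h\dyad^{m_k})=\dyad^{m_1-h}$. Such an error is acceptable under the same counting that gave $K\lesssim \dyad^{m_k}\dyad^{-h/2}\dyad^{-m/2}$ in Proposition~\ref{prop:good_errors}.
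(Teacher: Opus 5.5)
There is a genuine gap at Step~2, point~2. Lemma~\ref{l:division}, applied in $(\xi_1,\dots,\xi_4)$, only gives the identity with the four-frequency $\Delta^4\xi^2$. The statement needs the full resonance function $\Delta^{2n+4}\xi^2$, since that is what the commutator with $i\partial_t+\partial_x^2$ produces when $B^{2n+3,\bal}_h$ is used as a correction. The two differ by $\sum_{i\ge 2}\alpha_i\delta_i$ plus terms quadratic in the $\delta_i$ and $\xi$, and this difference has size $\dyad^h\dyad^{m_k}$, where $m_k$ is the largest medium frequency. Freezing the $(1-\chi)$ bracket in Step~1 does not touch this difference. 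That bracket sits inside $c^{2n+3,\bal}_h$ (it comes from $b^{2n+1}_h$), not in the multiplier of the new correction. The difference is also not small, since $m_k$ may be much larger than $m_1$, and nothing in your construction absorbs it. You therefore have to prove directly that $(\Delta^{2n+4}\xi^2-\Delta^4\xi^2)\,b^{2n+3,\bal}_h$ is good; its symbol has size $\dyad^{(-1-n)h}\prod_{i\neq k}\dyad^{-m_i}$. This is the main content of the paper's proof, and your proposal gives no estimate for it.

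Your counting for errors of this type also breaks down. You put three of the balanced frequencies in $L^6_{t,x}$, estimate the fourth bilinearly against $\xi$, and use Bernstein on every other pair. This works only when the pair lacking its $\dyad^{-m}$ weight is the $C^{\bal}$ pair, i.e.\ $k=1$. The case $k\neq 1$ arises in two places:
\begin{itemize}
\item in the term above, whenever $m_k\ge m_1$;
\item in your Step~1 on the support of $1-\chi$, since your ``main obstacle'' discussion only treats $m_1$ dominant.
\end{itemize}
When $k\neq1$, Bernstein on pair $k$ costs an uncompensated $\dyad^{m_k}$, while the weight $\dyad^{-m_1}$ is wasted. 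The resulting bound is $\dyad^{(-3-5s)h}\dyad^{m_k-m_1}\dyad^{-sm}$. For $m\sim m_k$ close to $h$ this is $\dyad^{(-2-6s)h}\dyad^{-m_1}$, which is not summable in $h$ once $s<-\frac13$.

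The missing idea, used in the paper, is to treat $k=1$ and $k\neq 1$ separately. In the case $k\neq1$, spend $\dyad^{-m_1}$ through Bernstein on the $C^{\bal}$ quadruple $Q$ projected to frequency $\dyad^{m_1}$, and break pair $k$ instead. Concretely:
\begin{itemize}
\item use $\dyad^{-m_1}\|P_{m_1}Q\|_{L^2_tL^\infty_x}\lesssim\|Q\|_{L^2_tL^1_x}$, and bound the right side by three $L^6_{t,x}$ norms and one $L^\infty_tL^2_x$ norm;
\item pair one high frequency of pair $k$ bilinearly with $\xi$, and put the other in $L^\infty_tL^2_x$.
\end{itemize}
This gives $\dyad^{(-3-5s)h}\dyad^{-sm}$, which is summable for $s>-\frac12$. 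The same two-case estimate also handles the bracket replacement in your Step~1. After that replacement, the remaining $\alpha_1$-dependence is just part of the smooth symbol fed into Lemma~\ref{l:division}, so the first-order expansion you plan is not needed.
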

\begin{proof}
    First we argue that it will suffice to find a decomposition 
    \[
        c^{\ordbal{n},\bal}_h + \Delta^{4}\xi^2 b^{\ordbal{n},\bal}_h = 
        g^{\ordbal{n}, \bal}_h
    \]
    where $\Delta^4\xi^2$ is the symbol associated to the four frequencies where 
    $C^{\bal}$ is injected. The largest term of the difference between the desired
    symbol and $\Delta^4\xi^2$ is controlled by 
    \[
        |\Delta^{2n+4}\xi^2 - \Delta^4\xi^2| \lesssim \dyad^h\dyad^{m_k}
    \]
    where $m_k$ denotes the largest medium frequency pairing.
    Now, if we find a decomposition with $\Delta^4\xi^2$ then 
    $b^{\ordbal{n},\bal}_h$ will have symbol size 
    $\dyad^{(-2-n)h} \prod_i \dyad^{-m_i}$. So the difference 
    between the $\Delta^{2n+4}\xi^2$ decomposition and $\Delta^4\xi^2$ decomposition
    has size 
    \[
        \dyad^{(-1-n)h} \prod_{i\neq m_k} \dyad^{-m_i}
    \]
    after localization to some set of medium frequency pairs. We need to show 
    that this difference is good according to Definition 
    \ref{def:good_L6_terms}. The strategy is to pair one of the $m_k$ pair 
    high frequencies 
    with the medium frequency, place the other $m_k$ pair high frequency in 
    $L^6_{t,x}$, and place the two extra high frequencies from $C^{\bal}$ in 
    $L^6_{t,x}$. If $k=1$, when $C^{\bal}$ lands on the highest pair, 
    then this is completely straightforward:
    \begin{align*}
        &\lesssim 
        \sum_{h \gg m_k \gg \ell} \dyad^{-2h}
        \|P_{m_k}(P_{\cong h} v P_{\cong h} \bar v P_{\cong h} v
        P_{\cong h} \bar v) P_{\lesssim m_k} v\|_{L^1_{t,x}}
        \prod_{i \neq k} \sum_{h \gg m_i \gg \ell} \dyad^{-h}\dyad^{-m_i}
        \|P_{m_i}(P_{\cong h} v P_{\cong h} \bar v)\|_{L^\infty_{t,x}}\\
        &\lesssim 
        \sum_{h \gg m \gg  \ell} \dyad^{-2h}
        \|P_{\cong h} v\|_{L^6_{t,x}}^3 \|P_{\cong h} \bar v P_m v\|_{L^2_{t,x}}
        (C^2 \epsilon^2 h \dyad^{(-1 -2s)h})^{n-1}\\
        &\lesssim C^{2n} \epsilon^{2n + 2} 
        \sum_{h \gg m \gg  \ell} \dyad^{(-2 -3s)h} \dyad^{-sm} c_h^3c_m\\
        &\lesssim C^{2n} \epsilon^{2n + 2} c_\ell
    \end{align*}
    Otherwise, it is important that we apply Bernstein's inequality on 
    the $m_1$ pair before splitting it into two $L^6_{t,x}$ estimates 
    and two $L^\infty_tL^2_x$ estimates.
    \begin{align*}
        &\lesssim 
        \sum_{h \gg m_k \gg \ell} \dyad^{-3h}\dyad^{-m_1}
        \|P_{m_1}(P_{\cong h} v P_{\cong h} \bar v P_{\cong h} v
        P_{\cong h} \bar v)\|_{L^{3}_{t}L^\infty_x} 
        \|P_{m_k}(P_{\cong h} v P_{\cong h} \bar v) P_{\lesssim m_k} v\|_{L^{3/2}_t L^1_x}
        (C^2 \epsilon^2 h \dyad^{(-1 -2s)h})^{n-2}\\
        &\lesssim C^{2n-4}\epsilon^{2n-4}
        \sum_{h \gg m \gg  \ell} \dyad^{-3h}
        \|P_{\cong h} v\|_{L^6_{t,x}}^3 \|P_{\cong h} v\|_{L^\infty_tL^2_x}^2
        \|P_{\cong h} \bar v P_m v\|_{L^2_{t,x}} \\
        &\lesssim C^{2n} \epsilon^{2n + 2} 
        \sum_{h \gg m \gg  \ell} \dyad^{(-2 -3s)h} \dyad^{-sm} c_h^3c_m\\
        &\lesssim C^{2n} \epsilon^{2n + 2} c_\ell
    \end{align*}
    
    Now, we wish to proceed with a decomposition using $\Delta^4\xi^2$, which 
    will follow directly from Lemma \ref{l:division} using 
    the same argument as Proposition \ref{prop:b5bal} if we can 
    replace the symbol 
    \[ (1-\chi)\frac{\alpha_1\delta_1 + \cdots + \alpha_n\delta_n}
    {\alpha_1\delta_1 + \cdots +\alpha_n\delta_n +  
(\delta_1+\cdots +\delta_n)^2 - 2(\delta_1+\cdots+\delta_n)
(\delta_1+\cdots +\delta_n + \xi)}
\]
by $1-\chi$ up to a good term. 

The difference of these symbols is 
\[
    (1-\chi)\frac{(\delta_1+\cdots +\delta_n)^2 - 2(\delta_1+\cdots+\delta_n)
    (\delta_1+\cdots +\delta_n + \xi)}
    {\alpha_1\delta_1 + \cdots +\alpha_n\delta_n +  
(\delta_1+\cdots +\delta_n)^2 - 2(\delta_1+\cdots+\delta_n)
(\delta_1+\cdots +\delta_n + \xi)}.
\]
    Recall that in the support of $(1-\chi)$ the largest $m_i$, which 
    we will call $m_k$ is so much larger than the rest that 
\[
    (1-\chi)\frac{(\delta_1+\cdots +\delta_n)^2 - 2(\delta_1+\cdots+\delta_n)
    (\delta_1+\cdots +\delta_n + \xi)}
    {\alpha_1\delta_1 + \cdots +\alpha_n\delta_n +  
(\delta_1+\cdots +\delta_n)^2 - 2(\delta_1+\cdots+\delta_n)
    (\delta_1+\cdots +\delta_n + \xi)} \sim \dyad^{m_k}\dyad^{-h}.
\]
Thus, the symbol size of the difference is again
    $\dyad^{(-1-n)h} \prod_{i \neq k} \dyad^{-m_i}$
    and so the above strategy applies to show that this difference is also good.

    Now that this difference is controlled, the remaining term is separable 
    in $\delta_i$ and $\alpha_i$ (except for the overall projection 
    to the lowest frequency) and we may directly apply Lemma \ref{l:division} 
    as in Proposition \ref{prop:b5bal}.

    The extra good estimate for 
    \[
        G^{\ordbal{n}, \bal}_h \circ B^{\ord {j_1}}_h \circ \cdots \circ 
        B^{\ord {j_k}}_h
    \]
    follows exactly from the estimate above, except that there are more 
    pairs of medium frequencies which are controlled by Bernstein's inequality.
\end{proof}
As usual we have the following corollary of the symbol size estimates whose proof 
we omit because of its similarity to the corresponding proof in Section 
\ref{ss:cubic_corrections}.
\begin{corollary}
    $B^{\ordbal{n},\bal}_h$ satisfies the required definition 
    for corrections, Definition \ref{def:good_corrections}.
\end{corollary}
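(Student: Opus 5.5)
We need to verify Definition~\ref{def:good_corrections} for $B^{\ordbal{n},\bal}_h$, using only the kernel bound from Proposition~\ref{prop:bnbal}:
\[
\|\check b^{\ordbal{n},\bal}_h\|_{L^1}\lesssim \dyad^{(-2-n)h}\prod_i\dyad^{-m_i},
\]
together with its frequency support. That support consists of $n+2$ pairs of frequencies $\cong h$ (the $n$ pairs of $B^{\ord n}_h$ plus the balanced pair injected by $C^\bal$). Each pair is recombined at a medium frequency $\dyad^{m_i}\gg\dyad^\ell$, there is one unpaired frequency $\xi$, and the output is at $\dyad^\ell$. We follow the $B^3$ corollary verbatim. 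On each dyadic piece, Lemma~\ref{lem:sep} replaces the form by a product of translated, frequency-localized factors, at the cost of the kernel constant. Since all our norms are translation invariant, we drop the translations and then sum over $h,m_i,\ell$. The growth in $n$ is at most $C(n)$, which we absorb into $\tilde C^{2n+3}$.

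\textbf{Fixed-time linearized bounds \eqref{eq:bn_pointwise_L1} and \eqref{eq:bn_pointwise}.} Place $f$ in each input slot in turn.
\begin{itemize}
\item If $f$ sits in a high slot, we use Bernstein at the output and at each medium frequency. Each remaining medium pair $P_{m_i}(P_h wP_h\bar w)$ is bounded in $L^\infty_x$ by $\dyad^{m_i}\|P_hw\|_{L^2}^2$, which cancels $\dyad^{-h}\dyad^{-m_i}$ and leaves $\dyad^{(-1-2s)h}\epsilon^2c_h^2$. The pair containing $f$ keeps $\|P_hf\|_{L^p}$. The extra $\dyad^{-h}$ relative to the $B^{\ord n}_h$ case supplies the factor $\dyad^{j/2}\dyad^{-h/2}$ in the $L^p$ versions, and for $s>-\frac12$ this yields the $\sum_{h\ge j}$ part.
\item If $f$ is the unpaired input at frequency $\dyad^m$, it is either $\gtrsim$ or $\lesssim$ the output. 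In the latter case all high pairs give powers $\epsilon^2\dyad^{(-1-2s)h}$ with $h\gg j$. Summing against the admissible envelope gives $\dyad^{(-1/2-s)j}c_j$ (respectively $c_j$ in the $L^p$ version), as in the $B^3$ computation.
\end{itemize}

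\textbf{Bilinear bounds \eqref{eq:bn_bilinear_1} and \eqref{eq:bn_bilinear_2}.} For \eqref{eq:bn_bilinear_1}, apply Bernstein at the output $\dyad^{\ell}$ to reduce to $L^2_tL^1_x$. Use one transversal bilinear estimate \eqref{eq:uj_sep_bi_boot}, pairing a high input with the unpaired medium input $\xi$; their separation is $\sim\dyad^h$, so this gains $\dyad^{-h}$. Put one high input in $L^\infty_tL^2_x$, and put all other medium pairs in $L^\infty$ via Bernstein as above. This reproduces the $B^3$ numerology with additional factors $(C\epsilon^2\dyad^{(-1-2s)h})^{n+1}$, each summable because $s>-\frac12$.

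For \eqref{eq:bn_bilinear_2}, split according to whether $h\gtrsim k$ or $k\gg h$, exactly as in the $B^3$ proof:
\begin{itemize}
\item If $h\gtrsim k$, put $P_k\bar w_2$ in $L^\infty_tL^2_x$ and reuse the previous estimate.
\item If $k\gg h$, square the norm and pair $P_kw_2$ bilinearly with a high input of $w_1$, using the mutual bilinear hypothesis. The remaining factors go into $L^\infty_{t,x}$.
\end{itemize}

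\textbf{Main obstacle.} The only genuinely new point is that the balanced pair from $C^\bal$ contributes two more high frequencies with no separation available among them. One must check that the extra $\dyad^{-2h}$ in the kernel exactly pays for them. In $L^\infty_tL^2_x$ they cost $\dyad^{-2sh}\epsilon^2c_h^2$, and in the $L^p$ bounds we Bernstein this pair through its medium frequency $\dyad^{m}$, as in the proof of Proposition~\ref{prop:bnbal}. The resulting factor $\dyad^{(-1-2s)h}$ (times at most a logarithmic loss in $h$, or a factor $\dyad^{m-h}$) is bounded because $s>-\frac12$, so the $B^3$ sums still converge geometrically. I expect this bookkeeping to go through uniformly once the composition rule from the remark after Lemma~\ref{lem:sep} controls the $n$-dependence.
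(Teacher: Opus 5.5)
Your proposal is essentially the paper's argument. The paper omits this proof, saying it follows from the kernel bound $\|\check b^{2n+3,\bal}_h\|_{L^1}\lesssim \dyad^{(-2-n)h}\prod_i \dyad^{-m_i}$ of Proposition~\ref{prop:bnbal} exactly as in the $B^3$ corollary, and that is what you carry out.

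Two bookkeeping corrections:
\begin{itemize}
\item There are $n+1$ high pairs, not $n+2$. The two inputs injected by $C^{\bal}$ share the medium frequency of the pair they enter, so the kernel carries only $n$ factors $\dyad^{-m_i}$.
\item For $n\ge 2$, the unpaired input can sit at the largest medium frequency of a pair not containing $f$. In that case, place that pair and the unpaired input both in $L^2_x$, as in the $B^3$ proof. Putting every remaining pair in $L^\infty_x$ instead loses for $s$ near $-\frac12$.
\end{itemize}
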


Now,
Proposition \ref{prop:bnbal} tells us that
\[
    [i\partial_t + \partial_x^2, B^{\ordbal{n},\bal}_h]
    = - C^{\ordbal{n},\bal}_h + G^{\ordbal{n},\bal}_h
\]
and so these symbols suitably correct $C^{\ordbal{n},\bal}_h$.

To unify the notation we will refer to $B^{5,\bal}$ by $B^{\ordbal{(1)}, \bal}_h$ 
going forward.  

\subsubsection{General implicit normal form}

Now we are in a position to show that given the corrections constructed 
above up to some order $N$, that all terms of $L^k_v (\cdot)$ for $k \leq N$ 
are a certain controllable term plus a good remainder.

Inspired by our corrections to the normal form in \eqref{eq:normal_form_correction}
we use the normal form variable 
\begin{align}\label{eq:normal_form}
    v &= u + B^{3,\hihi}(v,\bar v, v)\\ 
      \nonumber &+ \sum_h B^{\ord {(1)}}_h + B^{\ordbal{(1)},\bal}_h\\
      \nonumber &+ \sum_h B^{\ord {(2)}}_h 
      - B^{\ord {(1)}}_h \circ B^{\ord{(1)}}_h + B^{\ordbal{(2)},\bal}_h - 
      B^{\ordbal{(1)},\bal}_h \circ B^{\ord {(1)}}_h 
      - B^{\ord {(1)}}_h \circ B^{\ordbal{(1)},\bal}_h\\
      \nonumber &+ \cdots \\
      \nonumber &+ \sum_h \sum_{j_1+\cdots+j_k=N} (-1)^{k+1} B^{\ord {j_1}}_h \circ 
        \cdots \circ B^{\ord {j_k}}_h + \sum_{1 \leq i \leq k} 
        (-1)^{k+1} B^{\ord {j_1}}_h \circ 
        \cdots \circ B^{\ordbal{j_i },\bal}_h \circ \cdots \circ B^{\ord {j_k}}_h
\end{align}
Note that we sum over all ordered partitions of $N$, including the single 
term partition. In fact the single term partition captures the 
most basic correction $B^{\ord n}_h$.

We let $L_v$ as usual and write the NLS equation for $v$.

\begin{align}\label{eq:new_nls}
    (i\partial_t &+ \partial_x^2) v
    = C^\bal + C^{\low} + \sum_hC^{\low}(P_{\cong h}v, \ol{
    \sum_{j_1 + \cdots + j_k \leq N} (-1)^{k} B^{\ord {j_1}}_h \circ \cdots \circ 
    B^{\ord {j_k}}_h }, P_{\cong h}v) \\
\nonumber &+ 
    \sum_{k=0}^\infty (L_v)^k\Big(
    G + \sum_h C^{\ord {(1)},\res}_h 
+ \sum_{n=2}^N \left[i\partial_t + \partial_x^2,  
          \sum_{j_1+\cdots+j_k=n} (-1)^{k+1} B^{\ord {j_1}}_h \circ \cdots \circ
      B^{\ord {j_k}}_h\right]\\
    \nonumber           & -\sum_{n=1}^N \sum_{j_1+\cdots+j_k=n} 
    (-1)^{k+1}C^{\ord {(1)}}_h\circ B^{\ord {j_1}}_h \circ \cdots \circ B^{\ord {j_k}}_h
\Big)
\end{align}
where here $G$ collects all good terms and is given by 
\begin{equation}\label{eq:post_b5_good}
\begin{aligned}
    G &= L_v(C^{\low} + 
    \sum_hC^{\low}(P_{\cong h}v, \ol{
    \sum_{j_1 + \cdots + j_k \leq N} (-1)^{k} B^{\ord {j_1}}_h \circ \cdots \circ 
    B^{\ord {j_k}}_h }, P_{\cong h}v)) \\
    & + L_v(C^{\bal})- \sum_h \sum_{j_1+\cdots+j_k\leq N} \sum_{1 \leq i \leq k} 
        (-1)^{k+1} B^{\ord {j_1}}_h \circ 
        \cdots \circ (C^{\ordbal{j_i},\bal}_h - G^{\ordbal{j_i },\bal}_h) 
        \circ \cdots \circ B^{\ord {j_k}}_h\\
    &+ C(u, \bar u, u) - C(v, \bar v, v) - 
    \sum_hC^{\low}(P_{\cong h}v, \ol{
    \sum_{j_1 + \cdots + j_k \leq N} (-1)^{k} B^{\ord {j_1}}_h \circ \cdots \circ 
    B^{\ord {j_k}}_h }, P_{\cong h}v) \\
&\qquad +\sum_{j_1+\cdots+j_k\leq N} (-1)^{k+1}C^{\ord{(1)}}_h\circ 
            B^{\ord {j_1}}_h \circ \cdots \circ B^{\ord {j_k}}_h.
\end{aligned}
\end{equation}
Here the first line captures the good terms which come from 
the perturbative part of $C$ interacting with each $B$ term. The second 
line captures $C^\bal$ interacting with each $B$ term, where 
we have subtracted off the correction coming from the $B^{\ordbal{n},\bal}_h$ 
terms and are left with a good part only. 
And the third and fourth lines capture all terms coming from the implicit 
normal form except the bad ones explicitly contained in \eqref{eq:new_nls} again 
leaving us with a good term.

In addition to $G$, we will want to know that the only way $L_v^k(C^{\ord n,\res}_h)$ 
can form a bad term, is if $L_v$ provides only unbalanced corrections 
$B^{\ord {j_1}}_h \circ \cdots \circ B^{\ord {j_k}}_h$, 
places $C^{\ord n,\res}$ on the medium frequency,
and matches the high frequencies. And finally, we have more outlier terms as in 
Proposition \ref{prop:b3_good_bad_splitting}, which we want to show are good 
after an application of $L_v$.

The following proposition proves these claims.
\begin{proposition}\label{prop:bn_good_bad_splitting} 
    The term $G$ defined above satisfies Definition \ref{def:good_L6_terms}. 

    In addition 
    \begin{align*}
        L_v(C^{\ord n,\res}_h &\circ B^{\ord {i_1}}_h \circ \cdots \circ B^{\ord {i_{k'}}}_h) \\
        &- 
        \sum_{1\leq N} \sum_{j_1 + \cdots + j_k \leq N} 
        (-1)^{k+1} B^{\ord {j_1}}_h \circ \cdots \circ B^{\ord {j_k}}_h \circ 
        C^{\ord n,\res}_h \circ B^{\ord {i_1}}_h \circ \cdots \circ B^{\ord {i_{k'}}}_h
    \end{align*}
    satisfies the good estimate of Definition \ref{def:good_L6_terms}.

    Finally, the outlier term 
    \[
\sum_hC^{\low}(P_{\cong h}v, \ol{
    \sum_{j_1 + \cdots + j_k \leq N} (-1)^{k} B^{\ord {j_1}}_h \circ \cdots \circ 
    B^{\ord {j_k}}_h }, P_{\cong h}v) 
    \]
    satisfies the source term estimate \ref{eq:source_term} and is good after 
    an application of $L_v$:
    \[
L_v\left(\sum_hC^{\low}(P_{\cong h}v, \ol{
    \sum_{j_1 + \cdots + j_k \leq N} (-1)^{k} B^{\ord {j_1}}_h \circ \cdots \circ 
    B^{\ord {j_k}}_h }, P_{\cong h}v) \right)
    \]
    satisfies Definition \ref{def:good_L6_terms}.
\end{proposition}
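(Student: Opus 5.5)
We prove the three claims of Proposition~\ref{prop:bn_good_bad_splitting} in order, each time splitting every multilinear expression into localized pieces by Lemma~\ref{lem:sep}. For each piece we then run the same dichotomy used in Propositions~\ref{prop:b3_good_bad_splitting} and~\ref{prop:b5_good_bad_splitting}.
\begin{itemize}
\item Either the highest frequency pairs are unmatched, or matched but separated. Then two bilinear bounds \eqref{eq:uj_sep_bi_boot} at the top frequency are available, with Bernstein on the lowest factor.
\item Or the top frequencies are matched. Then the extra powers of $\dyad^{-h}$ coming from the kernel bounds of Lemma~\ref{lem:bnh_kernel_size} and Proposition~\ref{prop:bnbal} pay for one bilinear bound plus three $L^6_{t,x}$ bounds \eqref{eq:uj_se_boot}.
\end{itemize}
Since the orders can be as large as $2N+1$ and the Neumann series in $L_v$ is infinite, we must track that each added factor costs at most $\tilde C\epsilon^2$. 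Every extra pair $B^{\ord{j}}_h$ contributes a factor $\dyad^{-h}\dyad^{-m_i}\|P_{m_i}(P_{\cong h}vP_{\cong h}\bar v)\|_{L^\infty_{t,x}}\lesssim C^2\epsilon^2\dyad^{(-1-2s)h}$, which is harmless. Combined with the $C(N)^m$ remark after Lemma~\ref{lem:sep}, this gives the geometric constants demanded in Definition~\ref{def:good_L6_terms}.

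For $G$ in \eqref{eq:post_b5_good} we go line by line. The first line consists of $L_v$ applied to $C^{\low}$ and to the outlier. These are handled exactly as the corresponding terms in Proposition~\ref{prop:b3_good_bad_splitting}, using Proposition~\ref{prop:b_preserves_good} together with the fact that every $B^{\ord n}_h$ and $B^{\ordbal n,\bal}_h$ is a good correction. The second line has $L_v(C^{\bal})$ with the balanced injections subtracted. After the subtraction, every occurrence of $C^{\bal}$ inside a composition $B^{\ord{j_1}}_h\circ\cdots$ is replaced by $G^{\ordbal{j_i},\bal}_h$. This remainder is good by the composed statement in Proposition~\ref{prop:bnbal}. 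The remaining placements, where $C^{\bal}$ lands on the unpaired medium input or on $B^{3,\hihi}$, carry an unmatched top frequency and fall under the first branch of the dichotomy. The third and fourth lines collect the expansion of $C(u,\bar u,u)-C(v,\bar v,v)$ with $u$ written via \eqref{eq:normal_form}. We expand combinatorially and check that the alternating signs $(-1)^{k+1}$ reproduce the Neumann inversion \eqref{eq:normal_form_correction}. The leftover terms are then of two kinds. The first kind consists of compositions of $C^{\ord{(1)}}_h$ with $B$'s whose top pairs are not matched, which are good by the first branch. The second kind consists of differences $C^{\ord{(n+1)}}_h\circ D_h-B^{\ord n}_h\circ C^{\ord{(1)}}_h\circ D_h$ and commutator errors $[i\partial_t+\partial_x^2,B^{\ord n}_h]\circ D_h+C^{\ord n,\nonres}_h\circ D_h$. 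These are good by Proposition~\ref{prop:good_errors}. Terms of total order above $2N+1$ are good outright: they have at least $N+1$ matched pairs, and since $s>-\frac{N}{2N+1}$ the power $\dyad^{((N+1)(-1-2s)+\cdots)h}$ is summable against the single bilinear gain.

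For the second claim we expand one power $L_v$ acting on $C^{\ord n,\res}_h\circ D_h$. The factor $L_v$ inserts one correction in one slot. If it lands on a high-frequency slot, or its own high frequency is not $\cong h$, then there are two separations at the top and the term is good. Inserting $B^{\ordbal{j},\bal}_h$ is good by Proposition~\ref{prop:bnbal}. Hence the only surviving pieces are the matched unbalanced compositions $B^{\ord{j_1}}_h\circ\cdots\circ B^{\ord{j_k}}_h$ placed on the medium slot, and these are exactly the subtracted sum. The outlier is handled as in Proposition~\ref{prop:b3_good_bad_splitting}. The source estimate \eqref{eq:source_term} holds because either there is separation between a medium frequency and a $\cong h$ frequency, or the output sits near $2\dyad^h$ and one pairs against a high frequency. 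One further application of $L_v$ either unmatches the top or adds a second separation, which makes the term good.

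The main obstacle is the bookkeeping in the third and fourth lines of $G$. One has to verify that the signed sum over ordered partitions in \eqref{eq:normal_form} cancels precisely every bad matched composition up to order $2N+1$, leaving only the explicit terms in \eqref{eq:new_nls}. I would try induction on the total order $n$. Assuming cancellation below order $n$, the order-$n$ bad terms are
\[
B^{\ord n}_h\circ C^{\ord{(1)}}_h,\qquad C^{\ord{(1)}}_h\circ(\text{order }n\text{ compositions}),\qquad \text{commutators}.
\]
Proposition~\ref{prop:good_errors} converts all of them to $C^{\ord{(n+1)}}_h$ and $-C^{\ord n,\nonres}_h$ modulo good terms, and these cancel telescopically against the next order.
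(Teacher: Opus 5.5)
Your two-branch dichotomy agrees with the paper's sketch. So do your treatment of the second claim, where only the matched unbalanced compositions placed in the innermost medium slot survive, and your treatment of the outlier, where the output is adjacent to $2\dyad^h$, there is a medium separation, and one more $L_v$ adds separations. The gap is in the third and fourth lines of $G$ in \eqref{eq:post_b5_good}. By construction these lines are $C$ with one, two or three correction insertions, from which two things have already been subtracted: the outlier, and the matched compositions $C^{2(1)+1}_h\circ B^{2j_1+1}_h\circ\cdots\circ B^{2j_k+1}_h$ with $j_1+\cdots+j_k\le N$. No commutators $[i\partial_t+\partial_x^2,B^{2n+1}_h]\circ D_h$ and no differences $C^{2(n+1)+1}_h\circ D_h-B^{2n+1}_h\circ C^{2(1)+1}_h\circ D_h$ occur in $G$, and no sign bookkeeping over ordered partitions is needed. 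That telescoping, together with Proposition~\ref{prop:good_errors}, is the content of the next proposition, which takes this one as input. So the step you call the main obstacle is not part of this statement. What is actually needed is a direct estimate in the style of the $C^5,C^7,C^9$ claims of Proposition~\ref{prop:b3_good_bad_splitting}, covering three cases.
\begin{itemize}
\item Insertions into the first two slots, other than the outlier configuration.
\item Insertions into the third slot where $C$ is not the matched $\himed$ form. Here either the high pair of $C$ is separated from the composition's $h$, or $C$ is balanced, perturbative, resonant or $\hihi$, and one breaks a high pair against the extra medium frequencies.
\item Multiple insertions.
\end{itemize}
Your ``first kind'' covers only part of this.

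More seriously, your claim that all terms of total order above $2N+1$ are good outright because $s>-\frac{N}{2N+1}$ is false, so that step fails. The threshold enters only in Proposition~\ref{prop:unbal_L1}, and there it yields only the source bound \eqref{eq:source_term}. That argument uses two bilinear $L^2_{t,x}$ bounds, one of which pairs $P_hv$ with the measuring factor $P_{\cong\ell}\bar v$. That factor is absent in Definition~\ref{def:good_L6_terms}. The paper states explicitly that $F^{2n+1}$ with $n>N$ does not have the good estimate. An example is $C^{2(1)+1}_h$ composed with an order $2N+1$ composition. This is exactly why the fourth line of $G$ runs over $j_1+\cdots+j_k\le N$: it removes these order $2N+3$ terms from $G$ and keeps them as bad terms. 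The high-order terms that do lie in $G$ must be shown good through separations and extra symbol gains, not through the threshold.

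Two smaller points. First, Proposition~\ref{prop:b_preserves_good} requires a good input, so it cannot be used for $L_v(C^{\low})$ or for $L_v$ of the outlier. Cubic terms are never good, and the outlier satisfies only \eqref{eq:source_term}. These two terms need direct estimates, and that proposition only handles the further powers $L_v^k$. Second, $C^{\bal}$ landing in the medium slot of $B^{2n+1}_h$ does not unmatch the top pairs. It is controlled by breaking a high pair against the new medium frequencies.
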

\begin{proof}
    This uses the same ideas as in Propositions \ref{prop:b3_good_bad_splitting}
    and \ref{prop:b5_good_bad_splitting} with the new corrections 
    $B^{\ord n}_h$ and $B^{\ordbal n,\bal}_h$. 
    Ultimately, the proofs are identical because 
    $B^{\ord n}_h$ and $B^{\ordbal n,\bal}_h$ have the same structure as 
    $B^{3,\himed}$ and $B^{5,\bal}$ up to having many medium frequency pairs which are easily controlled in $L^\infty_{t,x}$ by 
    Bernstein's and the symbol sizes from Lemma \ref{lem:bnh_kernel_size} and 
    Proposition \ref{prop:bnbal}. 

    The important ideas are as follows, we can have at most two higher frequencies 
    than have corresponding correction symbols in terms 
    such as in 
    $B(C^{\res}, \bar v, \cdots)$. Here we may use two bilinear 
    estimates to control these two new high frequencies, placing many pairs 
    of medium frequencies in $L^\infty_{t,x}$ as needed.
    
    Similarly, in terms with two medium frequencies instead of the usual one, 
    we can get control by breaking up a pair of high frequencies 
    and using two bilinear estimates against the medium frequencies. This 
    happens for instance when $C^{\bal}$, $C^{\low}$, or 
    $C^{\res}$ (with unmatched highest frequencies) lands on the medium frequency 
    of a $B$ term.

    In terms with $B^{\ordbal n,\bal}_h$ and $C^{\bal}$, we have enough extra 
    gain of high frequencies to control the extra high frequencies as in 
    the proof of Proposition \ref{prop:bnbal}. Importantly, we cannot 
    control $C^{\bal}$ landing in $B^{\ord n}_h$, 
    but this never happens because of the 
    cancellation coming from $B^{\ordbal n,\bal}_h$.

    As in Proposition \ref{prop:b3_good_bad_splitting}, the outlier terms 
    \[
\sum_hC^{\low}(P_{\cong h}v, \ol{
    \sum_{j_1 + \cdots + j_k \leq N} (-1)^{k} B^{\ord {j_1}}_h \circ \cdots \circ 
    B^{\ord {j_k}}_h }, P_{\cong h}v) 
    \]
    are bad because there are two new frequencies near $h$ which both have the same 
    sign in the output frequency. This causes a situation where there is no
    separation between the frequencies near $h$, but when this happens, the output 
    frequency is adjacent to $2\dyad^h$, which gives a natural separation 
    after measurement as in the source term estimate \eqref{eq:source_term}. This 
    separation, along with the separation against the medium frequency, gives 
    two high frequency separations so we may estimate these with two 
    bilinear estimates.

    Further, just as in Proposition \ref{prop:b3_good_bad_splitting}, after any 
    application of $L_v$ we will have lots of separations because the output 
    frequency of the outlier terms is higher than any component frequency.

    We omit the specific details for concision.
\end{proof}


At this point we collect the terms of order $2n+1$ in the sum and cancel 
using Proposition \ref{prop:good_errors}.

\begin{proposition}
    Let $F^{\ord n}$ be the sum of all of the order $\ord n$ 
    terms in \eqref{eq:new_nls} in
    the sum not in $G$ for $1 \leq n \leq N$. Note this includes 
    terms from $L_v^k F^{\ord \ell}$. Then 
    \[
        F^{\ord n} = G^{\ord n} + 
        \sum_{j_1+\cdots+j_k = n} (-1)^{k+1}C^{\ord {j_1},\res}_h
        B^{\ord {j_2}}_h\cdots B^{\ord {j_k}}_h
    \]
    where $G^{\ord n}$ has a good estimate. Note here that if there is only 
    one term in the partition, then it must be $j_1$ so that there 
    is always a $C^{\ord {j_1},\res}_h$ on the outside.
\end{proposition}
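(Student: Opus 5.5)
We propose to prove the statement by induction on the order $n$, organizing all order-$(2n+1)$ contributions in \eqref{eq:new_nls} by the last (outermost) operator they contain. For fixed $h$ and $n$, the non-good order-$(2n+1)$ terms come from three sources. The first is the commutators $[i\partial_t+\partial_x^2, \sum_{j_1+\cdots+j_k=n}(-1)^{k+1}B^{\ord{j_1}}_h\circ\cdots\circ B^{\ord{j_k}}_h]$. The second is the terms $-(-1)^{k+1}C^{\ord{(1)}}_h\circ B^{\ord{j_1}}_h\circ\cdots\circ B^{\ord{j_k}}_h$ with $1+j_1+\cdots+j_k=n$ (orders counted by pairs). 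The third is the contributions of $L_v^k$ applied to lower-order bad terms. By Proposition~\ref{prop:bn_good_bad_splitting}, the only non-good part of $L_v(C^{\ord{\ell},\res}_h\circ D_h)$ is $\sum(-1)^{k+1}B^{\ord{j_1}}_h\circ\cdots\circ B^{\ord{j_k}}_h\circ C^{\ord\ell,\res}_h\circ D_h$. We may therefore assume inductively that the lower order pieces have the claimed form, and the task is purely combinatorial up to good errors.

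We first expand each commutator by the Leibniz rule,
\[
[i\partial_t+\partial_x^2, B_1\circ\cdots\circ B_k]=\sum_i B_1\circ\cdots\circ[i\partial_t+\partial_x^2,B_i]\circ\cdots\circ B_k .
\]
By the second statement of Proposition~\ref{prop:good_errors}, each inner commutator is replaced by $-C^{\ord{j_i},\nonres}_h$ up to a good error, provided the factors to its left are also harmless. To handle those left factors, we use Proposition~\ref{prop:b_preserves_good} together with the fact that the $B^{\ord j}_h$ are good corrections. Next we split $C^{\ord{j}}_h=C^{\ord j,\nonres}_h+C^{\ord j,\res}_h$. Using the first statement of Proposition~\ref{prop:good_errors}, we rewrite $C^{\ord{(j+1)}}_h\circ D_h$ as $B^{\ord j}_h\circ C^{\ord{(1)}}_h\circ D_h$ modulo good terms. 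This lets every nonresonant term be written as a composition ending in $C^{\ord{(1)}}_h$ with $B$'s to its left, and all the resulting terms become words in the letters $B^{\ord j}_h$ and $C^{\ord{(1)}}_h$.

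We then verify cancellation with a generating-function identity. Set $\mathcal B=\sum_j B^{\ord j}_h$, so that the normal form contains $\mathcal B(I+\mathcal B)^{-1}$ truncated at order $N$, that is $\sum_k(-1)^{k+1}\mathcal B^k$. The time derivative produces $(I+\mathcal B)^{-1}[\,\cdot\,,\mathcal B](I+\mathcal B)^{-1}$. At the same time, the $-C^{\ord{(1)}}_h\circ\sum(-1)^{k+1}\mathcal B^k$ terms, combined with the $L_v^k$ resummation, give $(I+\mathcal B)^{-1}$ factors on the left. Matching coefficients word by word at total order $n$, the nonresonant words cancel, using $B^{\ord j}\circ C^{\ord{(1)}}\approx C^{\ord{(j+1)}}$. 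What survives are exactly the words with a $C^{\ord{j_1},\res}_h$ in front, namely $\sum(-1)^{k+1}C^{\ord{j_1},\res}_h\circ B^{\ord{j_2}}_h\circ\cdots\circ B^{\ord{j_k}}_h$. The balanced compositions involving $B^{\ordbal{j},\bal}_h$ are absorbed the same way using Proposition~\ref{prop:bnbal}.

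The main obstacle is bookkeeping the signs and multiplicities: in particular, checking that the $1/n!$ normalization in Definition~\ref{def:bn} matches the number of orderings produced by the symmetrized nonresonant factor $\sum\alpha_i\delta_i$. To settle this, we would first verify the identity by hand at $n=2$ and $n=3$, where it reduces to the computation already done for $C^{\ord{(2)}}_h$. We would then argue in general by observing that symmetrizing $B^{\ord{(1)}}_h$ over its $n$ pairs reproduces $c^{\ord n,\nonres}_h$ exactly.
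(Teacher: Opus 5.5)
Your proposal follows the paper's proof. The ingredients are the same: induction on $n$; the same three sources of non-good terms (the commutators of the order-$n$ normal form terms, the terms $-C^{3}_h\circ(\text{order } n-1 \text{ normal form})$, and $L_v$ applied to the lower-order $F$'s, whose non-good part comes from Proposition~\ref{prop:bn_good_bad_splitting}); the Leibniz rule with both statements of Proposition~\ref{prop:good_errors}, left $B$ factors handled by Proposition~\ref{prop:b_preserves_good}; and cancellation of the nonresonant words via $B^{2j+1}_h\circ C^{3}_h\approx C^{2j+3}_h$. The paper does the matching by explicit manipulation of ordered partitions instead of a generating function.

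Two points in your resolvent sketch need fixing. The non-good, last-slot part of $L_v$ acting on the inductively known $F$'s is $\mathcal B(I+\mathcal B)^{-1}$, so the $L_v$ resummation contributes $I+\mathcal B$ on the left, not $(I+\mathcal B)^{-1}$; this is what cancels the left factor of the commutator term. The $C^{3}_h$ terms contribute $C^{3}_h(I+\mathcal B)^{-1}$ on the right, so everything reduces to $(I+\mathcal B)C^{3}_h\approx\sum_j C^{2j+1}_h$. The $1/n!$ normalization is not an extra obstacle, since it is already built into the first statement of Proposition~\ref{prop:good_errors}.
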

\begin{proof}
    We argue by induction. The base case $n=1$, is trivially true.

    Now let us assume the formula for all $m \leq n-1$. We see 
    from \eqref{eq:new_nls} that 
    \begin{align*}
        F^{\ord n} = &\pi_n(L_v (\sum_{1 \leq k \leq n} F^{\ord k}))
        +  \left[i\partial_t + \partial_x^2,  
          \sum_{j_1+\cdots+j_k=n} (-1)^{k+1} B^{\ord {j_1}}_h \circ \cdots \circ
        B^{\ord {j_k}}_h\right]\\
        \nonumber           & \qquad -C^{\ord {(1)}}_h \circ \left(
          \sum_{j_1+\cdots+j_k=n-1} (-1)^{k+1} B^{\ord {j_1}}_h \circ \cdots \circ
        B^{\ord {j_k}}_h\right)
    \end{align*}
    where here by $\pi_n$, we mean take only the order $2n+1$ terms.

    First we focus on the commutator term and inductive term.
    We may compute using Proposition \ref{prop:good_errors} 
    and the chain rule that 
    \begin{align*}
        &\left[i\partial_t + \partial_x^2, 
          \sum_{j_1+\cdots+j_k=n} (-1)^{k+1} B^{\ord {j_1}}_h \circ \cdots \circ
      B^{\ord {j_k}}_h\right] = G \\
        &\qquad -\sum_{j_1+\cdots+j_k=n} \sum_{1 \leq i \leq k} (-1)^{k+1} B^{\ord {j_1}}_h \circ 
      \cdots \circ C^{\ord {j_i},\nonres}_h \circ \cdots \circ 
      B^{\ord {j_k}}_h 
    \end{align*}
    
    where $G$ is good.

    Now we turn to $\pi_n L_v F^m$.
    We compute that 
    \begin{align*}
        &\pi_n L_vF^{\ord m} = 
        G \\&+ \sum_h \sum_{\tilde j_1 + \cdots + \tilde j_{\tilde k} = 
        n-m} (-1)^{\tilde k+1}B^{\ord {\tilde j_1}}_h\circ \cdots \circ 
        B^{\ord {\tilde j_{\tilde k}}}_h \circ 
        \sum_{j_1+\cdots+j_k = m} (-1)^{k+1}C^{\ord {j_1}, \res}_h
        \circ B^{\ord {j_2}}_h \circ \cdots \circ B^{\ord {j_k}}_h.
    \end{align*}
    
    Here $G$ is good because if $L_v$ applies $F^{\ord m}$ into any 
    position but the last of a $B^{\ord {j_1}}_h$, then we are left with a good 
    term by Proposition~\ref{prop:bn_good_bad_splitting}.

    This may be simplified into 
    \[
        \sum_{1 \leq m \leq n-1} \pi_n L_vF^{\ord m} = G - 
        \sum_h \sum_{j_1 + \cdots + j_{k} = 
        n} \sum_{1 < i \leq k}
        (-1)^{k+1}B^{\ord {j_1}}_h\circ \cdots \circ 
        C^{\ord {j_i},\res}_h \circ \cdots \circ B^{\ord {j_k}}_h
    \]
    Note here $i > 1$ since $L_v$ always applies at least one $B$ to 
    $F$. 

    We use the linearity of these multilinear forms and the 
    fact that $C^{\ord n, \res}_h + C^{\ord n, \nonres}_h = C^{\ord n}_h$ to 
    combine the commutator terms and these inductive terms to get 
    \begin{align*}
        \pi_n(L_v &(\sum_{1 \leq k \leq n} F^{\ord k}))
        +  \left[i\partial_t + \partial_x^2,  
          \sum_{j_1+\cdots+j_k=n} (-1)^{k+1} B^{\ord {j_1}}_h \circ \cdots \circ
        B^{\ord {j_k}}_h\right]\\
        = &-\sum_h \sum_{j_1 + \cdots + j_{k} = 
        n} \sum_{1 < i \leq k}
        (-1)^{k+1}B^{\ord {j_1}}_h\circ \cdots \circ
        C^{\ord {j_i}}_h \circ \cdots \circ B^{\ord {j_k}}_h\\
        &-    \sum_h \sum_{j_1 + \cdots + j_{k} = 
        n} 
        (-1)^{k+1}C^{\ord {j_1},\nonres}_h \circ B^{\ord {j_2}}_h \circ \cdots 
        \circ B^{\ord {j_k}}_h
    \end{align*}

Now, when $j_i = 1$ since $i > 1$ we may use Proposition \ref{prop:good_errors}
to write $B^{\ord {j_{i-1}}}_h \circ C^{\ord {(1)}}_h = 
C^{\ord {(j_{i-1}+1)}}$ up to a good error. Note 
that $j_{i-1} + 1 \geq 2$. Thus the terms where $j_i = 1$ become 
\[
+\sum_h \sum_{j_1 + \cdots + j_{k} = 
        n} \sum_{1 \leq i \leq k, j_i \geq 2}
        (-1)^{k+1}B^{\ord {j_1}}_h\circ \cdots \circ 
        C^{\ord {j_i}}_h\circ \cdots \circ B^{\ord {j_k}}_h\\
\]
after relabeling. Thus, we may write the 
sum of the inductive and commutator terms up to a good error as 
\begin{align*}
    \pi_n(L_v &(\sum_{1 \leq k \leq n} F^{\ord k}))
        +  \left[i\partial_t + \partial_x^2,  
          \sum_{j_1+\cdots+j_k=n} (-1)^{k+1} B^{\ord {j_1}}_h \circ \cdots \circ
        B^{\ord {j_k}}_h\right]\\
        &=\sum_h \sum_{j_1 + \cdots + j_{k} = 
        n} \sum_{1 \leq i \leq k, j_i \geq 2}
        (-1)^{k+1}B^{\ord {j_1}}_h \circ \cdots \circ
        C^{\ord {j_i}}_h \circ \cdots \circ B^{\ord {j_k}}_h\\
&-\sum_h \sum_{j_1 + \cdots + j_{k} = 
        n} \sum_{1 < i \leq k, j_i \geq 2}
        (-1)^{k+1}B^{\ord {j_1}}_h \circ \cdots \circ
        C^{\ord {j_i}}_h \circ \cdots \circ B^{\ord {j_k}}_h\\
&-    \sum_h \sum_{j_1 + \cdots + j_{k} = 
        n} 
        (-1)^{k+1}C^{\ord {j_1},\nonres}_h\circ B^{\ord {j_2}}_h \circ \cdots 
        \circ B^{\ord {j_k}}_h
\end{align*}
Now it is clear that the first and second 
terms cancel except when $i=1$. (The first line has $j_i \geq 2$ since 
it is formed from $B^{\ord k} \circ C^{\ord {(1)}}$, 
whereas the second has $j_i \geq 2$ since 
it is what is left after removing all $C^{\ord {(1)}}$ terms.)

Now we add in the third term from our formula for $F^{\ord n}$ to see 
up to a good error that 
\begin{align*}
    F^{\ord n} &=\sum_h \sum_{j_1 + \cdots + j_{k} = n, j_1 \geq 2} 
        (-1)^{k+1}C^{\ord {j_1}}_h\circ B^{\ord {j_2}}_h \circ 
        \cdots \circ B^{\ord {j_k}}_h\\
&-    \sum_h \sum_{j_1 + \cdots + j_{k} = 
        n} 
        (-1)^{k+1}C^{\ord {j_1},\nonres}_h \circ B^{\ord {j_2}}_h \circ \cdots 
        \circ B^{\ord {j_k}}_h\\
    &-C^{\ord {(1)}}_h \circ \left(
          \sum_{j_1+\cdots+j_k=n-1} (-1)^{k+1} B^{\ord {j_1}}_h \circ \cdots \circ
      B^{\ord {j_k}}_h\right)
\end{align*}
We can combine the first and third terms since bringing the $C^{\ord {(1)}}_h$ into 
the partition of $n$ increases $k$ by one, flipping the sign. This
unrestricts the assumption that $j_1 \geq 2$:
\begin{align*}
    F^{\ord n} &=\sum_h \sum_{j_1 + \cdots + j_{k} = n} 
        (-1)^{k+1}C^{\ord {j_1}}_h \circ B^{\ord {j_2}}_h\circ
        \cdots \circ B^{\ord {j_k}}_h\\
&-    \sum_h \sum_{j_1 + \cdots + j_{k} = 
        n} 
        (-1)^{k+1}C^{\ord {j_1},\nonres}_h \circ B^{\ord {j_2}}_h  \circ
        \cdots \circ B^{\ord {j_k}}_h
\end{align*}
again up to a good error.
Finally the sum of these is by definition 
\begin{align*}
    F^{\ord n} &=G^{\ord n} + \sum_h \sum_{j_1 + \cdots + j_{k} = n} 
        (-1)^{k+1}C^{\ord {j_1},\res}_h \circ B^{\ord {j_2}}_h\circ 
        \cdots \circ B^{\ord {j_k}}_h
\end{align*}
as desired.
\end{proof}
This allows us to write 
\begin{align}
    (i\partial_t &+ \partial_x^2) v
    = C^\bal + C^{\low} + 
    \sum_hC^{\low}(P_{\cong h}v, \ol{
    \sum_{j_1 + \cdots + j_k \leq N} (-1)^{k} B^{\ord {j_1}}_h \circ \cdots \circ 
    B^{\ord {j_k}}_h }, P_{\cong h}v) \\
    \nonumber & + \sum_h \sum_{j_1+\cdots+j_k \leq N} (-1)^{k+1}
    C^{\ord {j_1},\res}_h\circ B^{\ord {j_2}}_h \circ \cdots \circ B^{\ord {j_k}}_h + 
    \sum_{k=0}^\infty (L_v)^k\Big(G + C^{\geq N}\Big)
\end{align}
where $G$ is the sum of all the good terms accumulated so far and 
$C^{\geq N}$ is the sum of terms $L_v^1(C^{\ord {j_1},\res}_h
\circ B^{\ord {j_2}}_h \circ \cdots \circ B^{\ord {j_k}}_h)$
such that the total order is greater than or equal to $N$. We use the following 
notation to simplify the situation. We use $F^{\ord n}$ as above to 
notate the bad terms of order ${\ord n}$, both when $n \leq N$ and $n > N$; when 
$n \leq N$ they have a particularly nice form, and when $n > N$ they 
will be of the form $L^k_v F^{\ord m}$ where $k+m = n$ and do not have 
the good estimate of Definition \ref{def:good_L6_terms}.

We clarify this in the following definition:
\begin{definition}\label{def:good_bad_collection}
    We let 
    \[F^3 = C^{\res}
    \]
    and 
    \[F^{\low} = C^{\low} 
        + \sum_hC^{\low}(P_{\cong h}v, \ol{
    \sum_{j_1 + \cdots + j_k \leq N} (-1)^{k} B^{\ord {j_1}}_h \circ \cdots \circ 
    B^{\ord {j_k}}_h }, P_{\cong h}v) 
    \]
    For $2 \leq n \leq N$ we let 
    \begin{align*}
        F^{\ord n} &= 
        \sum_h\sum_{j_1+\cdots+j_k = n} (-1)^{k+1}C^{\ord {j_1},\res}_h \circ
        B^{\ord {j_2}}_h \circ \cdots \circ B^{\ord {j_k}}_h
    \end{align*}
    and for $N+1 \leq n$ we let
    \begin{align*}
        F^{\ord n} &= \sum_h
        \sum_{j_1+\cdots+j_k = n} (-1)^{k+1}B^{\ord {j_1}}_h \circ \cdots \circ 
        C^{\ord {j_i}}_h \circ 
        \cdots \circ B^{\ord {j_k}}_h\\
        &+ \sum_{\substack{j_1+\cdots+j_i = n-N \\ j_{i+1}+\cdots+j_k = N}}
        (-1)^k B^{\ord {j_1}}_h \circ \cdots \circ B^{\ord {j_i}}_h \circ C^{\ord{(1)}}_h \circ \cdots \circ B^{\ord {j_k}}_h
    \end{align*}
    
    where in the above the sum is taken over all ordered partitions of $N$ 
    such that each $j \leq N$.

    Then, we let $G^{\ord n}$ be the remaining order $2n+1$ terms 
    in \eqref{eq:new_nls} which we have argued have the good estimate. 
\end{definition}

Then \eqref{eq:new_nls}
becomes
\begin{align}\label{eq:new_nls_simp+}
    (i\partial_t + \partial_x^2) v
    &= C^\bal(v,\bar v, v) + F^{\low} + \sum_{n=1}^\infty F^{\ord n}(v) + G^{\ord n}(v)\\
    \nonumber &:= C^\bal(v,\bar v, v) + N^{\unbal}(v)
\end{align}

Now we are finally in a position to complete the proof of Theorem
\ref{thm:unbal_correction}. We have already seen that the $B$ terms 
we have constructed satisfy Proposition \ref{prop:contraction} which 
proves the first part of the theorem.  
We see from Definition \ref{def:good_L6_terms} that 
$\sum_n G^{\ord n}$ will satisfy the source term estimate 
\eqref{eq:source_term} as long as the constant in $G^{\ord n}$ does not grow faster 
than $\tilde C^n$. This is the case because we only produced finitely many,
depending on $N$, base 
good terms in the argument above and all other good terms are $L_v$ applied 
to a good term. Now $L_v$ has finitely many applications depending on $N$ so
the growth rate of the constants is at most $N^n$. 

All that remains is $\sum_n F^{\ord n}$. Note 
that the estimates for $n > N$ will require 
\[ 
s > \frac{-N}{2N + 1}
\]
and $N$ can be chosen large enough to capture any $s > -1/2$.

\begin{proposition}\label{prop:unbal_L1}
    Let $F^{\ord n}(v)$ be as defined above and suppose $v$ satisfies the 
    bootstrap estimates \eqref{eq:uj_ee_boot}-\eqref{eq:uj_sep_bi_boot}. 
    Fix $N$ 
    so that 
    \[ 
        s > \frac{-N}{2N + 1}.
    \]
    Then,
    there exists a constant 
    $\tilde C$ independent of $n$ (but depending on $N$) such that 
    \begin{align*}
        \|P_{\ell} F^{\ord n}(v) P_{\cong \ell} \bar v\|_{L^1_{t,x}} \lesssim 
        (\tilde C \epsilon)^{2n+2} c_\ell^2 \dyad^{-2s\ell}
    \end{align*}
\end{proposition}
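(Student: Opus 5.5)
We prove the bound one term at a time, with a geometric count of the terms. Each term of $F^{2n+1}$ is a composition $C^{2j_1+1,\res}_h\circ B^{2j_2+1}_h\circ\cdots\circ B^{2j_k+1}_h$, or for $n>N$ one of the tail compositions in Definition~\ref{def:good_bad_collection}. The number of ordered partitions of $n$ is at most $2^n$, so it suffices to prove each term's bound with constant $\tilde C^{2n+2}$. The integrable kernel of a composition is the convolution of the individual kernels (see the remark after Lemma~\ref{lem:sep}), so its $L^1$ size is at most $C(N)^{k}$ times the product of the kernel sizes from Lemma~\ref{lem:bnh_kernel_size}. Each term therefore reduces, after separation of variables, to estimating
\[
\prod_i \dyad^{-h}\dyad^{-m_i}\;\big\|P_\ell\big(\textstyle\prod_i P_{m_i}(P_{\cong h}v\,P_{\cong h}\bar v)\,P_m v\big)\,P_{\cong\ell}\bar v\big\|_{L^1_{t,x}},
\]
summed over dyadic $h\gg m_i$ and $\ell$. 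One pair carries the resonant factor $m_n\lesssim\ell$, and the exceptional pair of $c^{\nonres}$ has no $\dyad^{-m}$ weight. All translations are suppressed, since the bootstrap norms are uniform in $x_0$.

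For $n\le N$ the outer factor is $C^{2j_1+1,\res}_h$, whose resonant pair has $m_n\lesssim\ell$; this mirrors the $C^{\res}$ estimate in Proposition~\ref{prop:b3_good_bad_splitting}. We take one high frequency from the resonant pair and pair it bilinearly with $P_{\cong\ell}\bar v$ via \eqref{eq:uj_sep_bi_boot}, and pair the other with the unpaired $P_m v$. This gives two transversal bilinear bounds, each gaining $\dyad^{-h/2}$ through the separation $\dyad^h$. Each remaining pair $i$ is put in $L^\infty_{t,x}$ by Bernstein at frequency $\dyad^{m_i}$, using \eqref{eq:uj_ee_boot}:
\[
\dyad^{-h}\dyad^{-m_i}\|P_{m_i}(P_{\cong h}vP_{\cong h}\bar v)\|_{L^\infty_{t,x}}\lesssim C^2\epsilon^2\dyad^{(-1-2s)h}c_h^2 .
\]
The sum over $m_i\in(\ell,h)$ costs a factor $h$, which is absorbed by $\dyad^{(-1-2s)h}$ since $s>-\tfrac12$. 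Collecting powers of $\dyad^h$, the resonant pair contributes roughly $\dyad^{(-1-2s)h}$ against the weights $\dyad^{-s\ell}\dyad^{-sm}$. The $h$-sum then converges by the slowly varying condition (Definition~\ref{def:admissible_envelope}), with $c_h\lesssim c_\ell\dyad^{\delta(h-\ell)}$, and yields $c_\ell^2\dyad^{-2s\ell}$ with $\epsilon^{2n+2}$ and at most $\tilde C^{2n+2}$. This is the same numerology by which $C^{\res}$ satisfies \eqref{eq:source_term} for every $s>-\tfrac12$.

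For $n>N$ the terms contain an uncorrected $C^{2j_i+1}_h$ or $C^{3}_h$ in an inner position, and this is the main obstacle. One medium pair carries no $\dyad^{-m}$ weight, so Bernstein on it gains nothing, and the naive count diverges exactly as in the $s>-\tfrac13$ computation for $C^{\nonres,\himed}$. The plan is to exploit how many high pairs there are. A term of order $2n+1$ with $n>N$ contains at least $N+1$ matched high pairs, each costing at most $\dyad^{-h}\dyad^{-m_i}$ in the symbol. We place as many pairs as possible in $L^6_{t,x}$ via \eqref{eq:uj_se_boot}, take two bilinear pairings against the low and medium frequencies, and use Bernstein on the rest. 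Each corrected pair then gains a net factor $\dyad^{(-1-2s)h}$, while the single uncorrected pair loses at most $\dyad^{-sm}$ with $m<h$. The resulting $h$-exponent should come out as $(2N+1)(-\tfrac12-s)-\tfrac12+O(\delta)$ or similar, and this is negative precisely when $s>-\tfrac{N}{2N+1}$; that is where the hypothesis enters. The exponent bookkeeping at this step, and confirming it does not worsen as $n$ grows beyond $N+1$, is what I expect to need the most care. The extra pairs only add factors $C^2\epsilon^2 h\dyad^{(-1-2s)h}$, so the exponent improves with $n$, and the constant remains $\tilde C^{2n+2}$ with $\tilde C$ depending on $N$ through the kernel constants $C(N)$.
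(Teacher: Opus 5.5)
Your $n\le N$ step has a genuine gap. You split the resonant pair (the one projected to $m\lesssim\ell$, which carries no symbol weight) into the two transversal bilinear bounds, and pair its second high factor with the unpaired input $P_m v$, by analogy with $C^{\res}$. But for $n\ge 2$ the unpaired frequency is not $\lesssim\dyad^\ell$. The corrected pairs have $\ell\ll m_k\ll h$ and the output $\delta_1+\cdots+\delta_n+\xi$ is $\sim\dyad^\ell$, so $|\xi|\sim\dyad^{m_{\max}}$, which can be nearly $\dyad^h$; already in $C^{5,\res}_h$ one has $\xi\approx-\delta_1$. The factor $\dyad^{-sm}$ coming from $\|P_hv\,P_mv\|_{L^2_{t,x}}$ is then absorbed by nothing. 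Summing $m\lesssim m_{\max}\ll h$, your total becomes $\dyad^{(-1-2s)nh}\dyad^{-sh}\dyad^{-s\ell}$ up to logarithms. Its $h$-sum converges only when $-n-(2n+1)s<0$, i.e.\ $s>-\frac{n}{2n+1}$. For $2\le n<N$ this excludes part of the admissible range; for $n=2$ you would need $s>-\frac25$.

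The missing idea, which is how the paper argues, is to choose the pair to split according to where the unpaired frequency lives. Put the resonant pair in $L^\infty_{t,x}$ by Bernstein at its output frequency $\lesssim\dyad^\ell$, at cost $\dyad^\ell\|P_hv\|_{L^\infty_tL^2_x}^2$. Split instead the corrected pair with the largest medium frequency $m_{\max}$: one high factor against $P_{\cong\ell}\bar v$, the other against $P_{\lesssim m_{\max}}v$. Its weight $\dyad^{-h}\dyad^{-m_{\max}}$ now absorbs $\dyad^{-sm}$, leaving $\dyad^{(-1-s)m_{\max}}$, and $\dyad^{(1-s)\ell}\sum_{m_{\max}\gg\ell}\dyad^{(-1-s)m_{\max}}\lesssim\dyad^{-2s\ell}$. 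The remaining pairs give $h\,\dyad^{(-1-2s)h}$ each, so the $h$-exponent is $(-1-2s)n$ and the sum closes for every $s>-\frac12$.

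In fact, what you wrote for $n\le N$ is the correct argument for $n>N$, which is where the hypothesis on $N$ is used. There the weightless pair comes from the uncorrected inner $C$ factor. One may assume its medium frequency is the largest, since otherwise the weight $\dyad^{-m_{\max}}$ of the largest pair pays for Bernstein on it. Split it into the two bilinear bounds \eqref{eq:uj_sep_bi_boot} and put all other pairs in $L^\infty_{t,x}$. This gives the exponent $-n-(2n+1)s$, negative for $n>N$ when $s>-\frac{N}{2N+1}$. Each additional pair only contributes a further factor $h\,\dyad^{(-1-2s)h}$.

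Your proposed $n>N$ scheme does not close as stated. Once two bilinear $L^2_{t,x}$ factors are used, every remaining factor must be placed in $L^\infty_{t,x}$ to land in $L^1_{t,x}$, so there is no room for $L^6_{t,x}$ factors. Also, the guessed exponent $(2N+1)(-\frac12-s)-\frac12$ is not the right one: it is negative iff $s>-\frac{N+1}{2N+1}$, not iff $s>-\frac{N}{2N+1}$.
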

\begin{remark}
    It is clear that if $\epsilon$ is made small depending on $\tilde C$ 
    then $\sum_n F^{\ord n}$ will satisfy the desired bound \eqref{eq:source_term}.
    
    Since we have seen in Propositions \ref{prop:b3_good_bad_splitting} and 
    \ref{prop:bn_good_bad_splitting} that $F^{\low}$ satisfies the 
    source term estimate \eqref{eq:source_term} this completes the proof of 
    Theorem \ref{thm:unbal_correction}.
\end{remark}
\begin{proof}[Proof of Proposition~\ref{prop:unbal_L1}]
    We have already argued that 
        $C^{\res}$
    satisfies the source term estimate \eqref{eq:source_term} in Proposition 
    \ref{prop:b3_good_bad_splitting}. 

    Next we turn our attention to 
    $F^{\ord n}$ when $2 \leq n \leq N$. These are of the form 
    \[
        F^{\ord n} = 
        \sum_h\sum_{j_1+\cdots+j_k = n} (-1)^{k+1}C^{\ord {j_1},\res}_h
        \circ B^{\ord {j_2}}_h \circ \cdots \circ B^{\ord {j_k}}_h.
    \]
    For these, since $C^{\ord {j_1}, \res}$ is applied on the outside, 
    after projecting 
    to frequency $\ell$, the lowest pair of high frequencies in 
    $C^{\ord {j_1}, \res}_h$ 
    is projected to frequency at most $\ell$, so we are happy to apply Bernstein's 
    inequality on this projection. Let us relabel so that this low 
    frequency is $m_1$. Every other pair in $C^{\ord {j_1}, \res}$ or 
    $B^{\ord {j_k}}_h$ projected to frequency $\dyad^m$ has corresponding symbol size 
    $\dyad^{-m}\dyad^{-h}$. 
    Then we split the pair with the largest frequency, say $m_2$, 
    and use the bilinear $L^2_{t,x}$ estimate since the final unpaired frequency 
    must be at most $m_2$. Every other pair we will place in Bernstein's which 
    will produce $\dyad^m$ which cancels the symbol gain.

    Since $N$ is fixed, 
    we do not need to worry about the dependence of the constant on $n$.
    \begin{align*}
        \|P_\ell F^{\ord n}(v) P_{\cong\ell} \bar v\|_{L^1_{t,x}}
        &\lesssim \sum_{h \gg \ell \gtrsim m_1} 
        \sum_{\ell \ll m_2, \ldots, m_n \ll h}\dyad^{-h}\dyad^{-m_2}\sup_{x_0, x_1}
        \|P_h v P_{\cong \ell} \bar v(\cdot + x_0)\|_{L^2_{t,x}}\\
        &\hspace{6em}\|P_h v P_{\lesssim m_2} \bar v(\cdot + x_1)\|_{L^2_{t,x}}
         (\dyad^{\ell} \|P_h v\|_{L^\infty_t L^2_x}^2)
        (\dyad^{-h}\|P_h v\|_{L^\infty_t L^2_x}^2)^{n-2}\\
        &\lesssim C^n\epsilon^{2n+2} \dyad^{(1-s)\ell}\sum_{h \gg m_2 \gg \ell } 
        h^{n-2}\dyad^{(-1-2s)nh} \dyad^{(-1-s)m_2} c_h^{2n} c_{m_2} c_\ell \\
        &\lesssim C^n\epsilon^{2n+2} \dyad^{-2s\ell}\sum_{h \gg \ell} 
        \dyad^{(-1-2s )nh}\dyad^{\delta(h-\ell)} c^2_\ell \\
        &\lesssim C^n\epsilon^{2n+2} \dyad^{-2s\ell} \dyad^{(-1-2s)n\ell}c^2_{\ell}
    \end{align*}
    Note that the frequency envelope loss and the log loss from the extra sums 
    have been absorbed because $s > -1/2$.

    ~

    Finally we turn our attention to $F^{\ord n}$ for $n \geq N+1$. As for
    the dependence of the constant on $n$, there 
    are only finitely many types of multilinear forms since we only go up to 
    $B^{\ord N}_h$ 
    and $C^{\ord N}_h$, so the constants in the symbol sizes are bounded. The number 
    of ordered partitions of $n$ of max size $N$ is controlled by $N^{n}$ 
    so we have no issue here.

    In both of the terms of $F^{\ord n}$ for $n \geq N+1$ there is one pair of high 
    frequencies which does not have a corresponding symbol size of 
    $\dyad^{-h}\dyad^{-m}$. 
    Let us relabel this frequency to $m_1$.
    Thus, this is the same as the 
    previous case except we do not know if $m_1$ is smaller than $\ell$. 
    Without loss of generality, we may assume that $m_1$ is the largest of the 
    medium frequencies as we may move the symbol of the highest pair to control 
    $m_1$ otherwise.

    In this case we will simply use the $m_1$ pair in the bilinear estimate noting 
    that the unpaired frequency can be at most size $m_1$. Again note 
    the log loss from the extra medium frequency sums.
\begin{align*}
    \|P_\ell F^{\ord n}(v) P_{\cong \ell} \bar v\|_{L^1_{t,x}}
        &\lesssim \tilde C^n\sum_{m_1 \ll h \gg \ell } 
        \sup_{x_0, x_1}
        \|P_h v P_{\cong \ell} \bar v(\cdot + x_0)\|_{L^2_{t,x}}
        \|P_h v P_{\lesssim m_1} \bar v(\cdot + x_1)\|_{L^2_{t,x}}\\
        & \qquad 
        (h\dyad^{-h}\|P_h v\|_{L^\infty_t L^2_x}^2)^{n-1}\\
        &\lesssim \tilde C^n\epsilon^{2n+2} \dyad^{-s\ell}\sum_{m_1 \ll h \gg \ell } 
        h^{n-1}\dyad^{(-1-2s)nh} \dyad^{-sm_1} c_h^{2n} c_{m_1} c_\ell \\
        &\lesssim \tilde C^n\epsilon^{2n+2} \dyad^{-2s\ell}\sum_{h \gg \ell} 
        h^{n-1}\dyad^{(-n-(2n+1)s)h}\dyad^{\delta(h-\ell)} c^2_\ell \\
        &\lesssim \tilde C^n\epsilon^{2n+2} \dyad^{-2s\ell} \dyad^{(-1-2s)n\ell}
    \end{align*}
    Note that the above only converges if $-n-(2n+1)s < 0$ which is true as long 
    as $n > N$. Further, note that the log loss does not produce more than 
    an exponentially growing constant in $n$ as each extra factor of $h$ 
    comes with an additional decaying factor $\dyad^{-1 -2s h}$. This completes the 
    proof.

\end{proof}



\section{The frequency envelope bounds}
\label{s:fe-bounds}
The aim of this section is to prove the frequency envelope bounds in 
Theorem~\ref{t:boot}, given the bootstrap assumptions 
\eqref{eq:uj_ee_boot}-\eqref{eq:uj_sep_bi_boot}. 
Our proof will follow in five broad steps. 
\begin{itemize}
\item 
First, an immediate corollary of Theorem 
\ref{thm:unbal_correction} is to translate our bootstrap in the variable $u$ 
to an equivalent bootstrap in the normal form variable $v$. 
\item
Second, we prove fixed time 
estimates for the correction terms $B^\bal$ and spacetime estimates for the 
errors $R^{\geq 6}$, $Q^{4,\bal}$ using the bootstrap 
        assumptions in $v$ and also for the source term 
        $N^{\unbal}$ using Theorem \ref{thm:unbal_correction}. 
\item 
Third, we will 
close the energy estimate bootstrap in $v$ (which also closes $u$ by step one) 
from estimates in the second step. 
\item 
Fourth, we will close the 
localized $L^6_{t,x}$ based and bilinear $L^2_{t,x}$ based estimates using 
the interaction Morawetz identity which will depend more precisely on 
the structure of only the balanced corrections. 
\item 
Finally, we will close the separated, or transversal, bilinear 
estimate from the interaction Morawetz identity, again depending only on 
the balanced correction terms and the estimates from step two.
\end{itemize}

Since the first step is an immediate corollary of Theorem \ref{thm:unbal_correction}
we may proceed as if the bootstrap assumptions 
\eqref{eq:uj_ee_boot}-\eqref{eq:uj_sep_bi_boot} are in the variable $v$.

\subsection{Unbalanced space-time \texorpdfstring{$L^1$}{} bounds}
Here we use the bootstrap assumptions to prove estimates on  
the unbalanced part of the nonlinearity $N^{\unbal}(v)$ in the $v$ equation 
\eqref{eq:new_nls_simp}. 

We begin with estimates of the unbalanced interaction terms 
$N^{\geq 4,\unbal}_{m,j}$ 
and $N^{\geq 4, \unbal}_{p,j}$ defined in the exposition leading up to 
\eqref{eq:dens_flux_mj} and \eqref{eq:dens_flux_pj} 
respectively which follow immediately from Theorem \ref{thm:unbal_correction}.
\begin{corollary}\label{l:Nm_space_time}
    Assume that the bootstrap bounds \eqref{eq:uj_ee_boot}-\eqref{eq:uj_sep_bi_boot}
    hold, that $j\geq 0$ and that $\epsilon$ is small 
    depending on $C$, $s$, and the regularity of the symbol $c$.

    Then we have the following space-time estimate 
    \begin{equation}
        \|N^{\geq 4, \unbal}_{m,j}\|_{L^1_{t,x}} \lesssim C^2\epsilon^4 c_j^2 \dyad^{-2sj}
    \end{equation}
    where the implicit constant is independent of the choice of the bootstrap 
    constant $C$.
\end{corollary}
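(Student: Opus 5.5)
The plan is to unwind the definition of $N^{\geq 4,\unbal}_{m,j}$ and then read the bound off directly from the source term estimate \eqref{eq:source_term} of Theorem~\ref{thm:unbal_correction}. Recall that
\[
N^{\geq 4,\unbal}_{m,j}(v) = -i\, P_jN^{\unbal}(v)\, P_j\bar v + i\, P_j v\, P_j\ol{N^{\unbal}(v)},
\]
so the two summands are complex conjugates of each other up to sign. It therefore suffices to bound $\|P_jN^{\unbal}(v)\,P_j\bar v\|_{L^1_{t,x}}$, since the second term has the same $L^1_{t,x}$ norm.

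For this product I would apply \eqref{eq:source_term} with $k=j$ (certainly $j\cong j$) and $x_0=0$. This gives
\[
\|P_jN^{\unbal}(v)\,P_j\bar v\|_{L^1_{t,x}} \lesssim C^2\epsilon^4 c_j^2\dyad^{-2sj}.
\]
The constant here is independent of $C$ once $\epsilon$ is small depending on $C$, $s$ and the symbol regularity. That smallness is exactly what the proof of Theorem~\ref{thm:unbal_correction} used. First, it makes $\sum_n (\tilde C\epsilon)^{2n+2}$ geometric, as in Proposition~\ref{prop:unbal_L1}. Second, it makes the sum over good terms $G^{2n+1}$ converge, using Definition~\ref{def:good_L6_terms} together with Bernstein's inequality and \eqref{eq:uj_ee_boot}.

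One point needs checking: the bootstrap assumptions are imposed on $u$, while the corollary is stated for $v$. By the discussion opening this section, the comparability estimates of Theorem~\ref{thm:unbal_correction} (via Proposition~\ref{prop:contraction}) show that $v$ satisfies \eqref{eq:uj_ee_boot}--\eqref{eq:uj_sep_bi_boot} up to a harmless change of constants. Hence \eqref{eq:source_term} applies directly to $v$.

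The only potential obstacle is bookkeeping rather than analysis. One must confirm that the $P_j$ localizations in the definition of $N^{\geq4,\unbal}_{m,j}$ match the $P_j$, $P_k$ with $j\cong k$ in \eqref{eq:source_term}. One must also confirm that the implicit constant does not silently absorb a factor of $C$. The latter holds because the only $C$-dependence in \eqref{eq:source_term} is the explicit $C^2$, with all higher powers of $C$ carried together with extra powers of $\epsilon$ and absorbed by the smallness assumption. The same argument would give the momentum analogue $N^{\geq 4,\unbal}_{p,j}$, at the cost of an extra factor $\dyad^j$ coming from the symbol $-(\xi+\eta)\bbm_j$.
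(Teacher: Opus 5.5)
Your proposal is correct and matches the paper, which reads this corollary off from \eqref{eq:source_term} in Theorem~\ref{thm:unbal_correction} with $k=j$ and $x_0=0$. It uses, as you do, that the two summands of $N^{\geq 4,\unbal}_{m,j}$ are complex conjugates and so have equal $L^1_{t,x}$ norms; your detour through Proposition~\ref{prop:contraction} is harmless but unnecessary, since \eqref{eq:source_term} is already stated for $N^{\unbal}(v)$ under the bootstrap assumptions on $u$.
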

\begin{corollary}\label{l:Np_space_time}
    Assume that the bootstrap bounds \eqref{eq:uj_ee_boot}-\eqref{eq:uj_sep_bi_boot}
    holds, that $j\geq 0$ and that $\epsilon$ is small 
    depending on $C$, $s$, and the regularity of the symbol $c$.

    Then we have the following space-time estimate 
    \begin{equation}
        \|N^{\geq 4, \unbal}_{p,j}\|_{L^1_{t,x}} \lesssim C^2\epsilon^4 c_j^2 \dyad^j\dyad^{-2sj}
    \end{equation}
    where the implicit constant is independent of the choice of the bootstrap 
    constant $C$.
\end{corollary}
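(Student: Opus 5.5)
The statement to prove is Corollary~\ref{l:Np_space_time}, the momentum analogue of Corollary~\ref{l:Nm_space_time}. I will reduce it to the source term estimate \eqref{eq:source_term} of Theorem~\ref{thm:unbal_correction}. The only difference from the mass case is the extra frequency weight $-(\xi+\eta)$ in $\bbp_j$, which costs a factor $\dyad^j$.

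First I write out $N^{\geq 4,\unbal}_{p,j}$ explicitly. By analogy with the mass case, it is the bilinear form with symbol $\bbp_j(\xi,\eta)=-(\xi+\eta)p_j(\xi)p_j(\eta)$, evaluated with one entry $v$ replaced by $-iN^{\unbal}(v)$ and, in the conjugate slot, $\bar v$ replaced by $i\ol{N^{\unbal}(v)}$. That is,
\[
N^{\geq 4,\unbal}_{p,j}(v) = \bbP_j(-iN^{\unbal}(v), \bar v) + \bbP_j(v, i\ol{N^{\unbal}(v)}).
\]
The two terms are complex conjugates of each other up to sign, so it is enough to bound the first one.

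Next I separate variables. Insert $P_{\cong j}$ in front of each argument, which does not change the form since $p_j p_{\cong j}=p_j$. The symbol $-(\xi+\eta)p_j(\xi)p_j(\eta)$ is then smooth on the dyadic scale $\dyad^j$ in both variables, with size $M\lesssim \dyad^j$. Lemma~\ref{lem:sep} with $q=1$ gives
\[
\|\bbP_j(N^{\unbal}(v),\bar v)\|_{L^1_x} \lesssim \dyad^j \sup_{x_1,x_2}\|P_{\cong j}N^{\unbal}(v)(\cdot-x_1)\,P_{\cong j}\bar v(\cdot+x_2)\|_{L^1_x}.
\]
After a translation in $x$ this is a single relative offset $x_0$, so the supremum runs over $x_0\in\R$ only. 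I then integrate in time; the supremum over $x_0$ stays outside the space-time norm, as in \eqref{eq:source_term}, because the kernel bound from Lemma~\ref{lem:sep} is uniform in $t$ and integrable in the offsets.

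Finally I expand $P_{\cong j}=P_{j-1}+P_j+P_{j+1}$ in each factor. This leaves at most nine pairs $P_{j'}N^{\unbal}(v)\,P_{k'}\bar v(\cdot+x_0)$ with $j'\cong j$ and $k'\cong j$, hence $j'\cong k'$ up to a fixed shift. Each pair is bounded by \eqref{eq:source_term}:
\[
\|P_{j'}N^{\unbal}(v)\,P_{k'}\bar v(\cdot+x_0)\|_{L^1_{t,x}}\lesssim C^2\epsilon^4 c_{j'}^2\dyad^{-2sj'}.
\]
Admissibility of the envelope (Definition~\ref{def:admissible_envelope}) lets me replace $c_{j'}$ by $c_j$ at the cost of a constant. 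Multiplying by the weight $\dyad^j$ gives the claimed bound $C^2\epsilon^4c_j^2\dyad^j\dyad^{-2sj}$.

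The main obstacle is the pairing condition. Equation \eqref{eq:source_term} is stated only for $j\cong k$ (adjacent dyadic indices), while the three-piece expansion also produces pairs two indices apart, such as $(j-1,j+1)$. I will handle these by reading the proof of Proposition~\ref{prop:unbal_L1} and of the good-term bounds, which measure against $P_{\cong\ell}\bar v$ and are therefore insensitive to a fixed finite shift. As a fallback, I will use the pairs with $j'=k'$ and absorb the rest through the frequency support of $p_j$, which meets only $p_{j-1}$, $p_j$ and $p_{j+1}$. Everything else is routine bookkeeping.
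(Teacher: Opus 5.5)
This is the paper's argument: the corollary is treated as immediate from \eqref{eq:source_term} via Lemma~\ref{lem:sep}, with the extra factor $\dyad^j$ coming from the size of the symbol $\bbp_j$. The obstacle you flag is self-inflicted, since the pairs $(j\mp1,j\pm1)$ genuinely occur once you expand $P_{\cong j}$ in both slots, and your fallback as phrased does not remove them; the fix is not to expand at all: write $\bbp_j(\xi,\eta)=\bigl[-(\xi+\eta)\tilde p_j(\xi)\tilde p_j(\eta)\bigr]p_j(\xi)p_j(\eta)$ with $\tilde p_j=1$ on the support of $p_j$, so that Lemma~\ref{lem:sep} applied to the bracketed factor (size $\dyad^j$, smooth on scale $\dyad^j$) lands directly on $\sup_{x_0}\|P_jN^{\unbal}(v)\,P_j\bar v(\cdot+x_0)\|_{L^1_{t,x}}$, which is \eqref{eq:source_term} with $k=j$, and no envelope comparison is needed.
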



\subsection{Balanced spatial and space-time \texorpdfstring{$L^1$}{} bounds}
Here we consider the corrections $B^{4,\bal}_{m,j}$ and errors 
$Q^{4,\bal}_{m,j}(|\partial |v|^2|^2)$,
$R^{\geq 6}_{m,j}$ and their momentum counterparts
from the density-flux relations \eqref{dens-flux-m}, \eqref{dens-flux-p}.
These bounds will be repeatedly used in each of the following subsections.

\begin{lemma}\label{l:B4_multi}
Assume that the bootstrap bound \eqref{eq:uj_ee_boot} holds. Let $j \geq 0$.

Then we have the fixed time estimate 
\begin{equation}\label{eq:b4_mj}
    \| B^{4,\bal}_{m,j}(v) \|_{L^1_x} \lesssim C^4\epsilon^4 
    c_j^2 \dyad^{(-1-2s)j}\dyad^{-2sj}.
\end{equation}
\end{lemma}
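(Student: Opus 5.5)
This is a fixed-time estimate, so I will argue at a single time $t$, using only the energy bootstrap \eqref{eq:uj_ee_boot} for $v$ and Bernstein's inequality. By Proposition~\ref{prop:symbols}, $b^{4,\bal}_{m,j}$ is supported where all four frequencies lie in mutually adjacent dyadic intervals comparable to $\dyad^j$. On that support it obeys $|\partial^\alpha b^{4,\bal}_{m,j}|\lesssim \dyad^{-2j-|\alpha|j}$, so it is smooth on the dyadic scale $\dyad^j$ with size $M=\dyad^{-2j}$.

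First I insert cutoffs $P_{j_1},\dots,P_{j_4}$ with $j_i\sim j$ on each input; there are only boundedly many such quadruples. Lemma~\ref{lem:sep} with $q=1$ then gives
\[
\|B^{4,\bal}_{m,j}(v)\|_{L^1_x}\lesssim \dyad^{-2j}\sum_{j_1,\dots,j_4\sim j}\ \sup_{x_1,\dots,x_4}\big\|P_{j_1}v(\cdot-x_1)\,P_{j_2}\bar v(\cdot+x_2)\,P_{j_3}v(\cdot-x_3)\,P_{j_4}\bar v(\cdot+x_4)\big\|_{L^1_x}.
\]
Next I apply H\"older's inequality, placing two factors in $L^2_x$ and two in $L^\infty_x$. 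The translations are harmless because these norms are translation invariant. Bernstein's inequality gives $\|P_{j_i}v\|_{L^\infty_x}\lesssim \dyad^{j/2}\|P_{j_i}v\|_{L^2_x}$, so the product is bounded by
\[
\dyad^{j}\prod_{i=1}^4\|P_{j_i}v\|_{L^\infty_tL^2_x}.
\]

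By the bootstrap bound each factor is at most $C\epsilon c_{j_i}\dyad^{-sj_i}$. Since $j_i\sim j$, the slowly varying property from Definition~\ref{def:admissible_envelope} gives $c_{j_i}\lesssim c_j$ and $\dyad^{-sj_i}\lesssim \dyad^{-sj}$, with constants depending only on the fixed step parameter $\dyad$. Collecting powers gives
\[
\dyad^{-2j}\cdot \dyad^{j}\cdot C^4\epsilon^4c_j^4\dyad^{-4sj}=C^4\epsilon^4 c_j^4\,\dyad^{(-1-2s)j}\dyad^{-2sj}.
\]
Because $c_j\lesssim\|c\|_{\ell^2}\approx 1$, we have $c_j^4\le c_j^2$, and this is \eqref{eq:b4_mj}.

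I do not expect a real obstacle here. The only point needing care is that Lemma~\ref{lem:sep} applies uniformly in $j$, since its constant is independent of the dyadic scales. This holds because Proposition~\ref{prop:symbols} supplies symbol bounds on exactly the dyadic scale of each input. The same argument with the size $\dyad^{-j}$ of $b^{4,\bal}_{p,j}$ would give the analogous momentum bound with an extra factor $\dyad^{j}$.
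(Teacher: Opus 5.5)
Your proof is correct and follows the paper's argument: Lemma~\ref{lem:sep} with the symbol size $\dyad^{-2j}$ from Proposition~\ref{prop:symbols}, then H\"older with two $L^2_x$ and two Bernstein $L^\infty_x$ factors to get $\dyad^{-j}\|P_{\cong j}v\|_{L^2_x}^4 \lesssim C^4\epsilon^4 c_j^4\dyad^{(-1-4s)j}$. The bootstrap bound, the slowly varying property, and $c_j\lesssim 1$ then finish exactly as you describe.
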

The corresponding bound for the momentum follows as a corollary, as long 
as we account for the size of the momentum symbol:

\begin{corollary}\label{c:B4-multi}
Assume that the bootstrap bound \eqref{eq:uj_ee_boot} holds and 
    let $j \geq 0$. 
Then we have the fixed time estimate 
\begin{equation}\label{eq:b4_pj}
    \| B^{4,\bal}_{p,j}(v) \|_{L^1_x} \lesssim \epsilon^4 C^4 c_j^2 \dyad^{(-1-2s)j}
    \dyad^{(1-2s)j}
\end{equation}
\end{corollary}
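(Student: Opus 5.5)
The statement to prove is Corollary~\ref{c:B4-multi}. I will follow the argument for Lemma~\ref{l:B4_multi} (the mass case) and change only the symbol size. By Proposition~\ref{prop:symbols}, $b^{4,\bal}_{p,j}$ is supported where all four frequencies lie in mutually adjacent dyadic intervals comparable to $\dyad^j$. It obeys $|\partial^\alpha b^{4,\bal}_{p,j}|\lesssim \dyad^{-j-|\alpha|j}$, whereas the mass symbol obeys $|\partial^\alpha b^{4,\bal}_{m,j}|\lesssim \dyad^{-2j-|\alpha|j}$. So the only difference is one extra factor $\dyad^{j}$ in the size, and the target bound \eqref{eq:b4_pj} is exactly \eqref{eq:b4_mj} multiplied by $\dyad^{j}$.

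\emph{Step 1: reduce to a product.} Insert cutoffs $p_{j_1}(\xi_1)\cdots p_{j_4}(\xi_4)$ with $j_i\cong j$; there are only boundedly many such tuples. Lemma~\ref{lem:sep} applies with $M=\dyad^{-j}$, since the symbol is smooth on scale $\dyad^j$ in every variable. This gives
\[
\|B^{4,\bal}_{p,j}(v)\|_{L^1_x}\lesssim \dyad^{-j}\sum_{j_i\cong j}\sup_{x_1,\dots,x_4}\Big\|\prod_{i=1}^4 P_{j_i}v(\cdot-x_i)\Big\|_{L^1_x}.
\]

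\emph{Step 2: estimate the product.} Put two factors in $L^2_x$ and two in $L^\infty_x$, and use Bernstein on the latter, $\|P_{j_i}v\|_{L^\infty_x}\lesssim \dyad^{j/2}\|P_{j_i}v\|_{L^2_x}$. Then apply the bootstrap bound \eqref{eq:uj_ee_boot} to each factor, and replace $c_{j_i}$ by $c_j$ using the slowly varying property of the admissible envelope, since $j_i\cong j$. The product is therefore bounded by $\dyad^{j}\,(C\epsilon)^4 c_j^4\dyad^{-4sj}$.

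Multiplying by the prefactor $\dyad^{-j}$ from Step 1 gives
\[
\|B^{4,\bal}_{p,j}(v)\|_{L^1_x}\lesssim C^4\epsilon^4 c_j^2\,\dyad^{-4sj},
\]
after also using $c_j\lesssim 1$. This matches \eqref{eq:b4_pj}, because $\dyad^{(-1-2s)j}\dyad^{(1-2s)j}=\dyad^{-4sj}$.

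The one point to watch is that Lemma~\ref{lem:sep} has to be applied after the dyadic localization, so that each $L^1$ kernel bound is uniform in $j$. Beyond that the corollary needs nothing new: it is the mass computation with the symbol size $\dyad^{-j}$ instead of $\dyad^{-2j}$.
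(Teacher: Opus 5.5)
Your proposal is correct and is essentially the paper's own proof. The paper proves the mass bound \eqref{eq:b4_mj} via Lemma~\ref{lem:sep}, two Bernstein $L^\infty_x$ factors, two $L^2_x$ factors from \eqref{eq:uj_ee_boot}, and then the slowly varying condition together with $c_j\lesssim 1$; it obtains \eqref{eq:b4_pj} by observing that the only change is the extra factor $\dyad^{j}$ in the size of $b^{4,\bal}_{p,j}$, which is exactly the computation you carried out.
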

Note that in both of these $\dyad^{-1-2s} \leq 1$.

\begin{proof} 
The bounds \eqref{eq:b4_mj} and \eqref{eq:b4_pj} are similar, the only difference arises 
from the additional $\dyad^j$ factor in the size of the symbol $p_j$. 
    So we will prove 
the first bound.

    Recall that $b^{4,\bal}_{m,j}$ 
    has all four frequencies supported in adjacent dyadic intervals to $\dyad^j$ by 
Proposition~\ref{prop:symbols}. Further, by Lemma~\ref{lem:sep}, the 
regularity condition in Proposition~\ref{prop:symbols}, and 
the bootstrap bound \eqref{eq:uj_ee_boot} we have the 
estimate 
\begin{align*}
    \| B^4_{m,j}(v) \|_{L^1_x} &\lesssim 
\dyad^{-2j}\sum_{j_1,\ldots, j_4 \cong j} 
\sup_{x_1,x_2,x_3}\|P_{j_1}(\bar v(\cdot + x_1))P_{j_2}(v(\cdot + x_2))
P_{j_3}(\bar v(\cdot + x_3))P_{j_4}(v)\|_{L^1_x}\\
&\lesssim 
\dyad^{-j}
\|P_{\cong j}(v)\|_{L^2_x}^4\\
&\lesssim C^4\epsilon^4 
\dyad^{(-1 - 4s)j}c_j^4
\end{align*}
Finally, by the slowly varying condition and boundedness of the frequency envelope 
in Definition~\ref{def:admissible_envelope} we have 
\[
    \| B^{4,\bal}_{m,j}(v) \|_{L^1_x} \lesssim C^4  \epsilon^4 c_j^2 \dyad^{(-1-4s)j}
\]
\end{proof}

Next we turn our attention to $Q^{4,\bal}_{m,j}(|\partial |v|^2|^2)$
and its momentum counterpart.

\begin{lemma}\label{l:Q4_space_time}
Assume that the bootstrap bound \eqref{eq:uj_loc_bi_boot} holds. Let $j \geq 0$.

Then we have the space-time estimate 
\begin{equation}\label{eq:q4_mj}
    \| Q^{4,\bal}_{m,j}(|\partial |v|^2|^2) \|_{L^1_{t,x}} \lesssim C^2\epsilon^4 
    c_j^2 \dyad^{(-1-2s)j}\dyad^{-2sj}.
\end{equation}
\end{lemma}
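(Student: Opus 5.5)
The plan is to reduce the quartic term to a product of two localized bilinear $L^2_{t,x}$ norms, using the symbol bounds of Proposition~\ref{prop:symbols} together with the separation of variables Lemma~\ref{lem:sep}. By Proposition~\ref{prop:symbols}, the symbol of $Q^{4,\bal}_{m,j}$ is $(\xi_o-\xi_e)^2 q^{4,\bal}_{m,j}$, where $q^{4,\bal}_{m,j}$ is supported where all four frequencies are comparable to $\dyad^j$ and satisfies $|\partial^\alpha q^{4,\bal}_{m,j}|\lesssim \dyad^{-2j-|\alpha| j}$. I would write
\[
(\xi_o-\xi_e)^2 = (\xi_1-\xi_2)(\xi_4-\xi_3) + (\xi_1-\xi_4)(\xi_2-\xi_3).
\]
Each of the two summands is the symbol of a product $\partial_x(v\bar v)\cdot \partial_x(v\bar v)$, with the arguments paired as $(1,2),(3,4)$ or $(1,4),(3,2)$ and up to signs. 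This is precisely the structure encoded in the notation $(\partial|v|^2)^2$.

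Next I would localize each input with $P_{j_i}$, $j_i\sim j$. Since $q^{4,\bal}_{m,j}$ is smooth on the dyadic scale with size $\dyad^{-2j}$, Lemma~\ref{lem:sep} (applied with $q=1$ in $x$, then integrated in time) gives an integrable kernel with $L^1$ norm $\lesssim \dyad^{-2j}$. Uniformly in translations, this yields
\[
\|Q^{4,\bal}_{m,j}\|_{L^1_{t,x}} \lesssim \dyad^{-2j}\sum_{j_i\sim j}\sup_{x_0,x_1}\|\partial_x(P_{j_1}v\,P_{j_2}\bar v(\cdot+x_0))\|_{L^2_{t,x}}\|\partial_x(P_{j_3}v(\cdot+x_1)\,P_{j_4}\bar v(\cdot+x_2))\|_{L^2_{t,x}}.
\]
The derivatives fall on products, which is legitimate because the multiplier $\xi_1-\xi_2$ is exactly the frequency of the product. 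One subtlety needs care here. Lemma~\ref{lem:sep} converts the form into translated products only after the polynomial factor has been absorbed. So I would apply the kernel argument to $q$ alone, keeping $(\xi_1-\xi_2)$ and $(\xi_4-\xi_3)$ as derivatives on the translated bilinear products. This works because translations commute with $\partial_x$ acting on the product symbol $\xi_1-\xi_2$.

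Then I would apply Cauchy--Schwarz in spacetime and bound each bilinear factor. For pairs in the same dyadic range the diagonal bootstrap \eqref{eq:uj_loc_bi_boot} is not translation uniform, so instead I would use the transversal bound \eqref{eq:uj_sep_bi_boot}, which holds for all $j,k$ including $j\cong k$ and all $x_0$. That bound gives $C\epsilon^2 c_j^2\dyad^{j/2}\dyad^{-2sj}$ per factor, using the slowly varying property to replace $c_{j_i}$ by $c_j$. The product is then
\[
\dyad^{-2j}\, C^2\epsilon^4 c_j^4\, \dyad^{j}\dyad^{-4sj} = C^2\epsilon^4 c_j^4 \dyad^{(-1-2s)j}\dyad^{-2sj},
\]
and $c_j^4\le c_j^2$ gives \eqref{eq:q4_mj}.

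The main obstacle is the first step, namely justifying that the $(\xi_o-\xi_e)^2$ factor can be peeled off as exact derivatives of products while $q$ is handled by an integrable kernel with translations. I would handle this by writing the symbol in the variables of the two pairs and checking that the translations introduced by $\check q$ act on the factors in a way that lets the derivative act on the translated product.
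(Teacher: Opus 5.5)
Your proposal is correct and follows the paper's own argument. Proposition~\ref{prop:symbols} and Lemma~\ref{lem:sep} reduce $Q^{4,\bal}_{m,j}$ to $\dyad^{-2j}$ times a product of two localized bilinear $L^2_{t,x}$ norms, each of size $C\epsilon^2 c_j^2\dyad^{(1-4s)j/2}$. Your choice of the translation-uniform bound \eqref{eq:uj_sep_bi_boot} with $j\cong k$, instead of \eqref{eq:uj_loc_bi_boot} (which is stated only for $x_0=0$ and a single $j$), is the careful version of what the paper's one-line proof implicitly uses, and it gives the same numerology.
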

The corresponding bound for the momentum is again a corollary:
\begin{corollary}\label{c:Q4_space_time}
Assume that the bootstrap bound \eqref{eq:uj_loc_bi_boot} holds. Let $j \geq 0$.

Then we have the space-time estimate 
\begin{equation}\label{eq:q4_pj}
    \| Q^{4,\bal}_{p,j}(|\partial |v|^2|^2) \|_{L^1_{t,x}} \lesssim C^2\epsilon^4 
    c_j^2 \dyad^{(-1-2s)j}\dyad^{(1-2s)j}.
\end{equation}
\end{corollary}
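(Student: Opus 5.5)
The statement to prove is Corollary~\ref{c:Q4_space_time}, the momentum analogue of Lemma~\ref{l:Q4_space_time}. I would follow the proof of Lemma~\ref{l:Q4_space_time} verbatim and only track the extra factor $\dyad^j$ in the size of $q^{4,\bal}_{p,j}$.

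\emph{Step 1: symbol size.} By Proposition~\ref{prop:symbols}, $q^{4,\bal}_{p,j}$ is supported where all four frequencies are comparable to $\dyad^j$. It is smooth on the dyadic scale, with $|\partial^\alpha q^{4,\bal}_{p,j}|\lesssim \dyad^{-j-|\alpha|j}$. This is one power of $\dyad^j$ worse than $q^{4,\bal}_{m,j}$, whose size is $\dyad^{-2j}$.

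\emph{Step 2: separation of variables.} The symbol of $Q^{4,\bal}_{p,j}(|\partial|v|^2|^2)$ is $(\xi_o-\xi_e)^2 q^{4,\bal}_{p,j}$. By definition, $(\xi_o-\xi_e)^2=(\xi_1-\xi_2)(\xi_4-\xi_3)+(\xi_1-\xi_4)(\xi_2-\xi_3)$. I would split this into its two products. Each product is realized physically as a product of two derivatives of translated quadratic expressions, $\partial_x(P_{j_1}v\,P_{j_2}\bar v(\cdot+x_0))\cdot\partial_x(P_{j_3}v\,P_{j_4}\bar v(\cdot+x_1))$, with all $j_i\cong j$. Localizing to adjacent dyadic pieces, Lemma~\ref{lem:sep} applies to $q^{4,\bal}_{p,j}$ and gives
\[
\|Q^{4,\bal}_{p,j}\|_{L^1_{t,x}}\lesssim \dyad^{-j}\sum_{j_1,\dots,j_4\cong j}\sup_{x_0,x_1}\|\partial_x(P_{j_1}v\,P_{j_2}\bar v(\cdot+x_0))\|_{L^2_{t,x}}\|\partial_x(P_{j_3}v\,P_{j_4}\bar v(\cdot+x_1))\|_{L^2_{t,x}}.
\]

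\emph{Step 3: bilinear bootstrap.} Each bilinear factor is bounded by the localized Morawetz bootstrap \eqref{eq:uj_loc_bi_boot}, or, for adjacent distinct indices and translations, by \eqref{eq:uj_sep_bi_boot}. Either way each factor is $\lesssim C\epsilon^2c_j^2\dyad^{j/2}\dyad^{-2sj}$ at worst. Multiplying the two factors gives $C^2\epsilon^4c_j^4\dyad^{j}\dyad^{-4sj}$. With the $\dyad^{-j}$ prefactor this yields
\[
\|Q^{4,\bal}_{p,j}\|_{L^1_{t,x}}\lesssim C^2\epsilon^4c_j^4\dyad^{-4sj}=C^2\epsilon^4c_j^4\dyad^{(-1-2s)j}\dyad^{(1-2s)j}.
\]
Using $c_j\lesssim 1$ and the slowly varying property (Definition~\ref{def:admissible_envelope}) to replace $c_{j_i}$ by $c_j$, this becomes \eqref{eq:q4_pj}.

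The only delicate point is Step 2. The bilinear bootstrap bounds carry the derivative on the product, so the factor $(\xi_o-\xi_e)^2$ must be placed exactly on the two products rather than absorbed into the symbol. This is why the two terms of $(\xi_o-\xi_e)^2$ are treated separately before Lemma~\ref{lem:sep} is applied to the remaining smooth symbol $q^{4,\bal}_{p,j}$. Everything else differs from Lemma~\ref{l:Q4_space_time} only by the bookkeeping of $\dyad^j$.
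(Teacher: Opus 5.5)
Your proposal is correct and follows essentially the paper's argument. The paper notes that $q^{4,\bal}_{p,j}$ is a factor $\dyad^j$ larger than $q^{4,\bal}_{m,j}$ and runs the same computation: Lemma~\ref{lem:sep} plus two bilinear bootstrap bounds, giving $\dyad^{-j}\cdot C^2\epsilon^4c_j^4\dyad^{(1-4s)j}$. Your write-up is somewhat more careful on two points the paper passes over: you place the two factors of $(\xi_o-\xi_e)^2$ explicitly on the products, and you invoke \eqref{eq:uj_sep_bi_boot} to handle the translations that Lemma~\ref{lem:sep} introduces, whereas the paper cites only \eqref{eq:uj_loc_bi_boot}.
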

\begin{proof}
    Again, the symbol size of $q^{4,\bal}_{p,j}$ differs from 
    $q^{4,\bal}_{m,j}$ by $\dyad^j$, so it suffices to prove the 
    bound for $q^{4,\bal}_{m,j}.$

    By Proposition \ref{prop:symbols} as well as Lemma \ref{lem:sep} we have that 
    \begin{align*}
\| Q^{4,\bal}_{m,j}(|\partial |v|^2|^2) \|_{L^1_{t,x}}
        \lesssim \dyad^{-2j}\left(\sum_{j_1,j_2 \sim j}
        \|\partial (P_{j_1} v P_{j_2} \bar v)\|_{L^2_{t,x}}
        \right)^2\\
        \lesssim C^2 \epsilon^4 \dyad^{-2j}\dyad^{(1-4s)j} c_j^2
    \end{align*}
\end{proof}

Next we turn our attention to $R^{\geq 6}_{m,j}$, which we estimate as follows:

\begin{lemma}\label{l:R6_space_time}
Assume that the bootstrap bounds \eqref{eq:uj_ee_boot}-\eqref{eq:uj_sep_bi_boot} 
hold 
and that $j \geq 0$.

Then we have the space-time bound
\begin{equation}\label{R6_m_bd}
    \|R^{\geq 6}_{m,j}\|_{L^1_{t,x}} \lesssim \epsilon^4 C^6 c_j^2
    \dyad^{(-1-2s)j} \dyad^{-2sj}.
\end{equation}
\end{lemma}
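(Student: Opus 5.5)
We need to bound $R^{\geq 6}_{m,j}$. Recalling \eqref{R6-m-bal} in its localized form, $R^{\geq 6}_{m,j}$ is the sum of the terms obtained by substituting, into one of the four arguments of $B^{4,\bal}_{m,j}$, either $-iC^{\bal}(v,\bar v,v)$ (resp.\ its conjugate in the even slots) or $-iN^{\unbal}(v)$ (resp.\ its conjugate). We split accordingly into a sixth order balanced part $R^{6,\bal}_{m,j}$ and an unbalanced part $R^{\geq 6,\unbal}_{m,j}$ and estimate each in $L^1_{t,x}$. Throughout we use Proposition~\ref{prop:symbols}: $b^{4,\bal}_{m,j}$ is supported with all four frequencies in dyadic intervals adjacent to $\dyad^j$ and is smooth on scale $\dyad^j$ with size $\dyad^{-2j}$. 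Hence, by Lemma~\ref{lem:sep}, we may replace the form by a product of four translated, frequency localized factors with coefficient $\dyad^{-2j}$, and all translations are harmless since our norms are translation invariant.

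\textbf{Balanced part.} Since $C^{\bal}$ is localized to all frequencies comparable to its output, the inserted $C^{\bal}(v,\bar v,v)$, projected near $\dyad^j$, involves only $P_{\sim j}v$. After separating variables we have six factors at frequency $\sim\dyad^j$ with coefficient $\dyad^{-2j}$. We put all six in $L^6_{t,x}$ via \eqref{eq:uj_se_boot}, which gives
\[
\dyad^{-2j}\,C^6(\epsilon c_j)^4\dyad^{(1-4s)j} = C^6\epsilon^4 c_j^4\,\dyad^{(-1-4s)j}.
\]
This matches \eqref{R6_m_bd} since $\dyad^{(-1-2s)j}\dyad^{-2sj}=\dyad^{(-1-4s)j}$, and $c_j^4\lesssim c_j^2$. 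The neighbouring envelope values $c_{j\pm 1}$ are comparable to $c_j$ by admissibility.

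\textbf{Unbalanced part.} Consider a typical term $B^{4,\bal}_{m,j}(P_{\cong j}N^{\unbal}(v),\bar v,v,\bar v)$. Here we exploit the source term bound \eqref{eq:source_term} of Theorem~\ref{thm:unbal_correction}, which controls $\|P_{j'}N^{\unbal}(v)\,P_{k}\bar v(\cdot+x_0)\|_{L^1_{t,x}}$ for $j'\cong k$ uniformly in $x_0$ by $C^2\epsilon^4c_j^2\dyad^{-2sj}$. After Lemma~\ref{lem:sep}, we pair $P_{j'}N^{\unbal}$ with one of the adjacent conjugate factors in $L^1_{t,x}$ using \eqref{eq:source_term}, and place the remaining two factors $P_{\cong j}v$ in $L^\infty_{t,x}$ via Bernstein and \eqref{eq:uj_ee_boot}, each costing $\dyad^{j/2}C\epsilon c_j\dyad^{-sj}$. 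The resulting bound is
\[
\dyad^{-2j}\cdot C^2\epsilon^4c_j^2\dyad^{-2sj}\cdot \dyad^{j}C^2\epsilon^2c_j^2\dyad^{-2sj} = C^4\epsilon^6 c_j^4\,\dyad^{(-1-4s)j},
\]
which is even better than required. The conjugate insertions in the even slots are handled identically, using the conjugate of \eqref{eq:source_term}.

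The main subtlety is that \eqref{eq:source_term} is stated for the pairing $P_jN^{\unbal}\,P_k\bar v$ with matching conjugation structure, while inside $B^{4,\bal}_{m,j}$ the inserted term is multiplied by three factors. We need the translation uniformity of \eqref{eq:source_term}, and the separation of variables must isolate exactly one $\bar v$ factor adjacent in frequency to pair with it. This is available since all frequencies lie in adjacent dyadic intervals. Summing the finitely many slot and dyadic choices completes the proof.
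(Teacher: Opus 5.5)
Your proposal is correct and follows the paper's proof essentially step for step. The balanced insertion $B^{4,\bal}_{m,j}(C^{\bal}(v,\bar v,v),\bar v,v,\bar v)$ is bounded by six $L^6_{t,x}$ estimates from \eqref{eq:uj_se_boot} against the symbol size $\dyad^{-2j}$. The unbalanced insertion is handled by pairing $P_{j_1}N^{\unbal}(v)$ with an adjacent conjugate factor through \eqref{eq:source_term}, uniformly in the translation, and placing the remaining two factors in $L^\infty_{t,x}$ via Bernstein and \eqref{eq:uj_ee_boot}; this yields the same bound $C^4\epsilon^6 c_j^2\dyad^{(-1-4s)j}$ that the paper obtains by summing over $j_1\cong j_2\cong j_3\cong j_4$.
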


As above, we also have a similar bound for the momentum:

\begin{corollary}\label{c:R6-AB}
Assume that the bootstrap bounds \eqref{eq:uj_ee_boot}-\eqref{eq:uj_sep_bi_boot} 
hold and that $j \geq 0$.

Then we have the space-time bound
\begin{equation}\label{R6_p_bd}
    \|R^{\geq 6}_{p,j}\|_{L^1_{t,x}}
    \lesssim \epsilon^4 C^6 c_j^2 
    \dyad^{(-1-2s)j} \dyad^{(1-2s)j}.
\end{equation}
\end{corollary}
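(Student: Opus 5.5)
The plan is to prove this exactly as Lemma~\ref{l:R6_space_time}, tracking the one extra factor $\dyad^j$ in the momentum symbols. The target weight simplifies to $\dyad^{(-1-2s)j}\dyad^{(1-2s)j}=\dyad^{-4sj}$, so the aim is
\[
\|R^{\geq 6}_{p,j}\|_{L^1_{t,x}}\lesssim \epsilon^4C^6c_j^2\dyad^{-4sj}.
\]
By its definition (the momentum analogue of \eqref{R6-m-bal}), $R^{\geq 6}_{p,j}$ is a sum of terms of two kinds. In each, $B^{4,\bal}_{p,j}$ has one of its four entries replaced by $\mp iC^{\bal}(v,\bar v,v)$ (or its conjugate) or by $\mp iN^{\unbal}(v)$ (or its conjugate). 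By Proposition~\ref{prop:symbols}, $b^{4,\bal}_{p,j}$ is supported where all four frequencies lie in dyadic intervals adjacent to $\dyad^j$. It is smooth on scale $\dyad^j$ with size $\dyad^{-j}$, compared with $\dyad^{-2j}$ for $b^{4,\bal}_{m,j}$. Lemma~\ref{lem:sep} therefore reduces every term to translated products of frequency-localized pieces, with a prefactor $\dyad^{-j}$. I then treat the two kinds of terms separately.

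\emph{Balanced terms.} Since $C^{\bal}$ is localized to inputs comparable to its output, and the output is projected to $P_{\cong j}$, every factor is at frequency $\sim\dyad^j$. Composing the symbols and using the remark after Lemma~\ref{lem:sep}, this is a sextic form with integrable kernel of size $\lesssim\dyad^{-j}$. Hölder in $L^6_{t,x}$ and the bootstrap bound \eqref{eq:uj_se_boot} give
\[
\dyad^{-j}\|P_{\sim j}v\|_{L^6_{t,x}}^6\lesssim \dyad^{-j}C^6\epsilon^4c_j^4\dyad^{(1-4s)j}= C^6\epsilon^4c_j^4\dyad^{-4sj}.
\]
Since $c_j\lesssim 1$, this is the claimed bound.

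\emph{Unbalanced terms.} Here I pair the localized $P_{\cong j}N^{\unbal}(v)$ with one of the $P_{\cong j}\bar v$ factors and put the remaining two factors in $L^\infty_{t,x}$. By the source estimate \eqref{eq:source_term} of Theorem~\ref{thm:unbal_correction}, which is uniform in the translation $x_0$ and so compatible with the translations produced by Lemma~\ref{lem:sep}, the pair costs $C^2\epsilon^4c_j^2\dyad^{-2sj}$. By Bernstein and \eqref{eq:uj_ee_boot}, each of the other two factors is bounded by $\dyad^{j/2}C\epsilon c_j\dyad^{-sj}$. The total is
\[
\dyad^{-j}\cdot C^2\epsilon^4c_j^2\dyad^{-2sj}\cdot \dyad^{j}C^2\epsilon^2c_j^2\dyad^{-2sj}\lesssim C^4\epsilon^6c_j^2\dyad^{-4sj},
\]
which is better than required.

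The main point needing care is the balanced term: the regular kernel bound must hold after composing $b^{4,\bal}_{p,j}$ with $c^{\bal}$. This holds because both symbols are smooth on the same dyadic scale, and the composition lives on the region where all six frequencies are comparable to $\dyad^j$. The computation is otherwise identical to the mass case, with $\dyad^{-j}$ in place of $\dyad^{-2j}$.
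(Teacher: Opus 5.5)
Your proposal is correct and follows the paper's own argument. The paper reduces to the mass case (Lemma~\ref{l:R6_space_time}) by noting that the momentum symbols carry one extra factor $\dyad^j$. It then bounds the balanced part by six $L^6_{t,x}$ factors via \eqref{eq:uj_se_boot}, and the unbalanced part by the source estimate \eqref{eq:source_term} together with two Bernstein $L^\infty_{t,x}$ factors, which is exactly the bookkeeping you carry out.
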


\begin{proof}
    Recall the definition of $R^{\geq 6}_{m,j}$ which 
    is the localized version of \eqref{R6-m-bal}:
    \[
        R^{\geq 6}_{m,j} = 
        B^{4,\bal}_{m,j}(C^{\bal}(v,\bar v, v) + N^{\unbal}(v), \bar v, v, \bar v) + \cdots + B^{4,\bal}_{m,j}(v , \bar v, v, \ol{C^{\bal}(v,\bar v, v)} + \ol{N^{\unbal}(v)}).
    \]
    Again, since the symbol size of $r^{\geq 6}_{p,j}$ differs from 
    $r^{\geq 6}_{m,j}$ by $\dyad^j$ (since this 
    is true for $B^{4,\bal}$), it suffices to prove the 
    bound for $r^{\geq 6}_{m,j}.$

    By the symmetry of the symbol of $B^{4,\bal}_{m,j}$ it will suffice 
    to prove the above bound for each of the following two terms:
    \[
        B^{4,\bal}_{m,j}(C^{\bal}(v,\bar v, v), \bar v, v, \bar v)
        + B^{4,\bal}_{m,j}(N^{\unbal}(v), \bar v, v, \bar v)
    \]

    The first term we control with the Strichartz estimate \eqref{eq:uj_se_boot}
    as well as Proposition \ref{prop:symbols} and Lemma \ref{lem:sep}. Note that 
    in the first term all $6$ frequencies are near $j$ because of the support 
    properties of $C^{\bal}$ and $B^{4,\bal}_{m,j}$. We have 
    \begin{align*}
        \| B^{4,\bal}_{m,j}(C^{\bal}(v,\bar v, v) ,\bar v, v , \bar v)\|_{L^1_{t,x}}
        \lesssim& \ \dyad^{-2j}\sum_{j_1,\ldots, j_6 \sim j}
        \|P_{j_1} v\|_{L^6_{t,x}}\cdots\| P_{j_6} \bar v\|_{L^6_{t,x}}
        \\
        \lesssim& \ C^6 \epsilon^4 \dyad^{-2j}\dyad^{(1-4s)j} c_j^4
        \\
        \lesssim& \ C^6 \epsilon^4 \dyad^{(-1-4s)j} c_j^2
    \end{align*}
    The second term we can easily control with 
    Proposition\ref{prop:symbols}, Lemma~\ref{lem:sep}, 
    Theorem \ref{thm:unbal_correction}, Bernstein's inequality 
    and the bootstrap estimate \eqref{eq:uj_ee_boot}:
    \begin{align*}
        \| B^{4,\bal}_{m,j}(N^{\unbal}(v), \bar v, v, \bar v) \|_{L^1_{t,x}}
        &\lesssim \dyad^{-2j}\sum_{j_1 \cong j_2 \cong j_3 \cong j_4}
        \sup_{x_0}\|P_{j_1} N^{\unbal}(v) P_{j_2} \bar v(x + x_0)\|_{L^1_{t,x}}
        \|P_{j_3} v\|_{L^\infty_{t,x}}\|P_{j_4} v\|_{L^\infty_{t,x}}\\
        &\lesssim C^4 \epsilon^6 c_j^2 \dyad^{(-1-4s)j}
    \end{align*}
    This completes the proof.
\end{proof}


\subsection{The energy estimate}
Here we close the bootstrap for \eqref{eq:uj_ee}. We then remark that, 
after this is proved, we may 
drop the factors of $C$ in Lemma~\ref{l:B4_multi}.

To close this estimate, we fix some dyadic 
region $j$ and consider the mass localized to frequency 
$j$: 
\[
\bM_j(v) = \| P_j v\|_{L^2_x}^2.
\]
To prove \eqref{eq:uj_ee} we need to get a uniform bound on $\bM_j$ 
in time:
\begin{equation}\label{unif}
    \bM_j(v) \lesssim \epsilon^2 c_j^2 \dyad^{-2sj}.
\end{equation}
To achieve this 
we consider
the density flux relation \eqref{eq:dens_flux_mj},
\[
 \partial_t \ms_j(v) = \partial_x(\bbP_{j}(v)
    + R^{4,\bal}_{m,j}(v)) + Q^{4,\bal}_{m,j}(|\partial|v|^2|^2) 
    + N^{\unbal}_{m,j}(v) 
    + R^{\geq 6}_{m,j}(v),
\]
where 
\[
    \ms_j(v,\bar v) = \bbM_j(v,\bar v) + B^{4,\bal}_{m,j}(v).
\]
To prove \eqref{unif} we integrate the 
above density-flux relation in $t,x$ to obtain:
\begin{equation}\label{en-ident}
    \left. \int \bbM_j(v) + B^{4,\bal}_{m,j}(v) \, dx  \right|_0^T 
    = \int_{0}^T \int_\R Q^{4,\bal}_{m,j}(|\partial|v|^2|^2) 
    + N^{\geq 4, \unbal}_{m,j}(v) 
    + R^{\geq 6}_{m,j}(v) \ dx dt.
\end{equation}
Finally, we can estimate the contributions of $B^{4,\bal}_{m,j}$, $Q^{4,\bal}_{m,j}$,
$N^{\geq 4, \unbal}_{m,j}$, and $R^{\geq 6}_{m,j}$ 
using respectively Lemma~\ref{l:B4_multi}, Lemma~\ref{l:Q4_space_time},
Lemma~\ref{l:Nm_space_time}, and Lemma~\ref{l:R6_space_time}.

\begin{remark}
For later use, we observe that once the energy bounds \eqref{eq:uj_ee}
have been established, then they can be used instead of 
the bootstrap assumption \eqref{eq:uj_ee_boot} in the proof
of Lemma~\ref{l:B4_multi}. This leads to a stronger form of 
\eqref{eq:b4_mj}, \eqref{eq:b4_pj}, with the constant $C$ removed:
\begin{equation}\label{B4m_L1_re}
    \| B^{4,\bal}_{m,j}(v)\|_{L^\infty_t L^1_x}    
    \lesssim  \epsilon^4 c_j^2 \dyad^{-2sj} .
\end{equation}
\begin{equation}\label{B4p_L1_re}
    \| B^{4,\bal}_{p,j}(v)\|_{L^\infty_t L^1_x}    
    \lesssim  \epsilon^4 c_j^2 \dyad^{(1-2s)j} .
\end{equation}

\end{remark}

\subsection{The localized interaction Morawetz}
Here we prove the bounds \eqref{eq:uj_se}
and \eqref{eq:uj_loc_bi} using our bootstrap assumptions. 

Fix $j \geq 0$. It will suffice to prove the following bounds 
on the interaction Morawetz functional described in Section 
\ref{s:Morawetz}

\begin{equation}\label{Ij_bound}
    |\bI_{j}(v,v)| \lesssim \epsilon^4 c_j^4 \dyad^{(1-4s)j}, 
\end{equation}
\begin{equation}\label{J4_formula}
  \int_0^T\bJ^4_{j}(v,v)\,dt = 4\| \partial_x |P_j v|^2\|_{L^2_{t,x}}^2   , 
\end{equation}
\begin{equation}\label{J6-bound}
    \int_0^T \bJ^6_j(v,v)\, dt  = 
    \| (C^{\bal}(D,D,D) P_j^4(D))^\frac16 v\|_{L^6_{t,x}}^6
+ O( C^4 \epsilon^5 c_j^4\dyad^{(1-4s)j}) ,
\end{equation}
where $C^{\bal}(D,D,D)P^4_j(D)$ 
is the multiplier associated to the symbol
\[
    c^\bal(\xi, \xi, \xi)p_j^4(\xi),
\]
\begin{equation}
\int_0^T \bJ^8_j(v,v)\, dt = O(C^6 \epsilon^6  c_j^4\dyad^{(1-4s)j}), 
\end{equation}
\begin{equation}
    \int_0^T\bK^{\geq 6}_j(v,v)\, dt = O(C^8 \epsilon^5 c_j^4\dyad^{(1-4s)j}). 
\end{equation}

This allows us to straightforwardly 
estimate the localized interaction Morawetz term in \eqref{eq:uj_loc_bi}
provided $\epsilon$ is small enough.
The localized $L^6_{t,x}$ norm is easily estimated but with 
$(C^\bal(D,D,D)P_j^4(D))^{1/6}$ 
instead of the usual projection $P_j(D)$.
However, by the defocusing property (H3) and regularity property 
(H1s) of $C^\bal$, the discrepancy
between these two projections is a bounded symbol which is smooth 
on scale $\dyad^j$, and thus is bounded on $L^6_x$ by separation of variables, 
see Lemma~\ref{lem:sep}:
\[
\begin{aligned}
    \|P_j v\|_{L^6_{t,x}} &= \|(\frac{P_j^2(D)}{C^\bal(D,D,D)})^{1/6}
(C^\bal(D,D,D)P_j^4(D))^{1/6} v\|_{L^6_{t,x}}\\
    &\lesssim 
    \|(C^\bal(D,D,D)P^4_j(D))^{1/6} v\|_{L^6_{t,x}}
\end{aligned}
\]
(Note the converse can also be controlled using the slowly varying frequency
envelopes.)

There is nothing to do for $\bJ^4_{j}$ so we consider the remaining contributions:

\subsubsection{The $\bI_j$ bound}
The interaction Morawetz functional
$\bI_j$ is as in \eqref{Ia-sharp-def}
\begin{equation*}
\bI_{j} =   \iint_{x > y} \ms_j(v)(x) \ps_{j}(v) (y) -  
\ps_{j}(v)(x) \ms_{j}(v) (y)\, dx dy
\end{equation*}
with
\[
    \ms_j(v)= \bbM_j(v) + B^{4,\bal}_{m,j}(v),
    \qquad 
    \ps_j(v)= \bbP_j(v) + B^{4,\bal}_{p,j}(v).
\]
For $B^{4,\bal}_{m,j}$ and $B^{4,\bal}_{p,j}$ we have the $L^{\infty}_tL^1_x$
bound \eqref{B4m_L1_re} and \eqref{B4p_L1_re}. For $\bbM_j(v)$ and $\bbP_j(v)$
we have the straightforward uniform in time bounds
\begin{equation}\label{M-L1}
    \|\bbM_j(v)\|_{L^\infty_t L^1_x}\lesssim \epsilon^2 c_j^2 \dyad^{-2sj}.
\end{equation}

\begin{equation}\label{P-L1}
    \|\bbP_j(v)\|_{L^\infty_t L^1_x} \lesssim \epsilon^2 c_j^2 \dyad^{(1-2s)j}.
\end{equation}
Combining these, the estimate 
\eqref{Ij_bound} immediately follows.

\subsubsection{The $\bJ^6_j$ bound} 
This is a $6$-linear
expression whose expression we recall from \eqref{eq:J6_def},
\[
    \bJ^6_{j} =  2 \int -( \bbP_{j} B^{4,\bal}_{p,j}
    + \bbP_{j} R^{4,\bal}_{m,j}) + ( 
    \bbM_j R^{4,\bal}_{p,j} +\bbE_{j} B^{4,\bal}_{m,j})\, dx.
\]

We first notice that the mass $\bbM_j$, momentum $\bbP_j$ and energy $\bbE_j$ 
as well as the corrections $B^{4,\bal}_{m,j},$ $B^{4,\bal}_{p,j}$,
$ R^{4,\bal}_{m,j},$ and $R^{4,\bal}_{p,j}$ are all localized to a region 
where all frequencies are in adjacent dyadic intervals to $j$.

Next we recall the size of the symbols from
Proposition~\ref{prop:symbols}: 
\[
    |b^{4,\bal}_{m,j}| \lesssim \dyad^{-2j},
    \qquad 
    |b^{4,\bal}_{p,j}| \lesssim \dyad^{-j},
\]
\[
|r^{4,\bal}_{m,j}| \lesssim \dyad^{-j},
\qquad
|r^{4,\bal}_{p,j}| \lesssim 1,
\]
and similarly for their derivatives. 
And also the size of the symbols of $\bbM_j, \bbP_j,$ and $\bbE_j$ which 
simply come from their polynomial prefactors:
\[
    |\bbm_j| \lesssim 1, \qquad 
    |\bbp_j| \lesssim \dyad^j, \qquad 
    |\bbe_j| \lesssim \dyad^{2j}.
\]
We see that in \eqref{eq:J6_def} each $\dyad^\alpha$ is paired with 
$\dyad^{-\alpha}$, so by Lemma~\ref{lem:sep} we can ignore 
these contributions going forward.
Also importantly, 
we know that on the diagonal 
\[
\{ \xi_1 = \xi_2 = \xi_3 = \xi_4 = \xi_5 = \xi_6 \},
\]
we have 
\[
j^6_j(\xi) = p_j^4(\xi) c^\bal(\xi, \xi, \xi).
\]

It follows that  we can write the symbol $j^6_j$ in the form
\[
j^6_j(\xi_1,\xi_2,\xi_3,\xi_4,\xi_5,\xi_6) = b(\xi_1) b(\xi_2) b(\xi_3) b(\xi_4) b(\xi_5) b(\xi_6)  + 
j^{6,rem}_j(\xi_1,\xi_2,\xi_3,\xi_4,\xi_5,\xi_6) ,
\]
where $b(\xi) = p_j(\xi)^\frac23 c^\bal(\xi, \xi, \xi)^\frac16$ and $j^{6,rem}_j$
vanishes when all $\xi$'s are equal. 

Because we may take the symbol $j^{6,rem}_j$ to be 
symmetric in odd frequencies and separately even frequencies, we can write
$j^{6,rem}_j$ as a linear combination of terms $\xi_{odd} - \xi_{even}$ 
with smooth coefficients; this is a simpler version of 
the division lemma, \ref{l:division} where we only factor 
out \emph{one} derivative in the relevant directions. Factoring out 
this derivative will produce a factor of $\dyad^{-j}$ since 
all symbols are smooth on the dyadic scale.

The first term 
yields the desired $L^6_{t,x}$ norm,
\[
\int_0^T\bJ^6_j(v)\, dt = \| B(D) v\|_{L^6_{t,x}}^6 + \int_0^T\bJ^{6,rem}_j(v) dt.
\]

On the other hand the contribution $\bJ^{6,rem}_j$ of the second term can be 
estimated using a bilinear $L^2_{t,x}$ bound \eqref{eq:uj_sep_bi_boot}, 
three $L^6_{t,x}$ bounds \eqref{eq:uj_se_boot} 
and one $L^\infty_{t,x}$ via Bernstein's inequality and \eqref{eq:uj_ee} (which
we have already closed),
\begin{align*}
    \left|\int_0^T\bJ^{6,rem}_j(v)\, dt \right| 
    &\lesssim \dyad^{-j}
    \sup_{x_0} \|\partial(P_{\cong j} v(x) P_{\cong j} \bar v(x+x_0))\|_{L^2_{t,x}}
    \|P_{\cong j} v\|_{L^6_{t,x}}^3 \|P_{\cong j} v\|_{L^\infty_{t,x}}\\
    &\lesssim C^4\epsilon^5 c_j^5
    \dyad^{-j}\dyad^{(1/2 - 2s)j}(\dyad^{(1-4s)j/6})^3\dyad^{(1/2 -s)j}\\
    &\lesssim C^4\epsilon^5 c_j^4 \dyad^{(1-4s)j} \dyad^{(-1/2-s)j}
\end{align*}
which suffices since $-1/2 - s < 0$.

\subsubsection{The bound for $\bJ^8_j$}
We recall that $\bJ^8_j$ is defined in \eqref{eq:J8_def} which we copy 
here:
\[
\bJ^8_{j}(v,v) = 2  \int 
    B^{4,\bal}_{m,j}(v) R^{4,\bal}_{p,j}(v) 
    - R^{4,\bal}_{m,j}(v) B^{4,\bal}_{p,j}(v)    
 \, dx .
\]

For this we need to show that 
\[
\left| \int_0^T\bJ^8_j\, dt\right| \lesssim C^6\epsilon^6c_j^4\dyad^{(1-4s)j}.
\]
Again, all 8 frequencies will be localized to adjacent regions,
each comparable to $j$. We see from the symbol sizes in 
Proposition~\ref{prop:symbols}, that each term will come with a factor 
comparable to $\dyad^{-2j}$ by Lemma~\ref{lem:sep}. Then, we can 
estimate this using six $L^6_{t,x}$ bounds \eqref{eq:uj_se_boot} 
and two $L^\infty_{t,x}$ via Bernstein's inequality by \eqref{eq:uj_ee}.

\begin{align*}
    \left| \int_0^T\bJ^8_j\, dt\right| 
    &\lesssim  \dyad^{-2j}
    \|P_{\cong j} v\|_{L^6_{t,x}}^6
    \|P_{\cong j} v\|_{L^\infty_{t,x}}^2 \\
    &\lesssim C^6\epsilon^6 c_j^6
    \dyad^{-2j}(\dyad^{(1 - 4s)j/6})^6(\dyad^{(1/2-s)j})^2\\
    &\lesssim C^6\epsilon^6 c_j^4 \dyad^{(1-4s)j} \dyad^{(-1-2s)j}
\end{align*}
which suffices since $-1 - 2s < 0$.


\subsubsection{The bound for $\bK^{\geq 6}_j$} 
We recall from \eqref{eq:Kg6_def} that $\bK^{\geq 6}_j$ has the form
\begin{align*}
    \bK^{\geq 6}_j &= \iint_{x > y} \ms_j(v)(x) 
    (Q^{4,\bal}_{p,j}(|\partial|v|^2|^2)(y) + 
    N_{p,j}^{\geq 4, \unbal}(v)(y) + R^{\geq 6}_{p,j}(v)(y))\\
    &\qquad + \ps_j(v)(y) (Q^{4,\bal}_{m,j}(|\partial|v|^2|^2)(x) 
    + N_{m,j}^{\geq 4, \unbal}(v)(x) + R^{\geq 6}_{m,j}(v)(x)) \, dx dy \\ 
    \quad &- \iint_{x > y}
    \ms_j(v)(y) (Q^{4,\bal}_{p,j}(|\partial|v|^2|^2)(x) 
    + N_{p,j}^{\geq 4, \unbal}(v)(x) + R^{\geq 6}_{p,j}(v)(x))\\
    &\qquad + \ps_j(v)(x) (Q^{4,\bal}_{m,j}(|\partial|v|^2|^2)(y) 
    + N_{m,j}^{\geq 4, \unbal}(v)(y) + R^{\geq 6}_{m,j}(v)(y)) \, dx dy,
\end{align*}

The time integral of $\bK^{\geq 6}_j(v)$ is estimated directly using the 
$L^1_{t,x}$ bound for $Q^{4,\bal}_{m,j}$ in Lemma~\ref{l:Q4_space_time}, 
$N_{m,j}$ in Lemma~\ref{l:Nm_space_time}, $R^{\geq 6}_{m,j}$ 
in Lemma~\ref{l:R6_space_time} and the uniform $L^1_x$ bound
for $\ms$ and $\ps$, provided by Lemma~\ref{l:B4_multi} and 
together with the simpler bound \eqref{M-L1} as well 
as their momentum counterparts given as corollaries below each lemma.

\subsection{Near parallel interactions}

Here we briefly discuss the bilinear $L^2_{t,x}$ bound \eqref{eq:uj_sep_bi}
in the case when $j$ and $k$ are comparable. 
This can be viewed on one hand as a slight generalization of the argument 
in the previous subsection, where instead of $w = v$ we take $w = v(\cdot +x_0)$. 
The only difference in the proof is that, because of the translations, 
we can no longer use the defocusing property to control the sign of the diagonal 
$\bJ^{6}$ contribution. 
However, this is not a problem because the localized $L^6_{t,x}$ norm of $u_j$ 
has already been estimated in the previous subsection.

\subsection{The transversal bilinear \texorpdfstring{$L^2_{t,x}$}{}  estimate} 
Here we prove the bilinear $L^2_{t,x}$ bound \eqref{eq:uj_sep_bi} in 
the case where $k \ll j$.
This repeats the same analysis as before, but
using the interaction Morawetz functional associated to two separated 
dyadic frequency intervals corresponding to $\dyad^k$ and $\dyad^j$.
Here we no longer take $w = v$, and instead we let $w = v(\cdot+x_0)$. 
The parameter $x_0 \in \R$
is arbitrary and the estimates are uniform in $x_0$.

Since $x_0$ does not play any role in the analysis, we simply drop it from our 
notations. 

We copy the interaction functional from \eqref{interaction-bi} below:
\begin{equation*}
 \bI_{jk}(v,w) = \iint_{x > y} \ms_j(v)(x) \ps_{k}(w)(y)    
    - \ps_{j}(v)(x) \ms_k(w)(y) \,dx dy .
\end{equation*}
Its time derivative is given in \eqref{interaction-xi-AB}, by
\begin{equation*} \frac{d}{dt} \bI_{jk} =  \bJ^4_{jk} + \bJ^6_{jk} + \bJ^8_{jk} + 
    \bK^{\geq 6}_{jk} .
\end{equation*}
Following the same pattern as in the earlier case of the localized interaction 
Morawetz argument, we will estimate each of these
terms as follows:
\begin{equation}\label{Ijk-bound}
    |\bI_{jk}(v,w)| \lesssim  (\dyad^{j} + \dyad^k) \dyad^{-2sj}\dyad^{-2sk}
    \epsilon^4 c_j^2 c_k^2  ,
\end{equation}
\begin{equation}\label{J4jk-formula}
  \int_0^T\bJ^4_{jk}(v,w) \, dt= 4 \| \partial_x (P_j v P_k\bw)\|_{L^2_{t,x}}^2   , 
\end{equation}
\begin{equation}\label{J6jk-bound}
\left|\int_0^T\bJ^6_{jk}\, dt \right| 
    \lesssim C^2 \epsilon^6 (\dyad^{j} + \dyad^k) \dyad^{-2sj}\dyad^{-2sk}
     c_j^2 c_k^2 ,
\end{equation}
\begin{equation}\label{J8jk-bound}
\left|\int _0^T\bJ^8_{jk}\, dt\right| \lesssim  C^2 \epsilon^8  c_j^2 c_k^2
    (\dyad^{j} + \dyad^k) \dyad^{-2sj}\dyad^{-2sk} ,
\end{equation}
\begin{equation}\label{Kg6jk-bound}
    \left| \int_0^T \bK^{\geq 6}_{jk}\, dt \right| 
    \lesssim \epsilon^6  C^6  c_j^2 c_k^2 
    (\dyad^{j} + \dyad^k) \dyad^{-2sj}\dyad^{-2sk}.
\end{equation}

Again, the $J^4_{jk}$ bound follows from the linear computation
in Section~\ref{s:Morawetz-lin}.

\subsubsection{ The fixed time estimate for $\bI_{jk}$}
This estimate follows in exactly the same way as 
in the localized case by using 
the bounds \eqref{B4m_L1_re}, \eqref{B4p_L1_re}, \eqref{M-L1} and 
\eqref{P-L1}. 

\subsubsection{ The bound for $\bJ^6_{jk}$}
Here we prove the bound for $\bJ^6_{jk}$ in \eqref{J6jk-bound}. 
We recall from \eqref{eq:J6jk_def} that $\bJ^6_{jk}$ has the form
\begin{equation*}
\begin{aligned}
    \bJ^6_{jk}(v,w) &= \int -( \bbP_{j}(v) B^{4,\bal}_{p,k}(w)
    + \bbP_{k}(w) R^{4,\bal}_{m,j}(v)) + ( 
    \bbM_j(v) R^{4,\bal}_{p,k}(w) +\bbE_{k}(w) B^{4,\bal}_{m,j}(v))\\  
    &\qquad -( \bbP_{k}(w) B^{4,\bal}_{p,j}(v)
    + \bbP_{j}(v) R^{4,\bal}_{m,k}(w))+ ( 
    \bbM_k(w) R^{4,\bal}_{p,j}(v) +\bbE_{j}(v) B^{4,\bal}_{m,k}(w))\, dx.
\end{aligned}
\end{equation*}
Again, note that all $v$ frequencies will be comparable 
to $\dyad^j$ and all $w$ frequencies will be comparable 
to $\dyad^k$ because of the choices in Proposition~\ref{prop:symbols}.

The symbols for the $\bbM_j$, $\bbP_j$ and $\bbE_j$ factors have size 
$1$, $\dyad^j$ and $\dyad^{2j}$ respectively and similarly for $k$. 
Conversely, by Proposition~\ref{prop:symbols}, 
$B^{4,\bal}_{m,j}$, $B^{4,\bal}_{p,j}$, $R^{4,\bal}_{m,j}$
and $R^{4,\bal}_{p,j}$ have sizes
$\dyad^{-2j}$, $\dyad^{-j}$, $\dyad^{-j}$, and $1$ respectively and 
similarly for $k$.

Since we have chosen $j \gg k$, the term with the 
largest symbol size will be $\bbE_j(v) B^{4,\bal}_{m,k}$
with symbol size $\dyad^{2j}\dyad^{-2k}$. One might view 
this as the worst term because of this symbol size, but note 
that we also have four $w$'s at low frequency in the above. However, 
according to the structure above when we have many copies of $v$ 
at high frequency, we also have a better symbol size. These effects 
will cancel each other. We thus prove just the two extreme cases
$\bbE_j(v) B^{4,\bal}_{m,k}$ and $\bbM_k(w)R^{4,\bal}_{p,j}$
for brevity.

For both of these we use two bilinear $L^2_{t,x}$ bounds by 
\eqref{eq:uj_sep_bi_boot}
and two $L^\infty_{t,x}$ bounds via Bernstein's inequality by 
\eqref{eq:uj_ee}. Note that while there is no spatial derivative 
present for $\eqref{eq:uj_sep_bi_boot}$, since the frequencies 
are separated we will use the fact that $\dyad^{-j}\partial $ is 
a size one multiplier on $(P_j v P_k w)$.
\begin{align*}
\left|\int_0^T\int \bbE_j(v) B^{4,\bal}_{m,k}\, dx dt \right| 
    &\lesssim \dyad^{2j} \dyad^{-2k} \|P_{j} v P_{\cong k} w\|_{L^2_{t,x}}^2
    \|P_{\cong k} w\|_{L^\infty_{t,x}}^2\\
    &\lesssim C^2 \epsilon^6 c_j^2 c_k^4\dyad^{2j} \dyad^{-2k} 
    (\dyad^{-j/2}\dyad^{-sj}\dyad^{-sk})^2
    (\dyad^{k/2}\dyad^{-sk})^2\\
    &\lesssim C^2 \epsilon^6 c_j^2 c_k^2\dyad^{j} \dyad^{-2sj}\dyad^{-2sk}
    \dyad^{(-1 - 2s)k} 
\end{align*}
which suffices because $-1 - 2s < 0$.

\begin{align*}
\left|\int_0^T\int \bbM_k(w)R^{4,\bal}_{p,j}\, dx dt \right| 
    &\lesssim \|P_{\cong j} v P_{k} w\|_{L^2_{t,x}}^2
    \|P_{\cong j} v\|_{L^\infty_{t,x}}^2\\
    &\lesssim C^2 \epsilon^6 c_j^4 c_k^2 
    (\dyad^{-j/2}\dyad^{-sj}\dyad^{-sk})^2
    (\dyad^{j/2}\dyad^{-sj})^2\\
    &\lesssim C^2 \epsilon^6 c_j^2 c_k^2\dyad^{j} \dyad^{-2sj}\dyad^{-2sk}
    \dyad^{(-1 - 2s)j} 
\end{align*}
which again suffices because $-1 - 2s < 0$.


\subsubsection{ The bound for $\bJ^8_{jk}$}
Here we prove the bound \eqref{J8jk-bound}.
We recall from \eqref{eq:J8jk_def} that $\bJ^8_{jk}$ has the form
\[
    \bJ^8_{jk} = \iint B^{4,\bal}_{m,j}(v) R^{4,\bal}_{p,k}(w) 
    - B^{4,\bal}_{p,k} (w) R^{4,\bal}_{m,j}(v)
+ B^{4,\bal}_{m,k}(w) R^{4,\bal}_{p,j}(v) - B^{4,\bal}_{p,j}(v) R^{4,\bal}_{m,k}(w) 
\, dxdt.
\]
Here there is unambiguously a worst term which is 
$B^{4,\bal}_{m,k}(w) R^{4,\bal}_{p,j}(v)$ 
since the symbol size is $\dyad^{-2k}$ and every term 
has four $v$'s and four $w$'s. Thus it will suffice to 
control this term. We use two bilinear $L^2_{t,x}$ bounds by 
\eqref{eq:uj_sep_bi_boot} and four $L^\infty_{t,x}$ bounds 
via Bernstein's inequality by \eqref{eq:uj_ee}.

\begin{align*}
\left|\int_0^T\int B^{4,\bal}_{m,k}(w) R^{4,\bal}_{p,j}(v)\, dx dt \right| 
    &\lesssim  \dyad^{-2k} \|P_{\cong j} v P_{\cong k} w\|_{L^2_{t,x}}^2
    \|P_{\cong j} v\|_{L^\infty_{t,x}}^2\|P_{\cong k} w\|_{L^\infty_{t,x}}^2\\
    &\lesssim C^2 \epsilon^8 c_j^4 c_k^4 \dyad^{-2k} 
    (\dyad^{-j/2}\dyad^{-sj}\dyad^{-sk})^2
    (\dyad^{j/2}\dyad^{-sj})^2(\dyad^{k/2}\dyad^{-sk})^2\\
    &\lesssim C^2 \epsilon^8 c_j^2 c_k^2\dyad^{j} \dyad^{-2sj}\dyad^{-2sk}
    \dyad^{(-1 - 2s)j} \dyad^{(-1 - 2s)k} 
\end{align*}
which suffices since $-1-2s < 0$.

\subsubsection{ The bound for   $\bK^{\geq 6}_{jk}$} 
Again, we use the 
$L^1_{t,x}$ bounds for $Q^{4,\bal}_{m,j}$ in Lemma~\ref{l:Q4_space_time}, 
$N_{m,j}^{\geq 4, \unbal}$ in Lemma~\ref{l:Nm_space_time}, $R^{\geq 6}_{m,j}$ 
in Lemma~\ref{l:R6_space_time} and the uniform $L^1_x$ bound
for $\ms$ and $\ps$, provided by Lemma~\ref{l:B4_multi} and 
together with the simpler bound \eqref{M-L1} as well 
as their momentum counterparts given as corollaries below each lemma.

Unlike the localized case, here some terms are worse since 
the symbol size of the momentum may land on the larger 
frequency. However, this is exactly accounted for 
in \eqref{Kg6jk-bound} by the fact that 
$(\dyad^j + \dyad^k) \sim \dyad^j$.


\section{Global bilinear and Strichartz estimates}
\label{s:global}
Our objective in this last section is to complete the 
proofs of Theorems \ref{t:main} and \ref{t:NLS-Hs}. 

For Theorem 
\ref{t:main}, the $H^s$ bound follows directly from \eqref{eq:uj_ee} in Theorem
\ref{t:boot}. All that remains is to supplement the frequency localized 
bilinear $L^2_{t,x}$ and Strichartz estimates of Theorem \ref{t:boot}
with their more global counterparts. Unlike in previous work 
\cite{IT-global}, 
since the scale on which the bilinear $L^2_{t,x}$ bound holds depends on
the frequency, we cannot write a global bilinear bound which 
depends on $|u|^2$. Further, this discrepancy in the size where 
we control the $L^2_{t,x}$ bound causes a log loss when the frequencies are 
separated.

To capture this 
we can use the paraproduct notation:
\[
    T_f g = \sum_{j \ll k} P_j f P_k g
\]
and 
\[
    \Pi(f, g) = \sum_{j \sim k} P_j f P_k g.
\]
Now we are ready to state the global estimates.

\begin{proposition}
The global small data solutions $u$ for \eqref{nls}
obtained from Theorem~\ref{t:boot}  and the continuation argument satisfy the following bounds: 

\begin{itemize}
    \item Strichartz estimate:
    \begin{equation}\label{eq:global_se}
        \| \la D\ra^{-(1-4s)/6} u\|_{L^6_{t,x}}^6 \lesssim \epsilon^4,
    \end{equation}
    \item Bilinear $L^2_{t,x}$ bound:
    \begin{equation}\label{eq:global_bi_para}
    \begin{aligned}
        \|\partial_x T_{\la D\ra^{s-\delta_0} \bar u} \la D\ra^{s-1/2}u\|_{L^2_{t,x}}^2 
        &\lesssim \epsilon^4.\\
        \|\partial_x \Pi(\la D\ra^{s-1/4}u,\la D\ra^{s-1/4}\bar u)\|_{L^2_{t,x}}^2 
        & \lesssim \epsilon^4.
    \end{aligned}
    \end{equation}
    where $\delta_0 > 0$ is an arbitrary constant.

\end{itemize}
\end{proposition}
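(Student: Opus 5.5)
The plan is to sum the frequency localized bounds of Theorem~\ref{t:boot}, which hold globally in time by the continuation argument, over the dyadic decomposition. The tools are the square-summability $\|c\|_{\ell^2}\approx 1$ and the one-sided slowly varying property of the envelope (Definition~\ref{def:admissible_envelope}). Almost orthogonality will be handled either by the triangle inequality with geometric decay, or by square-function bounds in $L^6$.

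\emph{Strichartz bound.} We write $\la D\ra^{-(1-4s)/6}u=\sum_j \la D\ra^{-(1-4s)/6}P_j u$. Since $\la D\ra^{-(1-4s)/6}$ acts on the range of $P_j$ as a symbol smooth on the dyadic scale of size $\dyad^{-(1-4s)j/6}$, Lemma~\ref{lem:sep} together with \eqref{eq:uj_se} gives
\[
\|\la D\ra^{-(1-4s)/6}P_j u\|_{L^6_{t,x}}\lesssim (\epsilon c_j)^{2/3}.
\]
By the Littlewood--Paley square function estimate in $L^6_x$ (used pointwise in time) followed by Minkowski's inequality in $L^3$,
\[
\|\la D\ra^{-(1-4s)/6}u\|_{L^6_{t,x}}^6 \lesssim \Big\|\Big(\sum_j |\la D\ra^{-(1-4s)/6}P_ju|^2\Big)^{1/2}\Big\|_{L^6_{t,x}}^6 \le \Big(\sum_j \|\la D\ra^{-(1-4s)/6}P_ju\|_{L^6_{t,x}}^2\Big)^{3}.
\]
The right-hand side is $\lesssim (\sum_j \epsilon^{4/3}c_j^{4/3})^3$. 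Since $c\in\ell^2$ only, $\ell^{4/3}$ summability is not automatic. If this gap cannot be closed, I would instead write the sixth power as a sum over $j_1\le\dots\le j_6$ and use the off-diagonal decay: when $j_6\gg j_5$, frequency considerations force at least two top frequencies, and then the transversal bilinear bound \eqref{eq:uj_sep_bi} yields decay in $j_6-j_1$. A cleaner fix is to sum the localized bounds as $\sum_j (\epsilon c_j)^{4}$ via $\|P_ju\|_{L^6}^6\lesssim \epsilon^4c_j^4\dyad^{(1-4s)j}$, which suggests the stated $\epsilon^4$. I expect the diagonal part to give exactly $\sum_j \epsilon^4c_j^4\lesssim\epsilon^4$. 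The off-diagonal part should be controlled by the bilinear estimates, giving a gain of order $\dyad^{-(\frac12+s)|j-k|}$ after weights, and then Schur's test finishes.

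\emph{Bilinear bounds.} For the high--high paraproduct, each term $\partial_x(P_j\la D\ra^{s-1/4}u\,P_k\la D\ra^{s-1/4}\bar u)$ with $j\sim k$ is bounded by \eqref{eq:uj_sep_bi} (using Lemma~\ref{lem:sep} to absorb the weights) by $\epsilon^2c_jc_k\dyad^{j/2}\dyad^{-sj-sk}\dyad^{(s-1/4)(j+k)}\approx\epsilon^2c_j^2$. The outputs of different $j$ are not orthogonal, since high--high products land at all lower frequencies. I therefore decompose the output as $P_\ell$ with $\ell\lesssim j$ and estimate $\|\partial_x P_\ell(\cdots)\|_{L^2}$. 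The symbol $\xi\,p_\ell(\xi)$ of the output derivative costs $\dyad^{\ell}$, while the bilinear estimate controls $\partial$ of the localized product in the form it appears. Bernstein relative to \eqref{eq:uj_sep_bi} then gains $\dyad^{(\ell-j)/2}$, using the translation-uniform form together with the output localization. Summing $\ell$ first and then $j$ by Cauchy--Schwarz gives $\epsilon^4$. For the low--high paraproduct, the outputs sit at frequency $\sim\dyad^k$ and are almost orthogonal in $k$. For fixed $k$ I sum over $j\ll k$:
\[
\sum_{j\ll k}\epsilon^2c_jc_k\dyad^{k/2}\dyad^{-sj-sk}\dyad^{(s-\delta_0)j}\dyad^{(s-1/2)k}=\epsilon^2 c_k\sum_{j\ll k}c_j\dyad^{-\delta_0 j}\lesssim \epsilon^2c_k.
\]
Square-summing in $k$ gives $\epsilon^4$. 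The $\delta_0$ loss is exactly what absorbs the logarithm.

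The main obstacle is the Strichartz summation: the localized bound carries the power $c_j^{2/3}$, so naive summation fails. The off-diagonal interactions in the sixth power must be controlled using bilinear estimates, and that step is where I would concentrate effort.
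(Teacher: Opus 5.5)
Your low--high paraproduct argument is correct and matches the paper's: the paper expands the square and uses $k_1\sim k_2$, which is the same almost orthogonality. The other two parts have gaps.

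\emph{Strichartz bound.} Your fallback plan is exactly what the paper does: expand the sixth power, use that the two top frequencies must be comparable, and bound the all-comparable block by $\sum_j\epsilon^4c_j^4\lesssim\epsilon^4$. However, the off-diagonal blocks, which you correctly identify as the crux, are only asserted. They close by a two-case count, with $j_1$ labelling a top index.
\begin{itemize}
\item \textbf{Exactly one index, say $j_6$, lies far below $j_1$, and the rest are comparable to $j_1$.} Pair $P_{j_1}u$ with $P_{j_6}u$ in \eqref{eq:uj_sep_bi}; the separation converts $\partial_x$ into a factor $\dyad^{-j_1}$. Put one more top factor in $L^\infty_{t,x}$ by Bernstein and \eqref{eq:uj_ee}, and the other three in $L^6_{t,x}$ by \eqref{eq:uj_se}. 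With the weights this gives $\epsilon^5c_{j_1}^4c_{j_6}\dyad^{-(1+2s)j_1/3}\dyad^{-(1+2s)j_6/6}$.
\item \textbf{At least two indices satisfy $j_5,j_6\ll j_1\sim j_2$.} Use two bilinear bounds for $(j_1,j_5)$ and $(j_2,j_6)$, and Bernstein on $j_3,j_4\lesssim j_1$. After summing the lower indices this gives $\epsilon^6\dyad^{-\frac23(1+2s)j_1}$.
\end{itemize}
The decay here is absolute in each index, so no Schur-type argument is needed. Since $\frac{1-4s}{6}=-s+\frac{1+2s}{6}$, every factor controlled through an $L^2$-based bound (inside a bilinear estimate, or in $L^\infty$ via Bernstein) gains $\dyad^{-(1+2s)j/6}$. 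The bilinear bounds supply exactly the $\dyad^{-j/2}$ needed to pay for the Bernstein losses. Hence these blocks sum using only $c_j\lesssim 1$, and summability of the envelope is needed only in the diagonal block.

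\emph{High--high bilinear bound.} The gain $\dyad^{(\ell-j)/2}$ you claim for $P_\ell\partial_x(\cdots)$ is not available.
\begin{itemize}
\item \eqref{eq:uj_loc_bi} and \eqref{eq:uj_sep_bi} control only the full localized product, and $P_\ell$ is merely bounded on $L^2$.
\item Bernstein at the output scale would need $L^2_tL^1_x$ control of the product, i.e.\ $L^4_tL^2_x$ bounds on the factors, and the bootstrap norms do not provide these.
\end{itemize}
Without that gain, your $\ell$-summation followed by Cauchy--Schwarz is no longer controlled by $\|c\|_{\ell^2}$, since each $j$ feeds about $j$ output blocks. The detour is also unnecessary. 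The per-term bound $\epsilon^2c_j^2$ you already computed is quadratic in the envelope. So the triangle inequality over the pairs $j\sim k$ gives $\sum_j\epsilon^2c_j^2\lesssim\epsilon^2$ directly, with no orthogonality needed. This is the paper's argument.
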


\begin{proof}
    We prove the estimates in turn.
    \bigskip

    \emph{A. The global $L^6_{t,x}$ estimate.} We prove the $L^6_{t,x}$ estimate 
    \eqref{eq:global_se} using \eqref{eq:uj_ee}, \eqref{eq:uj_se}, and 
    \eqref{eq:uj_sep_bi}. 

First, we use a spatial dyadic decomposition  in the integral
\begin{align*}
I = \iint_{\R \times \R} |\la D\ra^{-(1-4s)/6} u|^6 \, dx dt
= \sum_{j_1,\ldots, j_6} \iint_{\R \times \R} 
    \la D\ra^{-(1-4s)/6} P_{j_1}u\cdots \la D\ra^{-(1-4s)/6}P_{j_6}\bar u\, dx dt.
\end{align*}

Notice that for a particular combination of dyadic intervals 
to contribute to the sum, we must have that the highest two are comparable. We 
split the remaining contributing cases into three cases:
\begin{enumerate}
    \item All frequencies are comparable $j_1 \sim \cdots \sim j_6$.
    \item Exactly one frequency is much smaller than the highest, without loss of 
        generality assume it is $j_6$. (Note $\|u \bar v\|_{L^2_{t,x}} = \| u v\|_{L^2_{t,x}}$).
    \item Two or more frequencies are much smaller than the highest two. Without 
        loss of generality, we assume $j_1$ and $j_2$ are highest and that 
        $j_5$ and $j_6$ are smaller.
\end{enumerate}
Based on this we split 
\[I = I_1 + I_2 + I_3\]
where 
\[I_1 = \sum_{j_1 \sim \cdots \sim j_6}
\iint_{\R \times \R} 
    \la D\ra^{-(1-4s)/6} P_{j_1}u\cdots \la D\ra^{-(1-4s)/6}P_{j_6}\bar u\, dx dt,
\]
\[I_2 = \sum_{j_1 \sim \cdots \sim j_5} \sum_{j_6 \ll j_1}
\iint_{\R \times \R} 
    \la D\ra^{-(1-4s)/6} P_{j_1}u\cdots \la D\ra^{-(1-4s)/6}P_{j_6}\bar u\, dx dt,
\]
\[I_3 = \sum_{j_1 \sim j_2} \sum_{j_5,j_6 \ll j_1}\sum_{j_3,j_4 \lesssim j_1}
\iint_{\R \times \R} 
    \la D\ra^{-(1-4s)/6} P_{j_1}u\cdots \la D\ra^{-(1-4s)/6}P_{j_6} \bar u\, dx dt,
\]
We analyze each of these separately. 

For $I_1$ we use the $L^6_{t,x}$ bound:
\[I_1 \lesssim \sum_{j_1} \epsilon^4 c_{j_1}^4 \lesssim \epsilon^4,\]
which suffices. Note that while here we required $\ell^4$ summability of 
the frequency envelope $c_j$, for the remaining two estimates 
the loss of derivatives in the $L^6_{t,x}$ estimate will 
allow for these to sum regardless of summability in $c_j$.

    For $I_2$ we use one bilinear $L^2_{t,x}$ bound between $P_{j_1}$ and 
    $P_{j_6} u$, one $L^\infty_{t,x}$ on $P_{j_2}u$ via Bernstein's 
    inequality, and the remaining factors we use $L^6_{t,x}$: 
\begin{align*}
    I_2 &\lesssim \epsilon^5 
    \sum_{j_1} \sum_{j_6 \ll j_1}\dyad^{-5(1-4s)j_1/6}
    \dyad^{-(1-4s)j_6/6}(\dyad^{(-1/2-s)j_1}\dyad^{-sj_6})\dyad^{(1/2 - s)j_1}
    \dyad^{(1-4s)j_1/2}
    c_{j_1}^4 c_{j_6}\\
        &=\epsilon^5 
\sum_{j_1} \dyad^{-(1+2s)j_1/3}c_{j_1}^4 
    \sum_{j_6 \ll j_1} \dyad^{- (1+2s)j_6/6} c_{j_6}\\
\end{align*}
Now, since $-1 - 2s < 0$, both dyadic sums converge giving
\[
    I_2 \lesssim \epsilon^5
\]
which suffices. 

Finally, for $I_3$ we use two bilinear $L^2_{t,x}$ bounds \eqref{eq:uj_sep_bi} 
pairing $P_{j_1}uP_{j_5}u$ 
and $P_{j_2}uP_{j_6}u$ and two $L^\infty_{t,x}$ $\eqref{eq:uj_ee}$ 
bounds via Bernstein's inequality on 
$P_{j_3}u$ and $P_{j_4}u$. We get 
\begin{alignat*}{2}
    I_3 &\lesssim \epsilon^6\sum_{j_1}\sum_{j_3,j_4 \lesssim j_1}
    \sum_{j_5,j_6 \ll j_1}&&
    \dyad^{-2(1-4s)j_1/6}\dyad^{-(1-4s)j_3/6}\dyad^{-(1-4s)j_4/6}
\dyad^{-(1-4s)j_5/6}\dyad^{-(1-4s)j_6/6}\\
    & &&(\dyad^{(-1 - 2s)j_1}\dyad^{-sj_5}\dyad^{-sj_6}c_{j_1}^2c_{j_5}c_{j_6})
    (\dyad^{(1/2-s)j_3}\dyad^{(1/2-s)j_4}c_{j_3}c_{j_4}) \\
    &\lesssim \epsilon^6\sum_{j_1}\dyad^{-j_1 + (-1 - 2s)j_1/3}
    \sum_{j_3,j_4 \lesssim j_1}&& \dyad^{j_3/2 + (-1 - 2s)j_3/6}\dyad^{j_4/2 
    + (-1 - 2s)j_4/6}
    \sum_{j_5,j_6 \ll j_1}
\dyad^{(-1-2s)j_5/6}\dyad^{-(1+2s)j_6/6}\\
        &\lesssim \epsilon^6\sum_{j_1} \dyad^{-\frac23(1 + 2s)j_1}\\
        &\lesssim \epsilon^6
\end{alignat*}
which suffices.

~

\emph{B. The global bilinear $L^2_{t,x}$ estimate.} 
We prove 
the estimate \eqref{eq:global_bi_para}
using the localized estimates \eqref{eq:uj_loc_bi} 
and \eqref{eq:uj_sep_bi}.

\emph{B1:} We start with the nearby piece:

We expand the definition of $\Pi$:

\begin{align*}
    \|\partial_x \Pi(\la D \ra^{s-1/4} \bar u, 
    \la D\ra^{s-1/4}u)\|_{L^2_{t,x}}
    &= \|\sum_{j_1 \sim k_1}
    \partial_x \left(\la D\ra^{s-1/4} P_{j_1}\bar u 
    \la D\ra^{s-1/4}P_{k_1}u\right)\|_{L^2_{t,x}}
\end{align*}
Here we estimate this as 
\begin{align*}
    \|\partial_x \Pi(\la D\ra^{s-1/4} \bar u, 
    \la D\ra^{s-1/4}u)\|_{L^2_{t,x}}
    &= \|\sum_{j \sim k}
    \partial_x \left(\la D\ra^{s-1/4} P_{j}\bar u 
    \la D\ra^{s-1/4}P_{k}u\right)\|_{L^2_{t,x}}\\
    &\leq \sum_{j \sim k} \|
    \partial_x \left(\la D\ra^{s-1/4} P_{j}\bar u 
    \la D \ra^{s-1/4}P_{k}u\right)\|_{L^2_{t,x}}\\
    &\lesssim \sum_{j \sim k} \dyad^{-j/2}\dyad^{2sj}
    \|\partial(P_{j}u P_{k}\bar u)\|_{L^2_{t,x}}\\
    &\lesssim \sum_{j \sim k} \epsilon^2 c_j^2\\
    &\lesssim \epsilon^2 
\end{align*}

\emph{B2:} Next we handle the separated piece. In this piece we have 
a $\delta_0$ loss from the separation of the low frequencies:

We expand the definition of $T$ in:
\begin{align*}
    I &=  \iint_{\R \times \R} \lvert\partial_x T_{\la D\ra^{s-\delta_0} \bar u} \la D
    \ra^{s-1/2}u\rvert^2 \, dxdt\\
    &= \sum_{j_1 \ll k_1}\sum_{j_2 \ll k_2}
    \iint_{\R \times \R} \partial_x \left(\la D\ra^{s-\delta_0} P_{j_1}\bar u 
    \la D\ra^{s-1/2}P_{k_1}u\right)\partial_x\left( \la D\ra^{s-\delta_0} P_{j_2}u 
    \la D \ra^{s-1/2}P_{k_2} \bar u\right) \, dxdt
\end{align*}
Because of the orthogonality of the Littlewood Paley projectors, 
for a particular combination of $j_1, k_1, j_2, k_2$ to 
contribute to the sum, we must have that $k_1 \sim k_2$. Then 
without loss of generality, let us assume that $j_1 \leq j_2$.

Then we use two bilinear $L^2_{t,x}$ estimates \eqref{eq:uj_sep_bi}.

We can see that 
\begin{align*}
    I \lesssim &\sum_{j_1 \leq j_2 \ll k_2 \sim k_1}
    \dyad^{k_1/2}\dyad^{j_1(s - \delta_0)}\dyad^{k_1s}
    \dyad^{k_2/2}\dyad^{j_2(s - \delta_0)}\dyad^{k_2s}
    \sup_{x_0, x_1}\|P_{k_1}uP_{j_1}\bar u(\cdot + x_0)\|_{L^2_{t,x}}  
    \|P_{k_2}uP_{j_2}\bar u(\cdot + x_1)\|_{L^2_{t,x}}\\
\lesssim &\epsilon^4 \sum_{j_1 \leq j_2 \ll k}
    c_{k}^2 \dyad^{-\delta_0j_1}\dyad^{-\delta_0j_2}c_{j_1} c_{j_2}\\
\lesssim &\epsilon^4 \sum_{k} c_{k}^2\\
\lesssim &\epsilon^4 
\end{align*}

\end{proof}

Finally, we prove Theorem \ref{t:NLS-Hs} in the case of \eqref{nls3}$(+)$ by 
rescaling. In particular, we use the scaling symmetry
\[
u^{\lambda}(x,t) = \lambda^{-1} u(x/\lambda,  t/\lambda^2).
\]
of \eqref{nls3}$(+)$. 

Since Theorem \ref{t:main} is an inhomogeneous result, the low 
frequencies will behave differently than the high frequencies. 
In particular, our proof of Theorem \ref{t:boot} allows arbitrary
mixing of frequencies smaller than $1$ which was 
important since we required negative dyadic factors like $\dyad^{(-1/2-s)j}$
to be bounded for all $j$. However, rescaling changes where frequency $1$ lives,
and thus, controls the threshold between dyadic frequencies being 
controlled by initial frequency envelopes and low frequency mixing. Thus, 
the most precise large data global frequency result uses 
inhomogeneous Sobolev spaces adapted to the frequency 
scale on which mixing happens.
We see this in the 
following proposition, from which Theorem \ref{t:NLS-Hs} will be a corollary.

\begin{proposition}\label{prop:large}
Consider the defocusing 1-d cubic NLS problem \eqref{nls3}$(+)$ with $H^s$
initial data $\du_0$ and fix $-\frac12 < s < 0$. Let 
\[
    \lambda = (1+\|\du_0\|_{H^s})^{\frac2{1+2s}}
\]
   and put 
    \[
        \la \xi \ra_\lambda = (|\xi|^2 + \lambda^2)^{1/2} \qquad \mbox{with } 
        \la D\ra_\lambda \mbox{ the associated
        Fourier multiplier.}
    \]
    Then the global solution $u$ 
satisfies the following bounds:
\begin{enumerate}[label=(\roman*)]
\item Uniform $H^s$ bound:
\begin{equation}\label{main-Hs-model+}
    \|\la D\ra_\lambda^{s} u \|_{L^\infty_t L^2_x} 
    \lesssim \|\du_0\|_{H^s_x}.
\end{equation}

\item Strichartz bound:
\begin{equation}\label{main-s-Str-model+}
    \|\la D\ra_\lambda^{-\frac{1-4s}{6}} u \|_{L^6_{t,x}} 
    \lesssim \|\du_0\|_{H^s_x}^{2/3}. 
\end{equation}

\item Bilinear Strichartz bound: for every $\delta_0 > 0$ 

\begin{equation}\label{main-bi-Hs-model+}
\begin{aligned}
    \big\| \partial_x\, T_{\la D\ra_\lambda^{s-\delta_0}\bar u}\, 
    \la D\ra_\lambda^{s-\frac12} u
    \big\|_{L^2_{t,x}} &\lesssim \|\du_0\|_{H^s}^2
    (1 + \|\du_0\|_{H^s}^{2\delta_0/(1+2s)})^{-1},\\[2pt]
    \big\| \partial_x\, \Pi\big(\la D\ra^{s-\frac14}_{\lambda} u,\, 
    \la D\ra^{s-\frac14}_{\lambda}\bar u\big)
\big\|_{L^2_{t,x}} &\lesssim \|\du_0\|_{H^s}^2.
\end{aligned}
\end{equation}

\end{enumerate}
\end{proposition}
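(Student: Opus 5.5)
The plan is to reduce Proposition~\ref{prop:large} to the small data results (Theorem~\ref{t:main}, or more precisely its frequency envelope form Theorem~\ref{t:boot} together with the global estimates proved just above) by the scaling $u^\mu(x,t) = \mu^{-1}u(x/\mu, t/\mu^2)$, which preserves \eqref{nls3}$(+)$. Set $A = \|\du_0\|_{H^s}$. If $A \lesssim \epsilon$ then $\lambda \approx 1$, $\la D\ra_\lambda \approx \la D\ra$, and the statement is exactly the small data result, so I will assume $A \gg \epsilon$. I will choose $\mu = \lambda^{-1}$ (up to a fixed constant), so that frequency $\xi$ for $u$ corresponds to frequency $\mu^{-1}\xi = \lambda\xi$ for $u^\mu$. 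Then $\la \lambda\xi\ra = \la\xi\ra_\lambda$, which makes the inhomogeneous weights $\la D\ra$ for $u^\mu$ correspond exactly to the weights $\la D\ra_\lambda$ for $u$.

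\textbf{Step 1: smallness after rescaling.} I will compute $\widehat{u^\mu_0}(\xi) = \hat\du_0(\mu\xi)$, so $\|u^\mu_0\|_{H^s}^2 = \mu^{-1}\int \la\xi/\mu\ra^{2s}|\hat \du_0(\xi)|^2\,d\xi$. For $\mu \le 1$ one has $\la \xi/\mu\ra^{2s} \le \mu^{-2s}\la\xi\ra^{2s}$ because $s<0$, which gives
\[
\|u^\mu_0\|_{H^s} \le \mu^{-\frac12 - s} A = \lambda^{\frac12+s}A .
\]
This has the wrong sign, so I have to redo the direction. The correct choice is $\mu = \lambda \ge 1$, i.e.\ a compression of frequencies. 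The gain then comes from $\mu^{-1/2-s}$ acting on high frequencies, while the low frequencies $|\xi|\lesssim \mu$ are only bounded by $\mu^{-1/2}$ times the low frequency $L^2$ mass. Since the weight ratio satisfies $\la\xi/\mu\ra^{2s}\le \la\xi\ra^{2s}\mu^{-2s}$ for all $\xi$, I get $\|u^\mu_0\|_{H^s}\lesssim \mu^{-1/2-s}A$. Choosing $\lambda = (1+A/\epsilon)^{2/(1+2s)}$ up to constants makes this $\le \epsilon$. Moreover, by the same weight comparison, $\|u^\mu_0\|_{H^s} \approx \mu^{-1/2-s}\|\la D\ra_\lambda^s \du_0\|_{L^2}$ exactly, with $\la D\ra_\lambda$ as in the statement. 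In Proposition~\ref{prop:large} the dependence on $\epsilon$ is absorbed into the implicit constants.

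\textbf{Step 2: apply the small data theorem and scale back.} For $u^\mu$ I will use \eqref{main-Hs}, \eqref{eq:global_se}, and \eqref{eq:global_bi_para}. Each norm is undone by the change of variables. The relations $\|\la D\ra^s u^\mu\|_{L^2} = \mu^{-1/2-s}\|\la D\ra_\lambda^s u\|_{L^2}$ give \eqref{main-Hs-model+}. For the $L^6_{t,x}$ norm, the Jacobian factor $\mu^{3/6}=\mu^{1/2}$ combines with $\mu^{-1}$ and the weight $\mu^{(1-4s)/6}$ to give a total factor $\mu^{-(1+2s)/3}$. This matches $\epsilon^{2/3}\sim (\mu^{-1/2-s}A)^{2/3}$ and yields $A^{2/3}$. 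I will check the analogous exponent bookkeeping for the bilinear bounds. The $\Pi$ term is balanced and scales exactly like $A^2$. The paraproduct term is the main obstacle. After rescaling, the weight $\la D\ra^{s-\delta_0}$ on the low factor is not homogeneous of the same degree as the others, and it leaves an extra factor $\mu^{-\delta_0}$. This factor produces the improvement $(1+A^{2\delta_0/(1+2s)})^{-1}$. I will verify this by tracking $\la\lambda\xi\ra^{s-\delta_0} = \lambda^{-\delta_0}\la\xi\ra_\lambda^{s}\la \xi\ra_\lambda^{-\delta_0}\cdot\lambda^{\delta_0}$ carefully, i.e.\ by comparing $\la D\ra^{-\delta_0}$ to $\lambda^{-\delta_0}\la D\ra_\lambda^{-\delta_0}$. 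The paraproduct cutoffs $j\ll k$ shift under rescaling by roughly $\log\lambda$ dyadic steps. This shift is harmless because the dyadic decomposition after rescaling is again a Littlewood–Paley decomposition up to bounded overlap.
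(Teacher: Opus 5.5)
Your route is the paper's route. You rescale by $\lambda\ge 1$ so that $\widehat{\du_0^\lambda}(\xi)=\hat\du_0(\lambda\xi)$, note that $\|\du_0^\lambda\|_{H^s}=\lambda^{-1/2-s}\|\la D\ra_\lambda^s\du_0\|_{L^2}\le\lambda^{-1/2-s}\|\du_0\|_{H^s}\le\epsilon$, apply the small data bounds, and scale back. Your exponents for the energy, $L^6_{t,x}$ and $\Pi$ bounds are right. You also correctly trace the gain $\lambda^{-\delta_0}$ in the $T$ bound to the mismatched degree of the weight $\la D\ra^{s-\delta_0}$. There are, however, two things to fix.

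First, your opening paragraph rests on the identity $\la\lambda\xi\ra=\la\xi\ra_\lambda$, which is false: $1+\lambda^2\xi^2\neq\lambda^2+\xi^2$ unless $\lambda=1$ or $|\xi|=1$. Your closing weight-tracking formula, once the factors $\lambda^{\pm\delta_0}$ cancel, is the same false identity. Taken literally it would extract no power of $\lambda$ from any weight, which contradicts both your Step 2 factors and the $\lambda^{-\delta_0}$ gain you want. The correct relation, for the direction $\mu=\lambda\ge 1$ you end up using, is $\la\xi\ra^\alpha=\lambda^{-\alpha}\la\lambda\xi\ra_\lambda^\alpha$, i.e.
\[
(\la D\ra^\alpha u^\lambda)(x,t)=\lambda^{-1-\alpha}(\la D\ra_\lambda^\alpha u)(x/\lambda,t/\lambda^2).
\]
This is exactly what the paper uses. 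With it, the $T$ norm of $u^\lambda$ equals $\lambda^{-1-2s+\delta_0}$ times the corresponding norm of $u$, against $\lambda^{-1-2s}$ for $\Pi$. This gives the bounds $A^2\lambda^{-\delta_0}$ and $A^2$ respectively.

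Second, and more substantively, you omit the passage from $L^2$ to $H^s$ data. Theorem~\ref{t:main} and the global estimates behind it are proved only for initial data in $L^2$, since that is where the local theory is available. Here $\du_0$, and hence $\du_0^\lambda$, is only assumed to lie in $H^s$, so you cannot apply those results to $\du_0^\lambda$ directly. The paper first invokes the $H^s$ global well-posedness of \cite{HGKV} for the integrable equation. It approximates $\du_0$ in $H^s$ by $L^2$ data and applies the bounds, which depend only on the $H^s$ size, to the approximate solutions. It then passes to the limit using continuous dependence in $H^s$, which (together with lower semicontinuity of the norms) transfers the bounds. This step is also what gives meaning to \emph{the global solution} $u$ in the statement.

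A smaller point: your claim that the rescaled dyadic decomposition agrees with the original one up to bounded overlap holds only above frequency $\lambda$. Below $\lambda$ the rescaled decomposition has a single block. You should therefore read $T$ and $\Pi$ in the statement as the paraproducts transported by the scaling, which is how the paper's computation implicitly treats them, rather than assert an equivalence.
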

\begin{proof}
First, to access $H^s$ initial data instead of $L^2$ initial data, we use 
the result of \cite{HGKV} which shows that \eqref{nls3} is globally well posed
in the sense of continuous dependence on initial data. We take an $L^2$ 
approximation of our initial data and apply Theorem \ref{t:main} to this 
approximation. Taking limits we then arrive at Theorem \ref{t:main} for $H^s$ 
initial data.

    If $\|\du_0\|_{H^s} \leq \epsilon$ then the proposition follows directly from 
    Theorem \ref{t:main}. Otherwise, we set  
    \[
        \lambda = (\|\du_0\|_{H^s}/\epsilon)^{2/(1+2s)}
    \]
     where $\epsilon$ is the smallness constant\footnote{which is universal, and thus harmlessly discarded in the statement of the theorem}  in Theorem 
    \ref{t:main} 
    and consider the rescaled initial data 
    \[
\du_0^{\lambda}(x) = \lambda^{-1} \du_0(x/\lambda).
    \]
    Note that 
    \[
\hat \du_0^{\lambda}(\xi) = \hat\du_0(\lambda \xi).
    \]
    We calculate that 
    \begin{align*}
        \|\du_0^{\lambda}\|_{H^s} 
        &\sim \|\hat\du_0^\lambda\|_{L^2_{\xi}([-1,1])} 
        + \||\xi|^s \hat \du_0^\lambda\|_{L^2_\xi((-\infty, -1] \cup [1, \infty)))}\\
        &=\lambda^{-1/2}\|\hat\du_0\|_{L^2_{\xi}([-\lambda,\lambda])} 
        + \lambda^{-1/2-s}\||\xi|^s \hat \du_0\|_{L^2_\xi((-\infty, -\lambda] \cup [\lambda, \infty)))}.\\
        &\leq \lambda^{-1/2-s} \|\du_0\|_{H^s} \leq \epsilon.
    \end{align*}
    This holds because 
    $\lambda \geq 1$, and so on $[-\lambda, -1] \cup [1,\lambda]$ we 
    may replace
    \[1 \leq \lambda^{-s}|\xi|^s.\]
    Thus, we may apply Theorem \ref{t:main} to $\du_0^{\lambda}$ to 
    get a solution $u^\lambda$. It will be convenient to note that 
    \[
        \la \xi \ra^\alpha = (1+\xi^2)^{\alpha/2} 
        = \lambda^{-\alpha}(\lambda^2 + (\lambda \xi)^2)^{\alpha/2}
        = \lambda^{-\alpha}\la \lambda \xi\ra_{\lambda}^\alpha
    \]
    and so 
    \[
        \langle D\rangle^\alpha u^\lambda(x) = 
        \lambda^{-1-\alpha}(\langle D \rangle^\alpha_\lambda u)(x/\lambda)
    \]

    (i) For the energy estimate we have 
    \begin{align*}
        1 &\gtrsim \|\la D\ra^s u^\lambda\|_{L^\infty_tL^2_x}\\
         &= \lambda^{-1-s}\|\la D\ra^s_\lambda u(x/\lambda)\|_{L^\infty_tL^2_x}\\
         &= \lambda^{-1/2-s}\|\la D\ra^s_\lambda u(x)\|_{L^\infty_tL^2_x}\\
    \end{align*}
    In other words
    \[ 
    \|\la D\ra^s_\lambda u\|_{L^\infty_tL^2_x} \lesssim \lambda^{1/2+s}
    \sim \|\du_0\|_{H^s}.
    \]
    (ii) For the Strichartz estimate we have 
    \begin{align*}
        1 &\gtrsim \|\la D\ra^{-(1-4s)/6} u^\lambda\|_{L^6_{t,x}}\\
         &= \lambda^{-1+(1-4s)/6}\|\la D \ra_\lambda^{-(1-4s)/6} 
         u(x/\lambda, t/\lambda^2)\|_{L^6_{t,x}}\\
         &= \lambda^{(-1-2s)/3}\|\la D\ra^{-(1-4s)/6}_\lambda u(x,t)\|_{L^6_{t,x}}\\
    \end{align*}
    Thus, 
    \[
         \|\la D\ra^{-(1-4s)/6}_\lambda u(x,t)\|_{L^6_{t,x}}
         \lesssim \|\du_0\|^{2/3}_{H^s}
    \]
    
    (iii) Finally for the bilinear estimate we have 
    \begin{align*}
        1 &\gtrsim \|\partial_x T_{\la D\ra^{s-\delta_0} \ol{u}^\lambda} \la D\ra^{s-1/2}u^\lambda\|_{L^2_{t,x}}\\
         &= \lambda^{-5/2-2s+\delta_0}\|\partial_x T_{\la D\ra^{s-\delta_0}_\lambda
         \ol{u}(x/\lambda, t/\lambda^2)}  \la D\ra^{s-1/2}_{\lambda}u(x/\lambda, t/\lambda^2)  \|_{L^2_{t,x}}\\
         &= \lambda^{-1-2s+\delta_0}\|\partial_x T_{\la D\ra^{s-\delta_0}_\lambda
         \ol{u}(x, t)}  \la D \ra^{s-1/2}_\lambda u(x, t)\|_{L^2_{t,x}}.
    \end{align*}
    And thus 
    \[
         \|\partial_x T_{\la D\ra^{s-\delta_0}_\lambda
         \ol{u}}  \la D\ra^{s-1/2}_\lambda u\|_{L^2_{t,x}} 
         \lesssim \|\du_0\|_{H^s}^{2 - 2\delta_0/(1+2s)}.
    \]
    Note that when $\|\du_0\|_{H^s} \leq \epsilon$ and setting 
    $\lambda =1$ we get the estimate 
    \[
    \|\partial_x T_{\la D\ra^{s-\delta_0}_\lambda
         \ol{u}}  \la D\ra^{s-1/2}_\lambda u\|_{L^2_{t,x}} \lesssim \|\du_0\|_{H^s}^2
    \]
    directly from Theorem \ref{t:main}. Thus, we only see the $\delta_0$ gain when 
    $\|\du_0\|_{H^s} \gg \epsilon$ and so, up to a constant, this gives the unified bound
    \[
    \|\partial_x T_{\la D\ra^{s-\delta_0}_\lambda
         \ol{u}}  \la D\ra^{s-1/2}_\lambda u\|_{L^2_{t,x}} \lesssim \|\du_0\|_{H^s}^2
         (1 + \|\du_0\|_{H^s}^{2\delta_0/(1+2s)})^{-1}.
    \]
    For the balanced bilinear bound we have 
    \begin{align*}
        1 &\gtrsim \|\partial_x \Pi( \la D\ra^{s-1/4} u^\lambda, \la D\ra^{s-1/4}\ol{u}^\lambda)\|_{L^2_{t,x}}\\ 
         &= \lambda^{-5/2-2s}\|\partial_x \Pi( \la D\ra^{s-1/4}_\lambda 
         u(x/\lambda, t/\lambda^2), 
         \la D \ra^{s-1/4}_\lambda\ol{u}(x/\lambda, t/\lambda^2))\|_{L^2_{t,x}}\\
         &= \lambda^{-1-2s}\|\partial_x \Pi( \la D \ra^{s-1/4}_\lambda 
         u(x, t), 
         \la D\ra^{s-1/4}_\lambda\ol{u}(x, t))\|_{L^2_{t,x}}
    \end{align*}
    And thus
    \[
         \|\partial_x \Pi( \la D\ra^{s-1/4}_\lambda 
         u, 
         \la D\ra^{s-1/4}_\lambda\ol{u})\|_{L^2_{t,x}}
         \lesssim \|\du_0\|_{H^s}^2.
    \]
    
\end{proof}

From here we can access the usual $H^s$ norm by using the 
fact that for any $\alpha < 0$ and any $\lambda,$
\[
    \la \xi\ra^\alpha \leq (1+\lambda^{-\alpha})\la \xi \ra_{\lambda}^\alpha.
\]
Of course, for large initial data, we do not expect to control 
the evolution of frequencies smaller than 
\[
    \lambda = \|\du_0\|_{H^s}^{\frac{2}{1+2s}}
\]
by the initial data. Thus the usual $H^s$ norm will have a loss 
corresponding to this frequency scale. In particular, 
the polynomial dependence of the solution size on the 
initial data will blow up as $s$ goes to $-1/2$. This is the content of 
Theorem \ref{t:NLS-Hs}, which we restate here as a corollary of 
Proposition \ref{prop:large}.

\begin{corollary}
Consider the defocusing 1-d cubic NLS problem \eqref{nls3}$(+)$ with 
    $H^s$ initial data $\du_0$ for $-1/2 < s < 0$. 
    Then the global solution $u$
    satisfies the following bounds:
\begin{enumerate}[label=(\roman*)]
\item Uniform $H^s$ bound:
\begin{equation*}
    \| u \|_{L^\infty_t H^s_x} 
    \lesssim \|\du_0\|_{H^s_x}(1 + \|\du_0\|_{H^s_x}^{\frac{-2s}{1+2s}}).
\end{equation*}

\item Strichartz bound:
\begin{equation*}
    \|\la D\ra^{-\frac{1-4s}{6}} u \|_{L^6_{t,x}} 
    \lesssim \|\du_0\|_{H^s_x}^{2/3}(1 + \|\du_0\|_{H^s_x}^{\frac{1-4s}{3+6s}}). 
\end{equation*}

\item Bilinear Strichartz bound: for every $\delta_0 > 0$ 
\begin{equation*}
\begin{aligned}
    \big\| \partial_x\, T_{\la D\ra^{s-\delta_0}\bar u}\, 
    \la D\ra^{s-\frac12} u
    \big\|_{L^2_{t,x}} &\lesssim \|\du_0\|_{H^s}^{2}
    (1 + \|\du_0\|_{H^s_x}^{\frac{1 - 4s}{1+2s}}),\\[2pt]
    \big\| \partial_x\, \Pi\big(\la D\ra^{s-\frac14} u,\, 
    \la D\ra^{s-\frac14}\bar u\big)
\big\|_{L^2_{t,x}} &\lesssim \|\du_0\|_{H^s}^2
    (1 + \|\du_0\|_{H^s_x}^{\frac{1 - 4s}{1+2s}}).
\end{aligned}
\end{equation*}

\end{enumerate}
\end{corollary}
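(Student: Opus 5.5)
The corollary follows from Proposition~\ref{prop:large} by comparing the $\lambda$-adapted weights with the standard ones. We use the elementary inequalities relating $\la \xi\ra^\alpha$ and $\la \xi\ra_\lambda^\alpha$, with $\lambda = (1+\|\du_0\|_{H^s})^{\frac{2}{1+2s}} \geq 1$. For $\alpha<0$ we have $\la\xi\ra \le \la\xi\ra_\lambda \le \lambda\la\xi\ra$, and therefore
\[
\la\xi\ra^\alpha \le \lambda^{-\alpha}\la\xi\ra_\lambda^\alpha .
\]
When $\|\du_0\|_{H^s}\lesssim 1$ we have $\lambda\sim 1$, and everything is immediate from Proposition~\ref{prop:large}, or even directly from Theorem~\ref{t:main}. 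So we may assume $\|\du_0\|_{H^s}\gg 1$, in which case $\lambda\sim \|\du_0\|_{H^s}^{2/(1+2s)}$.

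For (i), take $\alpha=s$. Then $\|u\|_{H^s}\le \lambda^{-s}\|\la D\ra_\lambda^s u\|_{L^2}$, and \eqref{main-Hs-model+} gives the factor $\lambda^{-s}=\|\du_0\|_{H^s}^{-2s/(1+2s)}$. For (ii), take $\alpha=-\frac{1-4s}{6}$. The multiplier $\la D\ra^\alpha\la D\ra_\lambda^{-\alpha}$ is a symbol of size at most $\lambda^{-\alpha}$ that is smooth on the dyadic scale, uniformly in $\lambda$, so it is bounded on $L^6_{t,x}$ with norm $\lesssim\lambda^{-\alpha}$. One sees this either from Lemma~\ref{lem:sep} applied dyadically, or from a Mikhlin multiplier bound. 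Combined with \eqref{main-s-Str-model+}, this gives the factor $\lambda^{(1-4s)/6}=\|\du_0\|_{H^s}^{(1-4s)/(3+6s)}$.

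For (iii) we apply the same comparison factor by factor inside the paraproducts. For $T$, both weights $s-\delta_0$ and $s-\frac12$ are negative, so the total loss is at most $\lambda^{-(s-\delta_0)}\lambda^{-(s-1/2)}=\lambda^{\frac12-2s+\delta_0}$. For $\Pi$, the loss is $\lambda^{2(\frac14-s)}=\lambda^{\frac12-2s}$. Since the multipliers act on individual dyadic pieces $P_j u$ before the products are formed, the cleanest route is to bound each dyadic summand as in the proof of the global bilinear estimates in Section~\ref{s:global}, with the multiplier ratio giving a pointwise factor on each piece. The claimed exponent is $\frac{1-4s}{1+2s}$, which corresponds to $\lambda^{\frac12-2s}$. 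For $\Pi$ this matches exactly. For $T$, the extra $\lambda^{\delta_0}$ is absorbed by the gain $(1+\|\du_0\|_{H^s}^{2\delta_0/(1+2s)})^{-1}=\lambda^{-\delta_0}$ already present in \eqref{main-bi-Hs-model+}.

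The one point requiring care is that multipliers do not simply factor through the bilinear norms. Hence the main obstacle is justifying the per-factor comparison. I would do this by decomposing dyadically, using that on the support of $P_j$ the ratio $\la\xi\ra^\alpha/\la\xi\ra_\lambda^\alpha$ is a dyadically smooth symbol bounded by $\lambda^{-\alpha}$. Lemma~\ref{lem:sep} then converts each localized bilinear term into translated products controlled by the same norms, at the cost of a supremum over translations, exactly as in the paper's bilinear bounds.
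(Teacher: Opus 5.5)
Your proposal is correct and takes the same route as the paper. The paper's proof is just the pointwise comparison $\la\xi\ra^\alpha\le(1+\lambda^{-\alpha})\la\xi\ra_\lambda^\alpha$ for $\alpha<0$, applied to Proposition~\ref{prop:large}. The extra $\lambda^{\delta_0}$ in the $T$ bound is cancelled by the factor $(1+\|\du_0\|_{H^s}^{2\delta_0/(1+2s)})^{-1}\sim\lambda^{-\delta_0}$, exactly as you track it. You also justify something the paper leaves implicit: that the symbol ratio passes through the $L^6_{t,x}$ norm (via a Mikhlin bound of size $\lesssim\lambda^{-\alpha}$) and through the bilinear norms (via dyadic pieces, Lemma~\ref{lem:sep}, and the translation-uniform localized bounds).
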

\begin{proof}

    This is a straightforward application of the fact that 
\[
    \la \xi\ra^\alpha \leq (1+\lambda^{-\alpha})\la \xi \ra_{\lambda}^\alpha
\]
    for $\alpha < 0$ and Proposition \ref{prop:large}. 
    Note that the $\delta_0$ loss cancels when 
$\lambda$ is large so that it does not appear in the final result.

\end{proof}

\bibliography{1d-global}

\bibliographystyle{plain}

\end{document}